\newtheorem{thm}{Theorem}[section] 
\newtheorem{theorem}{Theorem}
\newtheorem{lemma}[thm]{Lemma}
\newtheorem{proposition}[thm]{Proposition}
\newtheorem{claim}[thm]{Claim}
\newtheorem{corollary}[thm]{Corollary}
\theoremstyle{definition}
\newtheorem{remark}[thm]{Remark}
 \newtheorem{notation}[thm]{Notation}
  \newtheorem{definition-remark}[thm]{Definition-Remark}
\def\geq{\geqslant}
\def\leq{\leqslant}
\def\ker{\operatorname{ker}}
\def\cork{\operatorname{cork}}
\def\min{\operatorname{min}}
\def\im{\operatorname{im}}
\def\max{\operatorname{max}}
\def\c1{\operatorname{c_1}}
\def\c2{\operatorname{c_2}}
\def\Cliff{\operatorname{Cliff}}
\def\gon{\operatorname{gon}}
\def\Hilb{\operatorname{Hilb}}
\def\pr{\operatorname{pr}}
\def\kod{\operatorname{kod}}
\def\CC{{\mathbb C}}
\def\ZZ{{\mathbb Z}}
\def\QQ{{\mathbb Q}}
\def\PP{{\mathbb P}}
\def\P{{\mathcal P}} 
\def\EC{\mathcal{EC}}
\let\P\EC
\def\KC{\mathcal{KC}}
\def\A{{\mathcal A}}
\def\D{{\mathcal D}}
\def\G{{\mathcal G}}
\def\L{{\mathcal L}}
\def\M{{\mathcal M}}
\def\N{{\mathcal N}}
\def\O{{\mathcal O}}
\def\I{{\mathcal J}}
\def\E{{\mathcal E}}
\def\T{{\mathcal T}}
\def\H{{\mathcal H}}
\def\F{{\mathcal F}}
\def\K{{\mathcal K}}
\def\C{{\mathcal C}} 
\def\K{{\mathcal K}}
\def\c{\mathfrak{c}}
\def\R{\mathcal{R}}
\def\x{\times}                   
\def\cong{\simeq}
\def\sub{\subseteq}
\def\+{\oplus}               
\def\*{\otimes}                  
\def\Pic{\operatorname{Pic}}
\def\NS{\operatorname{NS}}
\def\Num{\operatorname{Num}}
\begin{document}

\title{Moduli of curves on Enriques surfaces}

\author[C.~Ciliberto]{Ciro Ciliberto}
\address{Ciro Ciliberto, Dipartimento di Matematica, Universit\`a di Roma Tor Vergata, Via della Ricerca Scientifica, 00173 Roma, Italy}
\email{cilibert@mat.uniroma2.it}

\author[T.~Dedieu]{Thomas Dedieu}
\address{Thomas Dedieu,
Institut de Mathématiques de Toulouse~---~UMR5219,
Université de Toulouse~---~CNRS,
UPS IMT, F-31062 Toulouse Cedex 9, France} 
\email{thomas.dedieu@math.univ-toulouse.fr} 

\author[C.~Galati]{Concettina Galati}
\address{Concettina Galati, Dipartimento di Matematica e Informatica, Universit\`a della Calabria, via P. Bucci, cubo 31B, 87036 Arcavacata di Rende (CS), Italy}
\email{galati@mat.unical.it}

\author[A.~L.~Knutsen]{Andreas Leopold Knutsen}
\address{Andreas Leopold Knutsen, Department of Mathematics, University of Bergen, Postboks 7800,
5020 Bergen, Norway}
\email{andreas.knutsen@math.uib.no}



 \begin{abstract} We  compute the  number of moduli of all irreducible components of the moduli space of smooth curves on   Enriques surfaces. In most cases, the moduli maps to the moduli space of Prym curves  are  generically  injective or dominant. Exceptional behaviour  is related  to existence of Enriques--Fano threefolds and to curves with nodal Prym-canonical model.
 \end{abstract}

\maketitle


\vspace{-1cm}

\section{Introduction} \label{sec:intro}

Moduli of curves on projective surfaces have been the object of intensive study  for a long  time. 
In  more recent times the so-called \emph{Mukai map} $c_g$ from the $(19+g)$--dimensional moduli space of  smooth  $K3$ sections of genus $g$  (that is, pairs $(S,C)$, where $S$ is a smooth $K3$ surface and $C \subset S$ is a smooth genus $g$ curve)  to $\M_g$ has been given much attention in relation to the birational geometry of $\M_g$ and of the moduli space of $K3$ surfaces of genus $g$. In particular $c_g$ is dominant for $g\leqslant 11$ and $g\neq 10$, is birational onto its image for $g\geq 11$ and $g\neq 12$, and its image is a divisor in genus 10 and it has generically one-dimensional fibers in genus $12$ \cite{muk1,muk2,muk3,MM,clm}.  Notable are the relations of pathologies of $c_g$ with the existence of Fano and Mukai manifolds  \cite{clm2,cds}. Also  recall that  Mukai's program towards reconstructing a fiber of $c_g$ is now proven \cite{muk1,abs1,Fe},  and that  the image of $c_g$ has been recently characterized, via the Gauss--Wahl map, for Brill-Noether-Petri general curves \cite{abs,W2}.

In this paper we consider smooth curves on Enriques   surfaces. The  moduli of  such  curves have  not been
systematically investigated so far. Probably this is due to the fact
that the Enriques case is much more complicated and rich  compared to the $K3$ case  due to the
presence of many irreducible components of  the moduli space of  polarized such surfaces,  whence also of the moduli space of smooth curves on Enriques surfaces,  even when
fixing the genus of the polarization. Remarkably enough our  results
give  the  number of moduli of {\it all}  such components, equivalently,
the dimension of the image (or of a general fiber) of the moduli map.  It should
be  noted  that there are  some striking analogies with the $K3$ case,
 including  behaviour   induced  by the existence of Enriques--Fano
threefolds, as well as more exceptional behavior, e.g., related to
curves with nodal Prym--canonical models.

We now present our results. Let $\E$ denote the smooth  irreducible 
$10$-dimensional moduli space parametrizing smooth, complex Enriques
 surfaces 
and  $\E_{g,\phi}$ the  (in general reducible)  moduli space of pairs $(S,H)$ such that $S$ is a member of 
$\E$  and $H$ is  an ample  line bundle on $S$  satisfying  $H^2=2g-2$
and $\phi(H)=\phi$, where
\begin{equation} \label{eq:fi} 
 \phi(H):=\min\bigl\{E \cdot H \; | \; E \in \NS(S), E^2=0, E  > 0 \bigr\}.
\end{equation}
 Recall  that $\phi^2\leq 2g-2$  by \cite[Cor.\ 2.7.1]{cd}. 

Denote by $\P_{g,\phi}$ the  moduli  space  of   triples $(S,H,C)$ where  $(S,H)$ is a member of   $\E_{g,\phi}$  and $C\in |H|$ is a smooth irreducible curve.  Note that $\EC_{g,\phi}$  has  as  many irreducible components as $\E_{g,\phi}$.  There are natural  morphisms
\begin{equation}\label{eq:enriques}
\xymatrix{ 
&\P_{g,\phi} \ar[ld]_(.55){p_{g,\phi}}  
\ar[d]_{\chi_{g,\phi}}    
\ar[rd]^(.55){c_{g,\phi}} &\\
\E_{g,\phi}&   \R_g \ar[r]    &\M_g,
}
\end{equation}
where $\R_g$ is the moduli space of {\it Prym curves}, that is, of pairs $(C,\eta)$, with $C$ a smooth, irreducible, genus $g$ curve and $\eta$ a non--zero $2$-torsion element of $\Pic^0(C)$. 
The  map $\chi_{g,\phi}$  sends $(S,H,C)$ to the Prym curve 
$(C, \omega_S \otimes \O_C)$. The morphism
$c_{g,\phi}$  is the  composition of the latter with the  forgetful   covering map $\R_g \to \M_g$,
which has degree $2^{2g}-1$. By a dimension count, one could  expect 
$\chi_{g,\phi}$ and $c_{g,\phi}$ to be dominant (on some or all components) for $g \leq 6$ and generically finite (on some or all components) for $g \geq 6$.

As far as we know, the only known result so far concerning the maps $\chi_{g,\phi}$ and $c_{g,\phi}$ is the one of Verra \cite{Ve} stating that $\chi_{6,3}$ is dominant, equivalently generically finite (note that $\E_{6,3}$ is irreducible).

Our main results are the following.  We present the cases $\phi \geq 3$, $\phi=2$ and $\phi=1$ separately.  We refer to the  tables  in \S \ref{sec:msenr}  and Notation \ref{not:comp}  for the definition of the various components of   $\E_{g,\phi}$ and  $\P_{g,\phi}$  showing up in the results below.

\begin{theorem} \label{mainthm1}
Assume that $\phi \geq 3$ (whence $g \geq 6$).  The map
$\chi_{g,\phi}: \P_{g,\phi} \to \R_g$ is generically injective on any
irreducible component of  $\EC_{g,\phi}$  not appearing in the list
below,  for  which the dimension of a general fiber is indicated:\\[1mm]
\resizebox{\linewidth}{!}{%
\begin{tabular}{|l |c|c|c|c|c|c|c|c|c|c|}
\hline
comp. & 
$\EC_{7,3}$ &
$\EC_{9,3}^{(II)}$ &
$\EC_{9,4}^{+}$ &
$\EC_{9,4}^{-}$ &
$\EC_{10,3}^{(II)}$ &
$\EC_{13,3}^{(II)}$ &
$\EC_{13,4}^{(II)^+}$ &
$\EC_{13,4}^{(II)^-}$ &
$\EC_{17,4}^{(IV)^+}$ &
$\EC_{17,4}^{(IV)^-}$ \\
\hline
fib.dim.
&   $1$
&  $1$
& $3$
&  $0$
& $2$
& $1$
& $1$
& $0$
& $1$
& $0$ \\ \hline
\end{tabular}
}
\end{theorem}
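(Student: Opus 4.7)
The plan is to analyze $\chi_{g,\phi}$ component-by-component, using the classification of irreducible components of $\EC_{g,\phi}$ provided in \S 3. The starting point is adjunction: for $(S,H,C) \in \EC_{g,\phi}$ one has $H|_C \cong \omega_C \otimes \eta$ with $\eta = \omega_S|_C$, so $|H|$ restricts on $C$ to the Prym-canonical linear system, and the image $\varphi_{|H|}(C) \subset \varphi_{|H|}(S) \subset \PP^{g-1}$ projects from a point to the Prym-canonical model $B \subset \PP^{g-2}$. Generic injectivity of $\chi_{g,\phi}$ on a component is then equivalent to unique recoverability of $(S,H)$ from the Prym curve $(C,\eta)$. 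A dimension count yields $\dim\EC_{g,\phi}=g+9\le 3g-3=\dim\R_g$ for $g\ge 6$, which is automatic under $\phi\ge 3$ since $9\le\phi^2\le 2g-2$; so $\chi_{g,\phi}$ already has generically finite fibers on every component, and the task is to distinguish genuinely injective behaviour from finite-but-multiple behaviour, while computing the dimensions in the remaining non-finite cases.

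The main infinitesimal tool is to identify $\ker d\chi_{(S,H,C)}$ with the cokernel of an appropriate map arising from the normal bundle sequence $0 \to T_C \to T_S|_C \to N_{C/S} \to 0$ combined with the deformation sequence for the pair $(S,C)$; vanishing of this obstruction space is what must be checked for each non-exceptional component. Paired with this is the geometric reconstruction: for a general point of such a component the Prym-canonical model $B \subset \PP^{g-2}$ should be smooth, and the projective model $\varphi_{|H|}(S) \subset \PP^{g-1}$ should be uniquely reconstructible from $B$ using Cossec--Dolgachev-style classification of projective models of Enriques surfaces. The proof proceeds by running this two-pronged verification through the list of components, with the technical input being sufficient non-vanishing of the relevant Gauss--Wahl map on $C$.

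The exceptional components split into two families, matching the two phenomena highlighted in the abstract. The components $\EC_{9,4}^\pm$, $\EC_{13,4}^{(II)^\pm}$, $\EC_{17,4}^{(IV)^\pm}$ correspond to polarizations realized as Enriques hyperplane sections of explicit Enriques--Fano threefolds $X \subset \PP^{g}$: through a fixed $C \subset X$ lies a linear system of such Enriques sections, producing a positive-dimensional fiber of $\chi$ whose dimension is extracted directly from the geometry of $X$; the $\pm$ superscripts encode a discrete choice (e.g.\ the two Enriques quotients sharing a common $K3$ double cover), accounting for the finite but non-injective cases of fiber dimension $0$. The components $\EC_{7,3}$, $\EC_{9,3}^{(II)}$, $\EC_{10,3}^{(II)}$, $\EC_{13,3}^{(II)}$ are those on which $\varphi_\eta$ fails to be birational onto its image and/or the Prym-canonical image acquires nodes; the positive-dimensional fiber in these cases is parameterized by the moduli of the choices of smoothing/projection of these singular models to recover different Enriques surfaces, which can be computed explicitly from the structure of $|H|$.

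The principal obstacle is the completeness of the exceptional list: for every component \emph{outside} it one must verify that neither an Enriques--Fano threefold structure nor a nodal Prym-canonical model is present. This requires a uniform infinitesimal injectivity criterion applicable across the widely different geometries of the remaining components in the classification, together with the precise case-by-case fiber-dimension calculation on the exceptional list; this is the technical heart of the proof.
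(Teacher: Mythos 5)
Your overall architecture (linearize the moduli map, then explain the exceptional components geometrically) matches the paper's, but there are several concrete gaps. First, the claim that a dimension count already gives generic finiteness on every component is false: $\dim \EC_{g,\phi} \le \dim \R_g$ only bounds the dimension of the image, and the theorem itself exhibits components ($\EC_{9,4}^{+}$, $\EC_{10,3}^{(II)}$, \dots) with positive-dimensional general fiber. The actual criterion is cohomological: the kernel of the differential of $c_{g,\phi}$ at $(S,H,C)$ is $H^1(\T_S(-C))$ (Lemma \ref{lemma:diff}), and everything reduces, via the \'etale $K3$ cover $\pi:\widetilde S\to S$ and the splitting $H^1(\T_{\widetilde S}(-\widetilde H)) = H^1(\T_S(-H))\oplus H^1(\T_S(-H+K_S))$, to computing $h^1(\T_{\widetilde S}(-\widetilde H))$. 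The paper does this by putting $\widetilde S$ in an explicit projective model (double cover of $\PP^1\times\PP^1$, or complete intersection of quadrics in $\PP^5$) and chasing the relative cotangent, Euler and normal bundle sequences through the isotropic decomposition of $H$ (Lemmas \ref{lemma:tricknuovo}, \ref{lemma:trick4}, \ref{lemma:appenriques}, \ref{lemma:restanti}); this case-by-case vanishing is exactly the ``technical heart'' you flag but do not supply. Second, generic injectivity (as opposed to finiteness) is not obtained by reconstructing $\varphi_{|H|}(S)$ from the Prym-canonical model via Cossec--Dolgachev classification: the paper lifts to the $K3$ cover, checks $\Cliff(\widetilde C)\ge 3$ via gonality bounds, and invokes the corank-one Gauss--Wahl criterion of \cite{cds} and the results of \cite{clm} to conclude that a finite fiber of $c_{2g-1}$ over $\widetilde C$ is a single point (Lemma \ref{lemma:geninj}); your ``Gauss--Wahl map on $C$'' is the right keyword applied to the wrong curve and without the descent mechanism.

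Third, your partition of the exceptional list is incorrect. For $\phi\ge 3$ the nodal-Prym-canonical phenomenon plays no role (it governs the $\phi\le 2$ cases of Theorem \ref{mainthm2}); all positive-dimensional fibers in Theorem \ref{mainthm1} are accounted for either by the nonvanishing of $h^1(\T_S(-H))$ computed directly, or, where a lower bound is needed, by extendability to Enriques--Fano threefolds (Proposition \ref{prop:pencil}), and this includes $\EC_{7,3}$, which you placed in the wrong family. The exact fiber dimensions are then pinned down by combining the upper bounds of Lemma \ref{lemma:restanti} with these lower bounds and with irreducibility and symmetry arguments distributing $h^1(\T_{\widetilde S}(-\widetilde H))$ between $H$ and $H+K_S$ (Remark \ref{rem:push2}), not by parametrizing smoothings or projections of singular models. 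As it stands the proposal identifies the correct themes but supplies none of the steps that actually close the argument.
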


 In particular, we obtain that
$ \chi_{6,3}: \P_{6,3} \longrightarrow \R_6$
is birational, improving the result of \cite{Ve}. Moreover, in analogy with the $K3$ case, for any $g \geq 8$ there is a component of $\E_{g,\phi}$ on which 
$\chi_{g,\phi}$ is generically injective, whereas on $\E_{7,3}$ (which is  irreducible),  the map $\chi_{g,\phi}$ has generically one--dimensional fibers. However, in contrast to the $K3$ case, there are more components of $\E_{g,\phi}$ for $g \geq 8$ where $\chi_{g,\phi}$ is not generically finite. This phenomenon can be explained by the existence of 
Enriques--Fano threefolds, see \S \ref{sec:EF-nuovo}. 

For $\phi=2$  we obtain:

 \begin{theorem} \label{mainthm2}
   The  map $\chi_{g,2}:\P_{g,2} \to \R_g$ is  generically finite on all
irreducible components of  $\P_{g,2}$  when $g \geq 10$. For $g \leq 9$ the
dimension of a general fiber of  $\chi_{g,2}$  on  the
various irreducible components of  $\P_{g,2}$  is as follows:\\[1mm]
\resizebox{\linewidth}{!}{%
\begin{tabular}{|l |c|c|c|c|c|c|c|c|c|c|c|c|}
\hline
comp. & 
$\P_{9,2}^{(I)}$ &
$\P_{9,2}^{(II)^+}$\! &
$\P_{9,2}^{(II)^-}$\! &
$\P_{8,2}$ &
$\P_{7,2}^{(I)}$ &
$\P_{7,2}^{(II)}$ &
$\P_{6,2}$ &
$\P_{5,2}^{(I)}$ &
$\P_{5,2}^{(II)^+}$\! &
$\P_{5,2}^{(II)^-}$\! &
$\P_{4,2}$ &
$\P_{3,2}$ 
\\ \hline
fib.dim.
&   $0$
& $2$
& $1$
&  $0$
& $1$
&  $3$
& $2$
&   $3$
& $6$
& $4$
&   $4$  
& $6$ \\ \hline
\end{tabular}}
\end{theorem}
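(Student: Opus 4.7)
The plan is to analyze each irreducible component of $\P_{g,2}$ enumerated in Section \ref{sec:msenr} separately, combining a dimension count with a reconstruction argument and, when needed, an explicit construction of positive-dimensional fibers.

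First, one computes $\dim \P_{g,2}^{(*)}$ for each component. Since $H$ is ample with $H^2=2g-2>0$ on an Enriques surface, Riemann--Roch together with Kawamata--Viehweg vanishing gives $h^0(S,H)=g$, so $\dim |H|=g-1$ and $\dim \P_{g,2}^{(*)} = \dim \E_{g,2}^{(*)} + (g-1)$. With $\dim \R_g=3g-3$, the expected generic fiber dimension of $\chi_{g,2}$ on $\P_{g,2}^{(*)}$ is $\dim \E_{g,2}^{(*)}-(2g-2)$, which is non-positive for $g \geq 10$ on every component in Section \ref{sec:msenr}; this is consistent with generic finiteness in that range.

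The key geometric input specific to $\phi(H)=2$ is the existence of elliptic half-pencils $E \subset S$ with $E \cdot H=2$. For each such $E$, the divisor $E\cap C$ has degree $2$; when $C$ is not hyperelliptic one has $h^0(\O_C(E))=1$, so this divisor is intrinsically attached to the pair $(C,E)$. Varying $E$ over the finite set of half-pencil classes with $E \cdot H=2$ (whose structure is determined, for each component, by the lattice-theoretic description in Section \ref{sec:msenr}) produces canonical degree-$2$ data on $C$ that, together with $(C,\eta)$, reconstructs $(S,H,C)$ up to finite ambiguity on those components where the table records fiber dimension $0$, and bounds the fiber dimension from above in general. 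The generic-finiteness claim for $g\geq 10$ then reduces to checking injectivity of the differential of $\chi_{g,2}$, which is a standard comparison of infinitesimal deformation spaces of the triple and of the Prym curve.

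For components with positive fiber dimension in the table, one must exhibit explicit positive-dimensional families of triples sharing a common Prym curve and match dimensions. Two natural sources of such families are: (i) hyperplane-section families from Enriques--Fano threefolds, which contribute in low genus (cf.~Section \ref{sec:EF-nuovo}); and (ii) explicit projective models in which $C$ lies in a positive-dimensional family of Enriques sections of a fixed ambient variety, coming from automorphisms of certain rational surfaces or from degree-two covers. The main obstacle is expected to be the low-genus cases $\P_{3,2}$, $\P_{4,2}$ and $\P_{5,2}^{(II)^\pm}$, where the fiber dimensions are largest: there both the reconstruction upper bound and the construction of explicit families require a careful analysis of the Prym-canonical model of $C$ and of the projective geometry of the ambient Enriques surface.
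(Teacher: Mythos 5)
There is a genuine gap, and it sits at the center of your argument. Your proposed mechanism for the upper bound on the fiber dimension --- reconstructing $(S,H,C)$ up to finite ambiguity from the degree-$2$ divisors $E\cap C$ cut out by the half-fibers with $E\cdot H=2$ --- is asserted but not argued, and there is no reason to expect it to work: you would have to show that the Enriques surface is determined by $(C,\eta)$ together with finitely many degree-$2$ divisors on $C$, which is essentially an Enriques analogue of Mukai's reconstruction program; the paper explicitly lists this as an \emph{open question} in the introduction, not as a tool. The paper instead bounds the fiber dimension by identifying it with $h^1(\T_S(-H))$, the kernel of the differential of the moduli map (Lemma \ref{lemma:diff}, Corollary \ref{cor:geninj}), and the bulk of the proof of Theorem \ref{mainthm2} is the computation of this group: via the splitting \eqref{eq:h1soprasotto} over the $K3$ cover and the double cover of $\PP^1\times\PP^1$ induced by $|\widetilde E_1+\widetilde E_2|$ (Lemmas \ref{lemma:tricknuovo}, \ref{lemma:trick4}, \ref{lemma:restanti} for the even-genus cases and $\P_{5,2}^{(I)}$, and the logarithmic-differentials computation of Lemma \ref{lemma:dadimlemma1} for the $(II)$-components). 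Your "standard comparison of infinitesimal deformation spaces" is precisely this computation, and it is where all the work lies.

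A second, independent gap: for $\P_{9,2}^{(I)}$ (fiber dimension $0$) and $\P_{7,2}^{(I)}$ (fiber dimension $1$) the cohomological computation on an unnodal surface is inconclusive --- these cases appear with blank entries in the table of Lemma \ref{lemma:restanti} --- and the paper has to specialize to \emph{nodal} Enriques surfaces, degenerating $C$ to $C_0\cup\Gamma$ with $\Gamma$ a $(-2)$-curve and $C_0$ of genus $g-1$, inside a pencil of hyperplane sections of an Enriques--Fano threefold; this requires constructing the nodal locus (Proposition \ref{prop:nuovoesiste}) and the technical Lemma \ref{lemma:lemmazzo}, plus semicontinuity of fiber dimension. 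Nothing in your outline produces these two entries. Finally, the exact values in the table require matching lower bounds against the cohomological upper bounds: e.g.\ for $\P_{9,2}^{(II)^{\pm}}$ one has $h^1(\T_S(-H))+h^1(\T_S(-H-K_S))=3$, split as $2+1$ only by combining extendability to Prokhorov's threefold with a further degeneration (Lemma \ref{lemma:mod}); and for $g\leq 8$ the lower bounds come from the containments of the image in $\R^0_g$, $\R^{0,\mathrm{nb}}_5$, $\D^0_5$ (Lemma \ref{lemma:posfibfi2}). Your proposal correctly identifies Enriques--Fano threefolds as a source of positive-dimensional fibers, but without the upper bounds the table cannot be established.
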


In particular, $\chi_{g,2}$ is dominant precisely  on $\P_{3,2}$ and $\P_{4,2}$ 
and is generically finite on at least one component of  $\P_{g,2}$  precisely for $g \geq 8$.  The  positive-dimensional fibers of $\chi_{9,2}$ on  $\P_{9,2}^{(II)^+}$ and 
$\P_{9,2}^{(II)^-}$  can again be explained by the existence of 
Enriques--Fano threefolds, see  Corollary \ref{cor:EF1}.  The other positive-dimensional fibers are due to the fact that the image of  $\chi_{g,2}$   lies in quite special loci, as we now explain.   
Define:
 \begin{compactitem}
   \item $\R^0_g$ --- the locally closed  locus in  $\R_g$ of pairs $(C,\eta)$ for which  the complete linear system $|\omega_C(\eta)|$ is base point free and the map $C \to \PP^{g-2}$ it defines (the so--called \emph{Prym--canonical map}) is  not an embedding. This locus is irreducible (and unirational) of dimension $2g+1$ for $g \geq 5$  by \cite[Thm.~1]{cdgk2}. (Obviously, $\R^0_g$ is dense in $\R_g$ for $g \leq 4$.) Moreover, for the general element, the Prym--canonical map is birational onto its image,  which  has precisely two nodes,  cf. \cite[Prop.~1.2]{cdgk2}.
\item $\R^{0,\mathrm{nb}}_g$ --- the closed  locus in  $\R^0_g$ of pairs
$(C,\eta)$ for which the Prym--canonical map is not birational onto
its image. This locus is irreducible of dimension $2g-2$ for $g \geq
4$ and dominates the bielliptic locus in $\M_g$ via the forgetful map
$\R_g \to \M_g$ by \cite[Cor.~2.2]{cdgk2}.
\item $\D^{0}_5$ ---  the locally closed locus  in $\R^0_5$ of pairs 
$(C,\eta)$ with  $4$-nodal Prym-canonical model.    By 
\cite[Prop.~5.3]{cdgk2} this locus is an irreducible (unirational) divisor in 
$\R^0_5$  whose closure  in $\R_5$ coincides with closure of the locus of pairs 
$(C,\eta)$ carrying 
a theta-characteristic $\theta$
such that
$h^0(\theta)=h^0(\theta+\eta)=2$.  

\end{compactitem}
The image of $\chi_{g,2}$ (on   any component of  $\P_{g,2}$) always lies in 
$\R^0_g$, cf. Lemma \ref{lemma:posfibfi2}(ii)-(v), and consequently, by counting dimensions one a priori sees that $\chi_{g,2}$ has expected fiber dimension $\max\{0,8-g\}$. Furthermore, as a consequence of Theorem \ref{mainthm2}, the maps $\chi_{g,2}$ dominate some of the peculiar loci above in various cases. Indeed,
it follows from   Proposition \ref{prop:fi2II}(i)-(ii) and Corollary \ref{cor:image} that:
\begin{compactitem}
\item $\chi_{5,2}$  on  $\P_{5,2}^{(I)}$  (respectively,   $\chi_{6,2}$, $\chi_{7,2}$  on  $\P_{7,2}^{(I)}$ , $\chi_{8,2}$) dominates $\R^0_5$ (resp., $\R^0_6$, $\R^0_7$, $\R^0_8$). In particular, the image of $\chi_{5,2}$ on  $\P_{5,2}^{(I)}$  is a divisor in $\R_5$; this parallells the situation of $\im c_{10}$ in the $K3$ case.
 \item $\chi_{5,2}$ on  $\P_{5,2}^{(II)^+}$  dominates $\R^{0,\mathrm{nb}}_5$.
\item $\chi_{5,2}$ on  $\P_{5,2}^{(II)^-}$  dominates $\D^{0}_5$.
\end{compactitem}

For $\phi=1$ the moduli spaces $\E_{g,1}$ are irreducible for all $g$ and  the image of $\chi_{g,1}$ (and of $c_{g,1}$) lies in the 
hyperelliptic locus, cf.\   Lemma \ref{lemma:posfibfi2}(i),  
hence the expected fiber dimension is $\max\{10-g,0\}$. We  prove that this is indeed the dimension of a general fiber:

\begin{theorem} \label{mainthm3}
   The dimension of a general fiber of $\chi_{g,1}$ and of $c_{g,1}$ is $\max\{10-g,0\}$. Hence, $c_{g,1}$ dominates the hyperelliptic locus if $g \leq 10$ and is generically finite if $g \geq 10$.  
\end{theorem}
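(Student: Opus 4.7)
The plan is to compute the general fibre dimension of $\chi_{g,1}$ by matching upper and lower bounds; the statement for $c_{g,1}$ then follows since the forgetful map $\R_g\to\M_g$ is finite of degree $2^{2g}-1$ and restricts to a finite surjection between the respective hyperelliptic loci (both of dimension $2g-1$).

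\emph{Lower bound on the fibre dimension.} By Lemma \ref{lemma:posfibfi2}(i) the half-pencil $E$ with $E\cdot H=1$ cuts out on every smooth $C\in |H|$ the unique hyperelliptic $g^1_2$, so the image of $\chi_{g,1}$ is contained in the hyperelliptic locus $\mathcal{H}\subset\R_g$, of dimension $2g-1$. Since $\E_{g,1}$ is irreducible of dimension $10$ and $\dim |H|=g-1$ by Riemann--Roch, one has $\dim \P_{g,1}=g+9$, so the general fibre of $\chi_{g,1}$ has dimension at least $\max\{10-g,0\}$.

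\emph{Matching upper bound.} For $g\leq 10$ the matching upper bound is the dominance of $\chi_{g,1}$ onto $\mathcal{H}$: I would construct, for a general $(C,\eta)\in\mathcal{H}$, a $(10-g)$-dimensional family of triples $(S,H,C)\in\P_{g,1}$ with $\chi_{g,1}(S,H,C)=(C,\eta)$. The natural strategy goes through the K3 double cover: the non-trivial $2$-torsion $\eta$ determines an étale double cover $\tilde C\to C$ of genus $2g-1$, and one looks for a K3 surface $\tilde S$ containing $\tilde C$ with $\tilde C^2=4g-4$, together with a fixed-point-free involution lifting the deck transformation. Surjectivity of the period map for lattice-polarised K3 surfaces with the appropriate Nikulin lattice datum (reflecting the $\phi=1$ condition, i.e.\ the existence of an elliptic half-pencil meeting $\tilde C$ in one point) yields such K3 covers in a family of the expected dimension $10-g$, and taking Enriques quotients gives the required family of triples. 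For $g\geq 10$, the matching upper bound is generic finiteness. I would verify it infinitesimally: the kernel of the differential of $\chi_{g,1}$ at a general $(S,H,C)$ is computed from the logarithmic tangent sequence
\[
0\to T_S(-\log C)\to T_S\to N_{C/S}\to 0
\]
via the comparison map $H^1(T_S(-\log C))\to H^1(T_C)\oplus H^1(\O_C)$ recording the induced deformation of the Prym curve $(C,\omega_S\otimes\O_C)$; injectivity reduces to a cohomological vanishing exploiting $N_{C/S}\cong \omega_C\otimes(\omega_S\otimes\O_C)^{-1}$ together with the generic position of $C$ in $|H|$ on a general $S\in\E_{g,1}$.

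The principal obstacle is the construction in the second paragraph: verifying that the expected $(10-g)$-dimensional family of Enriques surfaces containing a general hyperelliptic Prym curve actually exists. This rests on (i) a lattice-theoretic construction of an Enriques Picard lattice containing a class $H$ with $H^2=2g-2$ and $\phi(H)=1$ and a compatible free involution on the K3 cover, and (ii) a global Torelli statement promoting the lattice description to a moduli count of the expected dimension. Once these are in place, combining the two halves of the argument with the transfer to $c_{g,1}$ recorded at the outset yields both the fibre-dimension formula $\max\{10-g,0\}$ and the claims about dominance and generic finiteness.
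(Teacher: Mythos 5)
Your lower bound is exactly the paper's (Lemma \ref{lemma:posfibfi2}(i) plus the count $\dim \P_{g,1}=g+9$ against $\dim \mathcal{H}=2g-1$), and your reduction from $c_{g,1}$ to $\chi_{g,1}$ is fine. But neither half of your ``matching upper bound'' is actually established, and the route you propose for $g\leq 10$ has a conceptual flaw: surjectivity of the period map for lattice-polarized $K3$ surfaces produces surfaces whose Picard lattice contains a prescribed sublattice, but it does not place a \emph{given} curve $\widetilde C$ (the Prym double cover of a general hyperelliptic curve) on such a surface, let alone in a family of the expected dimension $10-g$. That is a Mukai-type reconstruction/containment problem, not a Torelli statement, and the expected-dimension count of Enriques surfaces through a fixed curve is precisely what has to be proved. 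For $g\geq 10$, the generic finiteness you want reduces (via Lemma \ref{lemma:diff}) to the vanishing $h^1(\T_S(-H))=0$, which you assert ``reduces to a cohomological vanishing'' without giving the argument; this vanishing is the hard computational core of the statement and does not follow formally from $\N_{C/S}\cong\omega_C\otimes(\omega_S\otimes\O_C)^{-1}$ or from genericity of $C$ in $|H|$.

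The paper's proof avoids both gaps by a single exact computation: by Corollary \ref{cor:geninj} the general fiber dimension of $\chi_{g,1}$ \emph{equals} $h^1(\T_S(-H))$, and Lemma \ref{lemma:fi1-I+} shows $h^1(\T_S(-H))=\max\{0,10-g\}$ for general $(S,H)\in\E_{g,1}$. This is done by passing to the $K3$ cover (using \eqref{eq:h1soprasotto} and irreducibility of $\E_{g,1}$), pushing $\T_{\widetilde S}$ down along the degree-two map $\widetilde S\to P=\PP^1\times\PP^1$ defined by $|\widetilde E_1+\widetilde E_2|$, and reducing via log differentials along the branch curve $R\in|\O_P(4,4)|$ to the computation of $h^0(\O_P(g+1,1)\otimes\I_{Z_1})$ for the length-$24$ ramification scheme $Z_1\subset R$ (whose points lie on distinct rulings because a general elliptic pencil on an Enriques surface has exactly $12$ nodal fibers). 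Once the fiber dimension is known exactly, dominance onto the hyperelliptic locus for $g\leq 10$ follows from the dimension count you already set up, with no need for any existence construction. To complete your proposal you would have to supply either that construction or the equivalent cohomological computation; as written, the proof is not complete.
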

  
 An immediate consequence of the above results is: 

\begin{corollary}
  A general curve of genus $2$, $3$, $4$ and $6$ lies on an Enriques surface, whereas a general curve of genus $5$ or $\geq 7$ does not.
A general hyperelliptic curve of genus $g$ lies on an Enriques surface if and only if $g \leq 10$. 
\end{corollary}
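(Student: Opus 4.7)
The plan is to translate the corollary into a dominance question for the maps $c_{g,\phi}\colon\P_{g,\phi}\to\M_g$: a general curve of genus $g$ lies on an Enriques surface if and only if some $c_{g,\phi}$ is dominant on some irreducible component, and similarly the hyperelliptic statement amounts to $c_{g,1}$ dominating the hyperelliptic locus. Since $\R_g\to\M_g$ is finite and surjective, dominance of $c_{g,\phi}$ is equivalent to that of $\chi_{g,\phi}$, so the answer can be read off directly from Theorems \ref{mainthm1}--\ref{mainthm3}. First I would record that every component of $\P_{g,\phi}$ has dimension $g+9$: indeed $\dim\E_{g,\phi}=10$ and $\dim|H|=g-1$, by Riemann--Roch on an Enriques surface (using that $K_S$ is numerically trivial) together with Kawamata--Viehweg vanishing for the ample $H$.

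For the positive direction, I would exhibit one dominant component for each of the listed genera. Genus $2$ follows from Theorem \ref{mainthm3} applied to $\P_{2,1}$, whose image is the hyperelliptic locus, equal to $\M_2$. For genera $3$ and $4$, Theorem \ref{mainthm2} lists $\P_{3,2}$ and $\P_{4,2}$ with general fibre dimensions $6$ and $4$, so the images in $\R_g$ have respective dimensions $6=\dim\R_3$ and $9=\dim\R_4$. For genus $6$, the component $\P_{6,3}$ does not appear among the exceptions of Theorem \ref{mainthm1}, hence $\chi_{6,3}$ is generically injective between spaces of common dimension $15$, and so dominant (in fact birational).

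For the negative direction at $g=5$ and $g\geq 7$, I would rule out every admissible $\phi$ by a dimension bound against $3g-3=\dim\M_g$: for $\phi=1$ the image lies in the hyperelliptic locus (Lemma \ref{lemma:posfibfi2}(i)), of dimension $2g-1$; for $\phi=2$ in $\R^0_g$, of dimension $2g+1$; for $\phi\geq 3$ the whole space $\P_{g,\phi}$ has dimension $g+9$. Each of these is strictly smaller than $3g-3$ throughout the required range, and $\phi\geq 3$ is forbidden at $g=5$ by the constraint $\phi^2\leq 2g-2$.

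The hyperelliptic statement is then almost automatic: for $g\leq 10$ Theorem \ref{mainthm3} gives that $c_{g,1}$ dominates the hyperelliptic locus, whereas for $g\geq 11$ the inequality $g+9<2g-1$ shows that no image of any $c_{g,\phi}$ has dimension large enough to cover the $(2g-1)$-dimensional hyperelliptic locus. The only real obstacle would be to check that no irreducible component of any $\P_{g,\phi}$ has been overlooked, so that the uniform dimension bound $\dim\P_{g,\phi}=g+9$ can safely be applied to each of them; but this classification is exactly the content of Theorems \ref{mainthm1}--\ref{mainthm3} together with the tables in \S\ref{sec:msenr}.
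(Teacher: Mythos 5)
Your proposal is correct and follows exactly the route the paper intends: the corollary is stated there as an immediate consequence of Theorems \ref{mainthm1}--\ref{mainthm3} (plus Lemma \ref{lemma:posfibfi2} and the constraint $\phi^2\leq 2g-2$), and your argument just makes the dimension counts and the case-by-case reading of those theorems explicit. No gaps.
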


 The proof of Theorem \ref{mainthm1}  also has an application to the classification of 
projective varieties having 
Enriques surfaces as linear sections. We recall that a projective variety $V \subset \PP^N$ is said to be {\it $k$-extendable} if there exists a projective variety $W \subset \PP^{N+k}$, different from a cone, such that $V =W \cap \PP^N$ (transversely). The question of $k$-extendability of Enriques surfaces is still open, although it is proved in \cite{Pr,KLM} that $N \leq 17$ is a necessary condition for $1$--extendability, and {\it terminal threefolds} having Enriques surfaces as hyperplane sections have been classified in  \cite{bay,sa,Mi}.

\begin{corollary} \label{cor:EF-emb}
  Let $S \subset \PP^N $ be an Enriques surface not containing any smooth rational curve. If $S$ is $1$--extendable, then $(S,\O_S(1))$ belongs to the following list:
\[ \E_{17,4}^{(IV)^+}, \; \E_{13,4}^{(II)^+}, \; \E_{13,3}^{(II)}, \; \E_{10,3}^{(II)}, \; \E_{9,4}^{+}, \; \E_{9,3}^{(II)}, \;  \E_{7,3}.\] 
Furthermore, the members of this list are all at most $1$--extendable, except for members of $\E_{10,3}^{(II)}$, which are at most $2$--extendable, and of $\E_{9,4}^{+}$, which are at most $3$--extendable.
\end{corollary}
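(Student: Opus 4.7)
The strategy is to translate $1$-extendability of $(S,H)$ into a positive-dimensional fiber of $\chi_{g,\phi}$ through $(S,H,C)$ for a general smooth $C\in|H|$, and then use Theorem \ref{mainthm1} to read off the possible components. First, since $S\subset\PP^N$ is embedded by $H=\O_S(1)$ and contains no $(-2)$-curve, Cossec's very-ampleness criterion for unnodal Enriques surfaces (cf.\ \cite{cd}) forces $\phi(H)\geq 3$, so $(S,H)$ lies in some $\E_{g,\phi}$ with $\phi\geq 3$ and Theorem \ref{mainthm1} applies.

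Let $W\subset\PP^{N+1}$ be a $1$-extension of $S$, not a cone, with $S=W\cap\PP^N$. For a general smooth $C\in|H|$, write $C=S\cap\PP^{N-1}$ for some $\PP^{N-1}\subset\PP^N$, and consider the pencil $\Lambda\cong\PP^1$ of hyperplanes $\PP^N_t\subset\PP^{N+1}$ containing the fixed $\PP^{N-1}$. Setting $S_t:=W\cap\PP^N_t$, a Bertini argument using that $W$ is not a cone shows that $S_t$ is a smooth Enriques surface for general $t\in\Lambda$, and $C\subset S_t$ by construction. Adjunction in $W$ gives
\[
\omega_{S_t}\otimes\O_C = \bigl(\omega_W\otimes\O_W(1)\bigr)\big|_C,
\]
independent of $t$, so every triple $(S_t,H_t,C)$ shares the same Prym curve $(C,\eta)$. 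Assuming the induced map $\Lambda\to\EC_{g,\phi}$ is non-constant---which holds because $W$ is not a cone---we conclude that $\chi_{g,\phi}$ has a positive-dimensional fiber through $(S,H,C)$.

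By Theorem \ref{mainthm1}, the component of $\EC_{g,\phi}$ containing $(S,H,C)$ must appear in the exceptional list with strictly positive generic fiber dimension; inspection of the table yields exactly the seven components stated in the corollary, the ``$-$'' variants $\EC_{9,4}^{-}$, $\EC_{13,4}^{(II)^-}$, $\EC_{17,4}^{(IV)^-}$ being ruled out because they parametrize Enriques polarized pairs carrying distinguished $(-2)$-configurations, incompatible with our hypothesis. For the extendability bounds, applying the same construction to a $k$-extension $W\subset\PP^{N+k}$ of dimension $k+2$ gives a $\PP^k$-family of Enriques polarized pairs through a fixed $C$ with common Prym structure (the $\PP^k$ parametrizes $N$-dimensional linear subspaces of $\PP^{N+k}$ containing $\PP^{N-1}$), hence a fiber of $\chi_{g,\phi}$ of dimension $\geq k$. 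Comparing with the fiber dimensions $1,1,1,2,3,1,1$ in the table of Theorem \ref{mainthm1} yields the claimed bounds: at most $1$-extendable in general, at most $2$-extendable for $\E_{10,3}^{(II)}$, and at most $3$-extendable for $\E_{9,4}^{+}$.

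The most delicate point in the argument is the non-constancy of the map $\Lambda\to\EC_{g,\phi}$, i.e., that distinct hyperplanes yield non-isomorphic polarized pairs $(S_t,H_t)$ for $t$ general; this is essentially equivalent to the non-triviality of the $1$-extension (the hypothesis that $W$ is not a cone) and requires a careful Bertini-type argument in the total space of the pencil. A secondary technicality is the identification of the ``$-$'' components as precisely those parametrizing Enriques surfaces with the forbidden $(-2)$-configurations, so that the no-rational-curve hypothesis effectively excludes them; this relies on the structural description of the components of $\E_{g,\phi}$ developed in the earlier sections of the paper.
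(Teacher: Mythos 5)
Your overall strategy (turn extendability into positive-dimensional fibers of the moduli map and read off the components) is close in spirit to the paper's Proposition \ref{prop:pencil}, but the actual proof in the paper runs through L'vovsky's theorem \cite[Thm.~0.1]{Lv}, which gives the \emph{pointwise} inequality $h^1(\T_S(-1))\geq\min\{k,N\}$ via $\alpha(S)\leq h^1(\T_S(-1))$, and then invokes Lemma \ref{lemma:restanti} and the computations in the proof of Theorem \ref{mainthm1}, which are valid for \emph{every} unnodal $(S,H)$. This is the crux of the gap in your argument: Theorem \ref{mainthm1} is a statement about the \emph{generic} fiber on each component, whereas your hypothesis concerns one specific (unnodal, $1$-extendable) surface $S$, which need not be general in its component. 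A generically injective map can perfectly well have a positive-dimensional fiber over a special point, so producing a positive-dimensional fiber through $(S,H,C)$ does not place the component in the exceptional list of Theorem \ref{mainthm1}. The same confusion invalidates the ``furthermore'' part: a fiber of dimension $\geq k$ at the special point $(S,H,C)$ is not bounded by the generic fiber dimension $1,1,1,2,3,1,1$. What one actually needs is the bound $k\leq\dim\ker(dc_{g,\phi})_{(S,H,C)}=h^1(\T_S(-C))$ together with the computation of $h^1(\T_S(-H))$ for \emph{all} unnodal members of each component (Lemma \ref{lemma:restanti}, Remark \ref{rem:push2} and the case analysis in the proof of Theorem \ref{mainthm1}); this is exactly what L'vovsky plus the twisted-tangent-bundle computations supply. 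In addition, for $k\geq 2$ your claim that the $\PP^k$ of linear sections through $C$ maps generically finitely to $\EC_{g,\phi}$ is asserted but not proved; Pardini's argument only handles pencils, and iterating it gives non-constancy, not generic finiteness.

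A second, substantive error is your reason for discarding $\EC_{9,4}^{-}$, $\EC_{13,4}^{(II)^-}$ and $\EC_{17,4}^{(IV)^-}$. These components have nothing to do with $(-2)$-configurations: they differ from their ``$+$'' counterparts only in that $H+K_S$, rather than $H$, is $2$-divisible in $\Pic(S)$ (e.g.\ $H\sim 2(E_1+E_{1,2})+K_S$ for $\E_{9,4}^{-}$), and their general member is unnodal. They are excluded because $h^1(\T_S(-H))=0$ there, the whole of $h^1(\T_{\widetilde S}(-\widetilde H))$ living in the $H+K_S$ summand of \eqref{eq:h1soprasotto} (cf.\ Remark \ref{rem:push2}); equivalently, the table of Theorem \ref{mainthm1} already records fiber dimension $0$ for them. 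Finally, a minor point: since $W$ is only assumed transverse to the one $\PP^N$ cutting $S$, it may be singular along a positive-dimensional locus elsewhere, so smoothness of the general member $S_t$ of your family requires justification before you may regard $(S_t,H_t)$ as a point of $\E_{g,\phi}$.
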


 This result is sharp in the case of $1$-extendability:  The general members of the moduli spaces of Corollary \ref{cor:EF-emb} indeed occur as hyperplane sections of threefolds different from cones, cf. Remark  \ref{rem:altricasi}.  Furthermore, one cannot remove the assumption about $S$ not containing smooth rational curves, as  there are  threefolds  different from cones  enjoying the peculiar property that  their Enriques hyperplane sections  belong  to $\E_{8,3}$ and $\E_{6,3}$  and   contain a smooth rational curve, cf. Corollary \ref{cor:nodalbayle}. We remark that the proof of Corollary \ref{cor:EF-emb} is independent from, and  much simpler than, the results in  \cite{Pr,KLM}, but it needs the technical assumption  about rational  curves, which can probably be avoided, at the expense of adding more  cases, cf.  Remark \ref{rem:nodnotnec}. 
 We  refer to Corollary \ref{cor:EF} for another variant of Corollary \ref{cor:EF-emb}.

 Our general strategy is to compute the kernel of the differential of
the map $c_{g,\phi}$, see \S \ref{sec:gen}; to this end
we develop in \S \ref{sec:tools} tools to compute the cohomology of
twisted tangent bundles on Enriques surfaces.
In some cases additional arguments are required, involving for
instance extensions to Enriques--Fano threefolds (see \S
\ref{sec:EF-nuovo}), and specializations to  Enriques surfaces containing smooth rational curves (see
\S\S \ref{sec:fi2-II-n}--\ref{sec:fi2-II}).
 Theorem \ref{mainthm1} and Corollary \ref{cor:EF-emb}  are  proved
in \S \ref{sec:enr};  Theorem \ref{mainthm2} is obtained by
combining Propositions \ref{prop:fi2gpari}, \ref{prop:fi2g>5} and
\ref{prop:fi2II};  Theorem \ref{mainthm3}  is  proved in \S
\ref{sec:fi2-II}.

In conclusion we remark that our work leaves several interesting open questions. For example: is it possible to characterize curves on Enriques surfaces in terms of the suitable Gauss--Prym map? In the cases of generic injectivity of $\chi_{g,\phi}$, is it possible to develop an analogue of Mukai's programme of explicit reconstruction of the Enriques surface from its Prym curve section?   The latter question was proposed to us by Enrico Arbarello. Finally, in view of 
Corollary \ref{cor:EF-emb}, are the general members of 
$\E_{10,3}^{(II)}$ (respectively, $\E_{9,4}^{+}$) $2$--extendable (resp., $3$--extendable)? 

\vspace{0.3cm} {\it Acknowledgements.} The authors thank Alessandro
Verra  and Enrico Arbarello  for useful conversations on the subject and acknowledge funding
from MIUR Excellence Department Project CUP E83C180 00100006 (CC),
project FOSICAV within the  EU  Horizon
2020 research and innovation programme under the Marie
Sk{\l}odowska-Curie grant agreement n.  652782 (CC, ThD),
 GNSAGA of INDAM (CC,CG),  the Trond Mohn  
Foundation (ThD, ALK) and grant 261756 of the Research Council of Norway (ALK).
 Finally the authors wish to thank the referee for the extremely careful reading of the paper and for her/his useful comments.

\section{Moduli spaces of Enriques surfaces}
\label{sec:msenr}

We  first  briefly recall some well-known properties of divisors on Enriques surfaces. 

Any irreducible curve $C$ on an Enriques surface satisfies $C^2 \geq -2$, with equality occurring if and only if  $C \cong \PP^1$.  
 The latter   curves are  called {\it nodal}, and Enriques surfaces containing (respectively, not containing)  them  are  called {\it nodal} (resp., {\it unnodal}). 
It is well-known that the general Enriques surface is unnodal, cf.\  references in \cite[p.~577]{cos2}. 

 A divisor  $E$ is said to be {\it isotropic} if $E^2=0$ and $E \not \equiv 0$   (where '$\equiv$' denotes numerical equivalence)  and {\it primitive}  if it is non-divisible in $\Num S$. 
 If $E$ is primitive, isotropic and nef, then  $|2E|$ is a base point free pencil with general member a smooth elliptic curve, cf.\   \cite[Prop. 3.1.2]{cd}. In  this   case,  $\dim( |E|)=0$ and $E$ is called a {\it half-fiber}, cf.\  \cite[p.~172]{cd}.
Conversely, any elliptic pencil $|P|$ contains precisely two double fibers $2E$ and $2E'$,  where $E'$ is the only member of $|E+K_S|$.  It is clear that, when $H$ is  big and  nef, the invariant $\phi(H)$ in \eqref{eq:fi}  is computed by a half-fiber.

Let $\E$ (resp. $\K^{\iota}$) denote the $10$-dimensional smooth
moduli space parametrizing smooth Enriques surfaces (respectively,
smooth $K3$ surfaces with a fixed point free involution), and let $\K$
denote the $20$-dimensional moduli space parametrizing smooth $K3$
surfaces,  cf.\ \cite[{VIII.\S12,\S\S 19-21}]{BPHV}.  We have a natural  bijective map sending a $K3$ surface with fixed point free involution to the quotient surface by the involution  
\begin{equation}
  \label{eq:dcmod}
  \delta: \K^{\iota} \longrightarrow \E. 
\end{equation}

Let $\E_{g,\phi}$ (respectively, $\widehat{{\E}}_{g,\phi}$) denote the
moduli space of  {\it polarized} (resp., {\it numerically polarized})
{\it Enriques surfaces}, that is, pairs $(S,H)$ (resp., $(S,[H])$)
such that $[S] \in \E$ and $H \in \Pic(S)$ (resp., $[H] \in \Num(S)$)
is  ample  with $H^2=2g-2 \geq 2$ and $\phi=\phi(H)$.
There is an  \'etale  cover  $\E_{g,\phi} \to \E$,
and $\E_{g,\phi}$ is smooth. There is also an  \'etale double  cover 
$\rho:  \E_{g,\phi} \to \widehat{\E}_{g,\phi}$ mapping $(S,H)$ and ($S,H+K_S)$ to $(S,[H])$.  We refer to \cite[\S 2]{cdgk} for details and references and also recall  that $\phi(H)^2 \leq H^2$ by \cite[Cor.\ 2.7.1]{cd}.

The spaces $\E_{g,\phi}$ need not be irreducible. In \cite{cdgk}
various irreducible components were determined and their
unirationality or uniruledness was proved. In particular, all
components are determined and described for $\phi \leq 4$ and $g \leq
20$ respectively. The description is in terms of isotropic
decompositions, as we now explain.

By  \cite[Cor. 4.6, Cor. 4.7, Rem. 4.11]{cdgk} 
any effective line bundle $H$ with $H^2\geq 0$ on an Enriques surface  $S$  can be written as (denoting linear equivalence by '$\sim$'):  
\begin {equation}\label{eq:ssid}  H  \sim   a_1E_1+\cdots+a_nE_{n}  + \varepsilon K_S 
\end{equation}
where all $E_i$ are  effective, primitive  and isotropic, all $a_i$ are positive integers, $n \leq 10$, 
\[
\varepsilon= \begin{cases} 0, & \mbox{if $H+K_S$ is not $2$-divisible in $\Pic(S)$,} \\
1, & \mbox{if $H+K_S$ is $2$-divisible in $\Pic(S)$,} 
\end{cases}
\]
 and moreover 
\begin{equation}\label{eq:varie}
\begin{cases}
\mbox{either  $n \neq 9$,  $E_i \cdot E_j=1$ for all $i \neq j$,} \\ 
\mbox{or  $n \neq 10$,  $E_1 \cdot E_2=2$ and otherwise $E_i \cdot E_j=1$ for all $i
\neq j$,} \\
\mbox{or $E_1 \cdot E_2=E_1 \cdot E_3=2$ and otherwise $E_i \cdot
E_j=1$ for all $i \neq j$.}
\end{cases}
\end{equation}
We call this a {\it simple isotropic decomposition} (up to reordering indices), cf.\  \cite{cdgk}.

We say that two polarized Enriques surfaces $(S,H)$ and $(S',H')$ in $\E_{g,\phi}$ {\em admit the same simple decomposition type} if one has two simple isotropic decompositions
\[H \sim a_1 E_1+\cdots +a_nE_n+\varepsilon K_S
\; \; \mbox{and} \; \; H' \sim a_1 E'_1+\cdots +a_nE'_n+ \varepsilon  K_{S'}
\]
   and   $E_i \cdot E_j=E'_i \cdot E'_j$ for all  $i \neq j$.
 This defines an equivalence relation on $\E_{g,\phi}$ by \cite[Prop. 4.15]{cdgk}.

By \cite[Cor. 1.3 and 1.4]{cdgk}  the irreducible
components of $\E_{g,\phi}$ when $\phi \leq 4$ or $g \leq 20$
correspond precisely to the loci consisting of pairs $(S,H)$ admitting
the same decomposition type.  Moreover, by \cite[Cor. 1.5]{cdgk},  in the same range,  for $\C \subset \E_{g,\phi}$ any irreducible component, $\rho^{-1}(\rho(\C))$ is reducible if and only if $\C$ parametrizes pairs $(S,H)$ such that $H$ is $2$-divisible in $\Num(S)$.  The various irreducible
components of $\widehat{\E}_{g,\phi}$ were labeled by roman numbers in
the appendix of \cite{cdgk}. We will use the same labels for the
irreducible components of $\E_{g,\phi}$, adding a superscript ``$+$''
and ``$-$'' in the cases there are two irreducible components lying
above one irreducible component of $\widehat{\E}_{g,\phi}$. We also
adopt the following from \cite{cdgk}:

\begin{notation} \label{not:int}
  When writing a simple isotropic decomposition \eqref {eq:ssid} verifying \eqref {eq:varie} (up to permuting indices), we will adopt the convention that $E_i$, $E_j$, $E_{i,j}$ are primitive isotropic satisfying
$E_i \cdot E_j=1$ for $i \neq j$, $E_{i,j} \cdot E_i= E_{i,j} \cdot E_j=2$ and
$E_{i,j} \cdot E_k=1$ for $k \neq i,j$. 
\end{notation}

In particular, we  recall  the following (cf.\  \cite[Cor.~1.3 and Lemma  4.18]{cdgk}):
\begin{compactitem}
\item $\E_{g,1}$ is irreducible and unirational, and $H \sim (g-1)E_1+E_2$.
\item If $g$ is even (resp., $g=3$), then $\E_{g,2}$ is irreducible and unirational, and
$H \sim \frac{g-2}{2}E_1 +E_2 +E_3$ (resp., $H \sim E_1+E_{1,2}$).
\item If 
$g \geq 7$ and $g \equiv 3 \; \mbox{mod} \; 4$, then $\E_{g,2}$ has two
irreducible, unirational components
$\E_{g,2}^{(I)}$ and $\E_{g,2}^{(II)}$ corresponding, respectively,  to simple decomposition types:
\begin{itemize}
\item[(I)] $H \sim \frac{g-1}{2}E_1 +E_{1,2}$,
\item[(II)] $H \sim \frac{g-1}{2}E_1 +2E_2$.
\end{itemize}
\item If $g \geq 5$ and $g \equiv 1 \; \mbox{mod} \; 4$, then $\E_{g,2}$ has three
irreducible, unirational components
$\E_{g,2}^{(I)}$, $\E_{g,2}^{(II)^+}$ and $\E_{g,2}^{(II)^-}$,
corresponding, respectively, to simple decomposition types 
\begin{itemize}
\item[(I)\; ] \hspace{0.033cm} $H \sim \frac{g-1}{2}E_1 +E_{1,2}$,
\item[(II)$^+$] $H \sim \frac{g-1}{2}E_1 +2E_2$,
\item[(II)$^-$] $H \sim \frac{g-1}{2}E_1 +2E_2+K_S$,
\end{itemize}

\end{compactitem}

 For later reference we  list all irreducible components  of $\E_{g,\phi}$  for $\phi \geq 2$  and $g \leq 10$,
cf.\  \cite[Appendix]{cdgk}:
\begin{center}\tiny
\begin{tabular}[t]{ |l | l |l|l|l|l|}
  \hline			
  $g$ &  $\phi$  & comp.  & dec. type   
\\\hline\hline
$3$ & $2$ & $\E_{3,2}$ &  $H \sim E_1+E_{1,2}$ \\\hline\hline
$4$ & $2$ & $\E_{4,2}$ & $H \sim E_1+E_2+E_3$ \\\hline\hline
$5$ & $2$ & $\E_{5,2}^{(I)}$ &  $H \sim 2E_1+E_{1,2}$ \\
$5$ & $2$ & $\E_{5,2}^{(II)^+}$ &  $H \sim 2E_1+2E_2$  \\
$5$ & $2$ & $\E_{5,2}^{(II)^-}$ &  $H \sim 2E_1+2E_2+K_S$ \\ \hline\hline
$6$ & $2$ & $\E_{6,2}$ & $H \sim 2E_1+E_2+E_3$ \\ \hline
$6$ & $3$ & $\E_{6,3}$ & $H \sim E_1+E_2+E_{1,2}$ \\ \hline\hline 
$7$ & $2$ & $\E_{7,2}^{(I)}$ &  $H \sim 3E_1+E_{1,2}$ \\
$7$ & $2$ & $\E_{7,2}^{(II)}$ &  $H \sim 3E_1+2E_2$  \\\hline
$7$ & $3$ & $\E_{7,3}$ &  $H \sim E_1+E_2+E_3+E_4$ \\\hline\hline
$8$ & $2$ & $\E_{8,2}$ &  $H \sim 3E_1+E_2+E_3$ \\\hline
$8$ & $3$ & $\E_{8,3}$ &  $H \sim 2E_1+E_3+E_{1,2}$ \\\hline
\end{tabular}\hspace{.5cm}
\begin{tabular}[t]{ |l | l |l|l|l|l|}
   \hline			
$g$ &  $\phi$  & comp.  & dec. type   
\\\hline\hline
$9$ & $2$ & $\E_{9,2}^{(I)}$ &  $H \sim 4E_1+E_{1,2}$ \\
$9$ & $2$ & $\E_{9,2}^{(II)^+}$ &  $H \sim 4E_1+2E_2$  \\
$9$ & $2$ & $\E_{9,2}^{(II)^-}$ &  $H \sim 4E_1+2E_2+K_S$ \\ \hline
$9$ & $3$ & $\E_{9,3}^{(I)}$ &  $H \sim 2E_1+E_2+E_{1,2}$ \\
$9$ & $3$ & $\E_{9,3}^{(II)}$ &  $H \sim 2E_1+2E_2+E_3$\\ \hline 
$9$ & $4$ & $\E_{9,4}^{+}$ &  $H \sim 2(E_1+E_{1,2})$ \\  
$9$ & $4$ & $\E_{9,4}^{-}$ &  $H \sim 2(E_1+E_{1,2})+K_S$ \\ \hline\hline
$10$ & $2$ & $\E_{10,2}$ &  $H \sim 4E_1+E_2+E_3$ \\ \hline 
$10$ & $3$ & $\E_{10,3}^{(I)}$ &  $H \sim 2E_1+E_2+E_3+E_4$ \\
$10$ & $3$ & $\E_{10,3}^{(II)}$ &   $H \sim 3(E_1+E_2)$ \\  \hline 
$10$ & $4$ & $\E_{10,4}$ &  $H \sim 2E_{1,2} + E_1+E_2$ \\ \hline
\end{tabular}
\end{center}
\smallskip

\noindent We also  list all  irreducible components of 
$\E_{13,3}$, $\E_{13,4}$ and $\E_{17,4}$:
\begin{center}\tiny
\begin{tabular}[t]{ |l | l |l|l|l|l|}
  \hline			
  $g$ &  $\phi$  & comp.  & dec. type   
\\\hline\hline
$13$ & $3$ & $\E_{13,3}^{(I)}$ & $H \sim 3E_1+E_2+E_3+E_4$\\
$13$ & $3$ & $\E_{13,3}^{(II)}$ & $H \sim 4E_1+3E_2$ \\ \hline 
$13$ & $4$ & $\E_{13,4}^{(I)}$ & $H \sim 2E_1+2E_2+E_{1,2}$\\
$13$ & $4$ & $\E_{13,4}^{(II)^+}$ & $H \sim 2(E_1+E_2+E_3)$\\
$13$ & $4$ & $\E_{13,4}^{(II)^-}$ & $H \sim 2(E_1+E_2+E_3)+K_S$\\
$13$ & $4$ & $\E_{13,4}^{(III)}$ &  $H \sim 3E_1+2E_{1,2}$\\\hline
\end{tabular}
\hspace{.5cm}
\begin{tabular}[t]{ |l | l |l|l|l|l|}
  \hline			
$g$ &  $\phi$  & comp.  & dec. type   
\\\hline\hline
$17$ & $4$ & $\E_{17,4}^{(I)}$ & $H \sim 3E_1+2E_2+2E_3$\\
$17$ & $4$ & $\E_{17,4}^{(II)}$ & $H \sim 3E_1+2E_2+E_{1,2}$ \\
$17$ & $4$ & $\E_{17,4}^{(III)^+}$ & $H \sim 4E_1+2E_{1,2}$\\
$17$ & $4$ & $\E_{17,4}^{(III)^-}$ & $H \sim 4E_1+2E_{1,2}+K_S$\\
$17$ & $4$ & $\E_{17,4}^{(IV)^+}$ &  $H \sim 4E_1+4E_2$\\
$17$ & $4$ & $\E_{17,4}^{(IV)^-}$ &  $H \sim 4E_1+4E_2+K_S$\\\hline
\end{tabular}
\end{center}

\section{Generalities on moduli maps} \label{sec:gen}

Recall that if a divisor $H$ on an Enriques surface $S$  is big
and nef such that $H^2=2g-2$, then   $\dim |H|=g-1$ and  a general member $C$ of $|H|$ is a smooth irreducible curve of genus $g$   if
either $g >2$, or $g=2$ and $S$ is unnodal  or $H$ is ample, by
 \cite[Prop.~2.4]{cos2} and \cite[Thm.~4.1 and Prop.~ 8.2]{cos1}.  
 As we explained in the  introduction, one could expect
$\chi_{g,\phi}$ and $c_{g,\phi}$ from diagram \eqref {eq:enriques} to be dominant (on some or all
irreducible components) for $g \leq 6$ and generically finite (on some
or all irreducible components) for $g \geq 6$.  This expectation fails
in the cases $\phi=1,2$ for low genera as the curves in $|H|$ are all
special from a Brill-Noether theoretical point of view, cf.\ Lemma
\ref{lemma:posfibfi2} below.  It also fails in case of existence of
Enriques--Fano threefolds, as we will see in \S
\ref{sec:EF-nuovo} below.

Recalling the map \eqref {eq:dcmod}, set
$\K_{g,\phi}^{\iota}=\delta^{-1}(\E_{g,\phi})$; thus
$\K_{g,\phi}^{\iota}$ is a component of the moduli space of polarized
$K3$ surfaces $(\widetilde{S},\widetilde{H},\iota)$ of genus $2g-1$
with a fixed point free involution $\iota$, and  we have a generically injective morphism $\alpha_{g,\phi}:\K_{g,\phi}^{\iota} \to \K_{2g-1}$ forgetting the involution, where $\K_{2g-1}$    denotes the  moduli
space of polarized $K3$ surfaces of genus $2g-1$  (as the general $K3$ surface with a fixed point free involution contains only one such).  
We have the  commutative  diagram 
\begin{equation}\label{eq:involution}
\xymatrix@C=10pt@R=15pt{ 
& \M_{2g-1} & \\ 
\KC_{g,\phi}^{\iota} \ar[ru]^{c_{g,\phi}^{\iota}} \ar[d]_{p_{g,\phi}^{\iota}}
\ar[rr]^{\widetilde{\alpha}_{g,\phi}} & & \KC_{2g-1} \ar[lu]_{c_{2g-1}} \ar[d]^{q_{2g-1}}\\
\K_{g,\phi}^{\iota} \ar[rr]_{\alpha_{g,\phi}} & & \K_{2g-1} }
\end{equation}
 where $\KC_{g,\phi}^{\iota}$ is the
 moduli space of quadruples $(\widetilde{S},\widetilde{H},\iota, \widetilde{C})$, with $(\widetilde{S},\widetilde{H},\iota)$ in $\K_{g,\phi}^{\iota}$ and $\widetilde{C} \in |\widetilde{H}|$ is a smooth curve invariant under the involution $\iota$, the map $\widetilde{\alpha}_{g,\phi}$ forgets $\iota$, 
and $\KC_{2g-1}$ is the
moduli space of triples $(X,L,Y)$, with $(X,L)$ in $\K_{2g-1}$ and $
Y \in |L|$ is a smooth curve. 

Recall  now, for any smooth $C \in |H|$,  the sheaf $\T_S\langle C \rangle$ defined by
 \begin{equation}
  \label{eq:deftlc} 
\xymatrix{  0 \ar[r] & \T_S\langle C \rangle \ar[r] & \T_S \ar[r] & \N_{C/S} \ar[r] & 0,}
\end{equation}
and fitting into the exact sequence
\begin{equation}
  \label{eq:deftlc2}
\xymatrix{  0 \ar[r] &  \T_S(-C) \ar[r] & \T_S\langle C \rangle \ar[r] & \T_C \ar[r] & 0.}
\end{equation}
We have the following, cf.\  \cite[\S 3.4.4]{Ser} or \cite{beau}:

\begin{lemma} \label{lemma:diff}
  The differential of $c_{g,\phi}$ at $(S,H, C)$ (resp., of $c_{2g-1}$ at $(\widetilde{S}, \widetilde{H},\widetilde{C})$) is the morphism $H^1(\T_S\langle C \rangle) \to H^1(\T_C)$ (resp., $H^1(\T_{\widetilde{S}}\langle \widetilde{C} \rangle) \to H^1(\T_{\widetilde{C}})$) induced by \eqref{eq:deftlc2}. Its kernel is $H^1(\T_S(-C))$ (resp., 
$H^1(\T_{\widetilde{S}}
(-\widetilde{C}))$).
\end{lemma}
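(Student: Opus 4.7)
The plan is to apply the standard deformation-theoretic description of moduli of pairs (smooth surface, smooth Cartier divisor), following \cite[\S 3.4.4]{Ser}. Since $H=\O_S(C)$ is recovered from $C$, a point of $\EC_{g,\phi}$ is really the datum of a pair $(S,C)$ with $C$ smooth, and similarly in the $K3$ setting of \eqref{eq:involution}.

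First I would identify the tangent space to $\EC_{g,\phi}$ at $(S,H,C)$ with $H^1(\T_S\langle C \rangle)$, which is precisely Sernesi's classification of first-order deformations of such a pair. As a sanity check, the two sequences \eqref{eq:deftlc}--\eqref{eq:deftlc2} fit $H^1(\T_S\langle C \rangle)$ into exact sequences involving $H^0(\N_{C/S})$ (deformations of $C$ inside $S$), $H^1(\T_S)$ (deformations of $S$) and $H^1(\T_C)$ (abstract deformations of $C$), which is exactly the expected shape for deformations of the pair.

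Next I would verify that the differential of $c_{g,\phi}$ is induced by the surjection $\T_S\langle C\rangle \twoheadrightarrow \T_C$ of \eqref{eq:deftlc2}. The intermediate map $\chi_{g,\phi}$ sends $(S,H,C)$ to $(C,\omega_S\otimes\O_C)$, but the $2$-torsion datum $\omega_S\otimes\O_C$ varies algebraically with the triple and the forgetful covering $\R_g \to \M_g$ is \'etale; hence at the tangent-space level $c_{g,\phi}$ agrees with the forgetful map $(S,H,C)\mapsto [C]$, whose differential is manifestly induced by $\T_S\langle C\rangle \twoheadrightarrow \T_C$. The same reasoning gives the assertion for $c_{2g-1}$.

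Finally, the kernel is computed from the long exact sequence associated with \eqref{eq:deftlc2},
\[
H^0(\T_C) \longrightarrow H^1(\T_S(-C)) \longrightarrow H^1(\T_S\langle C\rangle) \longrightarrow H^1(\T_C);
\]
it equals the image of $H^1(\T_S(-C))$, and since $g\geq 2$ throughout we have $H^0(\T_C)=0$, so the map $H^1(\T_S(-C)) \to H^1(\T_S\langle C\rangle)$ is injective and this image equals $H^1(\T_S(-C))$ itself. No genuine obstacle arises: the only delicate point is the bookkeeping translating between the moduli problem for triples and Sernesi's deformation functor of pairs, and this is harmless because $H$ is uniquely recovered from $C$.
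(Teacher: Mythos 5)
Your proposal is correct and follows exactly the standard deformation-theoretic argument that the paper invokes by simply citing \cite[\S 3.4.4]{Ser} and \cite{beau}: tangent space of the pair $(S,C)$ is $H^1(\T_S\langle C\rangle)$, the differential is induced by the surjection in \eqref{eq:deftlc2}, and the kernel is read off from the long exact sequence. Your extra remark that $H^0(\T_C)=0$ for $g\geq 2$, so that $H^1(\T_S(-C))$ injects into $H^1(\T_S\langle C\rangle)$ and the kernel is literally (not just an image of) $H^1(\T_S(-C))$, is exactly the point needed to justify the wording of the lemma.
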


The spaces $H^1(\T_S(-C))$ and $H^1(\T_{\widetilde{S}}
(-\widetilde{C}))$ in the lemma are related in the following way.
Let $\pi:\widetilde{S} \to S$ be the $K3$ double cover and set $\widetilde{H}:=\pi^*H$. 
As $\pi$ is \'etale, we have $\pi^*\T_S \cong \T_{\widetilde{S}}$.
 Therefore,
\begin{equation}
  \label{eq:h1soprasotto}
  H^1 (\T_{\widetilde{S}}(-\widetilde{H}))=H^1(\T_S(-H)) \+ H^1(\T_S(-H+K_S)).
\end{equation}

\begin{lemma} \label{lemma:geninj}
 Assume that $\phi \geq 3$ (whence $g \geq 6$). Let $(C,K_S \* \O_{C})$ be  a general  element of the image of $\chi_{g,\phi}$. Denote by $\widetilde{C} \to C$ its induced double cover. If $c_{2g-1}^{-1}(\widetilde{C})$ is finite, then it consists of only one point, and also 
$\chi_{g,\phi}^{-1}((C,K_S \* \O_{C}))$ consists of only one point.
\end{lemma}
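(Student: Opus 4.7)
The plan is to use the commutative diagram \eqref{eq:involution} to transfer the question to the $K3$ side, where Mukai's reconstruction theorem applies.

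First, I would construct a natural map
\[
\Phi \colon \chi_{g,\phi}^{-1}\bigl((C,K_S \otimes \O_C)\bigr) \longrightarrow c_{2g-1}^{-1}(\widetilde{C}),
\quad (S,H,C) \longmapsto (\widetilde{S}, \widetilde{H}, \widetilde{C}),
\]
where $\pi \colon \widetilde{S} \to S$ is the canonical $K3$ double cover, $\widetilde{H} = \pi^*H$, and $\widetilde{C} = \pi^{-1}(C)$. This is well-defined because the Prym structure $K_S \otimes \O_C$ corresponds precisely to the \'etale double cover $\widetilde{C} \to C$. Injectivity of $\Phi$ rests on the generic injectivity of $\alpha_{g,\phi}\colon \K_{g,\phi}^{\iota} \to \K_{2g-1}$: a general $K3$ surface in $\K_{g,\phi}^{\iota}$ carries a unique free involution, so the Enriques quotient $(S,H,C)$ is uniquely recovered from $(\widetilde{S}, \widetilde{H}, \widetilde{C})$ as the quotient by this involution, provided $\widetilde{S}$ lies in the dense open locus of $\K_{g,\phi}^{\iota}$ where uniqueness holds.

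Second, I would invoke Mukai's reconstruction theorem, as completed by Arbarello--Bruno--Sernesi and Feyzbakhsh (\cite{muk1,abs1,Fe}), to conclude that $c_{2g-1}^{-1}(\widetilde{C})$, being finite by hypothesis, consists of only one point. The hypotheses are automatic: $2g-1 \geq 11$ because $g \geq 6$, and $2g-1 \neq 12$ because $2g-1$ is odd. Consequently $c_{2g-1}$ is birational onto its image; more precisely, whenever the fiber $c_{2g-1}^{-1}(\widetilde{C})$ is zero-dimensional, the Brill--Noether reconstruction produces a unique $K3$ surface containing $\widetilde{C}$, so the fiber is a single point.

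Combining the two steps gives both conclusions. The main technical point I expect to face lies in the injectivity of $\Phi$: one must verify that for $(C,K_S \otimes \O_C)$ general in $\im(\chi_{g,\phi})$, every element $(S,H,C)$ in its fiber has $K3$ cover lying in the generic locus of $\K_{g,\phi}^{\iota}$. This should follow by a standard genericity argument, because each irreducible component of $\chi_{g,\phi}^{-1}((C,K_S \otimes \O_C))$ surjects, via the projections $p_{g,\phi}$ and $\delta^{-1}$, onto a dense subset of an irreducible component of $\K_{g,\phi}^{\iota}$, which necessarily meets the dense open locus of uniqueness of the free involution.
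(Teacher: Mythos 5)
Your first step (transferring the question to the $K3$ side via the diagram \eqref{eq:involution}, the bijection $\delta$ and the generic injectivity of $\alpha_{g,\phi}$) matches the paper's argument and is fine. The gap is in your second step. You claim that the hypotheses for concluding ``finite fiber $\Rightarrow$ one point'' are ``automatic'' from $2g-1\geq 11$ and $2g-1\neq 12$, invoking the generic birationality of $c_{2g-1}$ and Mukai's reconstruction program. But $\widetilde{C}$ is \emph{not} a general curve in the image of $c_{2g-1}$: it is an \'etale double cover of a genus $g$ curve, living on the $10$-dimensional locus of $K3$ surfaces with a free involution inside the $19$-dimensional space $\K_{2g-1}$. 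Generic birationality of $c_{2g-1}$ says nothing about the fiber over such a special curve, and the reconstruction results of \cite{muk1,abs1,Fe} require Brill--Noether--Petri type genericity that $\widetilde{C}$ has no a priori reason to satisfy. There is no theorem of the form ``zero-dimensional fiber of $c_{2g-1}$ implies a single point'' valid for an arbitrary curve in the image.

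What the paper actually does at this point is the real content of the proof, and it is missing from your proposal: one shows that $\widetilde{C}$ has Clifford index $\geq 3$, so that by \cite[Thm.~2.6]{cds} a Gauss--Wahl map of corank $>1$ would force the fiber $c_{2g-1}^{-1}(\widetilde{C})$ to be positive-dimensional; hence finiteness of the fiber gives corank exactly one, and then \cite{clm} (see \cite[Sketch of proof of Prop.~3.3]{LC}) yields that $\widetilde{C}$ lies on a unique $K3$ surface. Establishing $\Cliff(\widetilde{C})\geq 3$ (equivalently $\gon(\widetilde{C})\geq 5$) is where the hypothesis $\phi\geq 3$ and the genericity of $(C,K_S\*\O_C)$ are genuinely used: one reduces to $\gon(C)\geq 5$ via constancy of the Clifford index in $|\widetilde{H}|$ \cite{gl}, checks this against the classification of curves with $\gon(C)<2\phi(H)$ in \cite[Cor.~1.5]{KL1} for $g\geq 7$, and handles $g=6$ separately using \cite[Thm.~1.1]{Ra}. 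Without this step your argument does not go through.
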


\begin{proof}
Thanks to the  bijective  map $\delta$ in \eqref{eq:dcmod}  and $\alpha_{g,\phi}$ being generically injective, the fact that $\chi_{g,\phi}^{-1}((C,K_S \* \O_{C}))$  is a point is equivalent to the fact that 
 $({c}^{\iota}_{g,\phi})^{-1}(\widetilde{C})$  is a point, where
 ${c}^{\iota}_{g,\phi}$   is as in \eqref{eq:involution}. The latter
will follow  if  $c_{2g-1}^{-1}(\widetilde{C})$  is a point. By
\cite{clm}, this property  follows if  $\widetilde{C}$  has a corank
one Gauss-Wahl map, cf.\  \cite[Sketch of proof of
Prop. 3.3]{LC}. Since $2g-2=C^2 \geq \phi(C)^2 \geq 9$ (using
\cite[Cor. 2.7.1]{cd}), we have $g \geq 6$, hence $2g-1 \geq
11$. Therefore,  if $\Cliff(\widetilde{C})\geq 3$, the fiber
$c_{2g-1}^{-1}(\widetilde{C})$ is positive dimensional as soon as
the Gauss-Wahl map of $\widetilde{C}$ has corank $>1$,  cf.\  \cite[Thm. 2.6]{cds}.  Hence,  $c_{2g-1}^{-1}(\widetilde{C})$  consists of exactly one point  if  it is finite. 

We   thus have  left to prove that $\Cliff(\widetilde{C})\geq 3$.  As  the Clifford index is constant  among  smooth  curves in the linear system $|\tilde H|$ (see \cite{gl}), we may assume that $C$ is general in its linear system. Furthermore, $\Cliff(\widetilde{C})\geq 3$ is equivalent to $\gon (\widetilde{C}) \geq 5$,  which is satisfied if $\gon (C) \geq 5$.  The cases with $\gon (C) < 2\phi(H)$ are classified in \cite[Cor. 1.5]{KL1} and a direct  check  shows  that $\gon (C) \geq 5$ when $\phi \geq 3$ and $g \geq 7$.  If $g=6$, we use the assumption that $S$ is general, 
 so that $\gon(\widetilde{C})=2\phi(C)=6$ by 
\cite[Thm. 1.1]{Ra}. 
\end{proof}

\begin{corollary} \label{cor:geninj}
Let $(S,H)$ be a general element of an irreducible component $\E'_{g,\phi}$ of $\E_{g,\phi}$ and let $\chi'_{g,\phi}$ denote the restriction of $\chi_{g,\phi}$ to $p_{g,\phi}^{-1}(\E'_{g,\phi})$. 
\begin{itemize}
\item[(i)] If $\phi \geq 3$ and $h^1(\T_S(-H))=h^1(\T_S(-H+K_S))=0$, then 
$\chi'_{g,\phi}$ is generically injective.
\item[(ii)] If $h^1(\T_S(-H))=0$, then 
$\chi'_{g,\phi}$ is generically finite.
\end{itemize}
In any case, the dimension of a general fiber of $\chi'_{g,\phi}$ is $h^1(\T_S(-H))$.
\end{corollary}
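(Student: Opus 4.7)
The plan is to combine Lemma~\ref{lemma:diff}, the decomposition \eqref{eq:h1soprasotto}, and Lemma~\ref{lemma:geninj}. Since the forgetful cover $\R_g \to \M_g$ is \'etale, the differentials of $\chi_{g,\phi}$ and $c_{g,\phi}$ at any point $(S,H,C)$ share the same kernel, which by Lemma~\ref{lemma:diff} equals $H^1(\T_S(-C)) = H^1(\T_S(-H))$, since $C \in |H|$. The source $p_{g,\phi}^{-1}(\E'_{g,\phi})$ of $\chi'_{g,\phi}$ is irreducible, so at a general point generic smoothness in characteristic zero yields that the dimension of a general fiber of $\chi'_{g,\phi}$ equals the dimension of this kernel, namely $h^1(\T_S(-H))$. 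This proves the last assertion of the corollary and, when the kernel vanishes, immediately yields part~(ii).

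For part~(i), I would reduce generic set-theoretic injectivity of $\chi'_{g,\phi}$ to the finiteness of $c_{2g-1}^{-1}(\widetilde{C})$ via Lemma~\ref{lemma:geninj}, where the hypothesis $\phi \geq 3$ is used. To verify this finiteness, I would apply Lemma~\ref{lemma:diff} to the $K3$ moduli map $c_{2g-1}$ at the point $(\widetilde{S}, \widetilde{H}, \widetilde{C})$ coming from the $K3$ double cover $\pi\colon \widetilde{S} \to S$: the kernel of its differential is $H^1(\T_{\widetilde{S}}(-\widetilde{H}))$, which by \eqref{eq:h1soprasotto} splits as $H^1(\T_S(-H)) \oplus H^1(\T_S(-H+K_S))$. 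Both summands vanish by the hypotheses of~(i), so $c_{2g-1}$ is generically finite on the component of $\KC_{2g-1}$ containing $(\widetilde{S}, \widetilde{H}, \widetilde{C})$, and in particular $c_{2g-1}^{-1}(\widetilde{C})$ is finite for the general such $\widetilde{C}$. Lemma~\ref{lemma:geninj} then forces $\chi_{g,\phi}^{-1}((C, K_S \otimes \O_C))$ to consist of a single point, and restricting to $p_{g,\phi}^{-1}(\E'_{g,\phi})$ gives generic injectivity of $\chi'_{g,\phi}$.

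The technical heart of the argument is concentrated in Lemma~\ref{lemma:geninj}: upgrading set-theoretic finiteness of $c_{2g-1}^{-1}(\widetilde{C})$ to the fact that it is a single point relies on the Gauss--Wahl machinery of~\cite{clm,cds} together with the Clifford bound $\Cliff(\widetilde{C}) \geq 3$, and this is precisely what requires $\phi \geq 3$ in~(i). By contrast, generic finiteness in~(ii) is a purely first-order assertion, equivalent to the vanishing of a single cohomology group on the Enriques side; no comparison with $K3$ geometry or Clifford-index estimate is needed.
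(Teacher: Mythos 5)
Your proof assembles exactly the three ingredients the paper itself cites for this corollary (Lemma~\ref{lemma:diff}, the splitting \eqref{eq:h1soprasotto}, and Lemma~\ref{lemma:geninj}); the identification of the general fiber dimension with $h^1(\T_S(-H))$ and the deduction of part~(ii) are fine. One inference in part~(i) is loose: from the vanishing of $H^1(\T_{\widetilde{S}}(-\widetilde{H}))$ at the single point $(\widetilde{S},\widetilde{H},\widetilde{C})$ you pass to generic finiteness of $c_{2g-1}$ and then claim that ``in particular $c_{2g-1}^{-1}(\widetilde{C})$ is finite for the general such $\widetilde{C}$.'' But the curves $\widetilde{C}$ arising as Prym-canonical double covers sweep out an a priori special sublocus of $\im c_{2g-1}$, so generic finiteness of $c_{2g-1}$ controls nothing over such a $\widetilde{C}$; and injectivity of the differential at one point only bounds the fiber dimension locally at that point, not the whole fiber. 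The repair is already contained in the proof of Lemma~\ref{lemma:geninj}: if the Gauss--Wahl map of $\widetilde{C}$ had corank $>1$, then (using $\Cliff(\widetilde{C})\geq 3$) the fiber of $c_{2g-1}$ would be positive-dimensional at $(\widetilde{S},\widetilde{H},\widetilde{C})$ itself, contradicting the injectivity of the differential there; hence the corank is one and the fiber is a single point by the reconstruction results of \cite{clm}. With that adjustment your argument coincides with the paper's.
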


\begin{proof}
This follows from Lemmas \ref{lemma:geninj} and \ref{lemma:diff}, as
well as \eqref{eq:h1soprasotto}.
\end{proof}

In the rest of the paper we will  adopt the following:

\begin{notation} \label{not:comp}
  For any irreducible component $\E'_{g,\phi}$ of $\E_{g,\phi}$  we  express  the irreducible component
$p_{g,\phi}^{-1}(\E'_{g,\phi})$, as well as the restrictions of the maps $\chi_{g,\phi}$ and $c_{g,\phi}$ to this irreducible component, by the same  superscripts  as the ones used to label $\E'_{g,\phi}$. For instance, we set $\P_{5,2}^{(II)}:=p_{5,2}^{-1}(\E_{5,2}^{(II)})$, $c_{5,2}^{(II)}:=c_{5,2}|_{\P_{5,2}^{(II)}}$ and $\chi_{5,2}^{(II)}:=\chi_{5,2}|_{\P_{5,2}^{(II)}}$.
\end{notation}

We finish this section with a lemma  that  will be needed later.   We refer to the introduction for the definitions of the loci $\R^0_g$, $\R^{0,\mathrm{nb}}_5$ and $\D^0_5$.

\begin{lemma} \label{lemma:posfibfi2}
 
 (i) For any $g \geq 2$ the image of $c_{g,1}$ lies in the hyperelliptic locus; 
in particular the  fiber dimension is $\geq  \max\{0,10-g\}$.

(ii) The image of $\chi_{5,2}^{(I)}$ lies in  $\R^0_5$;  in particular the  fiber dimension is  $ \geq  3$.  

(iii) The image of  $\chi_{5,2}^{(II)^+}$  lies in  $\R^{0,\mathrm{nb}}_5$;  in particular the 
 fiber dimension is $ \geq   6$. 

(iv) The image of  $\chi_{5,2}^{(II)-}$ lies in $\D^0_5$;  in particular the  fiber dimension is $ \geq  4$. 

 (v) For any $g \geq 6$ the image of $\chi_{g,2}$ restricted to any component of $\P_{g,2}$  lies in  $\R^0_g$;  in particular the  fiber dimension is $ \geq  \max\{0,8-g\}$.
\end{lemma}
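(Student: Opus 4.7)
The plan is to handle each part by first establishing an inclusion of the image in the claimed sublocus of $\R_g$ (or of $\M_g$ for (i)), and then reading off the fiber-dimension bound from $\dim\P_{g,\phi}=10+(g-1)=g+9$ together with the known dimension of the target locus.

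For (i), when $\phi(H)=1$ the isotropic decomposition is $H\sim(g-1)E_1+E_2$ with $E_1\cdot E_2=1$, so after possibly replacing $E_1$ by $E_1+K_S$ the class $E_1$ is a half-fiber and $|2E_1|$ is a base-point-free elliptic pencil on $S$. Its restriction to a smooth $C\in|H|$ is a base-point-free pencil of degree $(2E_1)\cdot C=2$, making $C$ hyperelliptic; as the hyperelliptic locus in $\M_g$ has dimension $2g-1$, the general fiber of $c_{g,1}$ has dimension $\geq(g+9)-(2g-1)=10-g$ (and trivially $\geq 0$).

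For (v), with (ii) the special case $g=5$, the assumption $\phi(H)=2$ produces a primitive nef isotropic class $E$ with $E\cdot H=2$, i.e.\ a smooth elliptic half-fiber meeting a general $C\in|H|$ transversely in two distinct points $p,q$, with $p+q=H|_E$ as a divisor on $E$. Since $\deg(H|_E)=2$ on the elliptic curve $E$, $h^0(E,H|_E)=2$ and $|H|_E|$ is a base-point-free $g^1_2$; hence the morphism $\varphi_H\colon S\to\PP^{g-1}$ defined by $|H|$ contracts $\{p,q\}$ to a single point on the line $\varphi_H(E)$. Since $h^1(\O_S)=0$, the Prym-canonical morphism of $C$ coincides with $\varphi_H|_C$ (landing in the hyperplane cut out by the section defining $C$), so it identifies $p$ and $q$ and is not an embedding. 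Base-point-freeness of $|H|_C|=|\omega_C(\eta)|$ is inherited from base-point-freeness of $|H|$ on $S$, which holds for ample $H$ with $\phi(H)\geq 2$ by \cite{cd}. Therefore $\chi_{g,2}(\P_{g,2})\subset\R^0_g$, and using $\dim\R^0_g=2g+1$ yields fibers of dimension $\geq\max\{0,8-g\}$ (in particular $\geq 3$ for (ii)).

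For (iii) and (iv), the polarization is $2$-divisible in $\Pic(S)$: $H\sim 2F$ and $H\sim 2F+K_S$ respectively, with $F=E_1+E_2$ and $E_1\cdot E_2=1$, so both $E_1,E_2$ compute $\phi(H)=2$ and the argument of (v) already places the image in $\R^0_5$. In case (II)$^-$ one has $(H+K_S)|_C=2F|_C=\omega_C$, so $\theta:=F|_C$ is a theta-characteristic on $C$; a direct computation using the restriction sequence $0\to\O_S(-F-K_S)\to\O_S(F)\to\O_C(F|_C)\to 0$ (and its twist by $K_S$), combined with Kawamata--Viehweg vanishing on the Enriques surface, gives $h^0(\theta)=h^0(\theta+\eta)=h^0(S,F)=2$. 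Hence $(C,K_S|_C)\in\D^0_5$ by the theta-characteristic description of $\D^0_5$ recalled in the introduction. In case (II)$^+$, the non-birationality of the Prym-canonical morphism---equivalent to $C$ being bielliptic---comes from the identity $H\sim 2F$ in $\Pic(S)$, as analyzed in \cite[Prop.~1.2, Cor.~2.2]{cdgk2}. Using $\dim\R^{0,\mathrm{nb}}_5=8$ and $\dim\D^0_5=10$ yields fiber-dimension bounds $\geq 6$ for (iii) and $\geq 4$ for (iv).

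The main technical hurdle will lie in part (iii): inferring non-birationality of the Prym-canonical map from the mere line-bundle identity $H\sim 2F$ requires the finer Prym-canonical analysis of \cite{cdgk2}. All remaining inclusions reduce to the transparent half-fiber calculations above, and the dimension bounds are then formal consequences of the known dimensions of $\R^0_g$, $\R^{0,\mathrm{nb}}_5$, $\D^0_5$, and the hyperelliptic locus.
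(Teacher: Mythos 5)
Your proposal is correct in substance but takes a genuinely different route from the paper: the paper disposes of all five items by citation alone (item (i) to \cite[Prop.~4.5.1, Cor.~4.5.1]{cd}, items (ii)--(v) to Examples 5.1, 5.2 and Remark 5.5 of \cite{cdgk2}), whereas you reprove most of the geometric content directly. Your half-fiber arguments are the right ones and are essentially what those references contain: for (i), the restriction of the elliptic pencil $|2E_1|$ gives the $g^1_2$; for (ii) and (v), the two points of $C\cap E$ form a divisor in $|H|_E|$ and are therefore identified by $\varphi_H$, whose restriction to $C$ is the Prym-canonical map since $\omega_C(\eta)=H|_C$; for (iv), the computation $h^0(F|_C)=h^0((F+K_S)|_C)=2$ from the restriction sequences is clean and matches the theta-characteristic description of $\overline{\D^0_5}$ quoted in the introduction (strictly you land in the closure of $\D^0_5$, which is all the fiber-dimension bound needs). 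The dimension bookkeeping $\dim\P_{g,\phi}=g+9$ against the dimensions of the target loci is exactly how the "in particular" clauses are meant to be read. What your self-contained treatment buys is transparency; what the paper's citations buy is that the Enriques-specific facts (superelliptic behaviour, the $4$-nodal model) are already worked out in \cite{cdgk2}.

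The one step you have not actually established is the inclusion in (iii). You reduce it to the claim that $H\sim 2F$ with $F=E_1+E_2$, $F^2=2$ forces the Prym-canonical map (equivalently $\varphi_H|_C$) to be non-birational, but the references you invoke, \cite[Prop.~1.2, Cor.~2.2]{cdgk2}, describe the abstract loci $\R^0_g$ and $\R^{0,\mathrm{nb}}_g$ and say nothing about Enriques polarizations of the form $2F$. The fact you need is that for ample $H\equiv 2F$ with $F^2=2$ the morphism $\varphi_H\colon S\to\PP^4$ is $2:1$ onto its image (a "superelliptic" map in the terminology of \cite[\S 4.7]{cd}; see also \cite[Thm.~5.1]{cos1}), whence its restriction to $C$ is $2:1$ as well; this is precisely what \cite[Rem.~5.5]{cdgk2}, the reference the paper actually uses, records. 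Without that input, part (iii) is asserted rather than proved; with it, your argument closes.
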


\begin{proof}
  Item (i) follows from \cite[Prop. 4.5.1, Cor. 4.5.1]{cd}, items (ii) and (v) from  \cite[Ex.~5.1]{cdgk2}, item (iii)   from  \cite[Rem.~5.5]{cdgk2}  and (iv) from  \cite[Ex.~5.2]{cdgk2}. 
\end{proof}

\section{Fibers of the moduli maps and    Enriques--Fano threefolds} \label{sec:EF-nuovo}

An {\it Enriques--Fano threefold  of genus $g$}  is a pair $(X,\L)$ where $X$ is a normal threefold  and $\L$ is an  ample line bundle  on $X$  with $\mathcal L^3=2g-2$  such that  $|\mathcal L|$ contains a smooth Enriques surface $S$, and  $X$ is not a generalized cone over $S$, that is,  $X$ is not isomorphic to a variety obtained by contracting to a point a negative section of some $\PP^1$-bundle over $S$.     
Such threefolds with terminal singularities are classified in \cite{bay,sa,Mi}, and examples with canonical, nonterminal singularities are  given  in 
\cite{KLM,Pr}, but a full classification of these threefolds is still missing,  although it is proved in \cite{KLM,Pr} that $g \leq 17$.  We say that a polarized Enriques surface $(S,H)$ is {\it extendable} to an Enriques--Fano threefold $(X,\L)$ if $S\in |\mathcal L|$ with  $H=\L|_S$.

\begin{lemma} \label{lemma:basicEF}
  Let $(X,\L)$ be an Enriques--Fano threefold  of genus $g$, $\pi:\widetilde{X} \to X$ a  desingularization and $S \in |\L|$ a smooth surface. Then
  \begin{itemize}
  \item[(i)] $h^0(X,\L)=g+1$ and the restriction map $H^0(X,\L) \to H^0(S,\L|_S)$ is onto;
\item[(ii)] $h^1(\O_{\widetilde{X}})=0$ and $H^0(\widetilde{X},\pi^*\L) \cong H^0(X,\L)$. 
  \end{itemize}
\end{lemma}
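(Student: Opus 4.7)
The plan is to pass to a resolution $\pi \colon \widetilde X \to X$, establish the needed cohomological facts there via Kawamata--Viehweg vanishing, and then descend back to $X$ using the normality of $X$.

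First I would verify the geometric point that the smooth divisor $S$ is disjoint from $\Sing(X)$. Indeed, if $p \in S \cap \Sing(X)$, then a local equation $f \in \mathfrak m_{X,p}$ of $S$ would be a non-zero-divisor such that $\O_{X,p}/(f) \cong \O_{S,p}$ is regular, and this forces $\O_{X,p}$ itself to be regular by the standard lifting property of regular local rings, contradicting $p \in \Sing(X)$. Consequently $\pi$ restricts to an isomorphism in a neighborhood of $S$, and one may regard $S$ as an effective Cartier divisor on $\widetilde X$ with $\O_{\widetilde X}(S) \cong \pi^* \L$; both short exact sequences
\[
0 \longrightarrow \pi^* \L^{-1} \longrightarrow \O_{\widetilde X} \longrightarrow \O_S \longrightarrow 0
\quad \text{and} \quad
0 \longrightarrow \O_{\widetilde X} \longrightarrow \pi^* \L \longrightarrow \L|_S \longrightarrow 0
\]
then hold on $\widetilde X$ with the expected line bundles.

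For (ii), I use that $\L$ is ample, so $\pi^* \L$ is nef and big on the smooth threefold $\widetilde X$. Kawamata--Viehweg vanishing yields $H^i(\widetilde X, K_{\widetilde X} + \pi^* \L) = 0$ for $i > 0$, and Serre duality translates this to $H^j(\widetilde X, -\pi^* \L) = 0$ for $j \leq 2$. Plugging these vanishings into the cohomology of the first short exact sequence above gives $H^1(\widetilde X, \O_{\widetilde X}) \cong H^1(S, \O_S) = 0$, the last equality because $S$ is Enriques. The isomorphism $H^0(\widetilde X, \pi^* \L) \cong H^0(X, \L)$ then follows from $\pi_* \O_{\widetilde X} = \O_X$ (a consequence of the normality of $X$) together with the projection formula.

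For (i), I take the cohomology of the second short exact sequence above. The vanishing $h^1(\widetilde X, \O_{\widetilde X}) = 0$ from (ii) makes the restriction $H^0(\widetilde X, \pi^* \L) \to H^0(S, \L|_S)$ surjective, and via the identification in (ii) this is precisely the restriction map $H^0(X, \L) \to H^0(S, \L|_S)$ of the statement. A Riemann--Roch computation on the Enriques surface $S$, together with Kodaira vanishing applied to the ample $\L|_S$ (using that $K_S$ is numerically trivial), gives $h^0(S, \L|_S) = \tfrac12(2g-2) + 1 = g$, whence $h^0(X, \L) = 1 + g = g+1$. The only delicate step is the initial geometric verification $S \cap \Sing(X) = \emptyset$, which is what guarantees the correct line bundles in the two short exact sequences; after that, everything reduces to a routine application of Kawamata--Viehweg on $\widetilde X$ and of Riemann--Roch on the Enriques surface $S$.
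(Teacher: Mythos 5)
Your proof is correct and follows essentially the same route as the paper: both arguments pass to the resolution, use the two exact sequences $0 \to \pi^*\L^{-1} \to \O_{\widetilde X} \to \O_S \to 0$ and its twist by $\pi^*\L$, apply Kawamata--Viehweg vanishing to the big and nef $\pi^*\L$, and descend via normality of $X$ and Riemann--Roch on the Enriques surface. The only difference is that you make explicit the verification that the smooth Cartier divisor $S$ avoids $\Sing(X)$, which the paper leaves implicit in the phrase ``we may identify $S$ with $\pi^{-1}(S)$''.
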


\begin{proof}
  Since $\pi$ is an isomorphism outside the singular locus of $X$, 
we may identify $S$ with $\pi^{-1}(S)$. By the fact that $\pi^*\L$ is big and nef
and 
\begin{equation} \label{eq:desing}
\xymatrix{ 0 \ar[r] & \O_{\widetilde{X}}(-\pi^*\L) \ar[r] &  \O_{\widetilde{X}} \ar[r] &  \O_S \ar[r] &  0,}
\end{equation} 
 we get
$h^1(\O_{\widetilde{X}})=0$. The rest of (ii) follows from the normality of $X$. Tensoring \eqref{eq:desing} by $\pi^*\L$ and taking cohomology, we get (i).
\end{proof}

 In particular, part (i) implies that $|\L|$ is base point free
if and only if $|\L|_S|$ is, for any smooth Enriques surface $S \in |\L|$, which holds if and only if
$\phi(\L|_S) \geq 2$ by 
\cite[Thms. 4.1]{cos1} or \cite[Thm. 4.4.1]{cd}. Similarly, the morphism $\varphi_{\L}$ defined by $|\L|$ is an isomorphism on $S$ if and only if $\phi(\L|_S) \geq 3$ by \cite[Thms. 5.1]{cos1} or \cite[Thm. 4.6.1]{cd} (since $\L|_S$ is ample), in which case we get that $\varphi_{\L}(X) \subset \PP^g$ is a (possibly non-normal) threefold whose general hyperplane section is a smooth Enriques surface.

 The connection to the topic of this paper is given by:  

\begin{proposition}
\label{prop:pencil}
Let $(X,\L)$ be an
Enriques--Fano threefold of genus $g  \geq 6$. Let $S\in |\mathcal L|$ be
general, and $C\in |\mathcal L_{|S}|$ be  general, with 
$\phi=\phi(\mathcal L_{|S})  \geq 2$. Then the dimension of the fiber of
$c_{g,\phi}$ at $(S,\mathcal L_{|S}, C)$ is at least 1.
\end{proposition}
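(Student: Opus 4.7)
The plan is to construct a pencil $\Pi \cong \PP^1$ of Enriques surface sections of $X$ all containing the fixed curve $C$, and to show that the induced moduli map $\mu \colon \Pi \dashrightarrow \EC_{g,\phi}$, $t \mapsto (S_t, \mathcal L|_{S_t}, C)$, is non-constant. As $c_{g,\phi} \circ \mu$ will automatically be constant equal to $[C] \in \M_g$, the image of $\mu$ will then be a positive-dimensional subvariety of the fiber $c_{g,\phi}^{-1}([C])$ passing through our triple, giving the conclusion.

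\emph{Construction of the pencil.} The key computation is $h^0(X, \mathcal I_C \otimes \mathcal L) = 2$. By Lemma~\ref{lemma:basicEF}(i), $h^0(X, \mathcal L) = g+1$. Adjunction on $S$ identifies $\mathcal L|_C \cong \omega_C(\eta)$ with $\eta := K_S|_C$ a non-trivial $2$-torsion class for general $(S, C)$, so Riemann--Roch yields $h^0(C, \mathcal L|_C) = g-1$. The restriction $H^0(X, \mathcal L) \to H^0(C, \mathcal L|_C)$ factors as the composition of the two surjections $H^0(X, \mathcal L) \twoheadrightarrow H^0(S, \mathcal L|_S) \twoheadrightarrow H^0(C, \mathcal L|_C)$, where the first is Lemma~\ref{lemma:basicEF}(i) and the second comes from $0 \to \O_S \to \mathcal L|_S \to \mathcal L|_C \to 0$ together with $H^1(\O_S)=0$. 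Taking cohomology of the ideal sheaf sequence of $C \subset X$ then gives the count. By Bertini applied around the given smooth $S$, the general member of $\Pi := |\mathcal I_C \otimes \mathcal L|_X|$ is a smooth Enriques surface, and the family of triples $(S_t, \mathcal L|_{S_t}, C)$ yields the desired morphism $\mu$ on an open $U \subseteq \Pi$.

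\emph{Non-constancy of $\mu$.} This is the main obstacle. Non-constancy amounts to the Kodaira--Spencer map of the pencil at $t=0$, viewed as $T_0\Pi \to H^1(\T_S(-C))$, being non-zero. From the normal bundle sequence on a resolution $\widetilde X$,
\[
0 \to \T_S(-C) \to \T_{\widetilde X}|_S(-C) \to \N_{S/\widetilde X}(-C) \to 0,
\]
using $\N_{S/\widetilde X}(-C) \cong \O_S$ (since $\N_{S/\widetilde X} \cong \mathcal L|_S$ and $C \in |\mathcal L|_S|$) and $H^0(\T_S(-C)) = 0$ for general $S$, the connecting homomorphism $H^0(\O_S) \to H^1(\T_S(-C))$ is non-zero precisely when $H^0(\T_{\widetilde X}|_S(-C)) = 0$. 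I expect this vanishing to follow from combining Kodaira--Nakano-type arguments on $\widetilde X$ with the ampleness of $\mathcal L$ and the non-cone clause in the definition of an Enriques--Fano threefold. A more geometric alternative is available: if $\mu$ were constant, isotriviality of the family $\mathcal S \to \Pi$, combined with simple-connectedness of $\Pi$ and the global section given by $C$, would force $\mathcal S \cong S \times \Pi$, and the birational morphism $\mathcal S \to X$ collapsing $C \times \Pi$ onto $C$ would realize $X$ as a birational contraction of $S \times \PP^1$, contradicting the hypothesis that $X$ is not a generalized cone.
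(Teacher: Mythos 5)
Your overall strategy coincides with the paper's: both take the pencil $\Pi=|\I_C\otimes\L|$ of members of $|\L|$ through $C$ (your dimension count $h^0(X,\I_C\otimes\L)=2$ is correct and matches Lemma~\ref{lemma:basicEF}) and reduce everything to showing that the induced map $\Pi\dashrightarrow\EC_{g,\phi}$ is non-constant. The gap is that you do not actually prove this non-constancy, which is the entire content of the proposition. Your first route hinges on the vanishing $H^0(\T_{\widetilde X}|_S(-C))=0$, which you only ``expect'' to hold. Note that from the sequence $0\to\T_S(-C)\to\T_{\widetilde X}|_S(-C)\to\O_S\to 0$ and $H^0(\T_S(-C))=0$, this vanishing is \emph{equivalent} to the non-vanishing of the Kodaira--Spencer class of the pencil, i.e.\ to the statement you are trying to prove; so deferring it to an unspecified ``Kodaira--Nakano-type argument'' is circular in effect. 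Moreover Kodaira--Nakano does not apply readily here: $\pi^*\L$ is only big and nef on the resolution $\widetilde X$, $X$ need not be Gorenstein, and $\omega_{\widetilde X}$ carries exceptional contributions, so no standard vanishing theorem gives $H^0(\T_{\widetilde X}|_S(-\L))=0$ off the shelf.

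Your ``geometric alternative'' also does not close: if the family were isotrivial one would indeed get $\mathcal S\cong S\times\Pi$ with the exceptional divisor of $\mathcal S\to X$ identified with $C\times\Pi$ mapping to $C$ by the first projection, but this object is \emph{not} a generalized cone over $S$ in the sense of \S\ref{sec:EF-nuovo} (a generalized cone contracts a copy of $S$ to a point, not $C\times\PP^1$ onto a curve), so no contradiction with the non-cone hypothesis is obtained. A contradiction \emph{can} be extracted from this configuration --- e.g.\ by seesaw, $\O_{S\times\Pi}(C\times\Pi)=\pr_1^*\L|_S$ is trivial on the rulings $\{c\}\times\Pi$, whereas the normal bundle of the exceptional divisor of the blow-up of the smooth locus of $X$ along $C$ restricts to $\O_{\PP^1}(-1)$ on those rulings; alternatively $h^0(X,\L)=h^0(\mathcal S,\pi^*\L)=2\,h^0(S,\L|_S)=2g\neq g+1$ --- but you make neither argument. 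The paper instead derives non-constancy from Pardini's theorem \cite[Prop.~1.7]{Pa}: if two general members of $|\L|$ were isomorphic as polarized surfaces, two general hyperplane sections of $\varphi_\L(X)$ (not a cone) would be projectively equivalent, forcing the general hyperplane section to be ruled, which is absurd for an Enriques surface. As written, your proof is incomplete at its only essential step.
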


\begin{proof} Consider the linear pencil $\mathfrak l$ in  $|\mathcal L|$ with base locus $C$, so that $S\in \mathfrak l$. Consider the open subset $U$ of $\mathfrak l$ whose points correspond to smooth sections of $X$. We claim that two general points of $U$ correspond to non--isomorphic  polarized  Enriques surfaces $(S',\mathcal L_{|S'}), (S'', \mathcal L_{|S''})$. The assertion clearly follows from this claim.

To prove the claim, suppose, to the contrary, that all points of $U$ correspond to isomorphic  polarized  Enriques surfaces.   This  implies that two general members in $|\L|$ are isomorphic as  polarized  Enriques surfaces.  Since $g \geq 6$ and $\phi \geq 2$, Lemma \ref{lemma:basicEF} together with
\cite[Thms. 4.1 and 5.1]{cos1} or \cite[Thm. 4.4.1 and Prop. 4.7.1]{cd} yield that 
the  map $\varphi_\L$ determined by $|\mathcal L|$ is a morphism 
that
maps $X$  birationally onto its image, which is not a cone. Hence, 
two general hyperplane sections of $Y=\varphi_\L(X)$ are projectively equivalent.  By  \cite[Prop. 1.7]{Pa}  (which applies in fact to all varieties different from cones) 
this would imply that the general hyperplane section of $Y$ is ruled, a contradiction.  \end{proof}

\begin{corollary} \label{cor:EF1}
The maps $\chi_{17,4}^{(IV)^+}, 
\chi_{13,4}^{(II)^+}, \chi_{9,2}^{(II)^+}, \chi_{9,2}^{(II)^-}, \chi_{7,3}$  are not generically finite.
\end{corollary}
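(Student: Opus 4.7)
The plan is to deduce Corollary \ref{cor:EF1} from Proposition \ref{prop:pencil} by exhibiting, for each of the five listed components $\EC'_{g,\phi}$, an Enriques--Fano threefold $(X,\L)$ of genus $g$ whose general smooth $S \in |\L|$ represents, as the polarized surface $(S,\L|_S)$, the given component of $\E_{g,\phi}$.

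First I would reduce generic finiteness from $\chi_{g,\phi}$ to $c_{g,\phi}$: the map $c_{g,\phi}$ factors through $\chi_{g,\phi}$ via the finite forgetful map $\R_g \to \M_g$ of degree $2^{2g}-1$, so on any irreducible component of $\EC_{g,\phi}$ the image dimensions of the two maps agree, and $\chi_{g,\phi}$ is not generically finite on a component exactly when $c_{g,\phi}$ is not. Since all five listed components satisfy $g \geq 6$ and $\phi(H) \geq 2$, Proposition \ref{prop:pencil} will immediately yield fiber dimension $\geq 1$ for $c_{g,\phi}$ on each, as soon as the general polarized Enriques surface in the component is extendable to an Enriques--Fano threefold.

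For $\EC_{17,4}^{(IV)^+}$, $\EC_{13,4}^{(II)^+}$, and $\EC_{7,3}$, the required extensions are provided by the classification of Enriques--Fano threefolds with terminal singularities carried out in \cite{bay, sa, Mi}; for the remaining components $\EC_{9,2}^{(II)^+}$ and $\EC_{9,2}^{(II)^-}$, one invokes instead the examples with canonical, non-terminal singularities constructed in \cite{KLM, Pr}.

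The main obstacle to executing the proof is the bookkeeping: for each listed component one must identify the specific Enriques--Fano threefold in the classification or construction whose general hyperplane section has precisely the prescribed simple isotropic decomposition type of \S\ref{sec:msenr}, and, for the two $g=9$ cases, separate the $(II)^+$ and $(II)^-$ loci according to whether $H$ or $H+K_S$ is $2$-divisible in $\Pic(S)$ (equivalently, whether the corresponding preimage under $\rho:\E_{g,\phi}\to\widehat\E_{g,\phi}$ is reducible as predicted by \cite[Cor.~1.5]{cdgk}). Once these matchings are established, Proposition \ref{prop:pencil} directly yields positive-dimensional fibers for each of the five maps, completing the proof.
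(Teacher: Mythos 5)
Your overall strategy coincides with the paper's: reduce from $\chi_{g,\phi}$ to $c_{g,\phi}$ via the finite covering $\R_g\to\M_g$, and combine Proposition \ref{prop:pencil} with extendability of the general member of each listed component to an Enriques--Fano threefold. The gap is in the extendability step, which you defer to the literature as ``bookkeeping'': that step is the entire content of the paper's proof (Lemmas \ref{lemma:tutteclassic}, \ref{lemma:ext7-nuovo}, \ref{lemma:ext7-nuovissimo} and Proposition \ref{lemma:prok4div-nuovo}), and your pointers to the literature are partly wrong. The terminal classification of \cite{bay,sa,Mi} contains no threefold of genus $17$; the only known Enriques--Fano threefold of genus $17$ is Prokhorov's, which has canonical \emph{non}-terminal singularities, and the paper must prove from scratch (Proposition \ref{lemma:prok4div-nuovo}) both that its Enriques sections lie in $\E_{17,4}^{(IV)^+}$ rather than $\E_{17,4}^{(IV)^-}$ (this uses the $2$-divisibility criterion of \cite[Prop.~12.1]{KLM}) and, conversely, that every $(S,4(E_1+E_2))$ with $E_1,E_2$ nef arises as a $\tau$-invariant quadric section of the cone $V$. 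Likewise the $(g,\phi)=(9,2)$ cases are not ready-made examples in \cite{KLM,Pr}: the paper manufactures them by projecting Prokhorov's genus-$17$ threefold from a divisor $D\in|2E_2|$ or $|2E_2+K_S|$ (Lemma \ref{lemma:ext7-nuovissimo} via Lemma \ref{lemma:proj}), which is also exactly how the $(II)^+$/$(II)^-$ distinction is realized; and $\E_{7,3}$ is handled not by a genus-$7$ entry of a classification but by projecting the classical genus-$13$ threefold.

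There is a second, more structural problem with ``look it up in the classification'': for Proposition \ref{prop:pencil} to contradict generic finiteness you need the \emph{general} member of the $10$-dimensional component $\E'_{g,\phi}$ to be extendable, not merely some member. The hyperplane sections of a single Enriques--Fano threefold form only a $g$-dimensional family (Lemma \ref{lemma:basicEF} gives $\dim|\L|=g$), so for $g=7$ and $g=9$ a fixed threefold cannot dominate the component; one needs either a family of threefolds varying with $(S,H)$ (which the projection construction of Lemma \ref{lemma:proj} provides, the center of projection moving with $S$) or a direct converse statement, as in Lemma \ref{lemma:tutteclassic} and Proposition \ref{lemma:prok4div-nuovo}, that every sufficiently general $(S,H)$ in the component occurs as a hyperplane section. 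Without one of these, you only produce positive-dimensional fibers over a proper sublocus, which is compatible with generic finiteness.
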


\begin{proof}
  This will follow  from  Lemmas \ref{lemma:tutteclassic}, \ref{lemma:ext7-nuovo},  \ref{lemma:ext7-nuovissimo}  and Proposition \ref{lemma:prok4div-nuovo} 
 below,  where  we  prove  that  the general  members of  $\E_{17,4}^{(IV)^+}$, 
$\E_{13,4}^{(II)^+}, \E_{9,2}^{(II)^+}, \E_{9,2}^{(II)^-}, \E_{7,3}$ are extendable.
\end{proof}

We will make use of the following auxiliary result:

\begin{lemma} \label{lemma:proj} 
 Let  $(S,L)$ be a polarized Enriques surface of genus $g \geq 6$ with $\phi(L) \geq 2$.  Assume that $(S,L+D)$ is extendable 
to an Enriques--Fano threefold $(Y,\H)$ for an effective divisor $D$, and that $Y$ is unirational. Then $(S,L)$ is extendable to an Enriques--Fano threefold 
$(X,\L)$ and the elements in $|\L|$ are in one-to one correspondence with the elements in $|\H \* \I_D|$.
 \end{lemma}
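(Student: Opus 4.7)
The plan is to construct $(X,\L)$ as the image of $Y$ under the rational map $\varphi:Y\dashrightarrow X\subset\PP^g$ defined by the linear subsystem $|\H\*\I_D|\subset|\H|$; geometrically, after embedding $Y$ in $\PP\bigl(H^0(Y,\H)^*\bigr)$ via $|\H|$, this map is the projection of $Y$ from the linear span $\langle D\rangle$.

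The first step is to pin down the dimension of $|\H\*\I_D|$. From Lemma \ref{lemma:basicEF} we have $h^0(Y,\H)=h^0(S,L+D)+1$. Fitting the restriction sequence $0\to\H\*\I_D\to\H\to\H|_D\to 0$ on $Y$ into a commutative diagram with the sequence $0\to\O_S(L)\to\O_S(L+D)\to\O_D(L+D)\to 0$ on $S$, and using $\H|_D=\O_D(L+D)$, a diagram chase yields $h^0(Y,\H\*\I_D)=h^0(S,L)+1=g+1$. Thus $\dim|\H\*\I_D|=g$.

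The second step is to restrict $\varphi$ to $S$: a section of $\H\*\I_D$ restricts on $S$ to a section of $\O_S(L+D)$ vanishing along $D$, hence to a section of $\O_S(L)$, so $\varphi|_S$ is identified with the morphism $\varphi_L:S\to\PP^{g-1}$ defined by $|L|$. Since $\phi(L)\geq 2$, this morphism is birational onto its image, and in particular the image is a surface. This forces $\varphi$ to be generically finite and $X:=\overline{\varphi(Y)}$ to be a three-dimensional variety. Setting $\L:=\O_X(1)$, the divisor $S\in|\H|$ corresponds to an element of $|\H\*\I_D|$ (its defining section vanishes on $D\subset S$), so $\varphi(S)$ is a hyperplane section of $X$ birationally equivalent to $S$ with $\L|_{\varphi(S)}=L$; hence $\L^3=\L^2\cdot\varphi(S)=L^2=2g-2$. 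Replacing $X$ with its normalization if necessary (ampleness of $\L$ being preserved since the resulting map to $\PP^g$ is finite), we obtain a normal polarized threefold $(X,\L)$ of genus $g$ having a smooth Enriques surface in $|\L|$.

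The crucial remaining point, and also the main obstacle, is to rule out that $X$ is a generalized cone over this Enriques hyperplane section; this is precisely where the unirationality of $Y$ enters. Since $Y$ rationally dominates $X$, the variety $X$ is unirational; on the other hand, a generalized cone over a smooth Enriques surface $\Sigma$ is birational to $\Sigma\times\PP^1$ and hence has Kodaira dimension $\geq 0$, so it cannot be unirational. Therefore $(X,\L)$ is a genuine Enriques--Fano threefold extending $(S,L)$. Finally, the bijection $|\L|\leftrightarrow|\H\*\I_D|$ is built into the construction: hyperplane sections of $X\subset\PP^g$ correspond, via projection from $\langle D\rangle$, to hyperplane sections of $Y\subset\PP\bigl(H^0(Y,\H)^*\bigr)$ containing $\langle D\rangle$, that is, to elements of $|\H\*\I_D|$.
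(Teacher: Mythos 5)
Your construction is the same as the paper's: define $X$ as the closure of the image of $Y$ under the map given by $\lvert\H\otimes\I_D\rvert$, check that this linear system has dimension $g$ and restricts to $\lvert L\rvert$ on $S$, use birationality of $\varphi_L$, and invoke unirationality of $Y$ to exclude that $X$ is a cone. However, two steps need repair.

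First, your justification for excluding the cone is based on a false assertion: a generalized cone over a smooth Enriques surface $\Sigma$ is birational to $\Sigma\times\PP^1$, which is a uniruled threefold and therefore has Kodaira dimension $-\infty$, not $\geq 0$ (indeed $h^0(m(K_\Sigma\boxtimes\O_{\PP^1}(-2)))=0$ for all $m\geq 1$). The conclusion you want is still true, but for a different reason, which is the one the paper uses: if $X$ were birational to $\Sigma\times\PP^1$ and unirational, then $\Sigma$ itself would be unirational, hence rational by Castelnuovo's theorem, contradicting the fact that an Enriques surface is irrational. As written, your argument replaces the correct one-line deduction with an incorrect invocation of Kodaira dimension.

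Second, the final bijection is not quite ``built into the construction.'' Your argument identifies the elements of $\lvert\H\otimes\I_D\rvert$ with the \emph{hyperplane sections} of $X'=\overline{\varphi(Y)}\subset\PP^g$, but the statement concerns the full linear system $\lvert\L\rvert$ on the normalization $X$, which a priori could be strictly larger if $X'$ fails to be linearly normal (i.e.\ if $h^0(X,\L)>g+1$). The paper closes this by applying Lemma \ref{lemma:basicEF}(i) to the newly constructed Enriques--Fano threefold $(X,\L)$, which gives $h^0(X,\L)=g+1$ and hence forces the inclusion $H^0(X',\O_{X'}(1))\subseteq H^0(X,\L)$ to be an equality. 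You need this extra step; without it the claimed one-to-one correspondence is only an injection of $\lvert\H\otimes\I_D\rvert$ into $\lvert\L\rvert$.
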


\begin{proof} 
 Let $\pi:\widetilde{Y} \to Y$ be a desingularization and identify $S$ with $\pi^{-1}(S)$. Then $h^1(\O_{\widetilde{Y}})=0$ by Lemma \ref{lemma:basicEF}(ii). Therefore, the exact sequence 
\begin{equation} \label{eq:exproj}
\xymatrix{ 0 \ar[r] & \O_{\widetilde{Y}} \ar[r] &  \pi^*\H \* \I_D  \ar[r] &  \O_S(L) \ar[r] &  0,}
\end{equation}
shows, as $|L|$ is  base point free and birational by \cite[Thms. 4.1 and 5.1]{cos1} or \cite[Thm. 4.4.1 and Prop. 4.7.1]{cd}, 
that the closure of the image of the rational map defined by the linear system $|\pi^*\H \* \I_D|$ is a threefold $X'$ in $\PP^g$, where $L^2=2g-2$, having the surfaces in $|\pi^*\H \* \I_D|$, including
$S$, as hyperplane sections. Since $Y$ is unirational, also $X'$ is. If $X'$ were a cone, then it would be birational to $H\times \PP^1$, for a general hyperplane section $H$ of $X'$. Thus, $H$ would be unirational, a contradiction. Hence, $X'$ is not a cone. Let $\nu: X \to X'$ be its normalization and $\L:=\nu^*\O_{X'}(1)$. Then $(X,\L)$ is an Enriques--Fano threefold extending $(S,L)$.
Identifying $D$ with $\pi^{-1}(D)$, we get, as $Y$ is normal,  
\[ H^0(Y,\H \* \I_D) \cong H^0({\widetilde{Y}},\pi^*\H \* \I_D) \cong H^0(X',\O_{X'}(1)) \cong \CC^{g+1},\]
and the latter is contained in $H^0(X',\nu_*\L) \cong H^0(X,\L)$. Since
$h^0(\L)=g+1$ by Lemma \ref{lemma:basicEF}, we must have 
$H^0(Y,\H \* \I_D) \cong H^0(X,\L)$, proving the last assertion.
\end{proof}

The \emph{classical Enriques--Fano threefold} $Y$ of genus $13$ is the
image of $\PP^ 3$ via the linear system of sextic surfaces that are
double along the edges of a tetrahedron, cf.\ \cite{CoMu,Fa}. 
Its smooth hyperplane sections are  Enriques surfaces with
polarization of
the form $2(E_1+E_2+E_3)$ (cf.\ \cite[Pf. of Prop. 13.1]{KLM}), that
is, they belong to $\E_{13,4}^{(II)^+}$.

\begin{lemma} \label{lemma:tutteclassic}
  Any $(S,H=2(E_1+E_2+E_3)) \in \E_{13,4}^{(II)^+}$ such that $E_1,E_2,E_3$ are nef and $|E_1+E_2+E_3|$ is birational is extendable to the classical Enriques--Fano threefold.
\end{lemma}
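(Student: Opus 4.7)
The plan is to realize $(S,H)$ directly as a hyperplane section of $Y$ via its sextic model in $\PP^3$. Set $D:=E_1+E_2+E_3$, so that $H=2D$, $D^2=6$, $D\cdot E_i=2$ for each $i$, and $\phi(D)=2$. Riemann--Roch together with the vanishing available for big and nef divisors on an Enriques surface yield $h^0(D)=4$, so that $|D|$ defines a morphism $\phi_D:S\to\PP^3$; by the birationality hypothesis this morphism is birational onto a sextic surface $T:=\phi_D(S)\subset\PP^3$.

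The core step is to exhibit $T$ as a sextic whose double locus contains the six edges of a tetrahedron. The natural candidates for the preimages of these edges are the six half-fibres
\[
F\in\bigl\{E_1,\,E_2,\,E_3,\,E_1+K_S,\,E_2+K_S,\,E_3+K_S\bigr\},
\]
each satisfying $F^2=0$ and $D\cdot F=2$. Since $F$ has arithmetic genus $1$ and $\deg(D|_F)=2$, Riemann--Roch on $F$ gives $h^0(D|_F)=2$, so $\phi_D|_F$ factors as a $2:1$ morphism $F\to\PP^1$ followed by a linear embedding $\PP^1\hookrightarrow\PP^3$; hence $\phi_D(F)$ is a line $\ell_F\subset\PP^3$, and the $2:1$ factor forces $\ell_F$ to appear as a double line of $T$. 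The intersection numbers
\[
E_i\cdot E_j=1,\quad E_i\cdot(E_j+K_S)=1\ (i\neq j),\quad E_i\cdot(E_i+K_S)=0,
\]
together with the nefness of each $E_i$ and the birationality of $|D|$, force the six lines $\ell_F$ to be pairwise distinct and to realize the incidence pattern of the six edges of a nondegenerate tetrahedron, the three pairs $\{\ell_{E_i},\ell_{E_i+K_S}\}$ corresponding to the pairs of opposite (hence skew) edges.

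Consequently $T$ belongs to the linear system $\Lambda\subseteq|\O_{\PP^3}(6)|$ of sextics doubly vanishing along the edges of this tetrahedron. By the description recalled just before the statement, $\dim\Lambda=13$ and the associated rational map $\pi:\PP^3\dashrightarrow\PP^{13}$ has image $Y$, the classical Enriques--Fano threefold. The sextic $T$ therefore corresponds to a hyperplane $H_T\subset\PP^{13}$ with $Y\cap H_T=\pi(T)$, realizing $S$ (the normalization of $T$) as a member of $|\mathcal{L}|$ on $Y$, where $\mathcal{L}$ is the polarization. To conclude extendability of $(S,H)$ to $(Y,\mathcal{L})$, I verify $\mathcal{L}|_S\sim H$ by pulling back $\O_{\PP^{13}}(1)$ along $\phi_D$:
\[
\mathcal{L}|_S \;\sim\; \phi_D^*\O_{\PP^3}(6) - 2\sum_{F} F
\;=\; 6D - 2(2D+3K_S)
\;=\; 2D - 6K_S
\;\sim\; 2D \;=\; H,
\]
using $2K_S\sim 0$.

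The main anticipated obstacle is the uniform verification, in the family, that the six lines $\ell_F$ genuinely form a nondegenerate tetrahedron and that $\phi_D$ is truly $2:1$ onto each of them (so as to produce \emph{double} lines in $T$), rather than having some $F$ contracted or sent to a higher-degree curve that merely spans the same line. The nefness of the $E_i$ together with the birationality of $|D|$ should rule out all such degenerations, but the bookkeeping requires care when some $E_i$ fails to be a smooth elliptic curve.
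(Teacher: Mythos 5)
Your proposal is correct and follows essentially the same route as the paper: realize $S$ via the birational sextic model $\varphi_{E_1+E_2+E_3}(S)\subset\PP^3$, observe that it is double along the six edges of a tetrahedron (the images of the $E_i$ and $E_i+K_S$), and conclude that it is by construction a hyperplane section of the classical Enriques--Fano threefold. The only difference is that the paper outsources the tetrahedron structure (including the incidence and non-degeneracy bookkeeping you flag as the main obstacle) to \cite[Thm.~4.9.3]{cd}, whereas you sketch a direct derivation of it.
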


\begin{proof}
  By assumption, $|E_1+E_2+E_3|$ maps $S$ birationally onto a sextic surface in $\PP^3$ singular along the edges of a tetrahedron, which are the images of all $E_i$ and $E_i'$, the only member of $|E_i+K_S|$, for $i=1,2,3$, cf., e.g., \cite[Thm. 4.9.3]{cd}.  All such sextics are by construction hyperplane sections of the classical Enriques--Fano threefold. 
\end{proof}

\begin{lemma} \label{lemma:ext7-nuovo}
   A general member of  $\E_{7,3}$  is extendable. 
\end{lemma}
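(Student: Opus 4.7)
\smallskip
\noindent\emph{Proof proposal.}
The plan is to produce an explicit Enriques--Fano threefold $(X,\mathcal{L})$ of genus~$7$ whose general smooth hyperplane sections are polarized Enriques surfaces in $\mathcal{E}_{7,3}$, and then to check that the resulting family dominates $\mathcal{E}_{7,3}$.

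First, I would appeal to the classification of terminal Enriques--Fano threefolds of Bayle--Sano--Minagawa \cite{bay,sa,Mi} to exhibit such an $(X,\mathcal{L})$ with $\mathcal{L}^3=12$. A natural concrete model is obtained as follows: take a cubic threefold $V\subset\mathbb{P}^4$ invariant under a suitable linear involution $\sigma$ acting freely on a general $\sigma$-invariant quadric section $V\cap Q$. Under the anticanonical $2$-uple embedding $V\hookrightarrow V_{24}\subset\mathbb{P}^{14}$, these sections are K3 surfaces of genus $13$. Setting $X:=V_{24}/\sigma$, embedded in the appropriate projective space by the $\sigma$-invariant part of $|{-}K_V|$, produces a normal threefold not isomorphic to a cone, with $\mathcal{L}^3=\frac{1}{2}({-}K_V)^3=12$, and whose general smooth hyperplane section is the Enriques quotient of a K3 of genus $13$, hence a smooth Enriques surface of genus~$7$.

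Second, I would verify that for general smooth $S\in|\mathcal{L}|$ one has $\phi(\mathcal{L}|_S)=3$ and a decomposition $\mathcal{L}|_S\sim E_1+E_2+E_3+E_4$ with each $E_i$ primitive isotropic and $E_i\cdot E_j=1$ for $i\neq j$. By the tables of Section~\ref{sec:msenr}, this would place $(S,\mathcal{L}|_S)$ in $\mathcal{E}_{7,3}$. The four elliptic pencils $|2E_i|$ would be identified geometrically with four $\sigma$-invariant families of elliptic curves on the K3 double cover descending to $S$ (for instance, pencils cut out by appropriate families of hyperplanes or conics on $V$).

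Finally, I would use a dimension/parameter count: the family of pairs $(X,S)$ so constructed yields an irreducible constructible subset of $\mathcal{E}_{7,3}$; since $\mathcal{E}_{7,3}$ is itself irreducible (see Section~\ref{sec:msenr}), this subset is dense, so a general member of $\mathcal{E}_{7,3}$ is extendable.

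The main obstacle is the explicit identification of the decomposition type of $\mathcal{L}|_S$ and of the four elliptic pencils; an alternative route via Lemma~\ref{lemma:proj} would require finding an effective divisor $D$ on a general $(S,H)\in\mathcal{E}_{7,3}$ such that $(S,H+D)$ lands in one of the already extendable components of Corollary~\ref{cor:EF-emb} (e.g.\ $\mathcal{E}_{13,4}^{(II)^+}$ with polarization $2(F_1+F_2+F_3)$), but matching the decomposition $H\sim E_1+E_2+E_3+E_4$ with such a type in a way that makes $D$ effective appears delicate.
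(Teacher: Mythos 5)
Your proposal has two genuine gaps, and it also passes over the route the paper actually takes. First, the explicit construction does not exist as described: a linear involution of $\PP^4$ has fixed locus a disjoint union $\PP^a \sqcup \PP^b$ with $a+b=3$, so its fixed locus always contains a plane (or a $\PP^3$), which must meet any surface in $\PP^4$; hence no linear involution can act freely on a quadric section $V\cap Q$ of a cubic threefold. Second, even granting the existence of some genus-$7$ Enriques--Fano threefold from the classification in \cite{bay,sa,Mi}, your concluding step --- ``an irreducible constructible subset of the irreducible $\E_{7,3}$ is dense'' --- is a non sequitur: such a subset is dense only if it has full dimension, and this is exactly what can fail. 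Indeed, Corollary \ref{cor:nodalbayle} of the paper shows that for the Bayle threefolds with $(g,\phi)=(8,3)$ and $(6,3)$ the general Enriques section is forced to be \emph{nodal}, hence lies in a proper closed subset of the corresponding moduli space. So dominance of $\E_{7,3}$ would require an actual argument (e.g.\ via the fiber dimension of the map to moduli), not a parameter count of the kind you sketch.

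The ``delicate'' alternative you mention at the end is in fact the paper's proof, and it is immediate: for $H\sim E_1+E_2+E_3+E_4$ one computes $(E_1+E_2+E_3-E_4)^2=6-2\cdot 3=0$, so $F:=E_1+E_2+E_3-E_4$ is an effective isotropic divisor (a divisor of square $\geq 0$ on an Enriques surface is, up to sign, effective, and the sign is pinned down by intersecting with the nef $E_1$). Then $H+F\sim 2(E_1+E_2+E_3)$, which is precisely the polarization of the classical genus-$13$ Enriques--Fano threefold; since $S$ is general, hence unnodal, $|E_1+E_2+E_3|$ is birational, so Lemma \ref{lemma:tutteclassic} applies, and Lemma \ref{lemma:proj} (with $D=F$, the classical threefold being unirational as an image of $\PP^3$) gives extendability of $(S,H)$. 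No case analysis or matching of decomposition types is needed beyond this one-line computation.
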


 \begin{proof} 
Let $(S, H) \in \E_{7,3}$ be general with
$H \sim E_1+E_2+E_3+E_4$.  In particular, $S$ is unnodal, whence
 $|E_1+E_2+E_3|$ is birational by \cite[Thm. 7.2]{cos1}.  Thus 
$(S,L:=2(E_1+E_2+E_3))$ is extendable to  the classical Enriques--Fano threefold  $Y$  by  Lemma \ref{lemma:tutteclassic}.  Note that $(E_1+E_2+E_3-E_4)^2=0$, so that $E_1+E_2+E_3 \sim E_4+F$, for an effective isotropic  $F$.  In
  particular, $L \sim H+F$, and the result follows from Lemma \ref{lemma:proj}.  
\end{proof}

Next we consider the  only known Enriques--Fano  threefold   of  genus $17$, namely the one constructed by Prokhorov in \cite[\S 3]{Pr} with canonical  nonterminal  singularities  in the following way: Let $x$ and $y_{i,j}$, $0 \leq i,j \leq 2$ be homogeneous coordinates in $\PP^9$ and consider the anticanonical embedding of $P:=\PP^1 \x \PP^1$ in $\PP^8=\{x=0\} \subset \PP^9$ given by
\[ (u_0:u_1) \x (v_0:v_1) \mapsto (y_{0,0}: \cdots: y_{2,2}), \; y_{i,j}=u_0^iu_1^{2-i}v_0^jv_1^{2-j}.\]
Let $V$ be the projective cone over $P$ and $v=(0:\cdots:0:1)$ its vertex. Then 
$V$ is a Gorenstein Fano threefold $V$ with canonical singularities.
 Let $\pi:V \to W$ be the quotient map of  the involution $\tau$ defined by
$\tau(x)=-x$ and $\tau(y_{i,j})=(-1)^{i+j}y_{i,j}$. Letting $\M:=\O_V(1)$, we have $-K_V \sim 2\M$ by \cite[Lemma 3.1]{Pr} and every smooth member of $|-K_V|$ is a $K3$ surface. The $\tau$-invariant ones are precisely the ones cut out on $V$ by quadrics of the form $q_1(y_{0,0},y_{0,2},y_{2,0},y_{2,2},y_{1,1})+q_2(y_{0,1},y_{2,1},y_{1,0},y_{1,2},x)$, where $q_1$ and $q_2$ are quadratic homogeneous forms, on which the action of $\tau$ is free. The quotient of any such  $\tau$-invariant  $\widetilde{S}$ by $\tau$ is thus an Enriques surface $S$. Since $\pi^*S = \widetilde{S}$ we have $2g-2=S^3=\frac{1}{2}\widetilde{S}^3=\frac{1}{2}(2\M)^3 =32$, whence $g=17$.
 Set $\L:=\O_W(S)$. Then $(W,\L)$ is an Enriques--Fano threefold of genus $17$.

 \begin{proposition} \label{lemma:prok4div-nuovo}
    The threefold $W$ is unirational and its  polarized Enriques sections 
 $(S,\L|_S)$  belong to  $\E_{17,4}^{(IV)^+}$.  Conversely, any $(S,H=4(E_1+E_2)) \in \E_{17,4}^{(IV)^+}$ with $E_1$ and $E_2$ nef is extendable to $(W, \L)$.
\end{proposition}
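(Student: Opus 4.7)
The plan proceeds in three stages. First, I would establish unirationality of $W$: since $V$ is a cone over $\PP^1 \times \PP^1$, it is rational, so the degree-$2$ quotient $\pi: V \to W$ makes $W$ unirational.

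Second, to show that polarized Enriques sections lie in $\E_{17,4}^{(IV)^+}$, I would exploit the two rulings of $P$. Each ruling gives a pencil of quadric cones through $v$ in $V$, cutting out an elliptic pencil $|F_i|$ on $\widetilde{S}$; a routine intersection-theoretic computation (on the Hirzebruch resolution of the cone) yields $F_i \cdot \widetilde{H} = 8$ and $F_1 \cdot F_2 = 2$. As each $|F_i|$ is $\tau$-invariant, it descends to a half-pencil $|2E_i|$ on $S$ with $F_i = \pi^* E_i$, giving $E_i \cdot H = 4$, $E_1 \cdot E_2 = 1$, $\phi(H) = 4$, and hence (by Hodge index) $H \equiv 4(E_1+E_2)$ in $\Num(S)$. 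For the $+$ sign: since $\tau$ is induced by a linear involution of $\PP^9$, the hyperplane class $\M = \O_V(1)$ is $\tau$-invariant and descends to $\M_W$ on $W$, and $\pi^* \L = 2\M$ forces $\L = 2\M_W$ (modulo a 2-torsion ambiguity to be ruled out directly). Thus $H = 2(\M_W|_S)$ is $2$-divisible in $\Pic(S)$, characterizing $\E_{17,4}^{(IV)^+}$ (the $(IV)^-$ class $4E_1+4E_2+K_S$ has no square root in $\Pic(S)$).

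Third, for the converse, I would reverse-engineer the construction. Given $(S, H = 4(E_1+E_2)) \in \E_{17,4}^{(IV)^+}$ with $E_i$ nef, write $H = 2N$ in $\Pic(S)$; on the $K3$ cover $\widetilde{S}$, $\pi^* N = 2(F_1+F_2)$ with $F_1 \cdot F_2 = 2$. The pair $(|F_1|, |F_2|)$ defines a degree-$2$ bielliptic morphism $\varphi: \widetilde{S} \to P$. Composing with $P \hookrightarrow \PP^8$ via $\O_P(2,2)$ uses a $9$-dimensional subsystem $\varphi^*|\O_P(2,2)| \subset |2(F_1+F_2)|$. Since $h^0(\widetilde{S}, 2(F_1+F_2)) = 10$ by Riemann--Roch, a $1$-dimensional complement exists: a generator $x$ of it, anti-invariant under the bielliptic deck involution $\sigma$, provides the $10$th coordinate. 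The resulting morphism $\widetilde{S} \to \PP^9$ has image in $V$, because the first $9$ coordinates satisfy the quadrics that cut out $P \subset \PP^8$, which are exactly the equations of $V \subset \PP^9$.

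Finally, I would verify $\tau$-equivariance and descend to $W$. The splitting $\pi_*\O_{\widetilde{S}} = \O_S \oplus \O_S(K_S)$ decomposes $H^0(\widetilde{S}, 2(F_1+F_2))$ as $H^0(S,N) \oplus H^0(S,N+K_S)$, both summands of dimension $5$ by Riemann--Roch. The $5$ $\tau$-invariants are precisely the $\varphi$-pullbacks of the $y_{i,j}$ with $i+j$ even on $P$, so by dimension count the $\sigma$-anti-invariant generator $x$ must itself be $\tau$-anti-invariant. This matches the Prokhorov involution $(y_{i,j}, x) \mapsto ((-1)^{i+j} y_{i,j}, -x)$, so the embedding $\widetilde{S} \hookrightarrow V$ descends to $S \hookrightarrow W$ with $\L|_S = H$. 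The main obstacle I anticipate lies in verifying that $\widetilde{S} \to V$ is actually a closed immersion whose image is cut out by a $\tau$-invariant quadric on $V$: this combines a very-ampleness argument for $|2(F_1+F_2)|$ enlarged by $x$ (using nefness of the $E_i$) with a check that the class of the image in the divisor class group of the singular threefold $V$ equals $2\M$.
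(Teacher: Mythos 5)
Your overall architecture matches the paper's: unirationality from the rationality of $V$, the forward direction via divisibility of the pullback of $\L|_S$ to the $K3$ cover, and the converse by rebuilding the bielliptic double cover $\widetilde{S} \to \PP^1\times\PP^1$ from the two nef pencils and fitting it equivariantly into the cone $V$. Your converse is essentially a coordinate version of the paper's argument: where you adjoin an anti-invariant tenth section $x$ of $|2(F_1+F_2)|$ to $\varphi^*|\O_P(2,2)|$, the paper embeds $\widetilde{S}$ tautologically into the total space of $-K_{\PP^1\times\PP^1}$ and compactifies to $\PP(-K_{\PP^1\times\PP^1}\oplus\O)$, the blow-up of $V$ at its vertex. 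That framing dissolves the closed-immersion worry you flag at the end: the cyclic cover sits inside the total space by construction, so no very-ampleness check on $|2(F_1+F_2)|$ is needed, and the image is a quadric section of $V$ essentially for free. Your eigenspace count ($5+5$ against $5+4+1$) to identify the Enriques involution with $\tau|_{\widetilde{S}}$ is a legitimate alternative to the paper's direct verification that $t$ lifts to $\widetilde{S}$ compatibly with the fiberwise sign involution of $T(-K_{\PP^1\times\PP^1})$.

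The one genuine gap is the point you defer as ``a $2$-torsion ambiguity to be ruled out directly.'' From $\pi^*(\L|_S)\sim 4D$ one only gets that one of $\L|_S$ and $\L|_S+K_S$ is $4$-divisible, i.e.\ that the section lies in $\E_{17,4}^{(IV)^+}\cup\E_{17,4}^{(IV)^-}$. Since $2N\sim 2(N+K_S)$ for every $N\in\Pic(S)$, the relation $\pi^*\L=2\M$ cannot distinguish the two descents of $\M$ across the \'etale cover $\widetilde{S}\to S$, so the assertion ``$\L=2\M_W$, hence $H=2(\M_W|_S)$'' is exactly the statement at issue rather than an argument for it. The paper closes this by invoking \cite[Prop.~12.1]{KLM}: either $L$ or $L-E$ with $E\cdot L=\phi(L)$ is $2$-divisible in $\Pic(S)$, and either alternative forces $L+K_S$ not to be $2$-divisible. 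Without this (or some equivalent input) your forward direction does not separate $\E_{17,4}^{(IV)^+}$ from $\E_{17,4}^{(IV)^-}$.
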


\begin{proof}
We keep the notation above. The unirationality of $W$ follows from the rationality of $V$.   Set $\L|_S=L$.   As  we have an induced double cover $\widetilde{S} \to \PP^1 \x \PP^1$, we have $\M|_{\widetilde{S}} \sim 2D$, with $D^2=4$. 
Thus, 
 $\pi|_{\widetilde{S}}^*L= \pi^*\L|_{\widetilde{S}} \sim \O_{\widetilde{S}}(\widetilde{S}) \sim 2\M|_{\widetilde{S}} \sim 4D$, so that  either $L$ or $L+K_S$ is $4$-divisible in $\Pic (S)$. By \cite[Prop. 12.1]{KLM}, either $L$ or $L-E$ with $E \cdot L =\phi(L)$ is $2$-divisible in $\Pic (S)$,
 and this implies that $L+K_S$ is not $2$-divisible. Hence,  $L$ is $4$-divisible in $\Pic (S)$, and the only possibility is $L \sim 4(E_1+E_2)$ as desired. 

 Now let $(S,H=4(E_1+E_2)) \in \E_{17,4}^{(IV)^+}$ with $E_1$ and $E_2$ nef, and denote by $p_i:S \to \PP^1$ the morphism induced by the pencil $|2E_i|$, $i=1,2$. Let $\pi: \widetilde{S} \to S$ be the $K3$ double cover. Then  
each $|\pi^*E_i|$ is an elliptic pencil and we 
have a commutative diagram
\[ \xymatrix{
\widetilde{S} \ar[r]^{\pi} \ar[d]_{\widetilde{p}_i} &  S \ar[d]^{p_i} \\
\PP^1 \ar[r]^{\sigma_i}  & \PP^1, }
\]
 where $\widetilde{p}_i$ is the map induced by the pencil $|\pi^*E_i|$ and 
$\sigma_i$ is  a double cover branched at the two points corresponding to the double fibers of $p_i$. 

The  map $\widetilde{S}\to \PP^1\times \PP^1$ given by $x \mapsto
(\widetilde{p}_1(x),\widetilde{p}_2(x))$ is a double cover  branched on a smooth curve in $|-2K_{\PP^1\times \PP^1}|$.  Equivalently, it is defined by the linear system $|\pi^*(E_1+E_2)|$, as its
image in $\PP^3$ factors through $\PP^1\times \PP^1$ by \cite{SD} 
(see also \cite[VIII.\S 18]{BPHV}).
 Then $\widetilde{S}$ is embedded in the total space $T(-K_{\PP^1\times \PP^1})$ of the line bundle $-K_{\PP^1\times \PP^1}$ on $\PP^1\times \PP^1$. The variety $T(-K_{\PP^1\times \PP^1})$ compactifies to $V'=\PP(-K_{\PP^1\times \PP^1}\oplus \O_{\PP^1\times \PP^1})$ by adding a section at infinity $\Sigma$ of $V'$ corresponding to the surjection $-K_{\PP^1\times \PP^1}\oplus \O_{\PP^1\times \PP^1}\to  \O_{\PP^1\times \PP^1}$. Then $V'$ identifies with the blow--up of the cone $V$ at its vertex, the exceptional divisor being $\Sigma$. We have the inclusion
$\widetilde{S}\subset T(-K_{\PP^1\times \PP^1})=V'-\Sigma\subset V'$, hence an inclusion $\widetilde{S}\subset V$, and it is easy to check that $\widetilde{S}$ identifies with a quadric section of $V$. 

Let now $t_i:\PP^1\to \PP^1$ be the involution corresponding to the double cover $\sigma_i$, for $i=1,2$. Consider the involution $t:(x,y)\in \PP^1\times \PP^1\to (t_1(x),t_2(y))\in \PP^1\times \PP^1$. By appropriately choosing coordinates we may assume that $t$ coincides with the involution  $t:(u_0:u_1) \x (v_0:v_1) \mapsto (-u_0:u_1) \x (-v_0:v_1)$. The involution $t$ on $\PP^1\times \PP^1$ lifts to the involution $\tau$ of $V$ defined above and one checks that $\widetilde{S}$ is $\tau$-invariant. Indeed, $\tau$ lifts to an involution of $\widetilde{S}$ because $t$ clearly fixes the branch divisor of the double cover $\widetilde{S}\to \PP^1\times \PP^1$, and $\widetilde{S}$ is also invariant by the involution of $T(-K_{\PP^1\times \PP^1})$ that sends a point $z$ in a fibre $\CC$ over a point of $\PP^1\times \PP^1$ to $-z$. 
Thus, the quotient map $\pi:V \to W$ maps $\widetilde{S}$ back to $S$, and the last assertion follows. 
\end{proof}

\begin{lemma} \label{lemma:ext7-nuovissimo}
  The general members of $\E_{9,2}^{(II)^+}$ and $\E_{9,2}^{(II)^-}$ are extendable.
\end{lemma}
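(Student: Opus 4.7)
The plan is to apply Lemma~\ref{lemma:proj} with $(Y,\mathcal{H})$ equal to the Prokhorov Enriques--Fano threefold $(W,\mathcal{L})$ of genus $17$ constructed in Proposition~\ref{lemma:prok4div-nuovo}; recall that $W$ is unirational and extends precisely the members of $\mathcal{E}_{17,4}^{(IV)^+}$ of the form $4(E_1+E_2)$ with $E_1,E_2$ nef. Both of our polarizations have $g=9\geq 6$ and $\phi=2$, so the hypotheses of Lemma~\ref{lemma:proj} are satisfied as soon as we exhibit an effective divisor $D$ on $S$ such that $H+D\sim 4(F_1+F_2)$ with $F_1,F_2$ nef primitive isotropic on a general $(S,H)$ in the relevant component.

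For a general $(S,H)\in\mathcal{E}_{9,2}^{(II)^+}$ we have $H\sim 4E_1+2E_2$ with $E_1,E_2$ nef primitive isotropic (nefness being automatic from the unnodality of a general $S$). Take $D$ to be any member of the pencil $|2E_2|$; this is effective, and
\[
H+D\sim 4E_1+4E_2=4(E_1+E_2),
\]
which lies in $\mathcal{E}_{17,4}^{(IV)^+}$. By Proposition~\ref{lemma:prok4div-nuovo}, $(S,H+D)$ is extendable to $(W,\mathcal{L})$, so Lemma~\ref{lemma:proj} yields the extendability of $(S,H)$.

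For a general $(S,H)\in\mathcal{E}_{9,2}^{(II)^-}$ we have $H\sim 4E_1+2E_2+K_S$. The point is that even though an effective representative of $H$ is not itself of the form $2M$, one can still hit $\mathcal{E}_{17,4}^{(IV)^+}$ after adding a suitable effective divisor absorbing the $K_S$. Namely, let $E_2'\in|E_2+K_S|$ be the second half-fibre of the elliptic pencil $|2E_2|$, and put $D:=E_2+E_2'$, which is the reducible fibre of $|2E_2|$ in the class $2E_2+K_S$, hence effective. Then
\[
H+D\sim 4E_1+2E_2+K_S+E_2+(E_2+K_S)\sim 4(E_1+E_2),
\]
again a general element of $\mathcal{E}_{17,4}^{(IV)^+}$. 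Proposition~\ref{lemma:prok4div-nuovo} and Lemma~\ref{lemma:proj} then give the extendability of $(S,H)$.

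The main technical point (and what could be an obstacle if one were not careful) is the handling of the canonical class in the $(II)^-$ case: one must check that the class $2E_2+K_S$ contains an effective divisor, which is precisely the reducible fibre $E_2+E_2'$ of the genus-one pencil $|2E_2|$. The remaining verifications (that $g,\phi$ are in range for Lemma~\ref{lemma:proj}, and that a general member of the component is unnodal so $E_1,E_2$ are nef) are routine.
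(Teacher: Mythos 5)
Your proof is correct and is essentially the paper's own argument: take $D\sim 2E_2$ (resp.\ $D\sim 2E_2+K_S$), observe $H+D\sim 4(E_1+E_2)$, and combine Proposition~\ref{lemma:prok4div-nuovo} with Lemma~\ref{lemma:proj}. One small terminological slip: $E_2+E_2'$ is an effective divisor in the class $2E_2+K_S$, but it is not a fibre (or member) of the pencil $|2E_2|$, since $K_S\not\sim 0$; all you need, however, is its effectivity, so this does not affect the argument.
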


\begin{proof}
Let $(S, H) \in \E_{9,2}^{(II)^+}$ (respectively, $\E_{9,2}^{(II)^-}$) be general. We have
$H \sim 4E_1+2E_2$ (resp., $4E_1+2E_2+K_S$).  
 Set   $D:=2E_2$ (resp., $2E_2+K_S$).
Then $(S,H+D\sim 4(E_1+E_2))$ is extendable to $(W,\L)$ by  Proposition  \ref{lemma:prok4div-nuovo}, and the result follows from Lemma \ref{lemma:proj}.
\end{proof}

\begin{remark} \label{rem:altricasi}
 Similar reasonings yield that the general members of $\E_{13,3}^{(II)}$, $\E_{10,3}^{(II)}$, $\E_{9,3}^{(II)}$, $\E_{7,2}^{(II)}$, $\E_{6,2}$ are extendable, and the corresponding moduli maps  are  not generically finite, but we will not need this fact. Similarly, a thorough study of the only Enriques--Fano threefold of genus $9$ in \cite{bay,sa} shows that the general member of $\E_{9,4}^{+}$ is extendable. 
 In particular, the general members of all the moduli spaces occurring in Corollary \ref{cor:EF-emb} are extendable to an Enriques--Fano threefold
$(X,\L)$ such that the morphism $\varphi_{\L}$ defined by $|\L|$ is an isomorphism on the general member $S$ of $|\L|$ (as $\phi(\L|_S) \geq 3$), whence $\varphi_{\L}(X) \subset \PP^g$ is not a cone and has smooth Enriques surfaces as hyperplane sections. 
\end{remark}

 We conclude the section by explaining how to use our results (without using \cite{KLM,Pr}) to  
bound the families of Enriques--Fano threefolds having the property that their Enriques sections are {\it general in moduli}, meaning that the family of polarized Enriques sections obtained from the family dominates the moduli space $\E$ of Enriques surfaces.

\begin{corollary} \label{cor:EF}
Consider a family of Enriques--Fano threefolds $(X,\L)$ such that $\L$ is globally generated and whose Enriques sections are general in moduli. Then the general polarized Enriques sections of the family belong to one of the following moduli spaces: 
\[ \E_{17,4}^{(IV)^+}, \; \E_{13,4}^{(II)^+}, \; \E_{13,3}^{(II)}, \; \E_{10,3}^{(II)}, \; \E_{9,4}^{+}, \; \E_{9,3}^{(II)}, \;  \E_{7,3},\; \E_{9,2}^{(II)^+}, \; \E_{9,2}^{(II)^-}, \] 
\[ \E_{7,2}^{(I)}, \; \E_{7,2}^{(II)}, \; \E_{6,2},  \; \E_{5,2}^{(I)}, \; \E_{5,2}^{(II)^+}, \; \E_{5,2}^{(II)^-}, \; \E_{4,2}, \; \E_{3,2}
\] 
\end{corollary}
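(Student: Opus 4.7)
The plan is to extract the corollary from three ingredients already established in the paper: the base-point-freeness analysis of \S\ref{sec:EF-nuovo}, the fiber-jumping result of Proposition \ref{prop:pencil}, and the complete classifications of positive-dimensional fibers of $\chi_{g,\phi}$ given by Theorems \ref{mainthm1} and \ref{mainthm2}. No new geometric input is needed; the proof is essentially a matter of bookkeeping against the tables of \S\ref{sec:msenr}.

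First I would reduce to the case $\phi\geq 2$. Let $(S,\L|_S)$ be the general polarized section of a member of the family, and set $g:=\frac12\L^3+1$, $\phi:=\phi(\L|_S)$. By Lemma \ref{lemma:basicEF}(i) the restriction map $H^0(X,\L)\to H^0(S,\L|_S)$ is surjective, so globally generated $\L$ yields globally generated $\L|_S$, i.e.\ $|\L|_S|$ is base point free. By the criterion \cite[Thm.~4.4.1]{cd} for ample line bundles on an Enriques surface this forces $\phi(\L|_S)\geq 2$, ruling out every component $\E_{g,1}$. Since the component $\E'_{g,\phi}$ of $\E_{g,\phi}$ containing $(S,\L|_S)$ is one of the finitely many components listed in \S\ref{sec:msenr}, and since $p_{g,\phi}:\E_{g,\phi}\to \E$ is \'etale, the assumption ``general in moduli'' simply says that $\E'_{g,\phi}$ is one of these components.

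Second, for $g\geq 6$ I would apply Proposition \ref{prop:pencil} to the triple $(S,\L|_S,C)$ with $C\in|\L|_S|$ general: this produces a pencil in $|\L|$ of non-isomorphic polarized Enriques surfaces all cutting out the same Prym curve, giving
\[
\dim c_{g,\phi}^{-1}\bigl(c_{g,\phi}(S,\L|_S,C)\bigr)\geq 1.
\]
The forgetful morphism $\R_g\to\M_g$ has finite degree $2^{2g}-1$, so the same holds for $\chi_{g,\phi}$. Thus $\chi_{g,\phi}$ restricted to $p_{g,\phi}^{-1}(\E'_{g,\phi})$ has positive-dimensional general fiber. Now I would consult Theorems \ref{mainthm1} and \ref{mainthm2}: for $\phi\geq 3$ the only components carrying positive-dimensional fibers are (discarding the ones of fiber dimension $0$) exactly $\EC_{7,3}$, $\EC_{9,3}^{(II)}$, $\EC_{9,4}^+$, $\EC_{10,3}^{(II)}$, $\EC_{13,3}^{(II)}$, $\EC_{13,4}^{(II)^+}$, $\EC_{17,4}^{(IV)^+}$; for $\phi=2$ and $g\geq 6$ the components with positive-dimensional fibers are $\P_{9,2}^{(II)^\pm}$, $\P_{7,2}^{(I)}$, $\P_{7,2}^{(II)}$ and $\P_{6,2}$. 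All of these appear in the corollary's list, and no others can occur.

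Finally, the case $g\leq 5$: the bound $\phi^2\leq 2g-2$ of \cite[Cor.~2.7.1]{cd} combined with $\phi\geq 2$ forces $g\in\{3,4,5\}$, and a glance at the tables of \S\ref{sec:msenr} shows that every component of $\E_{g,2}$ for $3\leq g\leq 5$, namely $\E_{3,2}$, $\E_{4,2}$, $\E_{5,2}^{(I)}$, $\E_{5,2}^{(II)^+}$, $\E_{5,2}^{(II)^-}$, already appears in the corollary's list. This exhausts all possibilities. Since every ingredient is in place, there is no serious obstacle; the only care needed is in matching the list of components produced by Proposition \ref{prop:pencil} via Theorems \ref{mainthm1}--\ref{mainthm2} against the enumeration in the statement, and in being explicit that global generation of $\L$ (not merely ampleness) is what excludes the $\phi=1$ family, since otherwise all the hyperelliptic components $\E_{g,1}$ for $g\leq 10$ would a priori appear by Theorem \ref{mainthm3}.
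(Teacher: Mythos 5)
Your proposal is correct and follows essentially the same route as the paper's own proof: global generation of $\L$ forces $\phi(\L|_S)\geq 2$ (excluding the $\E_{g,1}$ components), the case $g\leq 5$ is handled by listing all components of $\E_{g,2}$ for $g=3,4,5$, and for $g\geq 6$ Proposition \ref{prop:pencil} combined with the fiber-dimension tables of Theorems \ref{mainthm1}--\ref{mainthm2} singles out exactly the components where $\chi_{g,\phi}$ is not generically finite. Your extra remarks (finiteness of $\R_g\to\M_g$, the role of the \'etale cover in interpreting ``general in moduli'') only make explicit what the paper leaves implicit.
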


\begin{proof}[Proof (granting Theorems \ref{mainthm1}-\ref{mainthm2})]
  Let $(X,\L)$ be general in the family and $S \in |\L|$ be general. The assumption that $\L$ is globally generated yields 
$\phi(\L|_S) \geq 2$ by 
\cite[Thms. 4.1]{cos1} or \cite[Prop. 4.7.1]{cd}. If $g \leq 5$, then $\phi \leq 2$ and 
$(S,\L|_S)$ belongs to one of $\E_{5,2}^{(I)}, \E_{5,2}^{(II)^+}, \E_{5,2}^{(II)^-}, \E_{4,2}, \E_{3,2}$, as those are all irreducible components of $\E_{g,2}$. If $g \geq 6$, then Proposition \ref{prop:pencil} and the assumptions of the corollary yield that $(S,\L|_S)$ belongs to one of the components of $\E_{g,\phi}$ over which 
the moduli map $\chi_{g,\phi}$ is not generically finite. These are given in 
Theorems \ref{mainthm1}-\ref{mainthm2}.
\end{proof}

\section{Computing cohomology of twisted tangent bundles} \label{sec:tools}

In the rest of the paper we adopt the following:

\begin{notation} \label{not:DC}
  For an Enriques surface $S$, we denote by $\pi:\widetilde{S} \to S$ the $K3$ double cover. For any divisor (or line bundle) $D$ on $S$ we write $\widetilde{D}:=\pi^*D$.
\end{notation}

In view of Corollary \ref{cor:geninj} and \eqref{eq:h1soprasotto}, in
this section we will develop some tools for computing or bounding
$h^1(\T_{\widetilde{S}}(-\widetilde{H}))$, where $H$ is a big and nef
line bundle on $S$.

Let  $F_1$ and $F_2$  be  two half-fibers such that $F_1 \cdot F_2=1$. Then 
$|\widetilde{F}_1+\widetilde{F}_2|$ is base point free  and (as in the proof of   Proposition  \ref{lemma:prok4div-nuovo})  it  defines a double cover $g:\widetilde{S} \to \PP^1 \x \PP^1$, branched on a smooth curve $R \in |-2K_{\PP^1 \x \PP^1}|$  (see also   \cite{SD} and  \cite[VIII.\S 18]{BPHV}). Denote by $\widetilde{R} \in |2\widetilde{F}_1+2\widetilde{F}_2|$ the ramification divisor. Define, for any big and nef $H$ on $S$

\[ \alpha=\alpha(H,F_1,F_2):=h^1(H-2F_1)+h^1(H-2F_1+K_S)+h^1(H-2F_2)+h^1(H-2F_2+K_S)\]
and 
\[\beta=\beta(H,F_1,F_2):=h^0(\O_{\widetilde{R}}(4\widetilde{F}_1+4\widetilde{F}_2
-\widetilde{H})).\]

\begin{lemma} \label{lemma:tricknuovo}
  Let $H$ be a big and nef line bundle on $S$ and $F_1$ and $F_2$ two half-fibers such that $F_1 \cdot F_2=1$. Then
$ h^1(\T_{\widetilde{S}}(-\widetilde{H})) \leq \alpha+\beta$,
with equality if $\alpha=0$.
\end{lemma}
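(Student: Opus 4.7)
My plan is to use the double cover $g\colon \widetilde{S}\to Q:=\PP^1\times\PP^1$ defined by $|\widetilde{F}_1+\widetilde{F}_2|$, with reduced ramification divisor $\widetilde{R}\in|2\widetilde{F}_1+2\widetilde{F}_2|$, in order to control $H^1(\T_{\widetilde{S}}(-\widetilde{H}))$ via the much simpler cohomology of $g^{*}\T_{Q}$. The first step is to derive the tangent bundle exact sequence
\[
0\longrightarrow \T_{\widetilde{S}}\longrightarrow g^{*}\T_{Q}\longrightarrow \O_{\widetilde{R}}(2\widetilde{R})\longrightarrow 0,
\]
where $2\widetilde{R}\sim 4\widetilde{F}_1+4\widetilde{F}_2$. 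I would start from the relative cotangent sequence $0\to g^{*}\Omega^{1}_{Q}\to \Omega^{1}_{\widetilde{S}}\to \Omega^{1}_{\widetilde{S}/Q}\to 0$, identify $\Omega^{1}_{\widetilde{S}/Q}\cong \O_{\widetilde{R}}(-\widetilde{R})$ by a local computation on the model $V(t^{2}-f)$ (under the transition $f\mapsto uf$ the generator $dt$ transforms by $u^{-1/2}$, matching $g^{*}L^{-1}$ where $L^{2}=\O_{Q}(R)$), and then dualize using $\mathcal{E}xt^{1}(\O_{\widetilde{R}}(D),\O_{\widetilde{S}})=\O_{\widetilde{R}}(\widetilde{R}-D)$.

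Next I would twist this sequence by $\O_{\widetilde{S}}(-\widetilde{H})$ and take cohomology. By definition, $h^{0}$ of the twisted cokernel is $\beta$. For the middle term, $\T_{Q}=\O_{Q}(2,0)\oplus\O_{Q}(0,2)$ and $g^{*}\O_{Q}(a,b)=\O_{\widetilde{S}}(a\widetilde{F}_{1}+b\widetilde{F}_{2})$, so applying the projection formula to the étale double cover $\pi\colon \widetilde{S}\to S$ (which gives $\pi_{*}\O_{\widetilde{S}}=\O_{S}\oplus \O_{S}(K_{S})$, hence $H^{1}(\widetilde{S},\pi^{*}L)=H^{1}(S,L)\oplus H^{1}(S,L+K_{S})$ for any line bundle $L$), together with Serre duality on the Enriques surface $S$, identifies $h^{1}(g^{*}\T_{Q}(-\widetilde{H}))$ with $\alpha$. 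The long exact sequence in cohomology then gives the inequality $h^{1}(\T_{\widetilde{S}}(-\widetilde{H}))\leq \alpha+\beta$ at once.

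For the equality claim when $\alpha=0$, I would first note that $H^{0}(\T_{\widetilde{S}}(-\widetilde{H}))\subset H^{0}(\T_{\widetilde{S}})=H^{0}(\Omega^{1}_{\widetilde{S}})=0$, since $h^{1,0}=0$ on a K3 surface, so the long exact sequence reduces to
\[
0\to H^{0}(g^{*}\T_{Q}(-\widetilde{H}))\to H^{0}(\O_{\widetilde{R}}(4\widetilde{F}_{1}+4\widetilde{F}_{2}-\widetilde{H}))\to H^{1}(\T_{\widetilde{S}}(-\widetilde{H}))\to 0,
\]
and equality with $\beta$ reduces to $h^{0}(g^{*}\T_{Q}(-\widetilde{H}))=0$. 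By the same decomposition this is in turn $h^{0}(\O_{S}(2F_{i}-H+\varepsilon K_{S}))=0$ for $i=1,2$ and $\varepsilon\in\{0,1\}$, which I would verify by contradiction: effectivity of such a class combined with nefness of $H$ would force $F_{i}\cdot H\geq g-1$, whereas Riemann--Roch on $S$ together with $\alpha=0$ yields the equality $h^{0}(H-2F_{i})+h^{0}(2F_{i}-H+K_{S})=g-2F_{i}\cdot H\geq 0$ (and similarly with $K_{S}$ swapped), giving $F_{i}\cdot H\leq g/2<g-1$ for $g\geq 3$.

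The one point I expect to require real care is the precise identification of the cokernel in the tangent sequence as $\O_{\widetilde{R}}(2\widetilde{R})$: Chern class considerations alone cannot distinguish between line bundles supported on the fixed curve $\widetilde{R}$, so the local-model computation together with the Ext identity appears unavoidable. Everything else is a routine cohomological bookkeeping.
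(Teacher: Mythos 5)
Your proposal is correct and follows the same route as the paper's (essentially one-line) proof: dualize the relative cotangent sequence of the double cover $g$, twist by $\O_{\widetilde{S}}(-\widetilde{H})$, and read off $\alpha$ and $\beta$ from the long exact cohomology sequence via $g^*\T_{\PP^1\times\PP^1}\cong\O_{\widetilde{S}}(2\widetilde{F}_1)\oplus\O_{\widetilde{S}}(2\widetilde{F}_2)$ and the splitting $H^i(\pi^*L)=H^i(L)\oplus H^i(L+K_S)$ combined with Serre duality on $S$. The one point to adjust is your Riemann--Roch justification of $h^0(2\widetilde{F}_i-\widetilde{H})=0$, which as written excludes the case $g=2$ allowed by the hypotheses: the cleaner argument (implicit in the paper's ``since $H$ is big and nef'') is that $\widetilde{F}_i$ is nef with $\widetilde{F}_i\cdot\widetilde{H}>0$, so an effective nonzero class $2\widetilde{F}_i-\widetilde{H}$ would satisfy $(2\widetilde{F}_i-\widetilde{H})\cdot\widetilde{F}_i=-\widetilde{F}_i\cdot\widetilde{H}<0$, a contradiction.
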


\begin{proof}
  Dualizing the sequence of relative differentials  and tensoring 
by $\O_{\widetilde{S}}(-\widetilde{H})$ we get 
\[ 0 \to \T_{\widetilde{S}} (-\widetilde{H}) \to  g^*\T_{\PP^1 \x \PP^1}(-\widetilde{H})  \cong  \O_{\widetilde{S}}(2\widetilde{F}_1-\widetilde{H}) \+ \O_{\widetilde{S}}(2\widetilde{F}_2-\widetilde{H}) \to \O_{\widetilde{R}}(2\widetilde{R}-\widetilde{H}) \to 0.\]
Since $H$ is big and nef, we have $h^0(2\widetilde{F}_i-\widetilde{H})=0$, and the result follows. 
\end{proof}

The following bounds on $\beta$ will be useful later on:
\begin{equation}
  \label{eq:bbeta1}
  \beta=0 \quad \text{if} \quad (F_1+F_2)  \cdot H >8,
\end{equation}
and
\begin{multline}
\label{eq:bbeta2}
\qquad \beta \leq 
h^0(4F_1+4F_2-H)+h^0(4F_1+4F_2-H+K_S) \\
+ h^1(H-2F_1-2F_2)+h^1(H-2F_1-2F_2+K_S).\qquad
\end{multline}
Indeed, \eqref{eq:bbeta1} follows by reasons of degree, as
\begin{eqnarray*}
 \deg (\O_{\widetilde{R}}(4\widetilde{F}_1+4\widetilde{F}_2
-\widetilde{H}) ) & = & \pi^*(2F_1+2F_2) \cdot \pi^*(4F_1+4F_2-H) \\  
& = & 4(F_1+F_2)(4F_1+4F_2-H)=4\left(8-(F_1+F_2)  \cdot H\right),
\end{eqnarray*}
whereas \eqref{eq:bbeta2}  follows from the exact sequence
\begin{equation} \label{eq:bbeta3}
0 \longrightarrow \pi^*\O_S(2F_1+2F_2-H) \longrightarrow \pi^*\O_S(4F_1+4F_2-H)
\longrightarrow \O_{\widetilde{R}}(2\widetilde{R}-\widetilde{H}) \longrightarrow 0. 
\end{equation}

Let next $G_1$ and $G_2$  be  two  effective primitive  isotropic divisors such that $G_1 \cdot G_2=2$ and 
$G_1+G_2$ is nef (e.g., 
$G_1$ and $G_2$ are
half-fibers). Then  
$|\widetilde{G}_1+\widetilde{G}_2|$ is base point free and 
 embeds $\widetilde{S}$ into $\PP^5$ as   a complete intersection of three quadrics by \cite{SD}. Set

 \begin{eqnarray*}
 \gamma =\gamma(H,G_1,G_2)  & := & h^1(H-G_1-G_2)+h^1(H-G_1-G_2+K_S) \\
 \delta=\delta( H,G_1,G_2)  & := & h^0(2G_1+2G_2-H)+h^0(2G_1+2G_2-H+K_S) \\
 \epsilon=\epsilon( H,G_1,G_2) & := & \cork \mu_{\widetilde{G}_1+\widetilde{G}_2,\widetilde{H}-\widetilde{G}_1-\widetilde{G}_2},
 \end{eqnarray*}
 where
$\mu_{A, B}: H^0(A) \* H^0(B) \longrightarrow H^0(A+B)$
is the multiplication map of sections.

\begin{lemma} \label{lemma:trick4}
  Let $H$ be a big and nef line bundle on $S$ with $H^2 \geq 4$  and let $G_1$ and $G_2$ be two effective   primitive  isotropic  divisors such that $G_1 \cdot G_2=2$ and $G_1+G_2$ is nef. If $H \not \equiv G_1+G_2$, then
$h^1(\T_{\widetilde{S}}(-\widetilde{H}))  \leq \epsilon+6\gamma+3\delta$,
with equality if $\epsilon=\gamma=0$.
If $H \equiv G_1+G_2$, then $h^1(\T_{\widetilde{S}}(-\widetilde{H}))=12$.
\end{lemma}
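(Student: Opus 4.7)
The plan is to use that $|\widetilde{G}_1+\widetilde{G}_2|$ embeds $\widetilde{S}$ in $\PP^5 = \PP(V^\vee)$, where $V := H^0(\widetilde{S},\widetilde{G}_1+\widetilde{G}_2) \cong \CC^6$, as a smooth complete intersection of three quadrics, so that $\N_{\widetilde{S}/\PP^5} \cong \O_{\widetilde{S}}(2(\widetilde{G}_1+\widetilde{G}_2))^{\oplus 3}$. The key tools are the Euler sequence of $\PP^5$ restricted to $\widetilde{S}$ and the normal bundle sequence of this embedding, both twisted by $\O_{\widetilde{S}}(-\widetilde{H})$:
\begin{gather*}
0 \to \O_{\widetilde{S}}(-\widetilde{H}) \to V \otimes \O_{\widetilde{S}}(\widetilde{G}_1+\widetilde{G}_2-\widetilde{H}) \to \T_{\PP^5}(-\widetilde{H})|_{\widetilde{S}} \to 0, \\
0 \to \T_{\widetilde{S}}(-\widetilde{H}) \to \T_{\PP^5}(-\widetilde{H})|_{\widetilde{S}} \to \O_{\widetilde{S}}(2(\widetilde{G}_1+\widetilde{G}_2)-\widetilde{H})^{\oplus 3} \to 0.
\end{gather*}

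First one computes $h^1(\T_{\PP^5}(-\widetilde{H})|_{\widetilde{S}})$ from the Euler sequence. Since $\widetilde{H}$ is big and nef on the K3 surface $\widetilde{S}$, Ramanujam vanishing gives $H^0(-\widetilde{H}) = H^1(-\widetilde{H}) = 0$, and Serre duality on $\widetilde{S}$ identifies the connecting map $H^2(-\widetilde{H}) \to V \otimes H^2(\widetilde{G}_1+\widetilde{G}_2-\widetilde{H})$ with the dual of $\mu_{\widetilde{G}_1+\widetilde{G}_2,\widetilde{H}-\widetilde{G}_1-\widetilde{G}_2}$. This yields $h^1(\T_{\PP^5}(-\widetilde{H})|_{\widetilde{S}}) = 6\,h^1(\widetilde{G}_1+\widetilde{G}_2-\widetilde{H}) + \epsilon$. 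Next, using $\pi_*\O_{\widetilde{S}} = \O_S \oplus \O_S(K_S)$ together with Serre duality on $S$ (which gives $h^1(D) = h^1(K_S-D)$), one identifies $h^1(\widetilde{G}_1+\widetilde{G}_2-\widetilde{H}) = \gamma$ and $h^0(2(\widetilde{G}_1+\widetilde{G}_2)-\widetilde{H}) = \delta$. The long exact sequence of the second displayed sequence then delivers the bound $h^1(\T_{\widetilde{S}}(-\widetilde{H})) \leq 3\delta + 6\gamma + \epsilon$.

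For the equality claim when $\epsilon = \gamma = 0$ and $H \not\equiv G_1+G_2$, it suffices to show that the connecting map $H^0(\T_{\PP^5}(-\widetilde{H})|_{\widetilde{S}}) = V \otimes H^0(\widetilde{G}_1+\widetilde{G}_2-\widetilde{H}) \to H^0(2(\widetilde{G}_1+\widetilde{G}_2)-\widetilde{H})^{\oplus 3}$ vanishes, which follows if $H^0(\widetilde{G}_1+\widetilde{G}_2-\widetilde{H}) = 0$. If instead $G_1+G_2-H$ or $G_1+G_2-H+K_S$ were effective, the nefness of $G_1+G_2$ would force $H \cdot (G_1+G_2) \leq 4$, whereas Hodge index together with $H^2 \geq 4$ gives $H \cdot (G_1+G_2) \geq 2\sqrt{H^2} \geq 4$ with equality only if $H \equiv G_1+G_2$, contradicting the hypothesis. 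The remaining case $H \equiv G_1+G_2$ requires a separate direct computation: here $\widetilde{H} = \widetilde{G}_1+\widetilde{G}_2$ is the primitive polarization of a complete-intersection K3 of genus $5$ in $\PP^5$, and Riemann--Roch gives $\chi(\T_{\widetilde{S}}(-\widetilde{H})) = \widetilde{H}^2 - 20 = -12$; combined with $h^0(\T_{\widetilde{S}}(-\widetilde{H})) = 0$ (standard vanishing for big and nef twists) and $h^2(\T_{\widetilde{S}}(-\widetilde{H})) = h^0(\Omega^1_{\widetilde{S}}(\widetilde{H})) = 0$ (obtained by combining the conormal sequence for $\widetilde{S} \subset \PP^5$ with the twisted dual Euler sequence), this yields $h^1 = 12$. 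The main technical hurdle will be the last vanishing $h^0(\Omega^1_{\widetilde{S}}(\widetilde{H})) = 0$, which must be handled carefully using the projective geometry of the complete intersection.
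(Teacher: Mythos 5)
Your proof is correct and, for the main case $H \not\equiv G_1+G_2$, follows exactly the paper's route: Euler sequence plus normal bundle sequence twisted by $-\widetilde{H}$, identification of the coboundary with the dual of $\mu_{\widetilde{G}_1+\widetilde{G}_2,\widetilde{H}-\widetilde{G}_1-\widetilde{G}_2}$, and the push-forward/Serre-duality identifications of $\gamma$ and $\delta$; your Hodge-index argument for $h^0(\widetilde{G}_1+\widetilde{G}_2-\widetilde{H})=0$ supplies a justification that the paper leaves implicit. The only genuine divergence is the case $H \equiv G_1+G_2$, where you invoke Riemann--Roch and must then establish the two vanishings $h^0(\T_{\widetilde{S}}(-\widetilde{H}))=0$ and $h^2(\T_{\widetilde{S}}(-\widetilde{H}))=h^0(\Omega^1_{\widetilde{S}}(\widetilde{H}))=0$. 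The paper instead reads $h^1$ directly off the same long exact sequence: with $H \equiv G_1+G_2$ one has $\gamma=\epsilon=0$, $h^0(\T_{\PP^5}|_{\widetilde{S}}(-\widetilde{H}))=6$ and $h^0\bigl(\O_{\widetilde{S}}(2\widetilde{G}_1+2\widetilde{G}_2-\widetilde{H})^{\+3}\bigr)=18$, so $h^1(\T_{\widetilde{S}}(-\widetilde{H}))=18-6=12$ once $h^0(\T_{\widetilde{S}}(-\widetilde{H}))=0$ is known (Mori--Sumihiro--Wahl, which you also need). This makes the $h^2$ vanishing that you flag as the "main technical hurdle" entirely superfluous; and in any case it is not an independent difficulty, since continuing the same long exact sequence one step further (using $h^1$ and $h^2$ of $\O_{\widetilde{S}}(\widetilde{G}_1+\widetilde{G}_2)$ and the surjectivity of the dual of $\mu$ onto $H^2(\T_{\PP^5}|_{\widetilde{S}}(-\widetilde{H}))$) gives it for free. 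So your argument closes, but the detour through Riemann--Roch buys nothing over the direct count.
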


\begin{proof}
  The Euler sequence 
twisted by $\O_{\widetilde{S}}(-\widetilde{H})$ is
\begin{equation*} \label{eq:euler}
\xymatrix{ 0 \ar[r] & \O_{\widetilde{S}}(-\widetilde{H}) \ar[r] & H^0(\widetilde{G}_1+\widetilde{G}_2)^{\vee} \* \O_{\widetilde{S}}(\widetilde{G}_1+\widetilde{G}_2-\widetilde{H}) \ar[r] & \T_{\PP^5}|_{\widetilde{S}}(-\widetilde{H})  \ar[r] & 0.}
\end{equation*}
The map on cohomology $H^2(\O_{\widetilde{S}}(-\widetilde{H})) \to H^0(\widetilde{G}_1+\widetilde{G}_2)^{\vee} \* H^2(\O_{\widetilde{S}}(\widetilde{G}_1+\widetilde{G}_2-\widetilde{H}))$ is the dual of $\mu_{\widetilde{G}_1+\widetilde{G}_2,\widetilde{H}-\widetilde{G}_1-\widetilde{G}_2}$. Thus, as $h^0(-\widetilde{H})=h^1(-\widetilde{H})=0$, we have
\begin{equation}
  \label{eq:h0t}
  h^0(\T_{\PP^5}|_{\widetilde{S}}(-\widetilde{H}))=6h^0(\O_{\widetilde{S}}(\widetilde{G}_1+\widetilde{G}_2-\widetilde{H}))=\begin{cases}
6 & \mbox{if} \; H \equiv G_1+G_2,\\
0 & \mbox{if} \; H \not \equiv G_1+G_2 
\end{cases}
\end{equation}
and
\begin{equation}
  \label{eq:h1t}
  h^1(\T_{\PP^5}|_{\widetilde{S}}(-\widetilde{H}))=\epsilon+6\gamma.
\end{equation}

 The  normal bundle  sequence 
twisted by $\O_{\widetilde{S}}(-\widetilde{H})$ is
\begin{equation} \label{eq:normal}
\xymatrix{ 0 \ar[r] & \T_{\widetilde{S}}(-\widetilde{H}) \ar[r] & \T_{\PP^5}|_{\widetilde{S}}(-\widetilde{H})  \ar[r] & 
\O_{\widetilde{S}}(2\widetilde{G}_1+2\widetilde{G}_2-\widetilde{H})^{\+ 3} \ar[r] & 0.}
\end{equation}
Taking cohomology, and using \eqref{eq:h0t} and \eqref{eq:h1t}, yields the desired result (using that $h^0(\T_{\widetilde{S}}(-\widetilde{H}))=0$ by the Mori-Sumihiro-Wahl Theorem \cite{MS,Wa-Pn}  when   $H \equiv G_1+G_2$).
\end{proof}

\begin{remark} \label{rem:push}
  Pushing forward \eqref{eq:normal} by $\pi$ and using  the fact that
$\T_{\widetilde{S}}\cong \pi^*\T_S $,   we obtain a  splitting of the coboundary map into the direct sum of
 $H^0(\O_S(2G_1+2G_2-H))^{\+ 3} \to H^1(\T_S(-H))$  and
 $H^0(\O_S(2G_1+2G_2-H+K_S))^{\+ 3} \to H^1(\T_S(-H+K_S))$.

\end{remark}

We end this section with some results that will be useful to compute the corank of multiplication maps. They are similar to the generalization by Mumford of a theorem of Castelnuovo, cf.\  \cite[Thm. 2, p.~41]{Mum}:

\begin{lemma} \label{lemma:mumford2}
 Let $F$ and $G$ be divisors on a projective surface  $S$  and assume that $|G|$ is a base point free pencil.
Then $\cork (\mu_{F,G}) \leq h^1(F-G)$, with equality if $h^1(F)=0$.
\end{lemma}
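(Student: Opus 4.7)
The strategy I would follow is the classical \emph{base-point-free pencil trick}. Since $|G|$ is a base-point-free pencil, $h^0(G)=2$, and any two generators $s_1,s_2\in H^0(G)$ give a surjection $\O_S^{\oplus 2}\twoheadrightarrow \O_S(G)$ whose kernel is $\O_S(-G)$, yielding the Koszul-style short exact sequence
\[
0 \longrightarrow \O_S(-G) \longrightarrow \O_S^{\oplus 2} \longrightarrow \O_S(G) \longrightarrow 0.
\]
First, I would twist this by $\O_S(F)$ and take the long exact sequence in cohomology. This produces the four-term exact sequence
\[
H^0(F)^{\oplus 2} \longrightarrow H^0(F+G) \longrightarrow H^1(F-G) \longrightarrow H^1(F)^{\oplus 2}.
\]

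The key step is to identify the left-hand map with the multiplication map $\mu_{F,G}$. Using the basis $\{s_1,s_2\}$ of $H^0(G)$ to identify $H^0(F)\otimes H^0(G)\cong H^0(F)^{\oplus 2}$, the map induced by $\O_S(F)^{\oplus 2}\to \O_S(F+G)$ sends $(\alpha,\beta)\mapsto s_1\alpha+s_2\beta$, which is exactly $\mu_{F,G}$ under that identification. Once this identification is made, exactness gives
\[
\coker(\mu_{F,G})\hookrightarrow \ker\bigl(H^1(F-G)\to H^1(F)^{\oplus 2}\bigr),
\]
so $\cork(\mu_{F,G})\leq h^1(F-G)$, with equality as soon as the connecting map $H^1(F-G)\to H^1(F)^{\oplus 2}$ is zero; a sufficient condition for this is $h^1(F)=0$, yielding the second claim.

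The argument is essentially mechanical and there is no real obstacle; the only point requiring care is the verification that the boundary map in the twisted Koszul sequence agrees with $\mu_{F,G}$ under the basis identification, which is standard.
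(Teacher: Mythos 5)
Your proof is correct and is essentially the paper's own argument: both tensor the evaluation (Koszul) sequence $0 \to \O_S(-G) \to H^0(G)\otimes\O_S \to \O_S(G) \to 0$ with $\O_S(F)$ and read off the statement from the long exact cohomology sequence. You merely spell out the identification of the induced map with $\mu_{F,G}$, which the paper leaves implicit.
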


\begin{proof}
  This follows by tensoring the evaluation exact sequence
\[ 0 \longrightarrow \O_S(-G) \longrightarrow H^0(G) \* \O_S \longrightarrow  \O_S(G)  
\longrightarrow 0\]
with $\O_S(F)$ and taking cohomology. 
\end{proof}

\begin{lemma} \label{lemma:mumford3}
 \rm{(\cite[Obs.~1.4.1]{GP})}   {\it Let $F$ and $G=\sum_{i=1}^n G_i$ be divisors on a projective surface. If the multiplication maps $\mu_{F+G_1+\cdots+G_{i-1},G_i}$ are surjective for all $1 \leq i\leq n$, then also $\mu_{F,G}$ is surjective.}
\end{lemma}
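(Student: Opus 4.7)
The plan is to proceed by induction on $n$, the case $n=1$ being the hypothesis itself. For the inductive step, I first apply the inductive hypothesis to the collection $G_1, \ldots, G_{n-1}$: by assumption the maps $\mu_{F+G_1+\cdots+G_{i-1}, G_i}$ are surjective for $1 \leq i \leq n-1$, so the induction yields that $\mu_{F, G_1+\cdots+G_{n-1}}$ is surjective.

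Then I would exploit the associativity of multiplication in the section ring $\bigoplus_D H^0(S,\O_S(D))$ via the commutative diagram
\begin{equation*}
\xymatrix@C=12pt{
H^0(F) \otimes H^0(G_1{+}\cdots{+}G_{n-1}) \otimes H^0(G_n) \ar[r] \ar[d] & H^0(F) \otimes H^0(G) \ar[d]^{\mu_{F,G}} \\
H^0(F{+}G_1{+}\cdots{+}G_{n-1}) \otimes H^0(G_n) \ar[r]_(.65){\mu_{F+G_1+\cdots+G_{n-1},G_n}} & H^0(F+G),
}
\end{equation*}
where the top arrow is $\mathrm{id} \otimes \mu_{G_1+\cdots+G_{n-1}, G_n}$ and the left arrow is $\mu_{F, G_1+\cdots+G_{n-1}} \otimes \mathrm{id}$. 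By the previous paragraph the left arrow is surjective, and by hypothesis (applied to $i=n$) the bottom arrow is surjective, so going around the diagram via the lower-left corner shows that the composition to $H^0(F+G)$ is surjective. Consequently the route through the upper-right corner is also surjective, and since this route factors through $\mu_{F,G}$, the map $\mu_{F,G}$ must itself be surjective.

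There is essentially no obstacle here: the lemma is a purely formal consequence of associativity of the cup-product structure on sections, and the only care needed is to align the induction hypothesis with the range of indices $i = 1, \ldots, n-1$ given by the assumption.
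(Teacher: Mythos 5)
Your argument is correct. The paper does not prove this lemma at all — it simply cites \cite[Obs.~1.4.1]{GP} — and your induction-plus-associativity argument is precisely the standard proof of that observation: the base case $n=1$ is the hypothesis, the inductive hypothesis gives surjectivity of $\mu_{F,G_1+\cdots+G_{n-1}}$, and the composition through the lower-left corner of your commutative square is surjective, forcing $\mu_{F,G}$ to be surjective since the upper-right route factors through it. The one point worth being explicit about (and which you handle correctly) is that the top arrow $\mathrm{id}\otimes\mu_{G_1+\cdots+G_{n-1},G_n}$ need not be surjective; this is harmless because surjectivity of a composite $g\circ f$ only requires surjectivity of the outer map $g$, which here is $\mu_{F,G}$.
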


\begin{remark} \label{rem:mumford3}
  When all $|G_i|$ are base point free pencils, the criterion in Lemma  \ref {lemma:mumford3} is satisfied (by Lemma \ref{lemma:mumford2}) if 
 $h^1(F+G_1+\cdots+G_{i-1}-G_i)=0$ for all $i=1,\ldots,n$. 
\end{remark}

\section{Fiber dimensions of moduli maps}
\label{sec:enr}

In this section we will apply the results of the previous section,
combined with Corollary \ref{cor:geninj} and \eqref{eq:h1soprasotto},
to prove Theorem \ref{mainthm1} and part of Theorem \ref{mainthm2}.

We will make use of the following facts.
Let $S$ be an Enriques surface.  By  \cite[Lemma~2.1]{klvan}, if $A$ and $B$
are effective 
divisors on $S$, then 
\begin{equation}
\label{kl-HIT}
A^2, B^2 \geq 0  \implies A \cdot B \geq 0,  \; \text{with equality iff} \;
A^2=0 \text{ and }
A \equiv k B \text{ for some } k\in\QQ.
\end{equation}
 For any divisor $D$ such that $D^2
\geq 0$  and $D \not \sim K_S$, either $D$ or $-D$ is effective. If moreover $S$ is unnodal, then any effective divisor $D$ is nef,
and it is ample if and only if $D^2>0$. Thus, for any divisor $D$ on  an unnodal  
$S$ we have  (by Riemann-Roch  and Mumford vanishing)  
\begin{equation}
\label{eq:noteff}
D^2 \leq -2
\ \implies\ 
\begin{cases}
h^0(D)=h^0(D+K_S)=h^2(D)=h^2(D+K_S)=0\quad \text{and} \\ 
\textstyle
h^1(D)=h^1(D+K_S)=-\frac{1}{2}D^2-1,
\end{cases}
\end{equation}
 \begin{equation}
\label{eq:KV}
h^1(D)=h^1(D+K_S)=0  \iff 
D^2 \geq -2 \;  
\text{and $D \not \equiv \pm lE$, $l \geq 2$, for  a half-fiber $E$}.
\end{equation}

 \begin{remark}  \label{rem:nodnotnec}   Recall that the general Enriques  surface is unnodal.  
The assumption  in the results below  that $S$ be unnodal  is not necessary  in order  to apply the results from 
\S \ref{sec:tools}. It  is  added to simplify the proofs of the vanishings of various cohomology groups. A more thorough study will yield bounds on $h^1(\T_S(-H))$ in terms of the existence of specific configurations of rational curves. We therefore expect that a result similar to Corollary \ref{cor:EF-emb} can also be obtained in the nodal cases (yielding for instance the two additional cases of Corollary \ref{cor:nodalbayle}   below), but it would  require additional  work that would bring us beyond the scope of this paper. 
 \end{remark}

We use Notations \ref{not:int} and \ref{not:DC}. 
We say that a simple isotropic decomposition
 $H \sim \sum_{i=1}^n \alpha_iF_i+ \varepsilon K_S$ contains 
$\sum_{i=1}^{n} a_iF_i$ if $\alpha_i \geq a_i$ for all $i \in \{1,\ldots,n\}$.

\begin{lemma} \label{lemma:appenriques}
Assume $S$ is an unnodal Enriques surface and $H$ a big and nef line
bundle on $S$.  We have $h^1(\T_{\widetilde{S}}(-\widetilde{H}))=0$
if a simple isotropic decomposition of $H$ contains \\[2mm]
\begin{minipage}{\linewidth/2}
\begin{itemize}
\item[(a)] $E_{1}+E_{2}+E_{3}+E_{4}+E_{5}$, 
\item[(b)] $2E_{1}+E_{2}+E_{3}+E_{4}$, 
\item[(c)] $3E_1+E_2+E_3$, 
\item[(d)] $5E_1+3E_2$,
\end{itemize}
\end{minipage}
\begin{minipage}{\linewidth/2}
\begin{itemize}
\item[(e)] $2E_1+E_3+E_{1,2}$,
\item[(f)] $E_1+E_2+E_{1,2}$,
\item[(g)] $3E_1+2E_{1,2}$,
\item[(h)] $2E_1+3E_{1,2}$.
\end{itemize}
\end{minipage}
\end{lemma}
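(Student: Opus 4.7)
The approach is case-by-case, applying the cohomological tools developed in Section \ref{sec:tools} combined with the vanishings \eqref{eq:noteff}--\eqref{eq:KV} that rely on unnodality of $S$.

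For cases (a)--(d), whose sub-decompositions involve only primitive isotropic classes with pairwise intersection $1$, the plan is to apply Lemma \ref{lemma:tricknuovo} to a pair of half-fibers $(F_1,F_2)$ with $F_1\cdot F_2=1$. The default choice is $(F_1,F_2)=(E_1,E_2)$. Verifying $\alpha=0$ reduces, via \eqref{eq:KV}, to computing $(H-2F_i)^2$ and ruling out numerical equivalence with $\pm lE$ for a half-fiber $E$, which is immediate from the intersection profile of $H-2F_i$ against the $E_j$'s appearing in the decomposition (the square being typically nonzero settles the point). Vanishing of $\beta$ follows from the degree bound \eqref{eq:bbeta1} whenever $(F_1+F_2)\cdot H>8$; in the borderline cases (as in case (a) with $H$ taken to be exactly the minimal sum $E_1+\cdots+E_5$) the refined bound \eqref{eq:bbeta2}, together with \eqref{eq:noteff}, yields the conclusion, possibly after replacing $F_2$ by an auxiliary isotropic class of the form $E_{i,j}$ (which is a half-fiber on unnodal $S$) so as to push $(F_1+F_2)\cdot H$ strictly above $8$.

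For cases (e)--(h), which feature a summand $E_{1,2}$ with $E_{1,2}\cdot E_1=2$, the plan is to apply Lemma \ref{lemma:trick4} with $(G_1,G_2)=(E_1,E_{1,2})$. Vanishing of $\gamma$ again follows from \eqref{eq:KV} applied to $H-E_1-E_{1,2}$; vanishing of $\delta$ follows from \eqref{eq:noteff}, since a direct self-intersection computation shows $2(E_1+E_{1,2})-H$ has self-intersection $\leq-2$ and is therefore not effective on an unnodal surface. For $\epsilon$, I would apply Remark \ref{rem:mumford3} to the decomposition $\widetilde{G}_1+\widetilde{G}_2=\widetilde{E}_1+\widetilde{E}_{1,2}$ into base-point-free elliptic pencils on $\widetilde{S}$, reducing again to vanishing of $h^1$ on suitable twists of the K3 double cover, which follows via \eqref{eq:h1soprasotto} from \eqref{eq:KV} on $S$.

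The hard part will be the borderline verifications: namely, those cases where $(F_1+F_2)\cdot H=8$ exactly or where the naive pencil decomposition used for $\epsilon$ produces a nonzero intermediate $h^1$ on $\widetilde{S}$. These require either a cleverer choice of auxiliary half-fiber to strengthen the strict inequality in \eqref{eq:bbeta1}, or a direct analysis of the long exact sequence coming from \eqref{eq:bbeta3}, or a reordering of the pencils in Remark \ref{rem:mumford3}. In all instances the relevant vanishing is ultimately reduced, by \eqref{eq:h1soprasotto} and the unnodality hypothesis, to a short list of explicit numerical computations of self-intersections against the classes appearing in the simple isotropic decomposition.
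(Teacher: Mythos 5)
Your overall architecture matches the paper's: Lemma \ref{lemma:tricknuovo} for the decompositions built from classes meeting in degree $1$, Lemma \ref{lemma:trick4} with $(G_1,G_2)=(E_1,E_{1,2})$ for the others, with $\alpha,\beta,\gamma,\delta$ killed by \eqref{eq:noteff}--\eqref{eq:KV} and \eqref{eq:bbeta1}--\eqref{eq:bbeta2}, and $\epsilon$ killed by Remark \ref{rem:mumford3}. Cases (b), (c), (d), (f), (g), (h) go through essentially as you describe (modulo the fact that in (b)--(d) it is \eqref{eq:bbeta2}, not \eqref{eq:bbeta1}, that does all the work, and that in (f)--(h) one needs a mild symmetry/reordering argument to rule out $H-E_1-E_{1,2}\equiv lE$ when applying \eqref{eq:KV}).

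However, there is a genuine gap in case (a). In the minimal situation $H\sim E_1+\cdots+E_5$, your default choice $(F_1,F_2)=(E_1,E_2)$ gives $(F_1+F_2)\cdot H=8$ exactly, and \eqref{eq:bbeta2} does not rescue you: $(H-2E_1-2E_2)^2=-4$, so \eqref{eq:noteff} gives $h^1(H-2E_1-2E_2)=h^1(H-2E_1-2E_2+K_S)=1$ and only $\beta\leq 2$. Your proposed repair --- replacing $F_2$ by a class of type $E_{i,j}$ --- destroys the other half of the estimate: $H\cdot E_{i,j}=7$ forces $(H-2E_{i,j})^2=20-28=-8$, whence $h^1(H-2E_{i,j})=3$ by \eqref{eq:noteff} and $\alpha\neq 0$, so Lemma \ref{lemma:tricknuovo} only yields an upper bound of $6$, not vanishing. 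No pair chosen from the classes appearing in the decomposition works. The paper's resolution is to invoke \cite[Cor.~2.5.6]{cd} to produce two \emph{new} half-fibers $F_1,F_2$ with $F_1\cdot F_2=F_i\cdot E_j=1$ for all $j$, so that $(F_1+F_2)\cdot H\geq 10$ kills $\beta$ via \eqref{eq:bbeta1}; even then $(H-2F_i)^2=0$ is borderline for $\alpha$, and one must additionally choose the $F_i$ generically so that $H-2F_i$ is not numerically a rational multiple of a half-fiber (this is where \eqref{kl-HIT} and the condition $F_j\not\equiv\frac12(E_1+\cdots+E_5)-\frac{1}{E_1\cdot D}D$ enter). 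This auxiliary-half-fiber construction is a missing idea, not a routine verification. Separately, your plan to handle case (e) with Lemma \ref{lemma:trick4} departs from the paper, which treats (e) with Lemma \ref{lemma:tricknuovo} for the pair $(E_1,E_3)$ and a case division on which classes appear in the residual nef summand $D$; your route may be salvageable, but the surjectivity of $\mu_{\widetilde{E}_1+\widetilde{E}_{1,2},\widetilde{H}-\widetilde{E}_1-\widetilde{E}_{1,2}}$ for an arbitrary residual $D$ is exactly the kind of inductive pencil argument that occupies most of case (f), and you have not checked it.
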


\begin{proof}
\textbf{(a)} We have $H \sim E_{1}+E_{2}+E_{3}+E_{4}+E_{5}+D$, where $D$ is
 nef.   By \cite[Cor.~2.5.6]{cd} there are
primitive isotropic $F_1,F_2$ such that $F_1 \cdot F_2=F_i \cdot
E_j=1$ for $i \in \{1,2\}$ and $j \in \{1,2,3,4,5\}$ and such that
$F_j \not \equiv \frac{1}{2}(E_1+\cdots+E_{5})-\frac{1}{E_1 \cdot
D}D$.

We apply Lemma \ref{lemma:tricknuovo}.  
We have $(F_1+F_2) \cdot H
\geq 10$, whence $\beta=0$ by \eqref{eq:bbeta1}. We have
$(E_{1}+\cdots+E_{5}-2F_1)^2=0$, whence
\begin{eqnarray*} (H-2F_1)^2 & = & (E_{1}+\cdots+E_{5}-2F_1)^2+2(E_{1}+\cdots+E_{5}-2F_1)\cdot D + D^2 \\
& = & 2(E_{1}+\cdots+E_{5}-2F_1)\cdot D + D^2 \geq 0,
\end{eqnarray*}
with equality if and only if $D^2=0$ and $D \equiv
k(E_{1}+\cdots+E_{5}-2F_1)$ for some $k \in \QQ$
by \eqref{kl-HIT}.
In the latter case, intersecting with $E_1$ yields $k=\frac{1}{2}E_1
\cdot D$, whence $F_1 \equiv
\frac{1}{2}(E_1+\cdots+E_{5})-\frac{1}{E_1 \cdot D}D$, a
contradiction.  Hence $h^1(H-2F_1)=h^1(H-2F_1+K_S)=0$ by
\eqref{eq:KV}. By symmetry, also $h^1(H-2F_2)=h^1(H-2F_2+K_S)=0$, so
that $\alpha=0$.  The result then follows from Lemma
\ref{lemma:tricknuovo}.

\textbf{(b)} We have $H \sim 2E_1+E_{2}+E_{3}+E_{4}+D$, where $D$ is  nef.  By symmetry, we may assume that
\begin{equation}
  \label{eq:intD0}
  D \cdot E_4 \geq D \cdot E_2.
\end{equation}
We apply Lemma \ref{lemma:tricknuovo} with $F_1=E_1$ and $F_2=E_2$. 

We have $(H-2E_1)^2=(E_2+E_3+E_4+D)^2>0$, whence $h^1(H-2E_1) =
h^1(H-2E_1+K_S)=0$ by \eqref{eq:KV}. We have $(2E_1+E_3+E_4-E_2)^2=2$,
whence
\[
 (H-2E_2)^2  =  (2E_1+E_3+E_4-E_2)^2+2(2E_1+E_3+E_4-E_2)\cdot D + D^2 \geq 2,
\]
since both $2E_1+E_3+E_4-E_2$ and $D$ are effective.
 It follows that  $h^1(H-2E_2)=h^1(H-2E_2+K_S)=0$ again by \eqref{eq:KV}. Hence $\alpha=0$. 

We next prove that $\beta=0$. We will apply \eqref{eq:bbeta2}. We first note that,  by \eqref{eq:noteff},  
\[
h^0(4E_1+4E_2-H)=h^0(2E_1+3E_2-E_3-E_4-D) \leq h^0(2E_1+3E_2-E_3-E_4)=0,
\]
 as  $(2E_1+3E_2-E_3-E_4)^2=-6$. Similarly, $h^0(4E_1+4E_2-H+K_S)=0$. We also have
\begin{eqnarray*}
 (H-2E_1-2E_2)^2 & = & (E_3+E_4-E_2)^2+2(E_3+E_4-E_2)\cdot D + D^2 \\
& = &
-2 +2(E_3+E_4-E_2)\cdot D + D^2 \geq -2,
\end{eqnarray*} 
by \eqref{eq:intD0}, and $(H-2E_1-2E_2)^2=0$ if and only if $(D^2,
(E_3+E_4-E_2)\cdot D) \in \{(0,1),(2,0)\}$.  But in the latter case we
must have $D \cdot E_3=0$ by \eqref{eq:intD0}, contradicting
\eqref{kl-HIT}. In the former case we have $D \cdot
(H-2E_1-2E_2)=1$, implying that $H-2E_1-2E_2$ is primitive. Hence,
$h^1(H-2E_1-2E_2)=h^1(H-2E_1-2E_2+K_S)=0$ by \eqref{eq:KV}. Thus, $\beta=0$ by \eqref{eq:bbeta2}.

\textbf{(c)} We have $H \sim 3E_1+E_2+E_3+D$, where $D$ is  nef.  
By symmetry, we may assume that
\begin{equation}
  \label{eq:intD}
  D \cdot E_3 \geq D \cdot E_2.
\end{equation}
We apply Lemma \ref{lemma:tricknuovo} with $F_1=E_1$ and $F_2=E_2$. 

We have $(H-2E_1)^2=(E_1+E_2+E_3+D)^2>0$, whence $h^1(H-2E_1)  =  h^1(H-2E_1+K_S)=0$ by \eqref{eq:KV}. We have 
\begin{eqnarray*}
 (H-2E_2)^2 & = & (3E_1+E_3-E_2)^2+2(3E_1+E_3-E_2)\cdot D + D^2 \\
& = &
-2 +2(3E_1+E_3-E_2)\cdot D + D^2 \geq -2,
\end{eqnarray*}
by \eqref{eq:intD}, and $(H-2E_2)^2=0$ if and only if $(D^2,
(3E_1+E_3-E_2)\cdot D) \in \{(0,1),(2,0)\}$.  But in the latter case
we must have $D \cdot E_1=0$ by \eqref{eq:intD}, contradicting
\eqref{kl-HIT}. In the first case we have $D \cdot
(H-2E_2)=1$, implying that $H-2E_2$ is primitive. It follows that
$h^1(H-2E_2)=h^1(H-2E_2+K_S)=0$ again by \eqref{eq:KV}. Hence
$\alpha=0$.

We next prove that $\beta=0$. We will apply \eqref{eq:bbeta2}. We first note that,  by \eqref{eq:noteff},  
\[
h^0(4E_1+4E_2-H)=h^0(E_1+3E_2-E_3-D) \leq h^0(E_1+3E_2-E_3)=0,
\]
 as  $(E_1+3E_2-E_3)^2=-2$. Similarly, $h^0(4E_1+4E_2-H+K_S)=0$. We also have
\begin{eqnarray*}
 (H-2E_1-2E_2)^2 & = & (E_1+E_3-E_2)^2+2(E_1+E_3-E_2)\cdot D + D^2 \\
& = &
-2 +2(E_1+E_3-E_2)\cdot D + D^2 \geq -2,
\end{eqnarray*} 
by \eqref{eq:intD}, and $(H-2E_1-2E_2)^2=0$ if and only if $(D^2,
(E_1+E_3-E_2)\cdot D) \in \{(0,1),(2,0)\}$. But in the latter case we
must have $D \cdot E_1=0$ by \eqref{eq:intD}, contradicting
\eqref{kl-HIT}. In the first  we have $D \cdot
(H-2E_1-2E_2)=1$, implying that $H-2E_1-2E_2$ is primitive. Hence,
$h^1(H-2E_1-2E_2)=h^1(H-2E_1-2E_2+K_S)=0$ by \eqref{eq:KV}. Thus, $\beta=0$ by \eqref{eq:bbeta2}.

\textbf{(d)} We have $H \sim 5E_1+3E_2+D$, where $D$ is  nef.  
We apply Lemma \ref{lemma:tricknuovo} with $F_1=E_1$ and $F_2=E_2$ and argue as in (c). 

\textbf{(e)} We have $H \sim 2E_1+E_3+E_{1,2}+D$,  where $D$ is  nef.  We apply Lemma \ref{lemma:tricknuovo} with $F_1=E_1$ and $F_2=E_3$. 

We have 
$(H-2E_1)^2=(E_3+E_{1,2}+D)^2 >0$, whence $h^1(H-2E_1)=h^1(H-2E_1+K_S)=0$ by \eqref{eq:KV}. We have $(2E_1+E_{1,2}-E_3)^2 = 2$, whence  
\[ (H-2E_3)^2=(2E_1+E_{1,2}-E_3)^2+2(2E_1+E_{1,2}-E_3)\cdot D + D^2>0,\]
so that also $h^1(H-2E_3)=h^1(H-2E_3+K_S)=0$ by \eqref{eq:KV}. It follows that $\alpha=0$. 

To prove that $\beta=0$, we will use \eqref{eq:bbeta1} and \eqref{eq:bbeta2}. We first note that 
\begin{equation} \label{eq:mezzobeta}
h^0(4E_1+4E_3-H)=h^0(2E_1+3E_3-E_{1,2}-D) \leq h^0(2E_1+3E_3-E_{1,2})=0,
\end{equation}
by \eqref{eq:noteff}, as $(2E_1+3E_3-E_{1,2})^2=-2$. Similarly, $h^0(4E_1+4E_3-H+K_S)=0$. To finish the proof that $\beta=0$ we divide the treatment in different cases.

Assume that $E_{1,2}$ is present in the isotropic decomposition of $D$. Then
$E_1 \cdot H \geq 5$ and $E_3 \cdot H \geq 4$, so that $\beta=0$ by \eqref{eq:bbeta1}.

Assume that $E_3$ is present in the isotropic decomposition of $D$, whereas $E_{1,2}$ is not. Write $D'=D-E_3$. Then $H-2E_1-2E_3=E_{1,2}+D'$, so that 
$h^1(H-2E_1-2E_3)=h^1(H-2E_1-2E_3+K_S)=0$ by \eqref{eq:KV}. Hence $\beta=0$ by 
\eqref{eq:mezzobeta} and \eqref{eq:bbeta2}.

Assume that $E_j$ is present in the isotropic decomposition of $D$, for $j=1$ or $2$, whereas  $E_{1,2}$  is not. Then
$D \cdot (E_{1,2}-E_3) \geq 1$ and  
\begin{eqnarray*}
 (H-2E_1-2E_3)^2 & = & (-E_3+E_{1,2}+D)^2 =(E_{1,2}-E_3)^2+2(E_{1,2}-E_3)\cdot D +D^2 \\
& = & -2+2(E_{1,2}-E_3)\cdot D +D^2
 \geq 0,
\end{eqnarray*}
with equality if and only if $D=E_j$. In this  case,  $h^1(H-2E_1-2E_3)=h^1(H-2E_1-2E_3+K_S)=0$ by \eqref{eq:KV}, as $E_j \cdot(H-2E_1-2E_3)=1$.  Again, $\beta=0$ by 
\eqref{eq:mezzobeta} and \eqref{eq:bbeta2}.

Finally, assume that neither $E_1$, $E_2$, $E_3$ nor $E_{1,2}$ are
present in the isotropic decomposition of $D$. Then $D \cdot
(E_{1,2}-E_3)=0$, whence
\[ 
(H-2E_1-2E_3)^2=(-E_3+E_{1,2}+D)^2 =(E_{1,2}-E_3)^2+2(E_{1,2}-E_3)\cdot D +D^2
=D^2 -2
 \geq -2,
\]
and is $0$ if and only if $D^2=2$. In this case, we have $E_1 \cdot D
\geq 2$ and $E_3 \cdot D \geq 2$, 
so that $E_1 \cdot H \geq 5$ and
$E_3 \cdot H \geq 5$. Hence, $\beta=0$ by \eqref{eq:bbeta1}.

\textbf{(f)} We have $H \sim E_1+E_2+E_{1,2}+D$,  where $D$ is  nef. 
By symmetry between $E_1$ and $E_2$, we may assume that $D \not \equiv kE_2$ for any $k \geq 1$. 
We apply Lemma \ref{lemma:trick4} with $G_1=E_1$ and $G_2=E_{1,2}$.

We have $H-E_1-E_{1,2}=E_2+D$, whence 
$h^1(H-E_1-E_{1,2})=h^1(H-E_1-E_{1,2}+K_S)=0$ by \eqref{eq:KV} and the fact that $D \not \equiv kE_2$. Therefore, $\gamma=0$. 

 By \eqref{eq:noteff} and the fact that $(E_1+E_{1,2}-E_2)^2=-2$, we
 have 
\[ h^0(2E_1+2E_{1,2}-H)=h^0(E_1+E_{1,2}-E_2-D) \leq h^0(E_1+E_{1,2}-E_2)=0.\]
 Similarly,   
$h^0(2E_1+2E_{1,2}-H+K_S)=0$. Hence, $\delta=0$.

To check that the multiplication map $\mu_{\widetilde{E}_1+\widetilde{E}_{1,2},\widetilde{E}_2+\widetilde{D}}$ is surjective, we apply Lemmas \ref{lemma:mumford2} and \ref{lemma:mumford3}, cf.\  Remark \ref{rem:mumford3}. 
Write $D  \equiv \sum_{i=1}^{n}\alpha_i E_i + \alpha_0E_{1,2}$  for some $n \leq 9$.  The multiplication map
\[ \mu_{\widetilde{E}_1+\widetilde{E}_{1,2},\widetilde{E}_2}: H^0(\widetilde{E}_1+\widetilde{E}_{1,2}) \* H^0(\widetilde{E}_2) \longrightarrow H^0(\widetilde{E}_1+\widetilde{E}_{1,2}+\widetilde{E}_2) \]
is surjective, since   \eqref{eq:KV} and the fact that $(E_1+E_{1,2}-E_2)^2=-2$ imply that 
\[h^1(\widetilde{E}_1+\widetilde{E}_{1,2}-\widetilde{E}_2)=h^1(E_1+E_{1,2}-E_2)+h^1(E_1+E_{1,2}-E_2+K_S)=0.\]
 Likewise, all multiplication maps
$\mu_{\widetilde{E}_1+\widetilde{E}_{1,2}+j\widetilde{E}_2,\widetilde{E}_2}$
for $1 \leq j \leq \alpha_2$ are surjective, since
all
$\bigl((\widetilde{E}_1+\widetilde{E}_{1,2}+j\widetilde{E}_2)
-\widetilde{E}_2\bigr)^2 >0$. For the same reason, all
$\mu_{\widetilde{E}_1+\widetilde{E}_{1,2}+(\alpha_2+1)\widetilde{E}_2+j\widetilde{E}_1,\widetilde{E}_1}$,
for $0 \leq j \leq \alpha_1-1$, are surjective, as well as all
$\mu_{(\alpha_1+1)\widetilde{E}_1+\widetilde{E}_{1,2}+(\alpha_2+1)\widetilde{E}_2+j\widetilde{E}_{1,2},\widetilde{E}_2}$,
for $0 \leq j \leq \alpha_0-1$. Finally, for any $i \in
\{3,\ldots,n\}$ and any $0 \leq j \leq \alpha_i-1$,
 set 
\[
B_{ij}:=(\alpha_1+1)E_1+(\alpha_0+1)E_{1,2}+(\alpha_2+1)E_2+\alpha_3
E_3+\cdots + \alpha_{i-1}E_{i-1}+jE_{i}-E_{i}.\]
Then $\widetilde{B}_{ij}=\widetilde{E}_1+\widetilde{E}_{1,2}+
\widetilde{E}_2-\widetilde{E}_{i} +\Delta$,
 with
\[ \Delta:=
\alpha_1\widetilde{E}_1+\alpha_0\widetilde{E}_{1,2}+\alpha_2\widetilde{E}_2+\alpha_3
\widetilde{E}_3+\cdots +
\alpha_{i-1}\widetilde{E}_{i-1}+j\widetilde{E}_{i}.\]
Since $\Delta^2 \geq 0$ and 
$(\widetilde{E}_1+\widetilde{E}_{1,2}+
\widetilde{E}_2-\widetilde{E}_{i})^2= 8$,
we have  $\widetilde{B}_{ij}^2 > 0$,  
whence  $h^1(\widetilde{B}_{ij})=h^1(B_{ij})+h^1(B_{ij}+K_S)=0$ 
by \eqref{eq:KV}. It follows by Lemma \ref{lemma:mumford3} and  Remark \ref{rem:mumford3} that $\mu_{\widetilde{E}_1+\widetilde{E}_{1,2},\widetilde{E}_2+\widetilde{D}}$ is surjective, whence $\epsilon=0$.

\textbf{(g)} We have $H \sim 3E_1+2E_{1,2}+D$,  where $D$ is  nef.  
If $E_2$ is present in $D$, we are done by (f). If any $E_j$, for $j \neq 1,2$, is present in $D$, we are done by (e). We have therefore left to treat the case where $H \equiv a_1E_1+a_0E_{1,2}$, with $a_1 \geq 3$ and $a_0 \geq 2$. By symmetry, we may assume that $a_1 \geq a_0$. As in the previous case, we apply Lemma \ref{lemma:trick4} with $G_1=E_1$ and $G_2=E_{1,2}$.

We have $H-E_1-E_{1,2}=(a_1-1)E_1+(a_2-1)E_{1,2}$, whence 
$h^1(H-E_1-E_{1,2})=h^1(H-E_1-E_{1,2}+K_S)=0$ by \eqref{eq:KV}. Therefore, $\gamma=0$. 

We have $2E_1+2E_{1,2}-H\equiv -(a_1-2)E_1-(a_0-2)E_{1,2}$, whence 
$h^0(2E_1+2E_{1,2}-H)=h^0(2E_1+2E_{1,2}-H+K_S)=0$.  Thus,  $\delta=0$.

To check that the map
$\mu_{\widetilde{E}_1+\widetilde{E}_{1,2},(a_1-1)\widetilde{E}_1+(a_0-1)\widetilde{E}_{1,2}}$
is surjective, we apply Lemma \ref{lemma:mumford3}.  The map
$\mu_{\widetilde{E}_1+\widetilde{E}_{1,2},(a_0-1)\left(\widetilde{E}_1+\widetilde{E}_{1,2}\right)}$
is surjective by \cite[Thm.~6.1]{SD}. Finally, for $0 \leq j \leq
a_1-a_0-1$, set $B_j:=(a_0+j)E_1+(a_0-1)E_{1,2}-E_1$. Then $B_j^2 >0$,
so that $h^1(\widetilde{B}_j)=h^1(B)+h^1(B+K_S)=0$ by \eqref{eq:KV},
whence all $\mu_{(a_0+j)\widetilde{E}_1+(a_0-1)\widetilde{E}_{1,2},
\widetilde{E}_1}$ are surjective by Lemma \ref{lemma:mumford2}.
 The  map
$\mu_{\widetilde{E}_1+\widetilde{E}_{1,2},(a_1-1)\widetilde{E}_1+(a_0-1)\widetilde{E}_{1,2}}$
is  thus  surjective by Lemma \ref{lemma:mumford3}.

\textbf{(h)} This case is treated  as the previous one, exchanging the roles of $E_1$ and $E_{1,2}$.
\end{proof}

\begin{lemma} \label{lemma:restanti} 
Assume $S$ is an unnodal Enriques surface  and $H$ a big and nef line bundle on $S$.  If  $h^1(\T_{\widetilde{S}}(-\widetilde{H}))\neq 0$, then we are in one of the following cases: 
\vspace{0.2cm}

\begin{footnotesize}

\begin{tabular}{| l| l |l|}
\hline
\raisebox{1mm}{ component of moduli  space} & \raisebox{1mm}{simple isotr.\ decomp.} & 
\raisebox{1mm}[1.5em]
{$h^1(\T_{\widetilde{S}}(-\widetilde{H}))$} \\ \hline
 $\E_{17,4}^{(IV)^+}$ \rm{and} $\E_{17,4}^{(IV)^-}$ & $H \equiv 4(E_1+E_2)$ &  $=1$ \\
  $\E_{13,3}^{(II)}$ &$H \sim 4E_1+3E_2$ & $=2$ \\
 $\E_{13,4}^{(II)^+}$ \rm{and} $\E_{13,4}^{(II)^-}$ & $H \equiv 2(E_1+E_2+E_3)$  & $\leq 1$\\
 $\E_{10,3}^{(II)}$ & $H \sim 3E_1+3E_2$ & $=4$ \\
$\E_{9,3}^{(II)}$ &$H \sim 2E_1+2E_2+E_3$ &  $=2$ \\
$\E_{9,4}^{+}$ \rm{and} $\E_{9,4}^{-}$ & $H \equiv 2(E_1+E_{1,2})$ &  $=3$\\
 $\E_{7,3}$ & $H \sim E_1+E_2+E_3+E_4$ &  $\leq 2$ \\
 $\E_{6,2}$ &   $H \sim 2E_1+E_2+E_3$ & $ =4$ \\
 $\E_{4,2}$ & $H \sim E_1+E_2+E_3$ & $=8$ \\ 
$\E_{3,2}$ & $H \sim E_1+E_{1,2}$ & $=12$\\ 
 $\E_{2k+1,2}^{(I)}$, $k \geq 2$ & $H \sim kE_1+E_{1,2}$, $k \geq 2$ & \\ 
$\E_{2k+1,2}^{(II)}$ if $k$ is odd; 
$\E_{2k+1,2}^{(II)+}$ and $\E_{2k+1,2}^{(II)-}$ if $k$ is even & $H \equiv kE_1+2E_{2}$, $k \geq 2$ & \\
$\E_{k+1,1}$ & $H \equiv kE_1+E_{2}$, $k \geq 1$ & \\
\hline
\end{tabular}
\end{footnotesize}

\end{lemma}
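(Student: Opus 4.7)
The strategy is to take the contrapositive: assuming $h^1(\T_{\widetilde{S}}(-\widetilde{H})) \neq 0$, I will show that any simple isotropic decomposition \eqref{eq:ssid} of $H$ must match one of the types in the table, and in the finite-genus cases I will compute $h^1$ exactly (or bound it). By Lemma \ref{lemma:appenriques}, the decomposition cannot contain any of the blocks (a)--(h), so the proof reduces to a combinatorial classification of such ``forbidden'' decompositions followed by explicit cohomology computations via Lemmas \ref{lemma:tricknuovo} and \ref{lemma:trick4}.

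For the classification step, I will enumerate the decompositions by the three intersection patterns of \eqref{eq:varie}. In the pattern ``all $E_i \cdot E_j = 1$'', block (a) forces $n \leq 4$, and then (b), (c), (d) bound the multiplicities: the surviving decompositions are $E_1+E_2+E_3+E_4$ ($\E_{7,3}$), the $n=3$ decompositions with $a_i \leq 2$ for all $i$ (giving $\E_{4,2}, \E_{6,2}, \E_{9,3}^{(II)}, \E_{13,4}^{(II)^{\pm}}$), and the $n=2$ decompositions satisfying $\min(a_1,a_2) \leq 2$ or $\max(a_1,a_2) \leq 4$ (giving $\E_{k+1,1}$, $\E_{2k+1,2}^{(II)^{\pm}}$, $\E_{10,3}^{(II)}$, $\E_{13,3}^{(II)}$, $\E_{17,4}^{(IV)^{\pm}}$). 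The analogous argument in the pattern with one pair of intersection $2$ (using Notation \ref{not:int} and blocks (e)--(h)) leaves only $\E_{3,2}$, $\E_{9,4}^{\pm}$ and $\E_{2k+1,2}^{(I)}$; for the third pattern of \eqref{eq:varie} I will show that every decomposition contains one of (a)--(h) or is numerically equivalent to a case already on the list, after choosing an alternative isotropic representative.

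For the cohomology step, I will apply Lemma \ref{lemma:tricknuovo} with $F_1 = E_1$, $F_2 = E_2$ (or analogous half-fibers) to the decompositions of type $aE_1 + bE_2$ and $aE_1 + bE_2 + E_3$: in the ``large-genus'' cases $\E_{17,4}^{(IV)^{\pm}}$, $\E_{13,3}^{(II)}$, $\E_{10,3}^{(II)}$, $\E_{9,3}^{(II)}$ I will verify $\alpha = 0$ via \eqref{eq:KV} and compute $\beta$ either from \eqref{eq:bbeta2} or directly from \eqref{eq:bbeta3} and the degree of $\O_{\widetilde{R}}(4\widetilde{F}_1 + 4\widetilde{F}_2 - \widetilde{H})$ on each component. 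For decompositions involving $E_{1,2}$, I will apply Lemma \ref{lemma:trick4} with $G_1 = E_1$, $G_2 = E_{1,2}$: the case $\E_{3,2}$ falls directly under the clause $H \equiv G_1 + G_2$, yielding $h^1 = 12$, while $\E_{9,4}^{\pm}$ will follow from computing the corank $\epsilon$ of the multiplication map via Lemmas \ref{lemma:mumford2} and \ref{lemma:mumford3}. The main obstacle will be the small-genus cases $\E_{4,2}$, $\E_{6,2}$ and the upper-bound cases $\E_{7,3}$, $\E_{13,4}^{(II)^{\pm}}$: there, the intermediate sheaves in Lemma \ref{lemma:tricknuovo} have negative self-intersection, $\alpha$ need not vanish, and producing the precise value (rather than a bound) of $h^1$ requires exhibiting enough sections of the ramification sheaf and controlling the coboundary map carefully. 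The parametric families $\E_{k+1,1}$, $\E_{2k+1,2}^{(I)}$, $\E_{2k+1,2}^{(II)^{\pm}}$ require only the classification step, since no explicit value of $h^1$ is claimed for them in the table.
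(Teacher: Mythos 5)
Your overall strategy is the paper's: reduce to a finite list of decompositions via Lemma \ref{lemma:appenriques}, then evaluate $h^1(\T_{\widetilde{S}}(-\widetilde{H}))$ case by case with Lemma \ref{lemma:tricknuovo} (taking $F_1=E_1$, $F_2=E_2$) for the decompositions of type $aE_1+bE_2(+E_3)$ and Lemma \ref{lemma:trick4} (taking $G_1=E_1$, $G_2=E_{1,2}$) for those involving $E_{1,2}$, with $\E_{3,2}$ falling under the degenerate clause $H\equiv G_1+G_2$ and $\E_{7,3}$, $\E_{13,4}^{(II)^{\pm}}$ yielding only upper bounds. This all matches the paper.

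There is, however, one concrete misstep in your classification. You assert that in the intersection pattern with one pair of product $2$, blocks (e)--(h) leave only $\E_{3,2}$, $\E_{9,4}^{\pm}$ and $\E_{2k+1,2}^{(I)}$. That is false as stated: the decompositions $H \equiv E_1+kE_3+lE_{1,2}$ with $k,l \geq 1$ (where $E_3\cdot E_{1,2}=1$, so (e) does not apply because $E_1$ has coefficient $1$, and (f) does not apply because the second curve meeting $E_{1,2}$ twice is absent) survive the sieve. The paper disposes of them by introducing $F:=E_1+E_{1,2}-E_3$, checking $F^2=0$, $E_{1,2}\cdot F=1$, $E_3\cdot F=2$, and rewriting $H\equiv (k+1)E_3+F+(l-1)E_{1,2}$, which lands in case (e) of Lemma \ref{lemma:appenriques} when $l\geq 2$ and in the $\E_{2k+1,2}^{(I)}$ row when $l=1$. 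You do invoke exactly this kind of ``alternative isotropic representative'' argument, but only for the third pattern of \eqref{eq:varie}; applied where you place it, your enumeration of the second pattern would silently drop a whole family of decompositions. Separately, your anticipated obstacle for $\E_{4,2}$ and $\E_{6,2}$ --- that $\alpha$ need not vanish --- does not materialize: in every case of the table one checks $\alpha=0$ via \eqref{eq:KV}, and the exact values of $h^1$ there come from computing $\beta$ precisely through \eqref{eq:bbeta3} (the twists $2F_1+2F_2-H$ have vanishing $h^0$ and $h^1$, so $\beta=h^0(4\widetilde F_1+4\widetilde F_2-\widetilde H)$), after which Lemma \ref{lemma:tricknuovo} gives equality.
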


\begin{proof}
Up to rearranging indices, the decompositions in the table are the only ones not covered by Lemma \ref{lemma:appenriques}, except for $H \equiv E_1+kE_3+lE_{1,2}$, with $k,l \geq 1$. Set $F:=E_1+E_{1,2}-E_3$. Then $F^2=0$, $E_{1,2} \cdot F=1$ and $E_3 \cdot F=2$. Thus, $H \equiv (k+1)E_3+F+(l-1)E_{1,2}$ is a simple isotropic decomposition, which can be rewritten, after renaming indices, $H \equiv (k+1)E_1+E_{1,2}+(l-1)E_3$.  This falls into case (e) of Lemma \ref{lemma:appenriques} if $l \geq 2$, and is present in the table of the lemma if $l=1$.  We now study $h^1(\T_{\widetilde{S}}(-\widetilde{H}))$.

 $\bullet$ {\bf  Cases  $\E_{17,4}^{(IV)^+}$ and $\E_{17,4}^{(IV)^-}$.}   We apply Lemma \ref{lemma:tricknuovo} with $F_1=E_1$ and $F_2=E_2$.
We have 
$(H-2E_1)^2=(2E_1+4E_{2})^2 >0$, whence $h^1(H-2E_1)=h^1(H-2E_1+K_S)=0$ by \eqref{eq:KV}. Similarly, we have $h^1(H-2E_2)=h^1(H-2E_2+K_S)=0$, so that  $\alpha=0$. 
We  have $4\widetilde{F}_1+4\widetilde{F}_2-\widetilde{H} =0$, whence $\beta=1$ by definition.  Lemma \ref{lemma:tricknuovo} implies 
 $h^1(\T_{\widetilde{S}}(-\widetilde{H}))=1$.

$\bullet$  {\bf Case $\E_{13,3}^{(II)}$.} We apply Lemma \ref{lemma:tricknuovo} with $F_1=E_1$ and $F_2=E_2$.
 We find $\alpha=0$ as   above.  We have $4F_1+4F_2-H=E_2$ and $2F_1+2F_2-H=-2E_1-E_2$. Hence $h^0(4\widetilde{F}_1+4\widetilde{F}_2-\widetilde{H})=h^0(E_2)+h^0(E_2+K_S)=2$, and $h^i(2\widetilde{F}_1+2\widetilde{F}_2-\widetilde{H})=0$ for $i=0,1$. Hence, $\beta=2$ by the exact sequence \eqref{eq:bbeta3}.  Thus, $h^1(\T_{\widetilde{S}}(-\widetilde{H}))=2$ by  Lemma \ref{lemma:tricknuovo}.

 $\bullet$ {\bf  Cases  $\E_{13,4}^{(II)^+}$ and $\E_{13,4}^{(II)^-}$.} We apply Lemma \ref{lemma:tricknuovo} with  $F_1=E_1$ and $F_2=E_2$. We find $\alpha=0$ as above.  We have $4F_1+4F_2-H \equiv  2(E_1+E_2-E_3)$, which has square $-8$, whence $h^0(4F_1+4F_2-H)=h^0(4F_1+4F_2-H+K_S)=0$ by \eqref{eq:noteff}. We have $2F_1+2F_2-H  \equiv -2E_3$, whence $h^1(2F_1+2F_2-H)+h^1(2F_1+2F_2-H+K_S)=1$.  Therefore,  $\beta \leq 1$ by \eqref{eq:bbeta2}.  It follows that $h^1(\T_{\widetilde{S}}(-\widetilde{H})) \leq 1$ by   Lemma \ref{lemma:tricknuovo}.

$\bullet$ {\bf Case $\E_{10,3}^{(II)}$.} We apply Lemma \ref{lemma:tricknuovo} with $F_1=E_1$ and $F_2=E_2$.  As above,  $\alpha=0$.  We have $2F_1+2F_2-H=-E_1-E_2$, whence $h^i(2F_1+2F_2-H)=h^i(2F_1+2F_2-H+K_S)=0$,  $i=0,1$ by \eqref{eq:KV}. We have $4F_1+4F_2-H=E_1+E_2$, whence $h^0(4F_1+4F_2-H)=h^0(4F_1+4F_2-H+K_S)=2$. 
Thus,
$h^i(2\widetilde{F}_1+2\widetilde{F}_2-\widetilde{H})=0$,  $i=0,1$
and $h^0(4\widetilde{F}_1+4\widetilde{F}_2-\widetilde{H})=4$,  so  $\beta=4$  by \eqref{eq:bbeta3}. Lemma \ref{lemma:tricknuovo} yields 
  $h^1(\T_{\widetilde{S}}(-\widetilde{H}))=4$.

$\bullet$ {\bf Case $\E_{9,3}^{(II)}$.} We apply Lemma \ref{lemma:tricknuovo} with
$F_1=E_1$ and $F_2=E_2$.  We find $\alpha=0$ as above. We have $2F_1+2F_2-H=-E_3$, whence $h^i(2F_1+2F_2-H)=h^i(2F_1+2F_2-H+K_S)=0$ for $i=0,1$ by \eqref{eq:KV}. We have $4F_1+4F_2-H=2E_1+2E_2-E_3$, which has square $0$. Since $E_1 \cdot (4F_1+4F_2-H)=1$, we have $h^0(4F_1+4F_2-H)=h^0(4F_1+4F_2-H+K_S)=1$ (using \eqref{eq:KV}). It follows that 
$h^i(2\widetilde{F}_1+2\widetilde{F}_2-\widetilde{H})=0$ for $i=0,1$
and $h^0(4\widetilde{F}_1+4\widetilde{F}_2-\widetilde{H})=2$, whence $\beta=2$ by \eqref{eq:bbeta3}.  Thus, $h^1(\T_{\widetilde{S}}(-\widetilde{H}))=2$ by  Lemma \ref{lemma:tricknuovo}.

 $\bullet$ {\bf  Cases   $\E_{9,4}^{+}$ and $\E_{9,4}^{-}$.} We apply Lemma \ref{lemma:trick4} with $G_1=E_1$ and $G_2=E_{1,2}$. 
We have $H-G_1-G_2  \equiv  E_1+E_{1,2}$, whence $\gamma=0$. We have $2G_1+2G_2-H  \equiv  0$, whence $\delta=1$. Finally, the multiplication map $\mu_{\widetilde{E}_1+ \widetilde{E}_{1,2},\widetilde{E}_1+ \widetilde{E}_{1,2}}$ is surjective by \cite[Thm. 6.1]{SD}. Hence, $\epsilon=0$.  Thus, $h^1(\T_{\widetilde{S}}(-\widetilde{H}))=3$ by  Lemma \ref{lemma:trick4}. 
 (See also Remark \ref{rem:push2} below.)

 $\bullet$ {\bf Case $\E_{7,3}$.} We apply Lemma \ref{lemma:tricknuovo} with 
$F_1=E_1$ and $F_2=E_2$. 
We have $(H-2F_1)^2=(E_2+E_3+E_4-E_1)^2=0$ and $E_2 \cdot(H-2F_1)=1$, whence
$h^1(H-2F_1)=h^1(H-2F_1+K_S)=0$ by \eqref{eq:KV}. Similarly, $h^1(H-2F_2)=h^1(H-2F_2+K_S)=0$, whence $\alpha=0$. We have $(4F_1+4F_2-H)^2=(3E_1+3E_2-E_3-E_4)^2=-4$, whence $h^0(4F_1+4F_2-H)=h^0(4F_1+4F_2-H+K_S)=0$ by \eqref{eq:noteff}. We have $(2F_1+2F_2-H)^2=(E_1+E_2-E_3-E_4)^2=-4$, whence $h^1(2F_1+2F_2-H)=h^1(2F_1+2F_2-H+K_S)=1$ by \eqref{eq:noteff}. 
It follows that 
$h^1(2\widetilde{F}_1+2\widetilde{F}_2-\widetilde{H})=2$ 
and $h^0(4\widetilde{F}_1+4\widetilde{F}_2-\widetilde{H})=0$, whence $\beta \leq 2$ by \eqref{eq:bbeta3}.  Thus, $h^1(\T_{\widetilde{S}}(-\widetilde{H})) \leq 2$ by  Lemma \ref{lemma:tricknuovo}.

$\bullet$ {\bf Case $\E_{6,2}$.} We apply Lemma \ref{lemma:tricknuovo} with $F_1=E_1$ and $F_2=E_2$.
We have $(H-2F_1)^2=(E_2+E_3)^2=2$, whence  $h^1(H-2F_1)=h^1(H-2F_1+K_S)=0$ by \eqref{eq:KV}. We have 
$(H-2F_2)^2=(2E_1+E_3-E_2)^2=-2$, whence
$h^1(H-2F_2)=h^1(H-2F_2+K_S)=0$ by \eqref{eq:KV}. It follows that $\alpha=0$. 
We have $(4F_1+4F_2-H)^2=(2E_1+3E_2-E_3)^2=2$, whence $h^0(4F_1+4F_2-H)=h^0(4F_1+4F_2-H+K_S)=2$ by \eqref{eq:KV} and Riemann-Roch. We have $(2F_1+2F_2-H)^2=(E_2-E_3)^2=-2$, whence $h^i(2F_1+2F_2-H)=h^i(2F_1+2F_2-H+K_S)=0$ for $i=0,1$ by \eqref{eq:noteff}. Thus,
$h^i(2\widetilde{F}_1+2\widetilde{F}_2-\widetilde{H})=0$  for $i=0,1$
and $h^0(4\widetilde{F}_1+4\widetilde{F}_2-\widetilde{H})=4$, so $\beta =4$ by \eqref{eq:bbeta3}. Lemma \ref{lemma:tricknuovo} yields  $h^1(\T_{\widetilde{S}}(-\widetilde{H})) =4$.

$\bullet$ {\bf Case $\E_{4,2}$.} We apply Lemma \ref{lemma:tricknuovo} with 
$F_1=E_1$ and $F_2=E_2$. 
We have $(H-2F_1)^2=(E_2+E_3-E_1)^2=-2$, whence
$h^1(H-2F_1)=h^1(H-2F_1+K_S)=0$ by \eqref{eq:KV}. Similarly, $h^1(H-2F_2)=h^1(H-2F_2+K_S)=0$, whence $\alpha=0$. We have $(4F_1+4F_2-H)^2=(3E_1+3E_2-E_3)^2=6$, whence $h^0(4F_1+4F_2-H)=h^0(4F_1+4F_2-H+K_S)=4$. We have $(2F_1+2F_2-H)^2=(E_1+E_2-E_3)^2=-2$, whence $h^i(2F_1+2F_2-H)=h^i(2F_1+2F_2-H+K_S)=0$,  $i=0,1$ by \eqref{eq:noteff}. Thus,
$h^i(2\widetilde{F}_1+2\widetilde{F}_2-\widetilde{H})=0$ for $i=0,1$ 
and $h^0(4\widetilde{F}_1+4\widetilde{F}_2-\widetilde{H})=8$, so $\beta =8$ by \eqref{eq:bbeta3}. Lemma \ref{lemma:tricknuovo} yields  $h^1(\T_{\widetilde{S}}(-\widetilde{H})) =8$. 

$\bullet$ {\bf Case $\E_{3,2}$.}  Lemma \ref{lemma:trick4} with $G_1=E_1$ and $G_2=E_{1,2}$ yields $h^1(\T_{\widetilde{S}}(-\widetilde{H})) =12$.
\end{proof}

\begin{remark} \label{rem:push2}
  In the cases $\E_{9,4}^{+}$ and $\E_{9,4}^{-}$, applying Remark \ref{rem:push}, we obtain more precisely that
$h^1(\T_S(-H))=3$ and $h^1(\T_S(-H+K_S))=0$ for $(S,H) \in \E_{9,4}^{+}$  and 
$h^1(\T_S(-H))=0$ and $h^1(\T_S(-H+K_S))=3$ for $(S,H) \in \E_{9,4}^{-}$.

\end{remark}

 We  draw some consequences from the last two lemmas:

\begin{proof}[Proof of Theorem \ref{mainthm1}]
The cases not in the table  of Lemma \ref{lemma:restanti} satisfy  $h^1(\T_{\widetilde{S}}(-\widetilde{H}))= 0$, where the result follows from Corollary \ref{cor:geninj}  and \eqref{eq:h1soprasotto}.  Let us  consider the  other  cases.

$\bullet$ {\bf  Cases    $\E_{17,4}^{(IV)^+}$ and
$\E_{17,4}^{(IV)^-}$.}   The moduli map $\chi_{17,4}^{(IV)^+}$ is not generically finite by Corollary \ref{cor:EF1}, whence $h^1(\T_S(-H))>0$  for $(S,H) \in \E_{17,4}^{(IV)^+}$  by Corollary
\ref{cor:geninj}.
Lemma \ref{lemma:restanti} then implies that $h^1(\T_S(-H))=1$ and
$h^1(\T_S(-H+K_S))=0$, so that $\chi_{17,4}^{(IV)^+}$ has generically
one-dimensional fibers and $\chi_{17,4}^{(IV)^-}$ is generically
finite by Corollary \ref{cor:geninj}.

$\bullet$ {\bf Cases $\E_{13,4}^{(II)^+}$ and $\E_{13,4}^{(II)^-}$.} 
These cases are treated exactly as the previous ones. 

$\bullet$ {\bf  Cases    $\E_{9,4}^{+}$ and $\E_{9,4}^{-}$.} Lemma \ref{lemma:restanti} and Remark \ref{rem:push2} imply that
for $(S,H) \in \E_{9,4}^{+}$, we have $h^1(\T_S(-H))=3$ and $h^1(\T_S(-H+K_S))=0$. Thus $\chi_{9,4}^{+}$ has generically three-dimensional fibers by Corollary \ref{cor:geninj}. It also follows that 
$h^1(\T_S(-H))=0$ for $(S,H) \in \E_{9,4}^{-}$, whence $\chi_{9,4}^{-}$ is generically finite. 

$\bullet$ {\bf Case $\E_{7,3}$.} 
By  Corollary \ref{cor:EF1},  the moduli map $\chi_{7,3}$ is not
generically finite, whence $h^1(\T_S(-H))>0$  for $(S,H) \in \E_{7,3}$  by Corollary
\ref{cor:geninj}, and also $h^1(\T_S(-H+K_S))>0$, since $(S,H+K_S) \in
\E_{7,3}$ as well.  Lemma \ref{lemma:restanti} then implies that
$h^1(\T_S(-H))=h^1(\T_S(-H+K_S))=1$, in particular $\chi_{7,3}$ has
generically one-dimensional fibers by Corollary \ref{cor:geninj}.

$\bullet$ {\bf  Cases   $\E_{13,3}^{(II)}$, $\E_{10,3}^{(II)}$ and $\E_{9,3}^{(II)}$.} Since these spaces are all irreducible and $(S,H)$ and $(S,H+K_S)$ belong to the same spaces, we must have $h^1(\T_S(-H))=h^1 (\T_S(-H+K_S))=\frac{1}{2}h^1(\T_{\widetilde{S}}(-\widetilde{H}))$. Then Lemma \ref{lemma:restanti} and 
 Corollary \ref{cor:geninj} yield the rest.
\end{proof}

\begin{proof}[Proof of Corollary \ref{cor:EF-emb}]
  Assume that $S \subset \PP^N$ is $k$-extendable, for some $k \geq 1$.  In the language of  \cite{Lv} this means that $S$ can be nontrivially extended $k$ steps. Since $S$ is not a quadric, \cite[Thm. 0.1]{Lv} yields that
  \begin{equation}
    \label{eq:lvov}
    \alpha(S):=h^0(\N_{S/\PP^N}(-1))-N-1 \geq \min\{k,N\}.
  \end{equation}

The normal bundle and Euler sequences  yield
$h^0(\N_{S/\PP^N}(-1)) \leq N+1+h^1(\T_S(-1))$, whence $\alpha(S) \leq h^1(\T_S(-1))$. 
Hence, $h^1(\T_S(-1)) \geq \min\{k,N\}$ by \eqref{eq:lvov}.  In particular, we must have $h^1(\T_S(-1))>0$, which may also be deduced from Lemma \ref{lemma:diff} and Proposition \ref{prop:pencil}. 
Since $\phi(\O_S(1)) \geq 3$ by \cite[Thm. 5.1]{cos1} or \cite[Thm. 4.6.1]{cd} and we assume  
$S$ is  unnodal,  $(S,\O_S(1))$  must therefore be in one of the cases listed in Lemma \ref{lemma:restanti}. The proof of Theorem \ref{mainthm1}
shows that $h^1(\T_S(-1))=0$ for $(S,\O_S(1))$ in $\E_{17,4}^{(IV)^-}$, $\E_{13,4}^{(II)^-}$ and $\E_{9,4}^{-}$, leaving us with the list of the corollary. The same proof also shows that $h^1(\T_S(-1))=1$ in all cases, except for the cases
$\E_{10,3}^{(II)}$ and  $\E_{9,4}^{+}$, where $h^1(\T_S(-1))=2$ and $3$, respectively. Since $N \geq 4$, we get from from  \eqref{eq:lvov} 
that $k \leq 2$, resp. $3$, in these cases. 
\end{proof}

\begin{corollary} \label{cor:nodalbayle}
  The  general  Enriques  surface  sections  of the Enriques--Fano threefolds (1) and (3) in  the list of \cite[Thm. A]{bay}  are nodal Enriques surfaces.
\end{corollary}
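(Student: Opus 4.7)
The plan is to derive this corollary as a direct contrapositive consequence of Corollary \ref{cor:EF-emb}. The key observation is that neither $\E_{8,3}$ nor $\E_{6,3}$ appears in the list of moduli components in Corollary \ref{cor:EF-emb}, so any unnodal $1$-extendable Enriques surface whose polarization lies in one of these components would yield a contradiction.

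First, I would recall from \cite[Thm.~A]{bay} the explicit construction of the Enriques--Fano threefolds (1) and (3), which have polarizations $\L$ of degrees yielding genera $g=8$ and $g=6$ respectively (or vice versa, depending on Bayle's numbering convention), with $\phi(\L|_S) = 3$ in both cases. Checking the numerical classes of $\L|_S$ for a general smooth Enriques section $S \in |\L|$ against the tables of \S \ref{sec:msenr}, one verifies that the polarizations belong precisely to the components $\E_{8,3}$ (type $H \sim 2E_1 + E_3 + E_{1,2}$) and $\E_{6,3}$ (type $H \sim E_1 + E_2 + E_{1,2}$).

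Having identified the components, suppose for contradiction that a general Enriques section $(S, \O_S(1))$ of one of these threefolds were unnodal. Since the threefold is by hypothesis different from a cone, $S$ is $1$-extendable in the sense of \S \ref{sec:intro}. Corollary \ref{cor:EF-emb} would then force $(S, \O_S(1))$ to belong to one of the components
\[
\E_{17,4}^{(IV)^+},\ \E_{13,4}^{(II)^+},\ \E_{13,3}^{(II)},\ \E_{10,3}^{(II)},\ \E_{9,4}^{+},\ \E_{9,3}^{(II)},\ \E_{7,3}.
\]
Since neither $\E_{8,3}$ nor $\E_{6,3}$ appears in this list, we obtain a contradiction, and therefore the general section must contain a smooth rational curve, i.e., it is nodal.

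The main obstacle, and essentially the only nontrivial verification, is confirming from Bayle's classification the precise simple isotropic decomposition type of $\O_S(1)$ for a general Enriques section of threefolds (1) and (3); this is a matter of explicit bookkeeping with the polarizations in \cite{bay}, combined with the fact that $\phi = 3$ forces the decomposition into one of the listed types in the tables of \S \ref{sec:msenr}. Once this identification is made, the conclusion is immediate from Corollary \ref{cor:EF-emb}.
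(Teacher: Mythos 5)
Your argument is essentially correct, but it is routed differently from the paper's. You deduce the corollary from Corollary \ref{cor:EF-emb}, whereas the paper argues directly: by Proposition \ref{prop:pencil} the moduli map $c_{g,\phi}$ has a positive-dimensional fiber at a general triple $(S,\L|_S,C)$ coming from the threefold; by Corollary \ref{cor:geninj} and \eqref{eq:h1soprasotto} this forces $h^1(\T_{\widetilde{S}}(-\widetilde{H}))\neq 0$ if $S$ is unnodal; and Lemma \ref{lemma:restanti} excludes this for $(g,\phi)=(8,3)$ and $(6,3)$, since these pairs do not appear in its table (note that once $(g,\phi)$ is read off from \cite{bay}, the components $\E_{8,3}$ and $\E_{6,3}$ are automatic, as they are the unique components with those invariants --- your ``bookkeeping with the decomposition types'' is not really needed). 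The two routes share the same engine (the vanishing in Lemmas \ref{lemma:appenriques}--\ref{lemma:restanti}), but yours carries two extra obligations that the paper's direct route avoids: you must know that $\varphi_{\L}$ embeds the general section $S$ into $\PP^{g}$ (this does follow from $\phi=3$ and ampleness of $\L|_S$ via \cite[Thm.~5.1]{cos1}), and, more importantly, that $\varphi_{\L}(X)$ is not a cone, so that $S$ is genuinely $1$-extendable in the sense required by Corollary \ref{cor:EF-emb}; this last point is true (it is established in the proof of Proposition \ref{prop:pencil} and Remark \ref{rem:altricasi}) but should be stated explicitly rather than absorbed into ``the threefold is by hypothesis different from a cone,'' since the definition of Enriques--Fano threefold only excludes generalized cones over $S$ and the non-cone property of the projective image is a separate (if easy) verification. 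Your route also implicitly imports L'vovsky's theorem through Corollary \ref{cor:EF-emb}, which is heavier than needed; the pencil argument of Proposition \ref{prop:pencil} suffices.
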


\begin{proof} Let  $(X,\L)$  be one of the Enriques--Fano threefolds in question  and $S \in |\L|$ be general.  We have $(g,\phi)=(8,3)$ and $(6,3)$, which do not appear in the table of Lemma \ref {lemma:restanti}.  By Proposition \ref  {prop:pencil}, the map $c_{g,\phi}$ has positive dimensional fiber at
$(S,\L|_S,C)$  for general $C \in |\L|_S|$. The result thus follows from Lemma \ref {lemma:restanti}, Corollary \ref{cor:geninj}  and \eqref{eq:h1soprasotto}.
\end{proof}

 The next result proves part of Theorem \ref{mainthm2}. 

 \begin{proposition} \label{prop:fi2gpari}
    (i) The  moduli map $\chi_{g,2}$ is generically finite  for even  $g \geq 8$, dominant  for  $g =3,4$, and with image of codimension 2  for   $g=6$.

   (ii) A general fiber of  $\chi^{(I)}_{5,2}$  is  three-dimensional.  
 \end{proposition}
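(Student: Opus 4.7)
The plan is to combine Corollary \ref{cor:geninj} (which gives the fiber dimension of $\chi_{g,\phi}$ as $h^1(\T_S(-H))$) with the splitting \eqref{eq:h1soprasotto} and the machinery of Section \ref{sec:tools}. All components appearing in the statement are irreducible, and in each case one verifies that $(S,H+K_S)$ lies in the same component as $(S,H)$ (by replacing some $E_i$ by $E_i+K_S$ in the decomposition). Hence the involution $H\mapsto H+K_S$ gives $h^1(\T_S(-H))=h^1(\T_S(-H+K_S))=\tfrac{1}{2}h^1(\T_{\widetilde{S}}(-\widetilde{H}))$, reducing every computation to one on the $K3$ cover.

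For part (i), when $g$ is even and $\geq 8$, the simple isotropic decomposition $H\sim \tfrac{g-2}{2}E_1+E_2+E_3$ of $\E_{g,2}$ contains $3E_1+E_2+E_3$, so case (c) of Lemma \ref{lemma:appenriques} yields $h^1(\T_{\widetilde{S}}(-\widetilde{H}))=0$, whence $\chi_{g,2}$ is generically finite. For $g\in\{3,4,6\}$, the corresponding rows of Lemma \ref{lemma:restanti} give $h^1(\T_{\widetilde{S}}(-\widetilde{H}))=12,8,4$ respectively, so $h^1(\T_S(-H))=6,4,2$. Since $\dim\P_{g,2}=g+9$ and $\dim\R_g=3g-3$, the image dimensions are $12-6=6=\dim\R_3$, $13-4=9=\dim\R_4$ and $15-2=13=\dim\R_6-2$, giving the three assertions of (i).

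For part (ii), with $H\sim 2E_1+E_{1,2}$ on $\E_{5,2}^{(I)}$, apply Lemma \ref{lemma:trick4} with $G_1=E_1$ and $G_2=E_{1,2}$ (noting $G_1\cdot G_2=2$ and $H\not\equiv G_1+G_2$). Then $H-G_1-G_2\sim E_1$ gives $\gamma=0$ via \eqref{eq:KV}, while $2G_1+2G_2-H\sim E_{1,2}$ gives $\delta=h^0(E_{1,2})+h^0(E_{1,2}+K_S)=2$. For $\epsilon$, apply Lemma \ref{lemma:mumford2} to $\mu_{\widetilde{E}_1+\widetilde{E}_{1,2},\widetilde{E}_1}$: since $(E_1+E_{1,2})^2=2$ gives $h^1(\widetilde{E}_1+\widetilde{E}_{1,2})=0$ and $h^1(\widetilde{E}_{1,2})=0$, one obtains $\epsilon=0$. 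Thus $h^1(\T_{\widetilde{S}}(-\widetilde{H}))=3\delta=6$, and the symmetry argument above gives $h^1(\T_S(-H))=3$, so the general fiber of $\chi^{(I)}_{5,2}$ is three-dimensional by Corollary \ref{cor:geninj}.

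There is no serious obstacle here, since the framework of Sections \ref{sec:gen} and \ref{sec:tools} does the heavy lifting and most of the needed cohomology computations are already present in Lemma \ref{lemma:restanti}. The only genuinely new input is the single application of Lemma \ref{lemma:trick4} to $\E_{5,2}^{(I)}$, for which the natural choice $(G_1,G_2)=(E_1,E_{1,2})$ works and all three quantities $\gamma,\delta,\epsilon$ reduce to elementary vanishings on $S$.
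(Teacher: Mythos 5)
Your proposal is correct and follows essentially the same route as the paper: Lemma \ref{lemma:restanti} (equivalently, Lemma \ref{lemma:appenriques}(c) for even $g\geq 8$) combined with the irreducibility/symmetry argument $h^1(\T_S(-H))=\tfrac12 h^1(\T_{\widetilde S}(-\widetilde H))$ and Corollary \ref{cor:geninj} for part (i), and the identical application of Lemma \ref{lemma:trick4} with $(G_1,G_2)=(E_1,E_{1,2})$ giving $\gamma=\epsilon=0$, $\delta=2$ for part (ii). The dimension count $\dim\P_{g,2}=g+9$ versus $\dim\R_g=3g-3$ matches the paper's concluding comparison.
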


 \begin{proof}
(i)   By  Lemma \ref{lemma:restanti}   we have $h^1(\T_{\widetilde{S}}(-\widetilde{H}))=0$  for even   $g \geq 8$, and the result follows from Corollary \ref{cor:geninj}  and \eqref{eq:h1soprasotto}.   
 In the remaining cases, as $(S,H)$ and $(S,H+K_S)$  both belong to $\E_{g,2}$, which is irreducible by \cite{cdgk},  we must have $h^1(\T_S(-H))=h^1 (\T_S(-H+K_S))=\frac{1}{2}h^1(\T_{\widetilde{S}}(-\widetilde{H}))$, whence
Lemma \ref{lemma:restanti} yields
\[ h^1(\T_S(-H)) = \begin{cases} 2 & \; \mbox{if} \; (S,H) \in \E_{6,2} \\
                                4 & \; \mbox{if} \; (S,H) \in \E_{4,2} \\
                                6  & \; \mbox{if} \; (S,H) \in \E_{3,2}, \end{cases}\] 
which is the dimension of a general fiber of $\chi_{g,2}$ by 
Corollary \ref{cor:geninj}. Comparing dimensions of $\P_{g,2}$ and $\R_g$  yields the rest.

 (ii) Recalling that  $H \sim 2E_1+E_{1,2}$, we first apply Lemma
\ref{lemma:trick4} with $G_1=E_1$ and $G_2=E_{1,2}$ to compute
$h^1(\T_{\widetilde{S}}(-\widetilde{H}))$.
We have $H-G_1-G_2=E_1$, whence $\gamma=0$. We have $2G_1+2G_2-H=E_{1,2}$, whence $\delta=2$. Finally, the multiplication map $\mu_{\widetilde{E}_1+ \widetilde{E}_{1,2},\widetilde{E}_1}$ is surjective by Lemma \ref{lemma:mumford2}, as
\[ h^1(\widetilde{E}_1+ \widetilde{E}_{1,2}-\widetilde{E}_1)=h^1(\widetilde{E}_{1,2})=h^1(E_{1,2})+h^1(E_{1,2}+K_S)=0.\]
Hence, $\epsilon=0$.  Thus, $h^1(\T_{\widetilde{S}}(-\widetilde{H}))=6$ by Lemma \ref{lemma:trick4} and the result follows as  in (i). 
\end{proof}

To finish the proof of Theorem \ref{mainthm2} we will  have to study the cases $\phi=2$ of odd  genus  $g \geq 5$   apart from $\chi^{(I)}_{5,2}$.  We will do this in  Sections
\ref{sec:fi2-II-n} and \ref{sec:fi2-II} after a technical result in the next section.  Theorem  \ref{mainthm2} will follow from Propositions  \ref{prop:fi2gpari}, \ref{prop:fi2g>5} and \ref{prop:fi2II}.

\section{A technical result} \label{sec:tec}

 We here  give a result that we will need
in the next section, where we will bound the fiber dimension of a moduli map by specializing  to a union $C \cup \Gamma$ of a smooth curve $C$ and a rational curve $\Gamma$ and using knowledge of the fiber dimension over  $C$. 

 Although  we will need the result in the  case  $X$  is an Enriques--Fano threefold, we formulate it in a more general setting. Its proof is independent of the rest of the paper and its reading can be postponed.

\begin{lemma} \label{lemma:lemmazzo}
  Let $X$ be a  normal  projective threefold and $\L$ a big and nef line bundle on $X$ such that the general member of $|\L|$ is a smooth, regular surface. Assume that there is a smooth surface $S_0 \in |\L|$ containing a smooth irreducible rational curve $\Gamma_0$ such that:
  \begin{itemize}
  \item[(i)] $\kod(S_0) \geq 0$  (where $\kod$ denotes the Kodaira dimension);
\item[(ii)] the general element in $|\L|$ does not contain any deformation of $\Gamma_0$;
\item[(iii)] $\L|_{S_0} \sim M+N$  such that $M$ and $N$ are effective and nontrivial and $M$ is globally generated;  moreover, $\Gamma_0 \cdot M >0$; 
\item[(iv)] there is  a smooth, irreducible  nonrational  
 $D \in |N|$    such that  $h^0(\O_D(\Gamma_0))=1$. 
  \end{itemize}
 Then, possibly up to substituting the pair $(S_0,\Gamma_0)$ with a deformation of it keeping $S_0$ inside $|\L \* \I_D|$ (which automatically maintains 
$N \sim D$ and $M \sim \L|_{S_0}-N$), the following holds:
For general $C \in |M|$,  the linear system  $|\L \* \I_{D \cup C}|$ is a pencil with base locus $D \cup C$ and either
\begin{itemize}
\item[(a)] $\Gamma_0$ does not deform to a general member of $|\L \* \I_{D \cup C}|$, or
\item[(b)] $\Gamma_0$ deforms to a general member of $|\L \* \I_{D \cup C}|$ in such a way that the intersection $\Gamma_t \cap C \neq \Gamma_0 \cap C$ for the general deformation $\Gamma_t$ of $\Gamma_0$.
\end{itemize}
\end{lemma}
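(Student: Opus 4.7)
The approach is to study the relative Hilbert scheme of rational curves over $|\L|$ and track how it interacts with the pencils $|\L\*\I_{D\cup C}|$. By hypothesis (i), the curve $\Gamma_0$ is rigid inside $S_0$, that is, $h^0(\O_{S_0}(\Gamma_0))=1$ (on a surface of non-negative Kodaira dimension any smooth rational curve has self-intersection $\leq -2$, and a standard argument using effectivity and negativity rules out larger linear systems). Let $\H$ denote the irreducible component through $(S_0,\Gamma_0)$ of the Hilbert scheme of pairs $(S,\Gamma)$ with $S\in|\L|$ smooth and $\Gamma\subset S$ a deformation of $\Gamma_0$; by (ii) the projection $\pi\colon\H\to|\L|$ is not dominant, and the rigidity in $S_0$ makes it generically injective onto its image. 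Let $\H_D:=\pi^{-1}(|\L\*\I_D|)$. The substitution allowed in the statement amounts to replacing $(S_0,\Gamma_0)$ by a general point of the component of $\H_D$ through it, which preserves all assumptions (i)--(iv).

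Next, using $\L|_{S_0}\sim M+N$ with $D\in|N|$, the short exact sequence on $X$
\[
0\to\O_X\to\L\*\I_D\to\O_{S_0}(M)\to 0
\]
combined with global generation of $M$ lets me lift a general $C\in|M|$ to a surface $S_C\in|\L\*\I_D|$ with $S_C\cap S_0=D+C$. The pencil $|\L\*\I_{D\cup C}|$ is then generated by $S_0$ and $S_C$, and its base locus equals $D\cup C$ since any two distinct members cut $S_0$ in an effective divisor of the fixed class $M+N$, hence in $D+C$.

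The dichotomy of the conclusion arises by analyzing the rational map $\psi\colon\H_D\dashrightarrow|M|$, $(S,\Gamma)\mapsto(S\cap S_0)-D$. If $\psi$ is not dominant, then for a general $C\in|M|$ no deformation of $\Gamma_0$ lies on a member of $|\L\*\I_{D\cup C}|$ distinct from $S_0$; this is alternative (a). Otherwise, for general $C$ pick $(S_C,\Gamma_C)\in\psi^{-1}(C)$. By hypothesis (iv) together with conservation of intersection numbers, $\Gamma_C\cap D$ is an effective divisor in $|\O_D(\Gamma_0)|$; since this linear series consists of a single point, one must have $\Gamma_C\cap D=\Gamma_0\cap D$ as divisors on $D$.

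The main obstacle is then to establish alternative (b), that is, $\Gamma_C\cap C\neq\Gamma_0\cap C$ for a general deformation. Suppose for contradiction that equality held for general $C$. Combined with the previous paragraph, the $0$-cycle $\Gamma_C\cap S_0=\Gamma_C\cap(D\cup C)$ would coincide with $\Gamma_0\cap(D\cup C)$, and in particular lie entirely on $\Gamma_0$. As $C$ varies in $|M|$, the points $\Gamma_0\cap C$ sweep a dense subset of $\Gamma_0$ (since $M$ is globally generated and $\Gamma_0\cdot M>0$), so the family $\{\Gamma_C\}$ meets $S_0$ in $0$-cycles moving continuously along $\Gamma_0$. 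A flatness argument on the relative Hilbert scheme then specializes $\Gamma_C$ onto $\Gamma_0\subset S_0$ as $C$ tends to an appropriate limit, and generic injectivity of $\pi$ forces $S_C$ to specialize to $S_0$; this contradicts the dominance of $\psi$ on a positive-dimensional family together with rigidity of $\Gamma_0$ inside $S_0$ from (i). Making the degeneration argument rigorous, by exploiting the substitution step to control the parameter space, is the delicate technical point.
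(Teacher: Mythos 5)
Your proposal sets up the right objects (the incidence variety over $|\L\*\I_D|$, the pencil structure, the role of the substitution step), but the core of the argument has genuine gaps. First, rigidity of $\Gamma_0$ in $S_0$ (which does follow from (i)) only makes the projection $\pi\colon\H\to|\L|$ generically \emph{finite}, not generically injective: a single surface may carry several distinct deformations of $\Gamma_0$. The paper treats this as a separate case and disposes of it directly (if a general member of a general pencil carries at least two such curves, their limits on a general $S_0$ are distinct, a general base curve $C$ avoids their pairwise intersections, and the intersections with $C$ are then forced to vary, giving (b)). Second, your claim that $\Gamma_C\cap D$ lies in $|\O_D(\Gamma_0)|$ does not follow from ``conservation of intersection numbers'', which only controls the degree; linear equivalence of the cycles $\Gamma_t\cap D$ requires the parameter space of the family to map trivially to $\operatorname{Jac}(D)$, which is why the paper restricts to a pencil $\Lambda\cong\PP^1$ before invoking $h^0(\O_D(\Gamma_0))=1$.

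Most seriously, your concluding contradiction does not exist as stated: specializing $\Gamma_C$ onto $\Gamma_0$ with $S_C$ specializing to $S_0$ is exactly what happens at the point $(S_0,\Gamma_0)$ of the incidence variety, and is compatible with dominance of $\psi$ and with rigidity. The step you flag as ``the delicate technical point'' is where the actual proof lives, and it uses a different mechanism that your outline never reaches: one forms the surface $R_\Lambda$ swept out by the curves $\Gamma_\lambda$ over a general pencil $\Lambda$ and analyzes the curve $R_\Lambda\cap S_\lambda$. If this intersection were just $\Gamma_\lambda$, then $\Gamma_\lambda=R_\Lambda\cdot\L$ would deform to a general member of $|\L|$, contradicting hypothesis (ii) --- which your argument otherwise never exploits beyond non-dominance of $\pi$. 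Hence $R_\Lambda\cap S_\lambda$ contains an extra fixed curve, which cannot be $D$ (that would make $\{\Gamma_\lambda\cap D\}$ a nonconstant family of linearly equivalent divisors, against (iv)) and must therefore be $C_\Lambda$, forcing $\Gamma_\lambda\cap C_\Lambda$ to vary and yielding (b). Without this intersection-theoretic step (or a rigorous substitute), the dichotomy (a)/(b) is not established.
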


\begin{proof}
  Let $\pi:\widetilde{X} \to X$ be  a resolution of singularities  of $X$ (which is an isomorphism on the smooth locus of $X$). 
 Arguing precisely as in the proof of Lemma \ref{lemma:basicEF}, one finds that $h^1(\O_{\widetilde{X}})=0$ and $H^0(\widetilde{X}, \pi^*\L) \cong H^0(X,\L)$.  We can therefore without loss of generality assume that $X$ is smooth and $h^1(\O_X)=0$. 

Let $S_0$, $\Gamma_0$, $M$, $N$ and  $D \in |N|$ be as in the statement. From
\[
\xymatrix{0 \ar[r] & \O_X \ar[r] &  \L \* \I_D \ar[r] &  \L|_{S_0}-D \cong M \ar[r] &  0,}
\] and the fact that $M$ is globally generated and $h^1(\O_X)=0$, we see that 
$|\L \* \I_D|$ is base point free off $D$ and 
$  \dim |\L \* \I_D| = \dim |M|+1 \geq 2$.
For any $C \in |M|$ we see from
\[
\xymatrix{0 \ar[r] & \O_X \ar[r] &  \L \* \I_{D \cup C} \ar[r] &  \L|_{S_0}-D-C \cong \O_{S_0} \ar[r] &  0,}
\]
that $|\L \* \I_{D \cup C}|$ is a pencil (containing the element $S_0$) with base locus $D \cup C$. Conversely, for any pencil $\Lambda \subset |\L \* \I_D|$ containing  $S_0$, the base locus is $C_{\Lambda} \cup D$ for some $C_{\Lambda} \in |M|$. Hence, giving a pencil $\Lambda \subset |\L \* \I_D|$ containing the element $S_0$ is equivalent to giving a curve $C \in |M|$ (which will be the base curve off $D$ of $\Lambda$). Note that $\Gamma_0 \not \sub C_{\Lambda} \cup D$ for general $\Lambda$, since $|M|$ is base point free and $D  \neq  \Gamma_0$ by assumption (iv).  
A general pencil $\Lambda \subset |\L \* \I_D|$ containing $S_0$ therefore does not have $\Gamma_0$ in its base locus. 

 Denote by $\R$ the  union of the components  of the incidence variety
 \[ \{ (\Gamma,S) \; | \; [\Gamma] \in \Hilb X,  S \in |\L \* \I_D|, \Gamma \subset S\} \subset \Hilb X \x |\L \* \I_D| \]
 containing $(\Gamma_0,S_0)$, where we denote by $[\Gamma]$ the point corresponding to the curve $\Gamma$ in the Hilbert scheme. We further denote   by $p: \R \to |\L \* \I_D|$ the natural projection, which is generically finite as a general member in $|\L \* \I_D|$ has nonnegative Kodaira dimension by assumption (i),  whence  it contains at most finitely many curves that are deformations of $\Gamma_0$. If $p$ is not dominant, we end up in case (a), taking the pencil generated by $S_0$ and a general element not in the image of $p$. We  may thus  henceforth assume $p$ is dominant. In particular, we can assume that $S_0$ is general in $|\L \* \I_D|$. Indeed, for general $S_t \in |\L \* \I_D|$, one may define $N_t:=\O_{S_t}(D)$ and $M_t:=\L|_{S_t}-D$, and properties (i)-(iv)  are preserved passing from $(S_0,\Gamma_0)$ to a general $(S_t,\Gamma_t)$.

Let now $\Lambda \subset |\L \* \I_D|$ be  a general pencil containing $S_0$.  
Its general member therefore does not contain $\Gamma_0$ and contains only finitely many rational curves deformations of $\Gamma_0$, as it has nonnegative Kodaira dimension by assumption (i). 
 For $\lambda \in \Lambda$ we denote by $S_{\lambda}$ the corresponding surface. 
Let $\R_{\Lambda}$ be any irreducible component of the incidence variety
\[ \{ (\Gamma,\lambda) \; | \; [\Gamma] \in \Hilb X,  \lambda  \in \Lambda, \Gamma \subset S_{\lambda}\} \subset \Hilb X \x \Lambda \]
containing $([\Gamma_0],0=S_0)$ and such that the second projection $p_{\Lambda}: \R_{\Lambda} \to \Lambda$ is dominant. Such a component exists since $p$ is dominant. Moreover, $p_{\Lambda}$ is generically finite by what we  said above.

Assume $p_{\Lambda}$ is not generically  injective.  Then the general $S \in \Lambda$ contains at least two curves that are deformations of $\Gamma_0$.  As we assume that $S_0$ is general in $|\L \* \I_D|$,  it is not a branch point of $p$, so that  the limit curves on $S_0$ are all distinct.  As we assume that  $\Lambda$ is general,  the curve $C_{\Lambda}$ (the base curve of $\Lambda$ off $D$) is general in $|M|$ and therefore  does not pass through the intersection points of the finitely many curves  on $S_0$  in the component of $\Hilb X$ containing $[\Gamma_0]$.  This  forces the intersection points $\Gamma \cap C_{\Lambda}$ to vary as $(\Gamma,S)$ varies in $\R_{\Lambda}$;  indeed, $\Gamma$ cannot specialize to a curve containing $C_{\Lambda}$, because the latter is not rational for general $\Lambda$, as it moves on $S_0$ and $\kod(S_0) \geq 0$.  We thus end up in case (b).

Assume therefore that $p_{\Lambda}$ is generically  injective.   In particular, 
there is a dense, open subset $\Lambda^{\circ} \subset \Lambda$ such that for all $\lambda \in \Lambda^{\circ}$, the surface $S_{\lambda}$ is smooth and contains a   distinguished  curve $\Gamma_{\lambda} \neq \Gamma_0$ that is a deformation of $\Gamma_0$. 
Consider the irreducible closed surface
$R_{\Lambda}:= \overline{\cup_{\lambda \in \Lambda^{\circ}} \Gamma_{\lambda}}  \subset X$.
This surface can also be described as the image in $X$ of the universal family over the image of $\Lambda \to \Hilb X$. 
Let us study the intersection $R_{\Lambda} \cap S_{\lambda}$ for general $\lambda \in \Lambda$.  Clearly, $R_{\Lambda} \cap S_{\lambda}$ is a curve containing  $ \Gamma_{\lambda}$. 

If $R_{\Lambda} \cap S_{\lambda} = \Gamma_{\lambda}$ (set-theoretically), then the intersection is transversal for general $\lambda \in \Lambda$, as $\Lambda$ is base point free off $D \cup C_{\Lambda}$. Hence $\Gamma_{\lambda}=R_{\Lambda} \cdot S_{\lambda} = 
R_{\Lambda} \cdot \L$, and it would follow that a general member of $|\L|$ contains a deformation of $\Gamma_0$, contradicting (ii). 

Therefore, $R_{\Lambda} \cap S_{\lambda}$  contains  a curve $F_{\lambda}$ in addition to $\Gamma_{\lambda}$.  We claim that $F_{\lambda}$ does not vary with $\lambda$, and therefore  $F_{\lambda}=D$, $C_{\Lambda}$ or $D \cup C_{\Lambda}$.  Indeed, if $F_{\lambda}$ varies, it cannot lie in  
$\overline{R_{\Lambda} \setminus \cup_{\lambda \in \Lambda^{\circ}} \Gamma_{\lambda}}$, as it  consists of finitely many curves.  But then  $F_{\lambda}$, for general $\lambda$, must intersect $ \cup_{\lambda \in \Lambda^{\circ}} \Gamma_{\lambda}$ in infinitely many points, 
and must therefore lie in the base locus $D \cup C_{\Lambda}$ of $\Lambda$,  a contradiction.  Thus, $R_{\Lambda} \cap S_{\lambda}=F_{\lambda} \cup \Gamma_{\lambda}$, with $F_{\lambda}=D$, $C_{\Lambda}$ or $D \cup C_{\Lambda}$.
Moreover,  $R_{\Lambda} \cap S_{\lambda}$ contains $D$ (respectively, $C_{\Lambda}$) for general $\lambda \in \Lambda^{\circ}$, only if  $\Gamma_{\lambda} \cap D$ (resp., $\Gamma_{\lambda} \cap C_{\Lambda}$) varies with $\lambda$: indeed, 
the finitely many curves in
$\overline{R_{\Lambda} \setminus \cup_{\lambda \in \Lambda^{\circ}}
\Gamma_{\lambda}}$ are  rational, being  components of limit curves of the
$\Gamma_{\lambda}$ with $\lambda \in \Lambda^{\circ}$,
whereas $D$ is not rational by assumption (iv)  and 
neither is
$C_{\Lambda}$  as it moves on $S_0$. 
Thus $R_{\Lambda} \cap S_{\lambda}$  cannot contain $D$, as
  $\{\Gamma_{\lambda} \cap D\}_{\lambda \in \Lambda^{\circ}}$  would then
form  a nonconstant family of rationally equivalent cycles on $D$, whence
$h^0(\O_D(\Gamma_0)) \geq 2$, contradicting assumption (iv). 
 Hence $R_{\Lambda} \cap S_{\lambda}$  contains $C_{\Lambda}$,  and 
we end up in case (b).
\end{proof}

\section{The moduli maps  on $\EC_{g,2}^{(I)}$  for $g \geq 7$} \label{sec:fi2-II-n}

The main result of this section is the following, which  proves part  of Theorem \ref{mainthm2}.

\begin{proposition} \label{prop:fi2g>5}
The map 
$\chi_{g,2}^{(I)}$ is generically finite  if  $g \geq 9$ and has generically one-dimensional  fibers  if $g=7$.    
\end{proposition}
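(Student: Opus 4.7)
The plan is to apply Corollary \ref{cor:geninj} and reduce to computing $h^1(\T_S(-H))$ for $(S,H)$ general in $\E_{g,2}^{(I)}$, where $H\sim\tfrac{g-1}{2}E_1+E_{1,2}$ with $k:=\tfrac{g-1}{2}\geq 3$. Since the coefficient of $E_{1,2}$ is $1$, the class $H$ is not $2$-divisible in $\Num(S)$, so by \cite[Cor.~1.5]{cdgk} both $(S,H)$ and $(S,H+K_S)$ lie in the same component $\E_{g,2}^{(I)}$. Consequently $h^1(\T_S(-H))=h^1(\T_S(-H+K_S))=\tfrac{1}{2}h^1(\T_{\widetilde{S}}(-\widetilde{H}))$ by \eqref{eq:h1soprasotto}, and this common value is the dimension of a general fiber of $\chi_{g,2}^{(I)}$.

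For the lower bound in the case $g=7$, Lemma \ref{lemma:posfibfi2}(v) shows that $\chi_{7,2}^{(I)}(\P_{7,2}^{(I)})\subset \R^0_7$, a locus of dimension $2g+1=15$. As $\dim \P_{7,2}^{(I)}=g+9=16$, the general fiber must have dimension at least $1$, and it suffices to establish the matching upper bound.

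For the upper bounds, my plan is to apply Lemma \ref{lemma:tricknuovo} with $F_1=E_1$ and $F_2=F$ a half-fiber with $E_1\cdot F=1$ and (the minimal possible value) $E_{1,2}\cdot F=1$, whose existence on a general unnodal $(S,H)\in \E_{g,2}^{(I)}$ follows from the lattice-theoretic richness of isotropic classes in $\Num S\cong U\oplus E_8(-1)$. The first term $\alpha$ is controlled using \eqref{eq:KV} and \eqref{eq:noteff}: since $(H-2E_1)^2=4(k-1)>0$ one has $h^1(H-2E_1)=h^1(H-2E_1+K_S)=0$, while $(H-2F)^2=-4$ contributes $h^1(H-2F)=h^1(H-2F+K_S)=1$; the second term $\beta$ is computed from the exact sequence \eqref{eq:bbeta3} after analyzing $h^i(\pi^*(2F_1+2F_2-H))$ and $h^i(\pi^*(4F_1+4F_2-H))$, both divisors of square $-4$. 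This yields an upper bound on $h^1(\T_{\widetilde S}(-\widetilde H))$.

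The main obstacle is that the resulting bound from Lemma \ref{lemma:tricknuovo} is not immediately sharp: it gives $h^1(\T_{\widetilde S}(-\widetilde H))\leq \alpha+\beta$ with $\alpha=2$ and $\beta$ determined only up to the kernel of the connecting map in \eqref{eq:bbeta3}. To pin down the exact value, I plan to invoke the specialization technique of Lemma \ref{lemma:lemmazzo}: specialize $(S,H)$ to a nodal Enriques surface $(S_0,H_0)$ containing a smooth rational curve $\Gamma_0$ together with a decomposition $H_0\sim M+N$ satisfying the hypotheses of that lemma. Since $\E_{g,2}^{(I)}$ is not directly extendable to an Enriques--Fano threefold (it does not appear in Remark~\ref{rem:altricasi}), the ambient threefold $X$ must be built from an auxiliary construction, for instance by passing to a larger extendable polarization containing $H$ in the sense of Lemma \ref{lemma:proj}, or exploiting the elliptic fibration $|2E_1|$. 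The dichotomy in Lemma \ref{lemma:lemmazzo} then rules out spurious positive-dimensional components of the fiber of $\chi_{g,2}^{(I)}$, forcing $h^1(\T_{\widetilde S}(-\widetilde H))=2$ (hence fiber dimension $1$) for $g=7$, and $h^1(\T_{\widetilde S}(-\widetilde H))=0$ (generic finiteness) for $g\geq 9$.
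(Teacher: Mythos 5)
There is a genuine gap: the computational core of your plan cannot work, and the fallback is not an argument. For $H\equiv kE_1+E_{1,2}$ the invariant $\phi(H)=2$ is computed only by $E_1$ (and $E_1+K_S$): any other primitive isotropic $F$ has $F\cdot E_1\geq 1$, hence $F\cdot H= kF\cdot E_1+F\cdot E_{1,2}\geq k+1$, so $(H-2F)^2=4k-4F\cdot H\leq -4$. Since this square is always divisible by $4$, it can never equal $-2$ or $0$ for $k\geq 3$, and \eqref{eq:noteff} then forces $h^1(H-2F)=h^1(H-2F+K_S)\geq 1$. As $F_1\cdot F_2=1$ rules out taking both half-fibers numerically equal to $E_1$, one gets $\alpha\geq 2$ for \emph{every} admissible pair in Lemma \ref{lemma:tricknuovo}; the lemma then yields only the inequality $h^1(\T_{\widetilde{S}}(-\widetilde{H}))\leq \alpha+\beta$, which is $\geq 2$ and in particular can never establish the vanishing needed for $g\geq 9$, nor does it pin down the value for $g=7$ (the same obstruction kills Lemma \ref{lemma:trick4}, since $H-E_1-E_{1,2}=(k-1)E_1$ gives $\gamma>0$ by \eqref{eq:KV}). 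This is precisely why $\E_{2k+1,2}^{(I)}$ appears in the table of Lemma \ref{lemma:restanti} with a blank entry: the tools of \S\ref{sec:tools} do not compute this $h^1$, and the paper never does compute it.

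The actual proof bounds the fiber dimension directly by degenerating the \emph{curve}, not by computing cohomology, and your sketch omits all of the ingredients that make this work. On a nodal surface in the codimension-one locus $\E'_{2k+2,2}$ of Proposition \ref{prop:nuovoesiste} one has $kE_1+E_2+E_3+\Gamma\sim (k+1)E_1+E_{1,2}$, so $C'=C\cup\Gamma$ with $C\in|kE_1+E_2+E_3|$ is a limit of curves parametrized by $\P_{2k+3,2}^{(I)}$; forgetting $\Gamma$ maps the fiber of the compactified moduli map at $C'$ finitely to the fiber of $c^{\circ}_{2k+2,2}$ at $C$, which is finite for $k\geq 3$ by Proposition \ref{prop:desvan}, and semicontinuity gives generic finiteness for $g\geq 9$ with no Enriques--Fano input at all. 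For $g=7$ the genus-$6$ fibers are two-dimensional, and only there does Lemma \ref{lemma:lemmazzo} enter, applied to the threefold obtained from the classical genus-$13$ Enriques--Fano threefold via Lemma \ref{lemma:proj}, to show the forgetful map is not dominant onto that two-dimensional fiber, cutting the dimension to $1$ (the lower bound $\geq 1$ you give via $\R^0_7$ is correct). Your proposal inverts this logic: Lemma \ref{lemma:lemmazzo} is a statement about deformations of a rational curve in a pencil of hyperplane sections and cannot ``force'' an exact value of $h^1(\T_{\widetilde{S}}(-\widetilde{H}))$; without the specific degeneration $C\rightsquigarrow C\cup\Gamma$, the refined vanishing of Proposition \ref{prop:desvan} on the locus $\E^{\circ}_{2k+2,2}$, and the explicit threefold, there is no proof.
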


Recall that the irreducible component $\E_{g,2}^{(I)}$ occurs for all
odd $g$, and corresponds to polarizations  $\frac{g-1}{2}E_1+E_{1,2}$.
The proposition will be proved by semicontinuity, specializing the curves in $\EC_{g,2}^{(I)}$ to a union of a curve in $\EC_{g-1,2}$ and a smooth rational curve. We will therefore first develop  some auxiliary
results on $\E_{2k+2,2}$.
Recall that by \cite{cdgk}  these spaces  are irreducible
and that $H \sim kE_1+E_2+E_3$ for $(S,H) \in \E_{2k+2,2}$.

Let $k \geq 1$. We define  $\E_{2k+2,2}^{\circ} \subset \E_{2k+2,2}$  to be the subset   parametrizing pairs
$(S,H=kE_1+E_2+E_3)$ such that $E_1,E_2,E_3$ are nef and both
$|E_1+E_2+E_3|$ and $|E_1+E_2+E_3+K_S|$ map $S$ birationally onto a
sextic.  Nonemptiness of this locus follows from \cite[\S 7]{cos1}.

 We set $\P^\circ_{2k+2,2}:=p_{2k+2,2}^{-1}
(\E_{2k+2,2}^\circ) \subset \P_{2k+2,2}$ and denote by 
$c^\circ_{2k+2,2}: \P^\circ_{2k+2,2} \to \M_g$ the restriction of $c_{2k+2,2}$  to $\P^\circ_{2k+2,2}$. 
A key result is the following  stronger version of Proposition \ref{prop:fi2gpari}(i):

\begin{proposition} \label{prop:desvan}
   For any $C \in \im c^\circ_{2k+2,2}$, we have 
$\dim \bigl({c^\circ_{2k+2,2}}^{-1}(C) \bigr)=0$ if $k \geq 3$, whereas
${c^\circ_{6,2}}^{-1}(C)$ is equidimensional of dimension two. 
\end{proposition}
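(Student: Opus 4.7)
The plan is to bound the local fibre dimension of $c^\circ_{2k+2,2}$ at every point $(S,H,C')\in \P^\circ_{2k+2,2}$ by controlling $h^1(\T_S(-H))$ globally on $\E^\circ_{2k+2,2}$. By Lemma~\ref{lemma:diff} this local fibre dimension is at most $h^1(\T_S(-H))$, and by \eqref{eq:h1soprasotto} one has $h^1(\T_{\widetilde S}(-\widetilde H))=h^1(\T_S(-H))+h^1(\T_S(-H+K_S))$. The involution $(S,H)\mapsto (S,H+K_S)$ preserves $\E^\circ_{2k+2,2}$, so upper semicontinuity forces both $h^1(\T_S(-H))$ and $h^1(\T_S(-H+K_S))$ to be at least the common generic value $h_0$; once one proves $h^1(\T_{\widetilde S}(-\widetilde H))\leq 2h_0$ everywhere, both individual terms must equal $h_0$ at every point.

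I would establish this upper bound by applying Lemma~\ref{lemma:tricknuovo} with $F_1=E_1$ and $F_2=E_2$, which are half-fibres by the $\E^\circ$ hypothesis. For the $\alpha$-term, the divisor $H-2E_1=(k-2)E_1+E_2+E_3$ is big and nef (all $E_i$ are nef and its square is positive for $k\geq 2$), so Kawamata--Viehweg gives $h^1(H-2E_1)=h^1(H-2E_1+K_S)=0$; for $H-2E_2=kE_1-E_2+E_3$, of square $-2$, one needs $h^0=h^2=0$, with $h^2=0$ following from Serre duality and the nefness of $E_2$, and $h^0=0$ to be extracted from the birationality of $|E_1+E_2+E_3|$, which forbids the effective representatives (necessarily supported on singular fibres of the elliptic fibration $|2E_1|$) that \eqref{eq:KV} rules out in the unnodal case. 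For the $\beta$-term, I would use \eqref{eq:bbeta1}--\eqref{eq:bbeta2}: when $k\geq 3$ one obtains $\beta=0$, directly from \eqref{eq:bbeta1} for $k\geq 6$ and by a case analysis of \eqref{eq:bbeta2} for $3\leq k\leq 5$; when $k=2$ one computes $\beta=4$ via the sequence \eqref{eq:bbeta3}, since $4E_1+4E_2-H=2E_1+3E_2-E_3$ has square $2$ and contributes $h^0=2$ (plus $2$ from the $K_S$-twist), while $2E_1+2E_2-H\equiv -E_3$ has trivial $H^0$ and $H^1$.

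Putting these together yields $h^1(\T_S(-H))=0$ on $\E^\circ_{2k+2,2}$ when $k\geq 3$, and $h^1(\T_S(-H))=2$ when $k=2$, at every point. For $k\geq 3$, Lemma~\ref{lemma:diff} then forces all fibres of $c^\circ_{2k+2,2}$ to be zero-dimensional. For $k=2$, the upper bound $2$ on every local fibre dimension combines with the lower bound $2$ on every component of every fibre---which comes from upper semi-continuity of fibre dimension applied to the irreducible source together with the generic value computed in Proposition~\ref{prop:fi2gpari}(i)---to give equidimensionality of dimension two. The main obstacle I anticipate is the cohomological control in the (possibly) non-unnodal $\E^\circ$ setting: the standard criterion \eqref{eq:KV} does not apply, so the vanishings $h^0(H-2E_2)=0$ and, for $k=2$, the exact equality $h^0(2E_1+3E_2-E_3)=2$ must be deduced directly from the birationality hypothesis built into the definition of $\E^\circ_{2k+2,2}$, since upper semi-continuity alone provides only inequalities in the wrong direction to exclude pathological $(-2)$-curve configurations.
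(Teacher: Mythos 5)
Your overall architecture coincides with the paper's: Lemma \ref{lemma:tricknuovo} with $F_1=E_1$, $F_2=E_2$, the computation $\alpha=0$ and $\beta=0$ (resp.\ $\beta=4$) for $k\geq 3$ (resp.\ $k=2$), and the combination of the pointwise upper bound $h^1(\T_S(-H))$ on the local fibre dimension with a lower bound of $2$ on every fibre component when $k=2$. Your use of semicontinuity of $h^1$ together with the $K_S$-symmetry of $\E^\circ_{6,2}$ to split the everywhere-valid identity $h^1(\T_{\widetilde S}(-\widetilde H))=4$ into $2+2$ at every point is a legitimate variant of the paper's appeal to Lemma \ref{lemma:posfibfi2}(v).

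The genuine gap is precisely the vanishing you flag as ``the main obstacle'' and then defer: that $lE_\alpha+E_\beta-E_\gamma$ is not numerically equivalent to an effective divisor for \emph{any} $l\in\ZZ$ and any permutation $\{\alpha,\beta,\gamma\}=\{1,2,3\}$, on \emph{every} surface of $\E^\circ_{2k+2,2}$. This single statement is what underlies $h^0(H-2E_2)=h^0(kE_1+E_3-E_2)=0$ in the $\alpha$-term, the vanishing $h^1((k-2)E_1+E_3-E_2)=0$ needed to reduce $\beta$ to an $h^0$ via \eqref{eq:bbeta3}, the case $k=3$ of $\beta=0$ (where $4F_1+4F_2-H=3E_2+E_1-E_3$), and, when $k=2$, the equality $h^1(2E_1+3E_2-E_3)=0$ (via \cite{klvan} and the Hodge index theorem, which reduce one to excluding an effective $\Delta\equiv E_1+E_2-E_3$). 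The paper isolates it as Lemma \ref{lemma:nonspec}, and its proof is not a one-liner: the case $l\leq 1$ is imported from Cossec's analysis of the birational sextic models (\cite[Def.~5.3.1 and (5.3.2)]{cos2}), while the case $l\geq 2$ is an induction on $l$ showing that an effective $\Delta\equiv lE_\alpha+E_\beta-E_\gamma$ --- which satisfies $\Delta\cdot E_\alpha=0$ and hence, once multiples of $E_\alpha$ and $E_\alpha+K_S$ are removed by the inductive hypothesis, has every effective subdivisor of square $\leq -2$ --- would be forced, by successively peeling off a chain of pairwise once-meeting $(-2)$-curves sitting inside fibres of $|2E_\alpha|$, to satisfy $2E_\alpha-\Delta>0$, contradicting $E_\gamma\cdot(2E_\alpha-\Delta)=1-l<0$. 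Your heuristic that effective representatives are ``supported on singular fibres of $|2E_1|$'' identifies the right fibration but does not dispose of the actual danger, namely configurations of $(-2)$-curves in reducible fibres together with multiples of the half-fibre itself; eliminating these for arbitrary $l$ is exactly the content of the induction. Since the proposition asserts the fibre dimension at every point of $\E^\circ_{2k+2,2}$, not just at a general one, this lemma cannot be bypassed by genericity, and without it neither your $\alpha$- nor your $\beta$-computation closes.
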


 To prove this, we first need an auxiliary result:

\begin{lemma} \label{lemma:nonspec}
  Let $(S,H \sim kE_1+E_2+E_3) \in \E_{2k+2,2}^{\circ}$.  Then, for  $\{\alpha,\beta,\gamma\}=\{1,2,3\}$, and any $l \in \ZZ$,  we have $h^i(lE_{\alpha}+E_{\beta}-E_{\gamma})=0$, $i=0,1,2$.
\end{lemma}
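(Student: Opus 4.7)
The plan is to exploit the computation $D^2=-2$, combined with Riemann--Roch and Serre duality, and then prove non-effectivity case by case. Since $E_i^2=0$ and $E_i\cdot E_j=1$ for $i\neq j$, a direct computation gives
$$D^2 = (lE_\alpha+E_\beta-E_\gamma)^2 = 2l-2l-2 = -2,$$
so $\chi(D)=0$ by Riemann--Roch on the Enriques surface $S$. By Serre duality $h^2(D)=h^0(K_S-D)$, and the class $K_S-D$ has the same shape as $D$ after swapping the roles of $E_\beta$ and $E_\gamma$, replacing $l$ by $-l$, and twisting by $K_S$. Since each $E_i+K_S$ is again an irreducible nef half-fibre of the elliptic pencil $|2E_i|$, the whole lemma reduces to showing that $h^0(D_\varepsilon)=0$ for $D_\varepsilon:=lE_\alpha+E_\beta-E_\gamma+\varepsilon K_S$, for every $l\in\ZZ$, every permutation $\{\alpha,\beta,\gamma\}=\{1,2,3\}$, and every $\varepsilon\in\{0,1\}$; the vanishing $h^1(D)=0$ then follows from $\chi(D)=0$.

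The case $l\leq 0$ is immediate. One has $D_\varepsilon\cdot E_\beta=l-1\leq -1<0$, and $E_\beta$ (respectively $E_\beta+K_S$) is an irreducible nef curve with self-intersection zero. If $D_\varepsilon$ were effective, writing $D_\varepsilon=mE_\beta+D'$ with $E_\beta\not\subseteq\Supp(D')$ would yield $D_\varepsilon\cdot E_\beta=mE_\beta^2+D'\cdot E_\beta=D'\cdot E_\beta\geq 0$, a contradiction.

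The main obstacle is the range $l\geq 1$, where all of the intersection numbers $D_\varepsilon\cdot E_i$ are non-negative, so no short argument applies. My plan is to use the elliptic pencil $|2E_\alpha|$ and the birational sextic model. If $D_\varepsilon$ were effective, then $D_\varepsilon\cdot E_\alpha=0$ together with the nefness and irreducibility of $E_\alpha$ forces every irreducible component of $D_\varepsilon$ to be orthogonal to $E_\alpha$, hence to lie in a fibre of the elliptic fibration $\pi_\alpha\colon S\to\PP^1$ induced by $|2E_\alpha|$; restricting to a general smooth fibre $F$ gives an effective degree-zero divisor $D_\varepsilon|_F$, necessarily trivial, so $D_\varepsilon$ is supported in finitely many reducible or multiple fibres of $\pi_\alpha$. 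The asymmetry $D_\varepsilon\cdot E_\beta=l-1$ versus $D_\varepsilon\cdot E_\gamma=l+1$ then forces specific $(-2)$-curve components of fibres to contribute asymmetrically to these two intersections, and ruling out all such configurations is the technical core of the argument. This is where the hypothesis $(S,H)\in\E_{2k+2,2}^\circ$ enters in an essential way: the assumption that both $|E_1+E_2+E_3|$ and $|E_1+E_2+E_3+K_S|$ map $S$ birationally onto sextics in $\PP^3$ constrains the possible $(-2)$-curves in fibres of $\pi_\alpha$ and their intersections with the $E_i$, which should exclude every fibre configuration compatible with an effective $D_\varepsilon$. This last step is the essential difficulty in the argument.
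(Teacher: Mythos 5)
Your reduction via $D^2=-2$, Riemann--Roch and Serre duality to the non-effectivity of $lE_\alpha+E_\beta-E_\gamma+\varepsilon K_S$, and your disposal of the case $l\leq 0$, both agree with the paper. But for $l\geq 1$ you only sketch a strategy and explicitly leave its core (``ruling out all such configurations'') unproved, so the proposal has a genuine gap exactly at the essential step. Moreover, the route you sketch --- classifying the possible $(-2)$-curve configurations in fibres of $|2E_\alpha|$ and using the birational-sextic hypothesis to exclude them --- is not how the paper closes the argument, and carrying it out as stated would be substantially harder than what is actually needed.

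The paper splits $l\geq 1$ into two quite different sub-arguments, and you have conflated them. The case $l=1$ is the \emph{only} place where the hypothesis $(S,H)\in\E^{\circ}_{2k+2,2}$ enters: by Cossec's characterisation of birational sextic models (\cite[Def.~5.3.1 and (5.3.2)]{cos2}, see also \cite[\S 7]{cos1}), the birationality of $|E_1+E_2+E_3|$ and $|E_1+E_2+E_3+K_S|$ directly gives that neither $E_\alpha+E_\beta-E_\gamma$ nor $E_\alpha+E_\beta-E_\gamma+K_S$ is effective; no fibre analysis is required. For $l\geq 2$ the argument is purely numerical and uses no birationality at all: assuming $\Delta\equiv lE_\alpha+E_\beta-E_\gamma$ effective, one may assume by induction on $l$ that $\Delta$ contains no multiple of $E_\alpha$ or $E_\alpha+K_S$; then $\Delta\cdot E_\alpha=0$ together with \eqref{kl-HIT} forces ${\Delta'}^2\leq -2$ for \emph{every} effective subdivisor $\Delta'\leq\Delta$, so $\Delta$ is a sum of $(-2)$-curves lying in fibres of $|2E_\alpha|$, and a step-by-step chain argument (each new component meets the previous one in exactly one point) shows $2E_\alpha-\Delta>0$. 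This contradicts $E_\gamma\cdot(2E_\alpha-\Delta)=1-l<0$, since $E_\gamma$ is nef. This chain argument, together with the correct citation for $l=1$, is the missing content; without it your proof is incomplete.
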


\begin{proof}
   Since $(lE_{\alpha}+E_{\beta}-E_{\gamma})^2=-2$, the statement is by Riemann-Roch and Serre duality equivalent to the fact that  the divisor $lE_{\alpha}+E_{\beta}-E_{\gamma}$ is not numerically equivalent to an effective divisor  for 
 any $l \in \ZZ$ and  $\{\alpha,\beta,\gamma\}=\{1,2,3\}$.

  By \cite[Def.~5.3.1 and (5.3.2)]{cos2} (see also 
\cite[\S 7]{cos1}), neither $E_{\alpha}+E_{\beta}-E_{\gamma}$ nor $E_{\alpha}+E_{\beta}-E_{\gamma}+K_S$ is linearly equivalent to an effective  divisor.
It  is clear that $lE_{\alpha}+E_{\beta}-E_{\gamma}$ cannot be numerically equivalent to an effective divisor if $l \leq 0$.

Assume therefore, to get a contradiction, that $lE_{\alpha}+E_{\beta}-E_{\gamma}$ is numerically equivalent to an effective divisor $\Delta$ for some $l \geq 2$. We claim that
\begin{equation}
  \label{eq:conte}
  2E_{\alpha}-\Delta >0.
\end{equation}
This yields the desired contradiction, as $E_{\gamma} \cdot (2E_{\alpha}-\Delta)
=1-l <0$.

Let us prove \eqref{eq:conte} by induction on $l$. We may assume that
$\Delta$ does not contain any multiple of $E_{\alpha}$ or $E_{\alpha}+K_S$, as
otherwise $(l-1)E_{\alpha}+E_{\beta}-E_{\gamma}$ would be numerically equivalent to an
effective divisor.
Since $\Delta \cdot E_{\alpha}=0$, 
we have
\begin{equation}
\label{eq:sotto}
{\Delta'}^2 \leq -2 
\quad \text{for every effective subdivisor }
\Delta' \text{ of } \Delta
\end{equation}
by \eqref{kl-HIT}.
Pick a $(-2)$-curve $ R  \leq \Delta$. Since $|2E_{\alpha}|$ is an
elliptic pencil and $ R  \cdot E_{\alpha}=0$, it follows that $ R $
must be part of a fiber of the elliptic fibration defined by $|2E_{\alpha}|$,
whence $2E_{\alpha}- R  >0$. Set $\Delta':=\Delta- R $. If $\Delta'>0$,
then, using \eqref{eq:sotto}, we find
\[ -2=\Delta^2={\Delta'}^2+ R ^2+2\Delta' \cdot  R  \leq
-4+2\Delta' \cdot  R ,\]
whence $\Delta' \cdot  R  \geq 1$. Hence, there exists a 
$(-2)$-curve $ R ' \leq \Delta'$ such that $ R ' \cdot  R 
\geq 1$; more precisely, we have $ R ' \cdot  R  = 1$, since
otherwise $( R + R ')^2 \geq 0$, contradicting
\eqref{eq:sotto}. Since $ R ' \cdot (2E_{\alpha}- R )=-1$, we must have
$2E_{\alpha}- R - R '>0$.  Repeating the procedure, if necessary,
eventually yields \eqref{eq:conte}. 
\end{proof}

\begin{proof}[Proof of Proposition \ref{prop:desvan}]
   By Lemmas  \ref{lemma:diff} and \ref{lemma:posfibfi2}(v), the result will follow if we prove that 
 for any $(S,H) \in \E_{2k+2,2}^\circ$ and $k\geq 2$, we have
\begin{equation} \label{eq:desvan}
  h^1(\T_S(-H))+h^1(\T_S(-(H+K_S))=
\begin{cases} 0, & \;   \mbox{ if}  \; \; \; k \geq 3, \\
4, & \;   \mbox{ if}  \; \; \;  k=2.
\end{cases}
\end{equation}
 (Indeed, when $k=2$, since $(S,H)$ and $(S,H+K_S)$ are both general in $\E^{\circ}_{6,2}$ we have  that $h^1(\T_S(-H))$ and $h^1(\T_S(-(H+K_S))$ are equal, and 
\eqref{eq:desvan} implies they are both equal to 2.)

  We apply Lemma \ref{lemma:tricknuovo} with $H=kE_1+E_2+E_3$, $F_1=E_1$ and 
$F_2=E_2$,  and  \eqref{eq:h1soprasotto}.  

As  $H-2F_1  \sim  (k-2)E_1+E_2+E_3$ is big and nef, we have $h^1(H-2F_1)=h^1(H-2F_1+K_S)=0$. We have  $H-2F_2  \sim  kE_1+E_3-E_2$,  so that  $h^1(H-2F_2)=0$ by Lemma \ref{lemma:nonspec} and $h^1(H-2F_2+K_S)=0$ by the same lemma applied with $E_3$ replaced by $E_3+K_S$. Hence  $\alpha=0$ by Lemma \ref{lemma:nonspec}. 

   As  $H-2F_1-2F_2  \sim  (k-2)E_1+E_3-E_2$,   Lemma \ref{lemma:nonspec}   (possibly applied again with $E_3$ replaced by $E_3+K_S$)   and  \eqref{eq:bbeta3} yield that  $\beta=h^0(4F_1+4F_2-H)+h^0(4F_1+4F_2-H+K_S)$. We  have
$4F_1+4F_2-H  \sim  (4-k)E_1+3E_2-E_3$.  As  $E_2 \cdot (4F_1+4F_2-H)=3-k$, we see that $\beta=0$  if  $k \geq 4$. Moreover, $\beta=0$  if   $k=3$ by Lemma \ref{lemma:nonspec}.  If   $k=2$,   we claim that $\beta=4$.
Indeed, as $(4F_1+4F_2-H)^2=2$,  the claim follows if we prove that 
$h^1(D)=0$ for $D \equiv 2E_1+3E_2-E_3$. If, by contradiction, $h^1(D) >0$, there is by \cite{klvan} an effective divisor
$\Delta$ such that $\Delta^2=-2$ and $\Delta \cdot D \leq -2$. Then $(D-\Delta)^2 \geq 4$ and $(D-\Delta) \cdot (E_1+2E_2) \leq D \cdot (E_1+2E_2)=4$. Since
$(E_1+2E_2)^2=4$, the Hodge index theorem yields $D-\Delta \equiv E_1+2E_2$, whence $\Delta \equiv E_1+E_2-E_3$, contradicting Lemma \ref{lemma:nonspec}. 
We have therefore proved that $\beta=4$ when $k=2$.

By Lemma \ref{lemma:tricknuovo}, we have 
$h^1(\T_{\widetilde{S}}(-\widetilde{H}))= 0$ if $k \geq 3$ and
$h^1(\T_{\widetilde{S}}(-\widetilde{H}))= 4$ if $k=2$, 
and  \eqref{eq:desvan} 
follows from \eqref{eq:h1soprasotto}.
\end{proof}

 The next key ingredient in the proof of Proposition \ref{prop:fi2g>5} is the identification of a suitable sublocus of {\it nodal} Enriques surfaces.

\begin{proposition} \label{prop:nuovoesiste}
 The closed subset
 $\E_{2k+2,2}' \subset \E_{2k+2,2}^\circ$ 
parametrizing  $(S,H)$ such that $S$ contains a smooth rational
curve $\Gamma$ with $\Gamma \cdot E_1=0$ and $\Gamma \cdot
E_2=\Gamma \cdot E_3=1$  (possibly after rearranging indices when $k=1$) is irreducible of codimension one.
 Moreover, for general $(S,H)$ in $\E_{2k+2,2}'$, we have $\Gamma \cap E_2 \cap E_3  =\emptyset$. 
\end{proposition}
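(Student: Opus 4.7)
The plan is to exhibit $\E_{2k+2,2}'$ as an irreducible divisor in $\E_{2k+2,2}$, in three steps: a lattice-theoretic description of $[\Gamma]$, an existence argument, and irreducibility via monodromy.

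First, a short computation shows that any class $R\in\Num(S)$ satisfying $R^2=-2$, $R\cdot E_1=0$, $R\cdot E_2=R\cdot E_3=1$ takes the form $R=E_1+D$ with $D\in\langle E_1,E_2,E_3\rangle^\perp$ and $D^2=-2$; conversely, every such $D$ yields an admissible $R$. Since $\Num(S)$ is the unimodular Enriques lattice $U\oplus E_8(-1)$ of rank~$10$ and $\langle E_1,E_2,E_3\rangle$ has rank~$3$ and discriminant~$2$, its orthogonal complement is negative-definite of rank~$7$ and discriminant~$2$ (in fact isomorphic to $E_7(-1)$), which contains $(-2)$-classes in abundance.

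Second, on the $10$-dimensional (irreducible) moduli space $\E_{2k+2,2}$, requiring a fixed class $R$ of square $-2$ to be effective is of codimension at most one, so $\E_{2k+2,2}'$ is either empty or a divisor. Nonemptiness is checked by an explicit construction using the sextic model $\varphi_{E_1+E_2+E_3}\colon S\to\PP^3$ available on $\E_{2k+2,2}^\circ$: one specializes $(S,H)$ so that the sextic surface double along the edges of a tetrahedron contains a smooth plane conic $C$ meeting the edges attached to $E_2,E_2',E_3,E_3'$ transversely at one point each and avoiding those attached to $E_1,E_1'$. The strict transform of $C$ in $S$ is then the sought smooth rational curve~$\Gamma$.

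For irreducibility, consider the forgetful map $\mathcal I\to\E_{2k+2,2}'$ from the incidence variety parametrizing triples $(S,H,\Gamma)$; since $\Gamma^2=-2$ implies $h^0(\O_S(\Gamma))=1$, this map is finite. Its fibers are indexed by the orbit of admissible $(-2)$-classes in $\langle E_1,E_2,E_3\rangle^\perp\cong E_7(-1)$ under the monodromy of $\E_{2k+2,2}$. Combining transitivity of the Weyl group $W(E_7)$ on its roots with the irreducibility of $\E_{2k+2,2}$ from~\cite{cdgk} yields irreducibility of $\mathcal I$, hence of $\E_{2k+2,2}'$. The last assertion $\Gamma\cap E_2\cap E_3=\emptyset$ on the general member is an open condition in $\E_{2k+2,2}'$, whose verification reduces to exhibiting a single such triple, again visible from the sextic model by a dimension count on the allowed conics.

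The main obstacle is the irreducibility argument: to conclude rigorously, one must verify that the monodromy of $\E_{2k+2,2}$ acts on $\langle E_1,E_2,E_3\rangle^\perp$ through the full Weyl group $W(E_7)$. This can be accomplished either by a direct computation, or by extending the unirational parametrizations of~\cite{cdgk} to cover the nodal sublocus $\E_{2k+2,2}'$ itself.
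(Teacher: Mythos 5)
Your existence step is in the same spirit as the paper's: the paper also works with the $\PP^{13}$ of Enriques sextics double along the edges of a tetrahedron and with the family $\F$ of smooth conics meeting the four edges attached to $E_2,E_3$ once each and missing those attached to $E_1$. But your irreducibility argument is a genuinely different route, and it is exactly there that the proof is incomplete. First, the monodromy claim is not just unverified but not even well posed as stated: the monodromy of the polarized moduli space $\E_{2k+2,2}$ is only required to preserve $H$, not the individual classes $E_1,E_2,E_3$ (for $k=1$ even the labelling of the $E_i$ is ambiguous, which is why the statement allows rearranging indices), so ``the orbit of admissible $(-2)$-classes in $\langle E_1,E_2,E_3\rangle^\perp$ under the monodromy'' requires passing to a cover where the $E_i$ are marked, and one must then prove the monodromy of that cover surjects onto $W(E_7)$ \emph{and} that each root is actually realized by an effective class somewhere --- none of which is supplied. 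Second, the assertion that effectivity of a fixed $(-2)$-class is ``of codimension at most one'' is not a naive parameter count: on an Enriques surface the whole of $\Num(S)$ is algebraic and $\chi(R)=0$ for $R^2=-2$, so neither $R$ nor $-R$ need be effective, and bounding the codimension of the effectivity locus requires the period description of the $K3$ cover. Third, nonemptiness is asserted rather than proved: the paper needs a genuinely nontrivial input here, namely an explicit Enriques surface (built from Cossec's surface with ten elliptic pencils and ten rational curves, realized via a Wirtinger sextic) carrying the required configuration $2E_1\sim\Gamma+\Gamma'$ with the prescribed intersections; without exhibiting one admissible pair $(S,\Gamma)$ your divisor could a priori be empty.

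For comparison, the paper avoids monodromy entirely. It shows that for \emph{every} conic $F\in\F$ the Enriques sextics containing $F$ form a linear system of dimension exactly $8$; the lower bound is the restriction count you have in mind, and the upper bound is the cohomological vanishing $h^1\bigl(2(E_1+E_2+E_3)-F\bigr)=0$, proved with the vanishing theorem of \cite{klvan} and the Hodge index theorem. Constancy of the fiber dimension makes the incidence variety $\G=\{(F,\Sigma):F\subset\Sigma\}$ a $\PP^8$-bundle over the irreducible $4$-fold $\F$, hence irreducible of dimension $12$; since an Enriques surface contains finitely many conics for a fixed polarization and $\sigma_{2k+2,2}$ has $3$-dimensional fibers, the image in $\E_{2k+2,2}$ is irreducible of dimension $9$. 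If you want to salvage your approach, the missing pieces are precisely a proof of the monodromy statement (after marking the $E_i$) together with a concrete nonempty example; as it stands, the argument establishes at most that $\E'_{2k+2,2}$ is a union of divisorial components, not that it is irreducible or nonempty.
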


Again, to prove this we need an auxiliary result:

\begin{lemma} \label{lemma:esiste}
  There exists an Enriques surface $S$ containing three nef, primitive isotropic divisors $E_1$, $E_2$ and $E_3$ and  smooth rational  curves $\Gamma$ and $\Gamma'$  such that
  \begin{compactitem}
  \item[(i)] $2E_1 \sim \Gamma +\Gamma'$, with $\Gamma \cdot \Gamma'=2$, and  the latter intersection is transversal;  
\item[(ii)] $E_2 \cdot E_3=1$, $\Gamma \cdot E_2=\Gamma' \cdot E_2=\Gamma \cdot E_3=\Gamma' \cdot E_3=1$ (whence $E_1 \cdot E_2=E_1 \cdot E_3=1$);
\item[(iii)]  the elliptic pencils $|2E_2|$ and $|2E_3|$ have no reducible fibers.   
\item[(iv)]   $|E_1+E_2+E_3|$  is ample and  maps $S$ birationally onto a sextic surface. 
 \end{compactitem}
\end{lemma}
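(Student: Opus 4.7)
The plan is to construct $S$ by combining lattice theory with the surjectivity of the period map for Enriques surfaces. Consider the rank-$4$ abstract even lattice $V$ with basis $e_1, e_2, e_3, \gamma$ and Gram matrix
\[
\begin{pmatrix} 0 & 1 & 1 & 0 \\ 1 & 0 & 1 & 1 \\ 1 & 1 & 0 & 1 \\ 0 & 1 & 1 & -2 \end{pmatrix}
\]
of signature $(1,3)$ and discriminant $-4$. The first step is to exhibit a primitive embedding of $V$ into the Enriques lattice $U \oplus E_8(-1)$ by standard results on even lattice embeddings, produced by a rank-$6$ negative definite complementary lattice with matching discriminant form. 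The Torelli-type surjectivity of the period map (cf.\ \cite[Chap.~IV]{cd}) then yields a $6$-dimensional family $\F$ of Enriques surfaces $S$ admitting a primitive embedding $V \hookrightarrow \Num(S)$.

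For very general $S \in \F$ one has $\Num(S) = V$. A Weyl-group argument places the isotropic classes in the nef chamber, so that $e_1, e_2, e_3$ are represented by nef primitive isotropic divisors $E_1, E_2, E_3$ with the prescribed intersection numbers. Since $\gamma^2 = -2$ and $\gamma \cdot (E_1+E_2+E_3) = 2 > 0$, either $\gamma$ or $\gamma + K_S$ is effective; after replacing $\gamma$ by $\gamma + K_S$ if necessary, $\Gamma := \gamma$ is represented by an effective divisor, and for generic $S \in \F$ it is a single smooth rational curve. Setting $\Gamma' := 2E_1 - \Gamma$, one computes $(\Gamma')^2 = -2$, $\Gamma' \cdot E_1 = 0$, $\Gamma \cdot \Gamma' = 2$, and $\Gamma + \Gamma' \sim 2E_1$. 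Hence $\Gamma + \Gamma'$ is a reducible fiber of the elliptic pencil $|2E_1|$; ruling out the Kodaira-$III$ specialization (a codimension-$1$ condition on $\F$), the fiber must be of type $I_2$, proving (i) and (ii).

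Conditions (iii) and (iv) are open on $\F$. A reducible fiber of $|2E_j|$ for $j \in \{2,3\}$ requires a $(-2)$-curve whose class lies in $e_j^\perp \cap V$; failure of ampleness of $E_1+E_2+E_3$ requires a $(-2)$-curve orthogonal to it. A finite enumeration of $(-2)$-classes in $V$ shows that, apart from $\gamma$ and $2E_1 - \gamma$ (both of which have positive intersection with $E_2$, $E_3$, and $E_1+E_2+E_3$), no further $(-2)$-class is represented by an effective divisor on generic $S \in \F$. Birationality of the morphism defined by $|E_1+E_2+E_3|$ onto a sextic then follows from standard Enriques theory for polarizations with $H^2 = 6$ and $\phi \geq 2$, cf.\ \cite[\S 7]{cos1} or \cite[Thm.~4.6.3]{cd}.

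The main obstacle is the joint control of $(-2)$-curves on the generic $V$-marked $S$, so as to ensure that (iii) and (iv) hold simultaneously while ruling out the Kodaira-$III$ specialization in (i). These reduce to a finite list of lattice-theoretic conditions, each defining a proper subfamily of $\F$, but verifying them requires some care. A cleaner-looking but more computational alternative would be to realize $\widetilde{S}$ by an explicit double cover construction (e.g.\ of $\PP^1 \times \PP^1$ branched on a bidegree-$(4,4)$ curve with a suitable involution), at the cost of losing the transparent genericity argument afforded by the period map.
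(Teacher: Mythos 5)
Your lattice-theoretic setup has a fatal flaw specific to Enriques surfaces: the claim that ``for very general $S \in \F$ one has $\Num(S) = V$'' is impossible. Unlike the $K3$ case, \emph{every} Enriques surface satisfies $\Num(S) \cong U \oplus E_8(-1)$, a unimodular lattice of rank $10$; one can never cut the numerical lattice down to a rank-$4$ sublattice $V$ by genericity. Consequently your key control mechanism --- ``a finite enumeration of $(-2)$-classes in $V$ shows that no further $(-2)$-class is represented by an effective divisor'' --- collapses: the $(-2)$-classes that could produce reducible fibers of $|2E_2|$, $|2E_3|$ or destroy ampleness of $E_1+E_2+E_3$ live in the full rank-$10$ lattice, and deciding which of them are effective on a given surface is exactly the hard point (it depends on the period of the $K3$ cover, i.e.\ on the position of $S$ in the nodal locus of moduli). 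Your dimension count is also off for the same reason: imposing that the single class $\gamma$ be effective is (roughly) one condition, not four, so there is no natural ``$6$-dimensional family'' here. Even setting this aside, you yourself flag that the simultaneous verification of (i), (iii), (iv) is ``the main obstacle'' and leave it unverified, so the argument is incomplete by your own account; the transversality in (i) (type $I_2$ versus type $III$) and the possible need to replace $E_i$ by $E_i+K_S$ to get a birational sextic model in (iv) are likewise asserted rather than proved.

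For comparison, the paper avoids all genericity issues by exhibiting a single explicit surface: it takes the Enriques surface of \cite[Lemma 3.2.1]{cos3} (a Reye congruence, equivalently the minimal desingularization of the double plane branched along a Wirtinger sextic and the edges of its complete quadrilateral), which comes equipped with ten elliptic pencils $|2F_i|$ with \emph{no} reducible fibers and ten smooth rational curves $D_i$ with known intersection numbers; setting $F_{ij}:=B-F_i-F_j$ with $3B\sim F_1+\cdots+F_{10}$, one gets $2F_{ij}\sim D_i+D_j$ and reads off (i)--(iv) directly, the transversality of $D_i\cap D_j$ coming from the explicit plane model. Your approach could in principle be repaired by working with the nodal locus in the period domain and carefully analyzing which roots of $U\oplus E_8(-1)$ become effective at its generic point, but as written there is a genuine gap rather than merely a different route.
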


\begin{proof}
   By \cite[Lemma 3.2.1]{cos3} there exists an Enriques surface $S$ with ten elliptic pencils $|2F_i|$ and ten smooth rational curves $D_i$, with $1\leqslant i\leqslant 10$, such that 
  \[ D_i \cdot F_j =1 \; \mbox{for} \;  i \neq j; \; \; D_i \cdot F_i =3; \; \;
   F_i \cdot F_j =1 \; \mbox{for} \; i \neq j; \; \;
 D_i \cdot D_j =2 \;  \mbox{for} \; i \neq j.\]
Moreover, by \cite[Rem. p.~747]{cos3}, the elliptic pencils $|2F_i|$ have no reducible fibers, and by \cite[Prop. 3.2.6]{cos3} the complete linear system $|D_i+D_j+D_k|$, for any distinct $i,j,k$, defines a degree two  
morphism  of $S$ onto a Cayley cubic surface in $\PP^3$. Thus, by \cite[Thm. 7.2 and (7.7.1)]{cos1}, the surface $S$ can equivalently be realized as the minimal desingularization of the double cover of $\PP^2$ branched along a {\it Wirtinger sextic} (a sextic with six double points at the vertices of a complete quadrilateral) and the edges of its complete quadrilateral. The curves $D_i, D_j, D_k$ are the inverse images of the three diagonals of the quadrilateral, whence they intersect pairwise transversely in two points. 

By \cite[Lemma 1.6.2]{cos2} there exists a $B \in \Pic (S)$ such that
$3B \sim F_1+\cdots+ F_{10}$. Set $F_{ij}:=B-F_i-F_j$, for $i \neq
j$. Then $F_{ij}^2=0$.  We get $D_i \cdot B=4$ for all $i$, whence
$D_i \cdot F_{ij}=0$ for $i \neq j$. As $(D_i+D_j)^2=(D_i+D_j)\cdot
F_{ij}=0$, we must have $(D_i+D_j) \equiv q F_{ij}$ for some $q \in
\QQ$ by \eqref{kl-HIT}, 
and dotting both sides with $F_i$ yields $q=2$. 
In particular, $F_{ij}$ is nef, so that $|2F_{i,j}|$ is an elliptic pencil. Hence, one necessarily has $2F_{i,j} \sim D_i+D_j$. 
The divisors $E_1:=F_{45}$, $E_2:=F_2$, $E_3:=F_3$, 
$\Gamma:=D_4$ and $\Gamma':=D_5$   satisfy  properties (i)--(iii).  Moreover, (iii) implies that both $E_2$ and $E_3$ has positive intersection with any $(-2)$--curve, whence $E_1+E_2+E_3$ is ample. By
\cite[\S 7]{cos1} or \cite[(5.3.2)]{cos2}, either $|E_1+E_2+E_3|$ or
$|E_1+E_2+E_3+K_S|$ maps $S$ birationally onto a sextic surface,
whence (iv) follows possibly by replacing any of $E_i$ with $E_i+K_S$. 
\end{proof}

\begin{proof}[Proof of Proposition \ref{prop:nuovoesiste}]
   We  argue as in \cite[ \S 5]{cdgk}. 
Fix homogeneous coordinates $(x_0:x_1:x_2:x_3)$ on $\PP^3$ and let
 $T =Z(x_0x_1x_2x_3)$  
 be the  \emph{coordinate tetrahedron}.  We label by
$\ell_1,\ell_2,\ell_3,\ell'_1,\ell'_2,\ell'_3$ the edges of $T$, in
such a way that $\ell_1,\ell_2,\ell_3$ are coplanar, and $\ell'_i$ is
skew to $\ell_i$ for all $i=1,2,3$.

Consider the linear system $\mathcal S$ of surfaces of degree  $6$  singular along the edges of $T$  (called 
\emph {Enriques sextics}). They  have equations of the form
\begin{equation*} \label{eq:sigma}
c_3(x_0x_1x_2)^2+c_2(x_0x_1x_3)^2+c_1(x_0x_2x_3)^2+c_0(x_1x_2x_3)^2+Qx_0x_1x_2x_3=0,
\end{equation*} 
where $Q=\sum_{i \leqslant j}q_{ij}x_ix_j$. 
This shows that $\dim(\mathcal S)=13$ and we may identify $\mathcal S$ with the $\PP^ {13}$ with  homogeneous coordinates
\[ q=(c_0:c_1:c_2:c_3:q_{00}:q_{01}:q_{02}:q_{03}:q_{11}:q_{12}:q_{13}:q_{22}:q_{23}:q_{33}). \]
  As in  \cite[ \S 5]{cdgk} we have a dominant rational map
  $\sigma_{2k+2,2}: \mathcal S \dashrightarrow \E_{2k+2,2}$,  which assigns to a general $\Sigma \in \mathcal S$ the pair $(S,H)$, where $\varphi:S\to \Sigma$ is the normalization and  $H=k\varphi^*(\ell_1)+\varphi^*(\ell_2)+\varphi^*(\ell_3)$.  Indeed, any $(S,H=kE_1+E_2+E_3)\in \E_{2k+2,2}$ such that $|E_1+E_2+E_3|$ is  ample and  birational lies in the image of  $\sigma_{2k+2,2}$, because the  image $\Sigma$ of $S$ via the map $\varphi:=\varphi_{E_1+E_2+E_3}$ is singular  precisely  along the edges of $T$,  cf. \cite[Thm. 4.6.3]{cd},  with $\ell_i=\varphi(E_i)$, after a suitable change of coordinates. 
  Also  note that the image of $\sigma_{2k+2,2}$ contains pairs $(S,H=kE_1+E_2+E_3$) satisfying the conditions of Lemma \ref{lemma:esiste}, because of property  (iv)  therein.

The fiber $\sigma_{2k+2,2} ^ {-1}(S,H)$ consists of the orbit of $\Sigma=\varphi(S)$ via the $3$--dimensional group of projective transformations  fixing $T$.

We denote by $\F$ the family of smooth conics  $F \subset \PP^3$ such that 
 $F$ does not contain the vertex $\ell'_1\cap\ell'_2\cap\ell'_3$ of $T$ and such that  $F$ meets the edges $\ell_2$, $\ell'_2$, $\ell_3$ and $\ell'_3$ exactly once and does not meet  $\ell_1$ and $\ell'_1$.

\begin{claim} \label{cl:nw01}
(a) The variety $\F$ is irreducible and $4$-dimensional.

(b) Each $F \in \F$ is contained in an $8$-dimensional linear system of Enriques sextics.
\end{claim}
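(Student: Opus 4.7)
For part (a), I would parametrize $\F$ using the fact that every smooth conic $F \subset \PP^3$ lies in a unique plane $\Pi \in (\PP^3)^*$. For generic $\Pi$, the four points $p_2 := \Pi \cap \ell_2$, $p'_2 := \Pi \cap \ell'_2$, $p_3 := \Pi \cap \ell_3$, $p'_3 := \Pi \cap \ell'_3$ are distinct and in general position in $\Pi$, and the conics in $\Pi$ through them form a pencil. This exhibits $\F$ as (an open subset of) a $\PP^1$-bundle over an open subset of the three-dimensional $(\PP^3)^*$, hence irreducible of dimension $4$. The open conditions defining $\F$ (smoothness of $F$, disjointness from $\ell_1$ and $\ell'_1$, and avoiding the vertex $\ell'_1\cap\ell'_2\cap\ell'_3$) are realized non-trivially by the configuration built in Lemma~\ref{lemma:esiste}.

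For part (b), fix $F \in \F$. The Enriques sextics containing $F$ form a linear subsystem $\mathcal{S}_F \subset \mathcal{S} \cong \PP^{13}$, whose dimension I compute as follows. Since each $\Sigma \in \mathcal{S}$ has multiplicity $2$ along every edge of $T$ and $F$ meets each of $\ell_2, \ell'_2, \ell_3, \ell'_3$ transversally at a single point, $\Sigma|_F \in H^0(\O_F(12))$ vanishes to order $\geq 2$ at each of $p_2, p'_2, p_3, p'_3$. The subspace of $H^0(\O_F(12))$ consisting of such sections has dimension $13 - 8 = 5$, so ``$F \subset \Sigma$'' imposes at most $5$ linear conditions on $\mathcal{S}$, yielding $\dim \mathcal{S}_F \geq 8$.

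For the opposite inequality, the plan is to show these $5$ conditions are independent by inspecting the ten basis elements of the equations of $\mathcal{S}$ of the form $Q \cdot x_0x_1x_2x_3$, where $Q$ ranges over quadratic forms on $\PP^3$. Each such restriction factors as $Q|_F \cdot (x_0x_1x_2x_3)|_F$. The key observation is that for every $F \in \F$ the degree-$8$ section $(x_0x_1x_2x_3)|_F \in H^0(\O_F(8))$ has zeros concentrated exactly on $\{p_2, p'_2, p_3, p'_3\}$, with multiplicity $2$ at each: every coordinate hyperplane $\{x_i = 0\}$ contains three of the six edges of $T$, exactly two of which belong to $\{\ell_2, \ell'_2, \ell_3, \ell'_3\}$ (the third being $\ell_1$ or $\ell'_1$, which is disjoint from $F$), and the Bezout identity $\deg(F \cap \{x_i = 0\}) = 2$ then forces both intersections to be transverse. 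Dividing by this base locus identifies the restriction of $Q \cdot x_0x_1x_2x_3$ with $Q|_F \in H^0(F, \O_F(4))$. Finally, the composite $H^0(\PP^3, \O(2)) \to H^0(\Pi, \O(2)) \to H^0(F, \O_F(4))$ is surjective (the second arrow because $H^1(\O_\Pi) = 0$), so the restrictions of these ten basis elements already span the $5$-dimensional target; the $5$ conditions are therefore independent, and $\dim \mathcal{S}_F = 8$.

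The main technical point is the determination of the zero pattern of $(x_0x_1x_2x_3)|_F$, which rests on the Bezout-type argument forcing transversality at each edge point; this is precisely what ensures the conclusion holds uniformly for every $F \in \F$, rather than only for the generic element.
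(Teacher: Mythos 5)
Your proof is correct. Part (a) is the same argument as the paper's (conics in a variable plane through the four points cut by $\ell_2,\ell'_2,\ell_3,\ell'_3$ form a pencil, giving a $\PP^1$-bundle over an open subset of $(\PP^3)^*$), and your lower bound $\dim\mathcal S_F\geq 8$ in (b) is the paper's degree count on $F$ in only slightly different clothing. Where you genuinely diverge is the reverse inequality. The paper fixes one Enriques sextic $\Sigma\supset F$, passes to its normalization $S$, factors the restriction $\mathcal S\dashrightarrow\mathcal S|_F$ through $\mathcal S|_S=|2(E_1+E_2+E_3)|$, identifies $\mathcal S_{S,F}$ with the linear system $|D|$, $D=2(E_1+E_2+E_3)-F$, and proves $h^1(D)=0$ (hence $\dim|D|=7$ by Riemann--Roch, $D^2=14$) by combining the vanishing criterion of \cite{klvan} with the Hodge index theorem. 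You instead stay entirely in $\PP^3$: the Bezout argument pinning down $\mathrm{div}\bigl((x_0x_1x_2x_3)|_F\bigr)=2(p_2+p'_2+p_3+p'_3)$ (which, as a byproduct, forces all four intersections $F\cap\ell_i$ to be transverse and the points distinct), followed by the surjectivity of $H^0(\O_{\PP^3}(2))\to H^0(\O_F(4))$, shows that the summands $Q\,x_0x_1x_2x_3$ alone already surject onto the $5$-dimensional space of sections of $\O_F(12)$ doubly vanishing at the four points. Your route is more elementary and self-contained --- it needs no vanishing theorem on Enriques surfaces, no Hodge index, and not even the existence of a smooth normalization --- whereas the paper's route is uniform with the techniques used elsewhere in the article and records the intrinsic divisor class of $F$ on $S$, which is what feeds into the rest of the proof of Proposition \ref{prop:nuovoesiste}. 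Both arguments are valid for every $F\in\F$, not just the generic one.
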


\begin{proof}[Proof of the claim]
(a) Each $F$ in $\F$ spans a plane  intersecting   the edges  of $T$ in six points. 
The set of plane conics through  four of these six points is a $\PP^1$,  proving (a).

(b) The linear system $\mathcal S$ of the Enriques sextics cuts out on each $F \in \F$ a linear system of divisors with base locus (containing) $T \cap F$ and a moving part of degree (at most) $4$, whence of dimension at most $4$. Hence $F$ is contained in a linear system $\mathcal{S}_F$ of Enriques sextics  of dimension at least $8$. 

We claim that  for each $F \in \F$, one has $\dim (\mathcal S_F)=8$. 
Consider the restriction rational map $\rho_F: \mathcal{S} \dasharrow {\mathcal{S}}|_F$, whose indeterminacy locus is $\mathcal S_F$. Pick any Enriques sextic $\Sigma$ containing $F$ and let $S$ be its normalization. We consider by abuse of notation $F$ as a curve in $S$. Then $\rho_F$ factors through the restriction $\rho_S$ to $S$ and the restriction $\rho_{S,F}$ from $S$ to $F$, i.e.,
 $\xymatrix{\rho_F: \mathcal{S} \stackrel {\rho_S}\dasharrow   \mathcal{S}|_S  \stackrel {\rho_{S,F}} \dasharrow  \mathcal{S}|_F}. 
$ 
The indeterminacy locus of $\rho_S$ is just the point $[\Sigma]$. Therefore,  denoting by  $\mathcal S_{S,F}$ the indeterminacy locus of $\rho_{S,F}$, we have $\dim (\mathcal S_F)=\dim(\mathcal S_{S,F})+1$. So we have to prove that 
$\dim(\mathcal S_{S,F})=7$.

The restricted linear system $\mathcal{S}|_S$ is 
$|2(E_1+E_2+E_3)|$; indeed, it is 
the sublinear system of $|6(E_1+E_2+E_3)|$ having base locus twice the
sum of the pullback of the edges of the tetrahedron, which is
\[2\bigl(E_1+E_2+E_3+(E_1+K_S)+(E_2+K_S)+(E_3+K_S)\bigr) \sim
4(E_1+E_2+E_3).\]
Hence $\mathcal S_{S,F}$ is the projectivization of the kernel of the
restriction map
\[ H^0(\O_S(2(E_1+E_2+E_3)) \longrightarrow H^0(\O_{F}(2(E_1+E_2+E_3)), \]
which is $|2(E_1+E_2+E_3)-F|$. Set $D:=2(E_1+E_2+E_3)-F$. Then
$D^2=14$. We want to prove that $\dim (\mathcal S_{S,F})=\dim(
|D|)=7$, which amounts to proving that $h^1(D)=0$. Assume
$h^1(D)>0$. Then, by \cite{klvan}, there exists an effective divisor
$\Delta$ such that $\Delta^2=-2$ and $\Delta \cdot D \leq -2$. In
particular, $\Delta \cdot F \geq 2$. Since $F$ is mapped by the
morphism $\varphi$ defined by $|E_1+E_2+E_3|$ to a smooth conic,
$\Delta$ cannot be contracted by $\varphi$. Hence, $\Delta \cdot
(E_1+E_2+E_3)>0$. It follows that $(D-\Delta) \cdot (E_1+E_2+E_3) < D
\cdot (E_1+E_2+E_3)=10$. But this contradicts the Hodge index theorem,
since $(D-\Delta)^2 (E_1+E_2+E_3)^2 \geq 16 \cdot 6=96$.
\end{proof}

 By  the claim   the incidence variety 
 $\G : = \{ (F,\Sigma) \in \F \x \mathcal{S} \; | \; F \subset \Sigma\}$ 
is irreducible of dimension $12$.  Denote by $\E''_{2k+2,2}$ the image of the projection $\G \to \mathcal{S}$ followed by $\sigma_{2k+2,2}: \mathcal{S} \dashrightarrow \E_{2k+2,2}$, which is nonempty by Lemma \ref{lemma:esiste}, as already remarked.  The projection has finite fibers, since an Enriques surface contains only finitely many conics with respect to a given polarization, and $\sigma_{2k+2,2}$ has three-dimensional fibers,  whence $\E''_{2k+2,2}$ is irreducible of dimension nine. 
It parametrizes by construction all pairs $(S,H=kE_1+E_2+E_3)$ such that
$E_1,E_2,E_3$ are nef, $|E_1+E_2+E_3|$ is  ample and  birational and $S$ contains 
a smooth rational
curve $\Gamma$ with $\Gamma \cdot E_1=0$ and $\Gamma \cdot
E_2=\Gamma \cdot E_3=1$  (possibly after rearranging indices when $k=1$). Since it is irreducible, its general element has the property that also
$|E_1+E_2+E_3+K_S|$ is birational. Thus, $\E'_{2k+2,2}=\E''_{2k+2,2} \cap \E_{2k+2,2}^{\circ}$ is nonempty, whence irreducible of dimension nine, as stated.   The last assertion of the proposition follows as $F \cap \ell_2 \cap \ell_3 =\emptyset$ for general $F \in \F$. 
\end{proof}

We set 
$\P'_{2k+2,2}:=p_{2k+2,2}^{-1} (\E_{2k+2,2}') \subset \P^{\circ}_{2k+2,2}$,  which is  irreducible of codimension one in $\P^{\circ}_{2k+2,2}$.

\begin{proof}[Proof of Proposition \ref{prop:fi2g>5}]

Let  $(S,kE_1+E_2+E_3) \in \E_{2k+2,2}'$ be general, $k \geq 2$. 
 Set $H:=kE_1+E_2+E_3+\Gamma$. Then $H$ is big and nef, but not ample, as $\Gamma \cdot H=0$. 

 Consider  $\overline{\E}_{2k+3,2}^{(I)}$, the closure of
 $\E_{2k+3,2}^{(I)}$ in the moduli space of pairs $(X,L)$ where $X$ is a smooth Enriques surface and $L$ is a {\it big and nef} line bundle on $X$.  (The existence of such a moduli space is indicated for instance in \cite[\S 5.1.4]{Huy} for $K3$ surfaces and the case of Enriques surfaces is analogous; see also \cite{dol}.)  We claim that $(S,H)$
 lies in $\overline{\E}_{2k+3,2}^{(I)}$. Indeed, set $B:= E_2+E_3+\Gamma$. Then $B$ is nef with $B^2=4$  and $\phi(B)=2$ (as  
$E_2 \cdot B=E_3 \cdot B= 2$).  Since also $E_1 \cdot B=2$, we may write $B\sim E_1+E_{1,2}$ for some  effective  isotropic primitive $E_{1,2}$ satisfying
$E_1 \cdot E_{1,2}=2$. Thus
 $ H \sim  kE_1+B \sim (k+1)E_1+E_{1,2}$, proving the claim.

Denote by $\overline{\P}_{2k+3,2}^{(I)}$ the partial compactification of
$\P_{2k+3,2}^{(I)}$ parametrizing  triples 
$(S,H,C)$, where $(S,H)$ lies in $\overline{\E}_{2k+3,2}^{(I)}$  and $C \in |H|$ has at 
most nodes as
singularities.  Denote by 
$\overline{c}_{2k+3,2}^{(I)}: \overline{\P}_{2k+3,2}^{(I)} \to
\overline{\M}_{2k+3}$ the extension of $c_{2k+3,2}^{(I)}$.
Pick a general $(S,\O_S(C),C) \in \P'_{2k+2,2}$ and consider $C':= C
\cup \Gamma$. Then
$(S,\O_S(C'),C') \in \overline{\P}_{2k+3,2}^{(I)}$. By  Proposition
\ref{prop:desvan}, the fiber $c_{2k+2,2}^{-1}(C)$ is finite for $k \geq
3$. Since $\Gamma$ does not move on any Enriques surface, also the
fiber $(\overline{c}_{2k+3,2}^{(I)})^{-1}(C')$ is finite. Hence,
$c_{2k+3,2}^{(I)}$ is generically finite for $k \geq 3$, that is, $g
\geq 9$, and so is $\chi_{g,2}^{(I)}$.

  Assume now $k=2$, that is, $g=7$. Then $(S,2(E_1+E_2+E_3))$ is extendable to the classical Enriques--Fano threefold $(Y,\O_Y(1))$ in $\PP^{13}$ 
by Lemma \ref{lemma:tutteclassic}. Let $D \in |E_2+E_3|$ be general. Then 
Lemma \ref{lemma:proj} implies that $(S,2E_1+E_2+E_3)$ is extendable to an Enriques--Fano threefold $(Y',\L)$ and the members in $|\L|$ are in one-to-one-correspondence to the members in $|\O_Y(1) \* \I_D|$.  Since the hyperplane sections $S'$ of $Y$ such that
$\O_{S'}(1) \sim 2(E'_1+E'_2+E'_3)$ with $(S',2E'_1+E'_2+E'_3) \in \E'_{6,2}$ (possibly after rearranging the $E'_i$s) form a hypersurface in $|\O_Y(1)|$ by  Proposition \ref{prop:nuovoesiste}, the  members in $|\L|$ yielding  elements in 
$\E'_{6,2}$ form a subset $\N$ of codimension at most one in  $|\L|$.
 Hence, a general pencil in  $|\L|$  contains a general element in $\N$. By the proof of Proposition \ref{prop:pencil},   two  general members of the pencil are not isomorphic. This means that we  may find   a pencil $|\O_Y(1) \* \I_{C \cup D}|$ of hyperplane sections of $Y$ containing $S$ such that
$(S,\O_S(C),C)$ is general in $\EC'_{6,2}$ and two general surfaces in the pencil are not isomorphic, that  is, we  have a finite  rational  map
$\mathfrak{a}:  |\O_Y(1) \* \I_{C \cup D}| \dashrightarrow  {c^{\circ}_{6,2}}^{-1}(C)$. We also have a  rational   map 
$\mathfrak{b}:  {\left(\overline{c}_{7,2}^{(I)}\right)}^{-1}(C')  \dashrightarrow  {c^{\circ}_{6,2}}^{-1}(C)$, forgetting $\Gamma$, which is finite, as $\Gamma$ does not move on any Enriques surface. Accepting for a moment that the hypotheses of Lemma \ref{lemma:lemmazzo} are satisfied (for $X=Y$, $\L=\O_Y(1)$,  $S_0=S$, $\Gamma_0=\Gamma$, $M \sim \O_S(2E_1+E_2+E_3)$ and $N \sim \O_S(E_2+E_3)$), we obtain that $\mathfrak{b}$   restricted to a neighborhood of $[(S,\O_S(C'),C')]$ is not dominant.  Indeed, either (a) of Lemma \ref{lemma:lemmazzo} holds, in which case there are elements in the image of $\mathfrak{a}$ not containing any deformation of $\Gamma$, whence not lying in the image of $\mathfrak{b}$. Else,
(b) of Lemma \ref{lemma:lemmazzo} holds, in which case $\Gamma$ deforms to a general surface in the pencil $|\O_Y(1) \* \I_{C \cup D}|$, but in such a way that the moduli of $C'=C \cup \Gamma$  vary,  thus again yielding an element in the image of $\mathfrak{a}$ outside the image of $\mathfrak{b}$.  
By Proposition
\ref{prop:desvan}, this implies that ${\left(\overline{c}_{7,2}^{(I)}\right)}^{-1}(C')$ has a component of dimension $\leq 1$. By semicontinuity  of the dimension of the fibers of a morphism (see
\cite [Lemme (13.1.1)]{EGA}),  a general
fiber of $c_{7,2}^{(I)}$ (and of $\chi_{7,2}^{(I)}$) has dimension $\leq 1$. 
Lemma \ref{lemma:posfibfi2}(v) implies that equality holds, as desired.

Finally, we check the hypotheses in Lemma \ref{lemma:lemmazzo}.  The general hyperplane section of $Y$ is unnodal by Lemma \ref{lemma:tutteclassic}, whence (i) and (ii) are satisfied.  
We have that $M$ is globally generated by \cite[Prop. 3.1.6 and Thm. 4.4.1]{cd},
since $M$ is nef with $\phi(M)=E_1 \cdot M=2$, and $|N|$ contains a smooth curve $D$ by \cite[Prop. 3.1.6]{cd} and \cite[Prop. 8.2]{cos1}, as $E_2$ and $E_3$ are nef.
The fact that $h^0(\O_D(\Gamma))=1$ can be verified, using semicontinuity, by specializing $D$ to $E_2 +E_3$, considering
\[
\xymatrix{ 0 \ar[r] & \O_{E_3}(\Gamma-E_2) \ar[r] & \O_D(\Gamma) \ar[r] &  \O_{E_2}(\Gamma) \ar[r] & 0}
\]
and using that $h^0(\O_{E_2}(\Gamma))=1$ and 
$h^0(\O_{E_3}(\Gamma-E_2))=0$ by   the last assertion in  Proposition \ref{prop:nuovoesiste}.  
\end{proof}

\begin{corollary} \label{cor:image}
  The  maps  $\chi_{8,2} $,  $\chi_{7,2}^{(I)}$,  $\chi_{6,2}$ and  $\chi_{5,2}^{(I)}$   dominate $\R^0_g$. 
\end{corollary}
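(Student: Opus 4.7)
The strategy is a clean dimension count, combining the inclusions of images in $\R^0_g$ from Lemma \ref{lemma:posfibfi2} with the fiber dimensions already computed in Propositions \ref{prop:fi2gpari} and \ref{prop:fi2g>5}, together with the known dimension and irreducibility of $\R^0_g$.

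First I would record the dimensions of the sources. For each of the components $\P_{8,2}$, $\P_{7,2}^{(I)}$, $\P_{6,2}$, $\P_{5,2}^{(I)}$, one has $\dim \P_{g,2}^{\bullet} = \dim \E_{g,2}^{\bullet} + \dim |H| = 10+(g-1) = g+9$, namely $17$, $16$, $15$, $14$ respectively. Next, by Lemma \ref{lemma:posfibfi2}(ii),(v), the image of each of these moduli maps is contained in the irreducible locus $\R^0_g \subset \R_g$, which by \cite[Thm.~1]{cdgk2} has dimension $2g+1$ for $g \geq 5$, hence $17$, $15$, $13$, $11$ respectively.

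Then I would invoke the fiber dimensions of Theorem \ref{mainthm2} (more precisely, Propositions \ref{prop:fi2gpari} and \ref{prop:fi2g>5}): the general fiber of $\chi_{8,2}$, $\chi_{7,2}^{(I)}$, $\chi_{6,2}$ and $\chi_{5,2}^{(I)}$ has dimension $0$, $1$, $2$ and $3$ respectively. Subtracting from the source dimensions, the image has dimension
\[
17-0=17,\quad 16-1=15,\quad 15-2=13,\quad 14-3=11,
\]
which coincides with $\dim \R^0_g$ in each case. Since $\R^0_g$ is irreducible and the image is a constructible subset of it of the same dimension, the image must be dense, i.e.\ the map dominates $\R^0_g$.

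There is essentially no obstacle here beyond citing the correct ingredients: the containment of the images in $\R^0_g$ from \S\,\ref{sec:gen}, the dimension of $\R^0_g$ from \cite{cdgk2}, and the fiber-dimension computations already carried out in Propositions \ref{prop:fi2gpari} and \ref{prop:fi2g>5}. The numerical match in all four cases is what makes the statement tight, and is precisely the same phenomenon noted in the introduction regarding the analogy with $\im c_{10}$ in the $K3$ case.
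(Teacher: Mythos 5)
Your proposal is correct and is essentially the paper's own argument: the proof there simply cites Lemma \ref{lemma:posfibfi2}(ii),(v) for the containment of the images in the irreducible $(2g+1)$-dimensional locus $\R^0_g$ and Propositions \ref{prop:fi2gpari} and \ref{prop:fi2g>5} for the exact general fiber dimensions, which is precisely the dimension count you spell out. Your numerical verification ($\dim\P_{g,2}^{\bullet}=g+9$ minus fiber dimension equals $2g+1$ in all four cases) is the implicit content of that one-line proof.
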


\begin{proof}
    The result follows from Lemma \ref{lemma:posfibfi2}(ii),(v),  Propositions \ref{prop:fi2gpari} and  \ref{prop:fi2g>5}.
\end{proof}

 Arguing similarly as above, we prove a result that we will need in the next section.

\begin{lemma} \label{lemma:mod}
  The map
  $c_{9,2}^{(II)^+}$ has some fibers of dimension $\geq 2$ whose general element $(S,H=4E_1+2E_2,C)$ has the property that  $E_1$ and $E_2$ are nef.
\end{lemma}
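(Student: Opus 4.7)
\noindent\emph{Plan.} Fix a general $(S,H=4E_1+2E_2)\in\E_{9,2}^{(II)^+}$ and a general smooth $C\in|H|$. By generality $S$ is unnodal, so $E_1$ and $E_2$ are nef half-fibers with $E_1\cdot E_2=1$. The aim is to show $h^1(\T_S(-H))\geq 2$: by Corollary~\ref{cor:geninj} this gives that the general fiber of $\chi_{9,2}^{(II)^+}$, and hence of $c_{9,2}^{(II)^+}$, at $(S,H,C)$ has dimension $\geq 2$; the assertion that the general element of such a fiber has $E_1,E_2$ nef then follows from the openness of the unnodal locus.

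To compute $h^1(\T_S(-H))$, I would push down to $S$ the basic exact sequence used in Lemma~\ref{lemma:tricknuovo} on the K3 cover $\pi:\widetilde S\to S$, arising from the double cover $g:\widetilde S\to\PP^1\times\PP^1$ defined by $|\widetilde E_1+\widetilde E_2|$ with ramification $\widetilde R\in|2\widetilde E_1+2\widetilde E_2|$. All three terms of that sequence are $\pi$-pullbacks equipped with their canonical $\sigma$-equivariant structure (for the last term, one uses that $2\widetilde R-\widetilde H=\pi^*(2R-H)$ and that $\pi|_{\widetilde R}\colon\widetilde R\to R$ is the \'etale double cover of $R:=\pi(\widetilde R)\in|2E_1+2E_2|$ classified by the $2$-torsion line bundle $K_S|_R$). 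Extracting the $\sigma^*$-invariant summand of the pushdown therefore yields the exact sequence
\[
0\to\T_S(-H)\to\O_S(2E_1-H)\oplus\O_S(2E_2-H)\to \O_R(2R-H)\to 0
\]
on $S$, with $R$ smooth and irreducible.

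To conclude, I observe that $2E_1-H\sim-2E_1-2E_2$ and $2E_2-H\sim-4E_1$ are both non-effective, so $h^0(\O_S(2E_i-H))=0$; and that $2R-H\sim 2E_2$, so the restriction sequence
\[
0\to\O_S(-2E_1)\to\O_S(2E_2)\to\O_R(2R-H)\to 0,
\]
combined with $h^0(\O_S(-2E_1))=0$ and $h^1(\O_S(-2E_1))=h^1(\O_S(2E_1+K_S))=0$ (by Serre duality and Riemann--Roch, using $h^0(2E_1+K_S)=1$), gives $h^0(\O_R(2R-H))=h^0(\O_S(2E_2))=2$. The long exact cohomology sequence then produces an injection $H^0(\O_R(2R-H))\hookrightarrow H^1(\T_S(-H))$, whence $h^1(\T_S(-H))\geq 2$. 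The main subtle point of the argument is justifying that the pushdown splits into $\sigma^*$-eigenpieces in the claimed way; this reduces to verifying that all three sheaves in the $\widetilde S$-sequence are pullbacks under $\pi$ carrying their canonical equivariant structure, as sketched above, after which the remaining cohomological computations are routine.
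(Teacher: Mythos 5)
Your strategy --- descending the tangent-bundle sequence of Lemma \ref{lemma:tricknuovo} from the K3 cover to $S$ and reading off the coboundary --- is genuinely different from the paper's proof, which instead produces special two-dimensional fibers by degenerating $C$ to $C\cup\Gamma\cup\Gamma'\in|4E_1+4E_2|$ inside pencils on Prokhorov's genus-$17$ Enriques--Fano threefold and invoking Lemma \ref{lemma:lemmazzo}. However, the step you defer as ``the main subtle point'' is not a routine verification, and your resolution of it is incorrect. The middle term $g^*\T_{\PP^1\times\PP^1}(-\widetilde H)$ does \emph{not} carry the canonical equivariant structure of a pullback: the $\sigma$-linearization induced by $g\circ\sigma=t\circ g$ (with $t=(t_1,t_2)$ the deck involution of $\PP^1\times\PP^1$) has as invariant global sections the pullbacks of $t$-invariant vector fields, a space of dimension $1+1=2$, whereas the canonical linearization of $\pi^*(\O_S(2E_1)\oplus\O_S(2E_2))$ has invariant part of dimension $2+2=4$. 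Hence the invariant summand of the middle term is $\O_S(2E_1-H+K_S)\oplus\O_S(2E_2-H+K_S)$; geometrically, the descended map is the reduced differential $\T_S\to\O_S(E_i+E_i')=\O_S(2E_i+K_S)$ of the elliptic fibration $|2E_i|$, not a map to $\O_S(2E_i)$. This particular slip is harmless (all four $h^0$'s vanish), but it shows that the principle you appeal to fails.

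The genuine gap is in the quotient term, where the same ambiguity is decisive. First, $R_S:=\pi(\widetilde R)$ lies in $|2E_1+2E_2+K_S|$, not $|2E_1+2E_2|$: this is forced by comparing determinants in any exact sequence $0\to\T_S(-H)\to\mathcal V\to Q\to 0$ with $\mathcal V$ of rank two and $\det\mathcal V\equiv 2E_1+2E_2-2H$, since $\det\T_S(-H)=-K_S-2H$. More importantly, the invariant summand of $\pi_*\O_{\widetilde R}(2\widetilde R-\widetilde H)$ is one of $\O_{R_S}(2E_2)$ or $\O_{R_S}(2E_2+K_S)$, whose $h^0$'s are $\geq 2$ and $=1$ respectively (compute via restriction from $S$, using $h^1(2E_1+K_S)=0$). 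Knowing that $\pi|_{\widetilde R}$ is classified by $K_S|_{R_S}$ only tells you the two summands differ by this $2$-torsion twist, not which one is invariant. Since $h^1(\T_S(-H))+h^1(\T_S(-H+K_S))=3$ by Lemma \ref{lemma:dadimlemma1} and both terms are $\geq1$ by Corollary \ref{cor:EF1}, pinning down this sign is exactly equivalent to distinguishing $\E_{9,2}^{(II)^+}$ from $\E_{9,2}^{(II)^-}$, i.e.\ to the content of the lemma; as written you assert the answer rather than prove it. The approach is probably salvageable: the surjection $g^*\T_P\to\O_{\widetilde R}(2\widetilde R)$ is $g^*$ of $\T_P|_{R}\to N_{R/P}$ for the branch curve $R$, and the $t$-invariant part of $N_{R/P}$ descends to the normal bundle of $R/t$ in $(\PP^1\times\PP^1)/t$, which pulls back to $\O_{R_S}(2R_S)$ (untwisted) under the induced double cover $S\to(\PP^1\times\PP^1)/t$; this yields $Q^+\cong\O_{R_S}(2E_2)$ and hence $h^1(\T_S(-H))\geq2$. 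But some such argument must be supplied before the proof stands.
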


\begin{proof}
To keep notation consistent with the rest of the section, we switch the roles of $E_1$ and $E_2$ and write $H=2E_1+4E_2$ for pairs $(S,H) \in \E_{9,2}^{(II)^+}$.
We define a dense, open subset $\E_{9,2}^{\circ} \subset \E_{9,2}^{(II)^+}$
parametrizing pairs
$(S,H=2E_1+4E_2)$ such that $E_1$ and $E_2$ are nef.  In fact, $\E_{9,2}^{\circ}$ is non--empty because on the general Enriques surface $S$ there are smooth irreducible elliptic curves $E_1$, $E_2$, which are therefore nef, with $E_1\cdot E_2=1$.
The openess of  $\E_{9,2}^{\circ}$ follows from the fact that  $E_1,E_2$ being nef on $S$ is an open condition in the moduli space of Enriques surfaces. 

We set $\P^\circ_{9,2}:=p_{9,2}^{-1}
(\E_{9,2}^\circ) \subset \P_{9,2}$ and
$c_{9,2}^{\circ}:= c_{9,2}^{(II)^+}|_{\P^\circ_{9,2}}$. To prove the lemma, we want to find a curve $C$ in $ \im c_{9,2}^{\circ}$ with 
$\dim(c_{9,2}^{\circ})^{-1}(C) \geq 2$.

\begin{claim}\label{cl:11}
 There is an irreducible codimension-one sublocus $\E_{9,2}' \subset \EC_{9,2}^{\circ}$ parametrizing pairs $(S,H= 2E_1+4E_2)$ such that $2E_1 \sim \Gamma+\Gamma'$, where $\Gamma$ and $\Gamma'$ are smooth rational curves intersecting transversely in two points and such that $\Gamma \cdot E_2 = \Gamma' \cdot E_2=1$. 
\end{claim}

\begin{proof}[Proof of the Claim]
We argue as in the proof of Proposition \ref{prop:nuovoesiste}, from where we keep the notation. Consider   the  map  $\sigma_{9,2}^{(II)^+}: \mathcal S \dashrightarrow \E_{9,2}^{(II)^+}$ 
associating to a general $\Sigma \in \mathcal S$ the pair $(S,H)$, where $\varphi:S\to \Sigma$ is the normalization and  $H=2E_1+4E_2$, with
$E_i:=\varphi^*(\ell_i)$, $i=1,2$. Let $\E''_{9,2}$ denote the image of the projection of the incidence variety $\G$ to $\mathcal{S}$ followed by $\sigma_{9,2}^{(II)^+}$, which has (as before) dimension nine. It parametrizes 
pairs $(S,H=2E_1+4E_2) \in \EC_{9,2}^{\circ}$ such that $S$ contains a smooth rational curve $\Gamma$ satisfying $\Gamma \cdot E_1=0$ and $\Gamma \cdot E_2=1$ (and a nef, isotropic $E_3$ such that $E_1 \cdot E_3=E_2 \cdot E_3=\Gamma \cdot E_3=1$ and $|E_1+E_2+E_3|$ is birational). Since $|2E_1|$ is a pencil, we must have $\Gamma':=2E_1-\Gamma>0$. By Lemma \ref{lemma:esiste} there are elements in $\E''_{9,2}$ for which $\Gamma'$ is a smooth irreducible rational curve intersecting $\Gamma$ transversely in two points.  (Also note property (iii) in
Lemma \ref{lemma:esiste} implies that $E_2$ has positive intersection with any $(-2)$--curve, so that $2E_1+4E_2$ is ample.) 
  We let $\E_{9,2}'$ be the (open dense) locus of such pairs.  
\end{proof}

We have $ \dim(c_{9,2}^{\circ})^{-1}(C)
\geq 1$ for any  $C \in \im c_{9,2}^{\circ}$, as $\chi_{9,2}^{(II)^+}$ is not generically finite by Corollary \ref{cor:EF1}. 
Assume, by contradiction, that 
\begin{equation}
  \label{eq:contra}
  \dim(c_{9,2}^{\circ})^{-1}(C)=1 \; \; \mbox{for all} \; \; C \in \im c_{9,2}^{\circ}.
\end{equation}
We now argue as in the last part of the proof of Proposition \ref{prop:fi2g>5}
(for $k=2$). Denote by $\overline{\P}_{17,4}^{(II)^+}$ the partial compactification of
$\P_{17,4}^{(II)^+}$ parametrizing curves with at most nodes as
singularities and denote by 
$\overline{c}_{17,4}^{(II)^+}: \overline{\P}_{17,4}^{(II)^+} \to
\overline{\M}_{17}$ the extension of $c_{17,4}^{(II)^+}$.
Pick a general $(S,\O_S(C)=2E_1+4E_2,C) \in \P'_{9,2}$ and consider $C':= C
\cup \Gamma \cup \Gamma' \in |4E_1+4E_2|$. Then
$(S,\O_S(C'),C') \in \overline{\P}_{17,4}^{(II)^+}$. 

 Extend $(S,4(E_1+E_2))$ to Prokhorov's Enriques--Fano threefold $(W,\L)$  by  Proposition  \ref{lemma:prok4div-nuovo}. As in the last part of the proof of Proposition \ref{prop:fi2g>5}, we obtain for general $D \in |2E_1|$, a pencil $|\L  \* \I_{C \cup D}|$  in  $W$ containing $S$ such that
$(S,\O_S(C),C)$ is general in $\EC'_{9,2}$ and two general surfaces in the pencil are not isomorphic, that  is, we have a finite  rational  map
$ \mathfrak{a}:  |\L  \* \I_{C \cup D}| \dashrightarrow {c^{\circ}_{9,2}}^{-1}(C)$. We also have a finite  rational map $ \mathfrak{b}:  {\left(\overline{c}_{17,4}^{(II)^+}\right)}^{-1}(C')  \dashrightarrow  {c^{\circ}_{9,2}}^{-1}(C)$, forgetting $\Gamma \cup \Gamma'$. By Lemma \ref{lemma:lemmazzo} (with $X=W$,  $S_0=S$,  $\Gamma_0=\Gamma$ or $\Gamma'$, $M \sim \O_S(2E_1+4E_2)$ and $N \sim \O_S(2E_1)$) we obtain,  arguing as in the last part of the proof of Proposition \ref{prop:fi2g>5},
that  $\mathfrak{b}$  restricted to a neighborhood of $(S,\O_S(C'),C')$  is not dominant. By \eqref{eq:contra} this implies that $({\overline{c}_{17,4}^{(II)^+}})^{-1}(C')$ has a zero-dimensional component. By semicontinuity of the dimension of the fibers of a morphism (see
\cite [Lemme (13.1.1)]{EGA}), the general fiber of $c_{17,4}^{(II)^+}$ is zero-dimensional. Hence,  also $\chi_{17,4}^{(II)^+}$ is generically finite, contradicting Corollary
\ref{cor:EF1}. 
\end{proof}

\section{The moduli maps  on  $\EC_{g,1}$,   $\EC_{g,2}^{(II)}$, $\EC_{g,2}^{(II)^+}$ and $\EC_{g,2}^{(II)^-}$} \label{sec:fi2-II}

  The aim  of this section is to prove Theorem \ref{mainthm3} together with the following result,  which concludes the proof of Theorem \ref{mainthm2}. 

\begin{proposition} \label{prop:fi2II}
  (i) A general fiber of $\chi_{5,2}^{(II)^+}$ has dimension six; in particular 
 $\chi_{5,2}^{(II)^+}$ dominates $\R^{0,\mathrm{nb}}_5$. 
  
(ii) A general fiber of $\chi_{5,2}^{(II)^-}$ is four-dimensional;  in particular $\chi_{5,2}^{(II)^-}$ dominates $\D^0_5$.   

(iii) A general fiber of  $\chi_{7,2}^{(II)}$ has dimension $3$. 

(iv) A general fiber of $\chi_{9,2}^{(II)^+}$ has dimension $2$.

(v)  A general fiber of $\chi_{9,2}^{(II)^-}$ has dimension $1$. 

(vi) The moduli maps $\chi_{g,2}$ are generically finite on all irreducible components of $\P_{g,2}$ for all odd $g \geq 11$. 
\end{proposition}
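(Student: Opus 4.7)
The plan is to reduce each statement to a computation of $h^1(\T_S(-H))$ via Corollary \ref{cor:geninj} and the splitting \eqref{eq:h1soprasotto}, and to match the resulting upper bounds with the lower bounds coming from Lemma \ref{lemma:posfibfi2}(iii)--(v), Corollary \ref{cor:EF1}, and Lemma \ref{lemma:mod}.

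For (i)--(v), where $g \in \{5,7,9\}$ and $H \sim kE_1 + 2E_2$ with $k = (g-1)/2 \in \{2,3,4\}$, I would apply Lemma \ref{lemma:tricknuovo} with $F_1 = E_1$ and $F_2 = E_2$ and carry out the resulting bookkeeping. The divisor $H - 2E_1 = (k-2)E_1 + 2E_2$ has vanishing $h^1$ via \eqref{eq:KV} as soon as $k \geq 3$, but the unavoidable contribution $h^1(H - 2E_2) = h^1(kE_1)$ coming from the elliptic pencil $|2E_1|$ must be computed directly. For $\beta$ I would use \eqref{eq:bbeta3}: the ramification divisor $\widetilde{R} \in |2\widetilde{E}_1 + 2\widetilde{E}_2|$ on the K3 double cover is smooth with $\O_{\widetilde{R}}(\widetilde{R}) = \omega_{\widetilde{R}}$, so the relevant cohomology is pinned down by its genus. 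Having obtained the value of $h^1(\T_{\widetilde{S}}(-\widetilde{H}))$, I split it between $h^1(\T_S(-H))$ and $h^1(\T_S(-H+K_S))$ by pushing the relevant normal-bundle sequence forward along $\pi$ as in Remark \ref{rem:push}, comparing the ranks of the two pieces of the coboundary; this will produce the different fiber dimensions on the $^+$ and $^-$ components in (i), (ii), (iv), (v). Matching with the lower bounds forces the claimed equalities.

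For (vi), the bound of Lemma \ref{lemma:tricknuovo} is no longer sharp since $h^1(kE_1) \geq 2$ when $k \geq 4$. I would therefore argue by specialization in the spirit of Proposition \ref{prop:fi2g>5}: a general $(S,H) \in \EC_{g,2}^{(II)}$ is specialized to a (possibly nodal) Enriques surface containing a smooth rational curve $\Gamma$ with $\Gamma \cdot H = 0$ and suitable intersections with $E_1, E_2$, so that a smooth $C \in |H|$ degenerates in the partial compactification to $C' = C_0 \cup \Gamma$ with $(S, \O_S(C_0)) \in \EC_{g-1,2}$. Since $\chi_{g-1,2}$ is generically finite by Proposition \ref{prop:fi2gpari}(i) (as $g-1 \geq 10$), and since $\Gamma$ does not deform on a general Enriques surface of the given polarization, semicontinuity of fiber dimension will propagate generic finiteness back to $\EC_{g,2}^{(II)}$, provided one rules out spurious positive-dimensional fibers arising from deformations in which $\Gamma$ varies together with its intersection with $C_0$; this is where Lemma \ref{lemma:lemmazzo} enters.

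The main obstacle will be in part (vi): unlike the case $g=7$ of Proposition \ref{prop:fi2g>5} (handled via the classical Enriques--Fano threefold) or $g=9$ in Lemma \ref{lemma:mod} (via Prokhorov's extension), no Enriques--Fano threefold is available for $g \geq 11$ to supply a ready-made pencil of Enriques sections as input to Lemma \ref{lemma:lemmazzo}. The required pencil of deformations must therefore be constructed intrinsically inside the moduli space of $(S, H)$, and one has to verify by direct cohomological analysis on the Enriques surface that $\Gamma$ does not acquire extra deformations on the general member of the pencil; this amounts to constructing an analogue of the sublocus in Proposition \ref{prop:nuovoesiste} for the component $\EC_{g,2}^{(II)}$ and checking the nontriviality condition $h^0(\O_D(\Gamma)) = 1$ by specializing $D$ to a union of half-fibers.
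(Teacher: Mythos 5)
Your overall strategy for (i), (ii), (iv), (v) --- bound the sum $h^1(\T_S(-H))+h^1(\T_S(-(H+K_S)))$ from above via \eqref{eq:h1soprasotto} and Corollary \ref{cor:geninj}, then force equality against the lower bounds from Lemma \ref{lemma:posfibfi2}(iii)--(v), Corollary \ref{cor:EF1} and Lemma \ref{lemma:mod} --- is exactly the paper's, and you have correctly identified all the sources of lower bounds. The gap is in the central computation. Lemma \ref{lemma:tricknuovo} with $F_1=E_1$, $F_2=E_2$ cannot deliver the exact value of $h^1(\T_{\widetilde S}(-\widetilde H))$ here: since $H-2E_2=kE_1$, one has $h^1(kE_1)+h^1(kE_1+K_S)>0$ for all $k\geq 2$, so $\alpha>0$ and the lemma only yields the inequality $h^1(\T_{\widetilde S}(-\widetilde H))\leq\alpha+\beta$. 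That bound is strictly non-sharp: for $g=5$ one gets $\alpha+\beta=2+9=11$ against the true value $10$, for $g=7$ roughly $8$ against $6$, and for $g=9$ about $7$ against $3$. With only these upper bounds the matching against the lower bounds no longer pins down the individual fiber dimensions (e.g.\ for $g=5$ the pairs $(6,5)$ and $(7,4)$ survive). Your proposed remedy of splitting the two summands by pushing forward a normal-bundle sequence as in Remark \ref{rem:push} does not apply either: that remark is attached to the $\PP^5$-embedding setup of Lemma \ref{lemma:trick4} (which requires $G_1\cdot G_2=2$, not the case here), and in any event a splitting of a non-sharp upper bound cannot produce exact values.

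What the paper does instead is introduce a sharper tool tailored to polarizations $kE_1+lE_2$ with $E_1\cdot E_2=1$: using $g_*\T_{\widetilde S}\cong\T_P(-2,-2)\oplus\T_P\langle R\rangle$ for the double cover $g:\widetilde S\to P=\PP^1\times\PP^1$, Lemma \ref{lemma:nuovo24.11} identifies $h^1(\T_{\widetilde S}(-\widetilde H))$ with $h^1(\Omega_P(\log R)(k-2,0))$, and Lemma \ref{lemma:dadimlemma1} computes this exactly from the residue sequence by analyzing the coboundary $H^0(\O_R(k-2,0))\to H^1(\Omega_P(k-2,0))$ (corank $1$ for $k=2$, surjective for $k>2$ via K\"unneth), giving $10,6,3,0$ for $k=2,3,4,\geq 5$. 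This is why your part (vi) appears so hard from your vantage point: with the exact vanishing for $k\geq 5$, (vi) is an immediate consequence of Corollary \ref{cor:geninj}, and none of the degeneration machinery (Lemma \ref{lemma:lemmazzo}, an analogue of Proposition \ref{prop:nuovoesiste}, or an Enriques--Fano extension) is needed. The obstacle you flag in (vi) is therefore not an obstacle to the statement but a symptom of having chosen a computational tool that is not sharp for these decomposition types.
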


 To prove the mentioned results, recall that  for $(S,H)$ in $\E_{g,2}^{(II)}$,
$\E_{g,2}^{(II)^+}$ or $\E_{g,2}^{(II)^-}$  (note that $\E_{g,2}^{(II)}$ occurs for $g \equiv 3 \; \mbox{mod} \; 4$, and 
$\E_{g,2}^{(II)^+}$ and $\E_{g,2}^{(II)^-}$ occur
 for $g \equiv 1 \; \mbox{mod} \; 4$) we have $H \equiv kE_1+2E_2$, $g=2k+1$, $k \geq 2$,  whereas for $(S,H)\in \E_{g,1}$, we have $H \sim (g-1)E_1+E_{2}$. Assume that $E_1$ and $E_2$ are nef and  consider the double cover $g:\widetilde{S} \to P:=\PP^1 \x \PP^1$ defined by 
$|\widetilde{E}_1+\widetilde{E}_2|$,  as in the beginning of \S \ref{sec:tools}. 

We denote any line bundle on  $P$  by the obvious notation $\O_{P}(a,b)$, its restriction to any effective divisor $D \subset P$ by $\O_D(a,b)$, and for any sheaf $\F$ on $P$, we set $\F(a,b):=\F \* \O_{P}(a,b)$. Recall that the branch divisor of $g$ is a smooth curve
$R \in |\O_{P}(4,4)|$.

\begin{lemma} \label{lemma:nuovo24.11}
For any $k, l \geq 1$ we have
$h^1(\T_{\widetilde{S}}(-k\widetilde{E}_1-l\widetilde{E}_2))
=h^1(\Omega_P(\log R)(k-2,l-2))$.
\end{lemma}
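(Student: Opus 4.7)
The plan is to push everything down along the double cover $g\colon \widetilde{S}\to P$, turning the problem into a cohomology computation on $\PP^1\times\PP^1$. The triviality of $K_{\widetilde{S}}$ gives $\T_{\widetilde{S}}\cong \Omega_{\widetilde{S}}$, and since $\widetilde{E}_1,\widetilde{E}_2$ are pullbacks of the two rulings of $P$ we have $\O_{\widetilde{S}}(-k\widetilde{E}_1-l\widetilde{E}_2) = g^*\O_P(-k,-l)$. Combined with the standard decomposition $g_*\O_{\widetilde{S}} = \O_P \oplus \O_P(-2,-2)$ (valid since $R\in|\O_P(4,4)|$, so $L=\O_P(2,2)$), the projection formula reduces the task to computing $h^1(P,g_*\Omega_{\widetilde{S}}\otimes\O_P(-k,-l))$.

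The crux is the computation of $g_*\Omega_{\widetilde{S}}$. I would exploit the standard isomorphism $\Omega_{\widetilde{S}}(\log\widetilde{R})\cong g^*\Omega_P(\log R)$, which holds because $g$ is a finite cover with branch divisor $R$ and reduced ramification $\widetilde{R}$. Projection formula then yields
\[
g_*\Omega_{\widetilde{S}}(\log\widetilde{R}) = \Omega_P(\log R) \oplus \Omega_P(\log R)(-2,-2).
\]
Pushing forward the residue sequence $0\to \Omega_{\widetilde{S}}\to \Omega_{\widetilde{S}}(\log\widetilde{R})\to \O_{\widetilde{R}}\to 0$ and inspecting the boundary map to $g_*\O_{\widetilde{R}}=\O_R$, I expect it to restrict to the usual (surjective) residue on the invariant summand and to vanish on the anti-invariant summand, because anti-invariant log forms are locally of the shape $y\cdot g^*\alpha$, and $y$ is a local equation of $\widetilde{R}$, automatically killing the residue. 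This will give
\[
g_*\Omega_{\widetilde{S}} = \Omega_P \oplus \Omega_P(\log R)(-2,-2).
\]

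Twisting by $\O_P(-k,-l)$ and taking $h^1$ splits the cohomology into two pieces. The first piece $h^1(P,\Omega_P(-k,-l))$ vanishes for $k,l\geq 1$: indeed $\Omega_P = \O_P(-2,0)\oplus \O_P(0,-2)$, and Künneth on $\PP^1\times\PP^1$ immediately gives $h^1(\O_P(-k-2,-l))=h^1(\O_P(-k,-l-2))=0$ in that range. For the second piece, Serre duality on $P$ suffices: $\Omega_P(\log R)$ is of rank two with $\det = \Omega_P\otimes\O_P(R) = \O_P(2,2)$, hence its dual is $\Omega_P(\log R)(-2,-2)$, and using $K_P = \O_P(-2,-2)$ one gets
\[
h^1(\Omega_P(\log R)(-k-2,-l-2)) = h^1(\Omega_P(\log R)(k-2,l-2)).
\]
Combining yields the claimed equality.

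The main obstacle is the anti-invariance step in the computation of $g_*\Omega_{\widetilde{S}}$: verifying that the residue pushes forward to zero on the $\Omega_P(\log R)(-2,-2)$ summand requires a short local computation with the involution $y\mapsto -y$; everything else is formal bookkeeping with line bundles on $\PP^1\times\PP^1$.
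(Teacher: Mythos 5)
Your proof is correct and follows essentially the same route as the paper: push forward along $g$, split $g_*$ of the (co)tangent sheaf into the summand $\Omega_P\cong\T_P(-2,-2)$ (whose $h^1(-k,-l)$-twist vanishes by K\"unneth) and the log summand, then apply Serre duality to land on $\Omega_P(\log R)(k-2,l-2)$. The only difference is that you re-derive the decomposition $g_*\Omega_{\widetilde{S}}\cong\Omega_P\oplus\Omega_P(\log R)(-2,-2)$ by hand via the residue sequence and the anti-invariance computation (correctly), whereas the paper simply cites \cite[Lemma 3.16]{ev} for the equivalent statement $g_*\T_{\widetilde{S}}\cong\T_P(-2,-2)\oplus\T_P\langle R\rangle$.
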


\begin{proof}
   By \cite[Lemma 3.16]{ev} we have
$g_* \T_{\widetilde{S}} \cong  \T_P(-2,-2) \+ \T_P\langle R \rangle$,
where $\T_P\langle R \rangle := \Omega_P (\log R)^{\vee}$  or is equivalently defined as  in \eqref{eq:deftlc}. 
We therefore have
 \begin{eqnarray*}
   h^1(\T_{\widetilde{S}}(-k\widetilde{E}_1-l\widetilde{E}_2)) & =  &  h^1(\T_{\widetilde{S}} \* g^*\O_P(-k,-l)) = h^1(\O_P(-k,-l) \* g_*\T_{\widetilde{S}}) \\
\hspace{-0.8cm} & \hspace{-1.2cm} = & \hspace{-0.8cm} h^1(\O_P(-k,-l) \* \T_{P}(-2,-2)) + h^1(\O_P(-k,-l) \* \T_P\langle R \rangle) \\ \hspace{-0.8cm} & \hspace{-1.2cm} = & \hspace{-0.8cm} h^1(\O_P(-k,-l-2)) + h^1(\O_P(-k-2,-l)) + h^1(\T_P\langle R \rangle(-k,-l)) \\
\hspace{-0.8cm} & \hspace{-1.2cm} = & \hspace{-0.8cm} h^1(\Omega_P(\log R)(k-2,l-2)).   
 \end{eqnarray*}
\end{proof}

The next lemma is the main ingredient in the proof of Proposition \ref{prop:fi2II}.

\begin{lemma} \label{lemma:dadimlemma1}
For any $(S,H)$ such that $H \equiv kE_1+2E_2$ with $E_1$ and $E_2$ nef and $E_1\cdot E_2=1$, we have
 \[
h^1(\T_S(-H))+h^1(\T_S(-(H+K_S)))=\begin{cases} 10, & \,\,  \mbox{if} \quad  k=2 \; (g=5) \\
6, & \,\,  \mbox{if}\quad k=3 \; (g=7) \\
                                                      3, & \,\,  \mbox{if} \quad  k=4 \; (g=9) \\
                                                      0, & \,\,  \mbox{if} \quad k \geq 5 \; (g \geq 11).
\end{cases}\]
\end{lemma}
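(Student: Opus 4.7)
The plan is to use \eqref{eq:h1soprasotto} to reduce the left-hand side to $h^1(\T_{\widetilde S}(-\widetilde H))$ on the $K3$ cover, and then to apply Lemma \ref{lemma:nuovo24.11} with $l=2$ in order to identify this with $h^1(\Omega_P(\log R)(k-2,0))$ on $P=\PP^1\times\PP^1$, where $R\in|\O_P(4,4)|$ is the smooth branch divisor of the double cover $g:\widetilde S\to P$. Note that $\widetilde H\sim k\widetilde E_1+2\widetilde E_2$ irrespective of the choice of representative of $H$, since $\pi^*K_S=0$.

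The main tool will be the long exact sequence in cohomology associated with the residue sequence
\[0\to\Omega_P(k-2,0)\to\Omega_P(\log R)(k-2,0)\to\O_R(k-2,0)\to 0.\]
Routine K\"unneth calculations using $\Omega_P=\O_P(-2,0)\oplus\O_P(0,-2)$, together with the restriction sequence $0\to\O_P(k-6,-4)\to\O_P(k-2,0)\to\O_R(k-2,0)\to 0$, supply all the cohomology dimensions needed: in particular $h^2(\Omega_P(k-2,0))=0$, $h^1(\Omega_P(k-2,0))$ equals $2$ for $k=2$ and $k-1$ for $k\geq 3$, while $h^1(\O_R(k-2,0))$ equals $9,6,3$ for $k=2,3,4$ respectively, and vanishes for $k\geq 5$.

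The key technical step will be to determine the rank of the connecting homomorphism $\partial\colon H^0(\O_R(k-2,0))\to H^1(\Omega_P(k-2,0))$. For any lift $\tilde g\in H^0(\O_P(k-2,0))$ of a given $g$, multiplication by $\tilde g$ defines a morphism from the untwisted residue sequence into its twist; hence, by functoriality of the connecting homomorphism,
\[\partial(\tilde g|_R)=\tilde g\smile[R]\in H^1(\Omega_P(k-2,0)),\]
where $[R]=c_1(\O_P(R))=4\alpha+4\beta\in H^1(\Omega_P)$ with $\alpha,\beta$ the generators of the K\"unneth summands $H^1(\O_P(-2,0))$ and $H^1(\O_P(0,-2))$. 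Cup product with $\alpha$ lands in $H^1(\O_P(k-4,0))$, which vanishes for $k\geq 3$, whereas cup product with $\beta$ gives an isomorphism $H^0(\O_P(k-2,0))\xrightarrow{\sim}H^1(\O_P(k-2,-2))=H^1(\Omega_P(k-2,0))$ by K\"unneth. Hence $\partial$ is surjective for $k\geq 3$, and for $k=2$ the one-dimensional source is mapped injectively onto the line spanned by $[R]\neq 0$. Plugging these into the long exact sequence yields the value $h^1(\Omega_P(\log R)(k-2,0))$ equal to $10,6,3$ for $k=2,3,4$ and $0$ for $k\geq 5$, as required.

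The main obstacle is the identification of $\partial$ as cup product with $[R]$: the functoriality-of-connecting-maps argument above is the cleanest route, and it avoids a naive factorization through the connecting map of the restriction sequence, which vanishes too often (namely whenever $H^1(\O_P(k-6,-4))=0$, i.e.\ for $k\leq 5$). A subordinate technicality is that for $k\geq 6$ the restriction $H^0(\O_P(k-2,0))\to H^0(\O_R(k-2,0))$ ceases to be surjective; its image, however, still spans the target of $\partial$ via cup product with $[R]$, so the surjectivity of $\partial$ is preserved.
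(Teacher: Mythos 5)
Your proposal is correct and follows essentially the same route as the paper: reduce via \eqref{eq:h1soprasotto} and Lemma \ref{lemma:nuovo24.11} to $h^1(\Omega_P(\log R)(k-2,0))$, compute the outer terms of the residue sequence, and determine the corank of the connecting map $\partial$ by identifying it (on the image of the restriction from $P$) with cup product against $[R]$, whose $\beta$-component induces the K\"unneth isomorphism $H^0(\O_P(k-2,0))\cong H^1(\O_P(k-2,-2))$. The paper packages this last step as a composition $\phi_2\circ\phi_1$ rather than invoking functoriality of connecting maps under multiplication by a section, but the computation is the same, including the handling of the non-surjectivity of the restriction map for large $k$.
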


\begin{proof}
We will compute $h^1(\T_{\widetilde{S}}(-\widetilde{H}))$ using Lemma
\ref{lemma:nuovo24.11} and use \eqref{eq:h1soprasotto}.
We  have   
\begin{equation} \label{eq:normdef2}
\xymatrix{ 0 \ar[r] & \Omega_P(k-2,0) \ar[r] 
& \Omega_P(\log R)(k-2,0) \ar[r] 
&  \O_{R}(k-2,0) \ar[r] &  0,}
\end{equation} 
 cf., e.g., \cite[2.3a]{ev}. 
Since $\Omega_P(k-2,0) \cong \O_P(k-4,0) \+ \O_P(k-2,-2)$, we  get  
\[
h^2(\Omega_P(k-2,0))=0 \quad \mbox{and} \quad
h^1(\Omega_P(k-2,0))=\begin{cases} 2, &
 \,\, \mbox{if}\quad  k=2  \\
 k-1, & \,\, \mbox{if}\quad  k > 2.  \end{cases}
\]
We compute
$h^1(\O_R(k-2,0))=h^0(\O_R(4-k,2))=h^0(\O_P(4-k,2))=\max\{0,15-3k\}$. Hence,
from Lemma \ref{lemma:nuovo24.11} and \eqref{eq:normdef2}, we obtain
\begin{equation}
\label{eq:cohcv}
h^1\bigl(\T_{\widetilde{S}}(-\widetilde{H})\bigr)
= h^1\bigl(\Omega_P(\log R)(k-2,0)\bigr)
= \max\{0,15-3k\}+\cork (\partial), 
\end{equation}
with $\partial$ the coboundary map 
$H^0(\O_{R}(k-2,0)) \to H^1(\Omega_P(k-2,0))$
of \eqref{eq:normdef2}.

When $k=2$, whence $g=5$, we have
\[ 
\partial: \CC \cong H^0(\O_R) \longrightarrow 
H^1( \Omega_P) \cong \CC^2,
\]
 which  is injective, its image being the $1$-dimensional  subspace
 of $H^{1,1}(P)$ generated by the class of $R$.
Thus, $\cork (\partial)=1$ and the lemma follows from
\eqref{eq:cohcv} and \eqref{eq:h1soprasotto}.

Similarly, by \eqref{eq:cohcv} and \eqref{eq:h1soprasotto} the lemma
follows when $k >2$ if we prove the surjectivity of
$\partial$.
It suffices to prove that its restriction 
to the image of the multiplication map
\[
H^0 (\O _P (k-2,0)) \otimes H^0(\O_R)
\longrightarrow H^0(\O_R (k-2,0))
\]
is surjective.
This restriction is the composed map
\begin{equation}
\label{restr:Halphadual}
H^0 (\O _P (k-2,0)) \otimes H^0(\O_R)
\xrightarrow{\phi_1} H^0 (\O _P (k-2,0)) \otimes H^1( \Omega_P)
\xrightarrow{\phi_2} H^1( \Omega_P (k-2,0)),
\end{equation}
where $\phi_1$ is the tensor product of the identity with the same map
$H^0(\O_R) \to H^1( \Omega_P) \cong H^{1,1}(P)$ as above, and
$\phi_2$ is defined by cup-product.

As we saw, the map  $\phi_1$ is injective, and its image is 
$H^0 (\O _P (k-2,0))\otimes \mathbb C\cdot[R]$, where $[R]$ is the class of $R$ in 
$H^ {1,1}(P)\cong H^1( \Omega_P)$. 
 By  the K\"unneth formula we have
\[
H^1( \Omega_P (k-2,0))\cong \pr_1^ * \big (H^ 0(\O_{\mathbb P^1}(k-2))  \big)\otimes \pr_2^* \big( H^ 1(\Omega_{\mathbb P^ 1})\big)
\] 
where $\pr_i: P\to \PP^ 1$, $1\leq i\leq 2$, are the two projections. Moreover
$
H^0 (\O _P (k-2,0))\cong \pr_1^ * \big (H^ 0(\O_{\mathbb P^1}(k-2))  \big).
$
Hence the map 
\[
\phi_2: \pr_1^ * \big (H^ 0(\O_{\mathbb P^1}(k-2)) \big)\otimes
H^1( \Omega_P)\longrightarrow \pr_1^ * \big (H^ 0(\O_{\mathbb
P^1}(k-2)) \big)\otimes \pr_2^* \big( H^ 1(\Omega_{\mathbb P^ 1})\big)
\]
is the tensor product of the identity on the first factor and of the
natural map
$
H^1( \Omega_P)\to \pr_2^ *\big( H^ 1(\Omega_{\mathbb P^ 1})\big),
$
which maps $\mathbb C \cdot [R]$ isomorphically to the target $\pr_2^ *\big( H^ 1(\Omega_{\mathbb P^ 1})\big)\cong \mathbb C$. Hence  $\phi_2$ maps the image of $\phi_1$  isomorphically  onto $H^1( \Omega_P (k-2,0))$, showing that the composed map
\eqref{restr:Halphadual} is surjective.  Thus, 
$\partial$  is surjective, which ends the proof.
\end{proof}

\begin{proof}[Proof of Proposition \ref{prop:fi2II}]
 (i)--(ii)  By Corollary \ref{cor:geninj} and Lemma \ref{lemma:dadimlemma1}, the sum of the dimensions of a general fiber of $\chi_{5,2}^{(II)^+}$ and a general fiber of $\chi_{5,2}^{(II)^-}$
is $10$.  Hence, assertions (i) and (ii)  follow by Lemma \ref{lemma:posfibfi2}(iii),(iv).   

(iii) This is a consequence of Corollary \ref{cor:geninj} and Lemma \ref{lemma:dadimlemma1}, as both $(S,H)$ and $(S,H+K_S)$ are general elements of $\E_{7,2}^{(II)}$.

(iv)--(v)  By Lemma \ref{lemma:diff} and Lemma \ref{lemma:mod} there are pairs $(S,H=4E_1+2E_2) \in \E_{9,2}^{(II)^+}$ such that $E_1$ and $E_2$ are nef and $h^1(\T_S(-H)) \geq 2$. Similarly, Lemma \ref{lemma:diff} and Corollary \ref{cor:EF1} imply that $h^1(\T_S(-(H+K_S))) \geq 1$. Hence, equality is attained in both cases by Lemma \ref{lemma:dadimlemma1}, whence also for general $(S,H) \in \E_{9,2}^{(II)^+}$. Corollary \ref{cor:geninj} yields the result.

(vi) This is an immediate consequence of Corollary \ref{cor:geninj} and Lemma \ref{lemma:dadimlemma1}.
 \end{proof}

 We next   prove  Theorem \ref{mainthm3}.  We recall that the moduli spaces $\E_{g,1}$ are all irreducible (cf.\  \cite{cdgk}). By Corollary \ref{cor:geninj}, the theorem is a consequence  of the following lemma.

\begin{lemma} \label{lemma:fi1-I+}  
 For  general $(S,H) \in \E_{g,1}$,  $g \geq 2$,  we have 
$ h^1(\T_S(-H))=\max \{0, 10-g\}$. 
\end{lemma}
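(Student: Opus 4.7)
The plan is to compute $h^1(\T_S(-H))$ via the $K3$ double cover, in the spirit of Lemma \ref{lemma:dadimlemma1}. The lower bound $h^1(\T_S(-H)) \geq \max\{0,10-g\}$ is already furnished by combining Lemma \ref{lemma:posfibfi2}(i) with Lemma \ref{lemma:diff} and Corollary \ref{cor:geninj}(ii), so only the matching upper bound is at issue. Since $\E_{g,1}$ is irreducible and $H+K_S \sim (g-1)E_1 + (E_2+K_S)$ has the same decomposition type as $H$, both $(S,H)$ and $(S,H+K_S)$ are general elements of $\E_{g,1}$; hence $h^1(\T_S(-H)) = h^1(\T_S(-H-K_S))$, and \eqref{eq:h1soprasotto} reduces the claim to
\[ h^1\bigl(\T_{\widetilde{S}}(-\widetilde{H})\bigr) \leq \max\{0,\,20-2g\}. \]

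For general $(S,H) \in \E_{g,1}$, the half-fibers $E_1, E_2$ are nef with $E_1 \cdot E_2 = 1$, so Lemma \ref{lemma:nuovo24.11} applies with $k=g-1$, $l=1$ to give $h^1(\T_{\widetilde{S}}(-\widetilde{H})) = h^1(\Omega_P(\log R)(g-3, -1))$, where $R \in |\O_P(4,4)|$ is the smooth branch curve of the $K3$ double cover $\widetilde{S} \to P = \PP^1 \times \PP^1$. Using the Poincar\'e residue sequence
\[ 0 \to \Omega_P(g-3,-1) \to \Omega_P(\log R)(g-3,-1) \to \O_R(g-3,-1) \to 0, \]
the splitting $\Omega_P = \O_P(-2,0) \+ \O_P(0,-2)$, K\"unneth on $P$, and Riemann--Roch plus Serre duality on the smooth curve $R$ (of arithmetic genus $9$), one computes $h^i(\Omega_P(g-3,-1))=0$ for $i=0,2$, $h^1(\Omega_P(g-3,-1))=\max\{0,2g-4\}$, $h^0(\O_R(g-3,-1))=\max\{0,4g-24\}$, and $h^1(\O_R(g-3,-1))=\max\{0,24-4g\}$. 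Writing $\partial\colon H^0(\O_R(g-3,-1)) \to H^1(\Omega_P(g-3,-1))$ for the coboundary, the long exact sequence yields
\[ h^1(\Omega_P(\log R)(g-3,-1)) = h^1(\Omega_P(g-3,-1)) + h^1(\O_R(g-3,-1)) - \rank(\partial). \]

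For $g \leq 6$ the source of $\partial$ vanishes and the formula directly gives $h^1(\Omega_P(\log R)(g-3,-1)) = 20-2g$, closing the argument. For $g \geq 7$ the upper bound amounts to showing that $\partial$ has maximal rank $\min\{4g-24,\,2g-4\}$, i.e., $\partial$ is injective for $7 \leq g \leq 10$ and surjective for $g \geq 10$. Proving this is the main technical obstacle. The plan is to identify $\partial$ explicitly: via the ideal sheaf sequence $0 \to \O_P(g-7,-5) \to \O_P(g-3,-1) \to \O_R(g-3,-1) \to 0$ the source reads $H^0(\O_R(g-3,-1)) \cong H^1(\O_P(g-7,-5))$, while the nonzero K\"unneth component of the target is $H^1(\O_P(g-3,-3)) = H^0(\O_{\PP^1}(g-3)) \otimes H^1(\O_{\PP^1}(-3))$. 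Under these identifications, the \v{C}ech-level formula for the residue coboundary shows that $\partial$ is given by multiplication by the $y$-partial derivatives $\partial F/\partial y_0,\, \partial F/\partial y_1$ of the bihomogeneous polynomial $F \in H^0(\O_P(4,4))$ defining $R$. The maximal rank statement then follows by lower semicontinuity from an explicit check at a sufficiently generic $F$, or via a Koszul-style computation exploiting the fact that $\partial F/\partial y_0$ and $\partial F/\partial y_1$ meet in a zero-dimensional subscheme of $P$ for general $F$.
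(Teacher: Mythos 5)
Your reduction is correct up to the point where everything hinges on the maximal rank of the coboundary $\partial\colon H^0(\O_R(g-3,-1)) \to H^1(\Omega_P(g-3,-1))$: the use of \eqref{eq:h1soprasotto} together with the irreducibility of $\E_{g,1}$, the application of Lemma \ref{lemma:nuovo24.11}, and all the dimension counts check out. The genuine gap is in the last step. The branch curves $R$ of double covers of $P$ carrying an Enriques involution form only a $10$-dimensional subfamily of the $24$-dimensional linear system $|\O_P(4,4)|$, so establishing maximal rank ``at a sufficiently generic $F$'' --- whether by an explicit example or by a Koszul argument using that $Z(\partial F/\partial y_0)\cap Z(\partial F/\partial y_1)$ is finite for general $F$ --- verifies an open condition only away from the locus you actually need; lower semicontinuity says nothing about a proper closed sublocus. (There is also a smaller issue: the two partials lie in $H^0(\O_P(4,3))$, while the bidegree shift from the source $H^1(\O_P(g-7,-5))$ to the relevant K\"unneth component $H^1(\O_P(g-3,-3))$ of the target is $(4,2)$, so $\partial$ is not literally multiplication by the pair of partials; on $R$ the Euler relation $y_0F_{y_0}+y_1F_{y_1}=4F\equiv 0$ shows the relevant datum is rather a single section of $\O_R(4,2)$ cutting out the ramification $Z_1$ of $\pr_1\colon R\to\PP^1$.) Some Enriques-specific input is unavoidable here, and the paper uses two: (a) for general $(S,H)$ the elliptic pencil $|2E_1|$ has exactly $12$ reduced nodal singular fibers (\cite{FM}), so the length-$24$ scheme $Z_1$ consists of points lying on $24$ distinct rulings of $|\O_P(1,0)|$; and (b) $h^1(\T_{\widetilde S}(-k\widetilde E_1-\widetilde E_2))$ is even for every $k$, again by \eqref{eq:h1soprasotto} and the irreducibility of $\E_{k+1,1}$ --- a parity trick needed to exclude $h^0(\O_P(k+2,1)\otimes\I_{Z_1})=1$.

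At the cohomological level the paper also makes a more convenient choice of sequence. Instead of the residue sequence $0 \to \Omega_P \to \Omega_P(\log R) \to \O_R \to 0$, it uses the twist of $0 \to \Omega_P(\log R)(-R) \to \Omega_P \to \omega_R \to 0$ (cf.\ \cite[2.3c]{ev}) by $\O_P(k+2,3)$, whose middle term $\O_P(k,3)\oplus\O_P(k+2,1)$ has vanishing $h^1$ and $h^2$. This converts the whole computation into the corank of a map of global sections, identified as $(s,t)\mapsto s|_R\cdot\sigma_1+t|_R\cdot\sigma_2$ with $\sigma_i$ cutting out the ramification of the two projections of $R$; the answer then comes out as $18-2k+h^0(\O_P(k+2,1)\otimes\I_{Z_1})$, and the residual term is killed for $k\leq 9$ (and bounded by $2g-20$ for large $g$ by an elementary induction) using (a) and (b) above. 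If you want to salvage your route, you must replace ``general $F\in|\O_P(4,4)|$'' by a concrete geometric property of $Z_1$ that is known to hold for general \emph{Enriques} covers; the distinct-rulings property from \cite{FM} is exactly such a property, and at that point you will essentially be reproducing the paper's argument.
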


\begin{proof} By \eqref {eq:h1soprasotto} and the fact that $\E_{g,1}$ is irreducible, it suffices to prove that $h^ 1(T_{\widetilde S}(-\widetilde H))=\max \{0,20-2g\}$.  
 
Consider $\sigma_1 \in H^0(\O_R(4,2))$ and $\sigma_2 \in H^0(\O_R(2,4))$  two sections (uniquely defined up to constants) whose zero schemes $Z(\sigma_1)=Z_1$ and $Z(\sigma_2)=Z_2$ are the ramification divisors of the $4:1$ maps $R \to \PP^1$ defined by the two projections of $P$ to $\PP^1$.  Note that $Z_1 \cap Z_2 = \emptyset$. Indeed a point in $Z_1 \cap Z_2$ would be singular for $R$, a contradiction.

We remark for later use that the scheme $Z_1 \in |\O_R(4,2)|=|\omega_R(2,0)|$ has length $24$ and consists of the ramification points of the first projection $R \to \PP^1$, thus of the points where the fibers in $|\O_{P}(1,0)|$ are tangent to $R$. On $\widetilde{S}$ these fibers become singular members of $|\widetilde{E}_1|$, that are mapped pairwise onto singular members of $|2E_1|$ on $S$. Thus, if $S$ is general, $Z_1$ consists of $24$ points on distinct elements of $|\O_{P}(1,0)|$, as  it is well-known that an elliptic pencil on a general Enriques surface has precisely $12$ singular reduced fibres, all nodal, cf., e.g., \cite[Thm. 4.8 and Rem. 4.9.1]{FM}. 
 
For  any   integer $ k \geq 1$, consider $H_{ k } \sim  k  E_1+E_2$.  Note that $H=H_{g-1}$. 
 
\begin{claim}\label{cl:1}
For every ${ k }\geq 1$, one has
\begin{equation}\label{eq:cacca}
 h^1(\T_{\widetilde{S}}(-\widetilde{H}_{ k })) =18-2{ k }+h^ 0(\O_P({ k }+2,1)\otimes \I_{Z_1}).
\end{equation}
\end{claim}

\begin{proof}[Proof of the Claim]
 We have an exact sequence (cf., e.g. \cite[2.3c]{ev})  
\[
  \xymatrix{0 \ar[r] & \Omega_P(\log R)({ k }-2,-1) \ar[r] &  \Omega_P({ k }+2,3) \ar[r]^{\gamma} &  \omega_R({ k }+2,3) \ar[r] & 0. }
\]
Since $\Omega_P({ k }+2,3) \cong \O_P({ k },3) \+ \O_P({ k }+2,1)$, we have
$h^1(\Omega_P({ k }+2,3))=0$, whence  
\begin{equation}
  \label{eq:cohcv002}
  h^1(\T_{\widetilde{S}}(-\widetilde{H}_{ k })) 
=h^1(\Omega_P(\log R)(k -2,-1))
=\cork H^0(\gamma)
\end{equation}
(where the left equality follows from Lemma
\ref{lemma:nuovo24.11}). Using the fact that $\omega_R \cong
\O_R(2,2)$, we may write $H^0(\gamma)$ as
\begin{equation*}
  \label{eq:cohcv03} H^0(\gamma): H^0(\O_P({ k },3)) \+ H^0(\O_P({ k }+2,1)) \longrightarrow H^0( \O_R({ k }+4,5)).
\end{equation*}
Moreover, computing dimensions yields that the  domain  has dimension $6{ k }+10$ and the target has dimension $4{ k }+28$, whence 
\begin{equation}
  \label{eq:kercoker}
  \cork (H^0(\gamma)) = 18-2{ k }+\dim (\ker H^0(\gamma)).
\end{equation}
We have
$H^0(\gamma)= \gamma_1+\gamma_2$, 
where
\begin{alignat*}{2}
H^0(\O_P({ k },3)) &\stackrel{\gamma_1}{\longrightarrow}
H^0( \O_R({ k }+4,5)), & \quad
 s & \stackrel{\gamma_1}{\longmapsto}
 s|_R \cdot \sigma_1 \\
H^0(\O_P({ k }+2,1)) &\stackrel{\gamma_2}{\longrightarrow} 
H^0( \O_R({ k }+4,5)), & 
t & \stackrel{\gamma_2}{\longmapsto}
t|_R \cdot \sigma_2.
\end{alignat*}
 The restriction  
$H^0(\O_P({ k },3)) \to H^0(\O_R({ k },3))$ is an isomorphism, as $h^0(\O_P({ k }-4,-1))=h^1(\O_P({ k }-4,-1))=0$.  Hence,  
$\gamma_1$ is injective and  $\im \gamma_1 = H^0(\O_R({ k }+4,5) \* \I_{Z_1})$. 
Since $h^0(\O_P(k-2,-3))=0$, the restriction map $H^0(\O_P({ k }+2,1)) \to H^0(\O_R({ k }+2,1))$ is injective (but not surjective). It follows that $\gamma_2$ is injective and 
\begin{equation}
\label{eq:interW}
\ker H^0(\gamma) \cong 
\left(\im \gamma_1 \cap \im \gamma_2\right) =
H^0(\O_P({ k }+2,1) \* \I_{Z_1}).
\end{equation}
Thus, \eqref {eq:cacca} follows from \eqref{eq:cohcv002},  \eqref{eq:kercoker} and \eqref {eq:interW}. \end{proof} 

\begin{claim}\label{claim:vacca} 
 If $1\leq  k \leq g-1$ and $(S,H)$ is general, then 
$h^ 0(\O_P(k+2,1)\otimes \I_{Z_1})$ is even. 
\end{claim}

\begin{proof} [Proof of the Claim]
By 
 \eqref {eq:h1soprasotto} written for $H_{ k }$,  the fact that $(S,H)$ is general (whence also all $(S,H_{ k })$ are general) and the fact that $\E_{{ k }+1,1}$ is irreducible, we have $h^1(T_S(-H_{ k }))=h^1(T_S(-H_{ k }+K_S))$, so that  $h^1(\T_{\widetilde{S}}(-\widetilde{H_{ k }}))$ is even. Hence the claim follows from \eqref {eq:cacca}. \end{proof}
 
\begin{claim}\label{claim:zacca} One has
$h^ 0(\O_P({ k }+2,1)\otimes \I_{Z_1}) =0$ for $1\leq { k } \leq 9$ and $(S,H)$ general.
 \end{claim}
 
 \begin{proof}[Proof of the Claim] 
Assume $h^ 0(\O_P({ k }+2,1)\otimes \I_{Z_1}) >0$. Then  $h^ 0(\O_P({ k }+2,1)\otimes \I_{Z_1})\geq 2$ by Claim \ref {claim:vacca}. Write $|\O_P({ k }+2,1)\otimes \I_{Z_1}|=M+\Delta$, where $M$ is the moving part and $\Delta$ the fixed part. 
 
Assume first that $\Delta$ contains an irreducible curve $B\in |\O_P(\beta,1)|$, for some $\beta\leq { k }+2$. Then $\Delta=B+F_1+\cdots+F_\alpha$, where $F_i\in |\O_P(1,0)|$ and $0\leq \alpha\leq { k }+2-\beta$. Hence $M$ consists of divisors in $|\O_P({ k }+2-\alpha-\beta,0)|$. Since $M$ has no fixed part, then $Z_1\subset \Delta$. Therefore $M=|\O_P({ k }+2-\alpha-\beta,0)|$ and $\Delta$ is the unique curve in $|\O_P(\alpha+\beta,1)\otimes \I_{Z_1}|$. In particular $h^ 0(\O_P(\alpha+\beta,1)\otimes \I_{Z_1})=1$. So Claim \ref {claim:vacca} implies that $\alpha+\beta\leq 2$. As $Z_1\subset R\in |\O_P(4,4)|$, then we must have $24=\deg(Z_1)\leq \O_P(\alpha+\beta,1)\cdot \O_P(4,4)=4(\alpha+\beta+1)\leq 12$, a contradiction. 
 
The remaining case is $\Delta=F_1+\cdots+F_\alpha$ where $F_i\in |\O_P(1,0)|$ and $0\leq \alpha\leq { k }+2$. Let $Z''$ be the largest subset of $Z_1$ contained in $\Delta$ and set $Z'=Z_1-Z''$. We thus have $M=|\O_P({ k }+2-\alpha,1)\otimes \I_{Z'}|$ and $\dim(M)=h^ 0(\O_P({ k }+2,1)\otimes \I_{Z_1})-1\geq 1$ by Claim \ref {claim:vacca}. As $M$ is base component free, it contains irreducible members. Hence  
$\deg (Z')\leq \O_P({ k }+2-\alpha,1)^ 2=2({ k }+2-\alpha)$. Since $\deg(Z'')\leq \alpha$, because the points of $Z_1$ lie in different elements of $|\O_P(1,0)|$, we have $2({ k }+2)\geq 2\alpha+\deg(Z')\geq 2\deg(Z'')+\deg (Z')\geq \deg(Z_1)=24$. Hence ${ k }\geq 10$, which proves  the claim. \end{proof}

We can now finish the proof of the lemma.  By  \eqref {eq:cacca} written for ${ k }=g-1$, we have 
\begin{equation}\label{eq:lof}
h^ 1(T_{\widetilde S}(-\widetilde H))=20-2g+h^ 0(\O_P(g+1,1)\otimes \I_{Z_1}).
\end{equation}

 Assume $g\leq 10$.   
By  Claim \ref {claim:zacca} we have $h^ 0(\O_P(g+1,1)\otimes \I_{Z_1})=0$,  so  $h^ 1(T_{\widetilde S}(-\widetilde H))  =20-2g$  by \eqref{eq:lof},  as  wanted. 

 Assume  $g\geq 11$.  For any $n\geq 0$  and  $F\in |\O_P(1,0)|$ such that $F\cap Z_1=\emptyset$, we have
\[
0\longrightarrow \O_P(n,1)\otimes \I_{Z_1}\longrightarrow \O_P(n+1,1)\otimes \I_{Z_1}\longrightarrow
\O_P(n+1,1)\otimes \O_F \cong \O_{\PP^ 1}(1)\longrightarrow 0,
\]
whence 
$h^ 0(\O_P(n+1,1)\otimes \I_{Z_1})\leq h^ 0(\O_P(n,1)\otimes \I_{Z_1})+2$.  
Arguing inductively, we have
\[
h^ 0(\O_P(g+1,1)\otimes \I_{Z_1})\leq h^ 0(\O_P(g-i,1)\otimes \I_{Z_1})+2(i+1)
\]
for every $i\in \{0,\ldots,g\}$. Setting $i=g-11$ and applying Claim \ref {claim:zacca} we get 
\[ h^ 0(\O_P(g+1,1)\otimes \I_{Z_1})  \leq  h^ 0(\O_P(11,1)\otimes \I_{Z_1})+2(g-11+1) =  2g-20. \] 
 Inserting in \eqref {eq:lof} we get 
$h^ 1(T_{\widetilde S}(-\widetilde H))= 0$, as wanted.  \end{proof}

%
%


\begin{thebibliography}{55}

 \bibitem{abs1} E.~Arbarello, A.~Bruno, E.~Sernesi, {\it Mukai's program for curves on a $K3$ surface}, Alg. Geom. {\bf 1} (2014), 532--557.


\bibitem{abs} E.~Arbarello, A.~Bruno, E.~Sernesi, {\it On hyperplane sections of $K3$ surfaces},  Alg. Geom. {\bf 4} (2017), 562--596. 




 \bibitem{BPHV} W.~Barth, K.~Hulek, C.~Peters, A.~van de Ven, {\it Compact Complex Surfaces}, Ergebnisse der Mathematik und ihrer Grenzgebiete. 3. Folge, Volume
     4 (2004), 
     Springer-Verlag Berlin Heidelberg. 

 \bibitem{bay} L.~Bayle, {\it  Classification  des  vari\'et\'es  complexes  projectives  de  dimension  trois
 dont une section hyperplane g\'en\'erale est une surface d’Enriques}, 
 J. Reine Angew. Math.
 {\bf 449} (1994), 9--63.  


 \bibitem{beau} A.~Beauville,  {\it Fano Threefolds and $K3$ Surfaces}, The Fano Conference. Univ. Torino, Turin, 175--184.

 


\bibitem{cds} C.~Ciliberto, T.~Dedieu, E.~Sernesi, {\it   Wahl maps and extensions of canonical curves and K3 surfaces},  J. Reine Angew. Math.,  \texttt{DOI: https://doi.org/10.1515/crelle-2018-0016}. 


\bibitem{cdgk} C.~Ciliberto, T.~Dedieu, C.~Galati, A.~L.~Knutsen, {\it  Irreducible unirational and uniruled components of moduli spaces of  polarized Enriques surfaces}, \texttt{arxiv:1809.10569}

 \bibitem{cdgk2} C.~Ciliberto, T.~Dedieu, C.~Galati, A.~L.~Knutsen, {\it  On the locus of Prym curves where the Prym--canonical map is not an embedding}, 
 \texttt{arXiv:1903.05702}, to appear in Arkiv f{\"o}r Matematik.  


\bibitem{clm2} C.~Ciliberto, A.~F.~Lopez, R.~Miranda, {\it Classification of varieties with canonical curve section via Gaussian maps on
      canonical curves},
      Am. J. Math. {\bf 120} (1998), 1--21. 



  \bibitem{clm} C.~Ciliberto, A.~F.~Lopez, R.~Miranda, {\it Projective degenerations of $K3$ surfaces,
 Gaussian maps and Fano threefolds}, Invent. Math. {\bf 114} (1993), 641--667.







 \bibitem{CoMu} A.~Conte, J.~P.~Murre, {\it Algebraic varieties of dimension three whose hyperplane sections are Enriques surfaces}, Ann. Scuola Norm. Sup. Pisa Cl. Sci. {\bf 12} (1985), 43--80. 
 
 \bibitem{cos1}  F.~R.~Cossec, {\it Projective models of Enriques surfaces}, Math. Ann. {\bf 265} (1983), 283--334.
 
 

\bibitem{cos3}  F.~R.~Cossec, {\it Reye congruences}, Trans. Amer. Math. Soc.
 {\bf 280} (1983), 737--751.



 \bibitem{cos2}  F.~R.~Cossec, {\it On the Picard group of Enriques surfaces}, Math. Ann. {\bf 271} (1985), 577--600. 


 \bibitem{cd}  F.~R.~Cossec,  I.~V.~Dolgachev,  {\it Enriques surfaces. I.}
 Progress in Mathematics, {\bf 76}. Birkh\"auser Boston, Inc., Boston, MA, 1989.

 \bibitem{dol}  I.~V.~Dolgachev,  {\it A brief introduction to Enriques surfaces}, in: Development of Moduli Theory--Kyoto 2013,  Advanced Studies in Pure Mathematics {\bf 69} (2016),  1--32. 


 
 \bibitem{ev} H.~Esnault, E.~Viehweg, {\it Lectures on vanishing theorems}, DMV Seminar, vol. 20, Birkh\"auser Verlag, Basel, 1992.

\bibitem{Fa} G.~Fano, 
{\it Sulle variet\`a algebriche a tre dimensioni le cui sezioni
iperpiane sono superficie di genere zero e bigenere uno}, 
Mem. Soc. It. d. Scienze, Serie 3, {\bf 24} (1938),  41--66.

\bibitem{Fe} S.~Feyzbakhsh,  {\it Mukai's program (reconstructing a K3 surface from a curve) via wall-crossing},  J. Reine Angew. Math., doi:10.1515/crelle-2019-0025. 


\bibitem{FM} R.~Friedman, J.~W.~Morgan, {\it Smooth four-manifolds and complex surfaces.} Ergebnisse der Mathematik und ihrer Grenzgebiete {\bf 27}. Springer-Verlag, Berlin, 1994. 


 \bibitem{GP}  F.~J.~Gallego, B.~P.~Purnaprajna, {\it Projective normality and syzygies of algebraic surfaces}, J. Reine und angew. Math. {\bf 506} (1999), 145--180.


 \bibitem{gl} M.~Green, R.~Lazarsfeld, {\it Special divisors on curves on a
 $K3$ surface}, Invent. Math.
 {\bf 89} (1987),  357--370. 


\bibitem{EGA} A.~Grothendieck,  {\it \'El\'ements de g\'eom\'etrie alg\'ebrique: IV}, Publ. math. I.H.\'E.S. {\bf 28} (1966), 5--255. 


  
\bibitem{Huy}  D.~Huybrechts, {\it Lectures on $K3$ Surfaces}, Cambridge Studies in Advanced Mathematics {\bf 158} (2016), Cambridge University Press.

  




 \bibitem{klvan} A.~L.~Knutsen, A.~F.~Lopez, {\it A sharp vanishing theorem for line bundles on $K3$ or Enriques surfaces}, Proc. Amer. Math. Soc. {\bf  135} (2007) 3495--3498.

 \bibitem{KL1} A.~L.~Knutsen, A.~F.~Lopez, {\it Brill-Noether theory for curves on Enriques surfaces, I: the positive cone and gonality},  Math. Zeit. {\bf 261} (2009), 659--690. 

\bibitem{KLM} A.~L.~Knutsen, A.~F.~Lopez, R.~Mu\~noz, {\it
 On the extendability of projective surfaces and a genus bound for Enriques--Fano threefolds}, J. Diff. Geom. {\bf 88} (2011),  483--518. 





\bibitem{LC} M.~Lelli-Chiesa,  {\it Curves on surfaces with trivial canonical bundle}, Boll. Unione Mat. Ital.  {\bf 11} (2018), 93--105.
  
 

\bibitem{Lv} S.~L'vovsky, {\it Extensions of projective varieties and deformations, II}, Michigan Math. J. {\bf 39} (1992), 65--70.



\bibitem{Mi} T.~Minagawa, {\it Deformations of $\mathbf{Q}$-Calabi-Yau $3$-Folds and $\mathbf{Q}$-Fano $3$-Folds of Fano Index $1$}, J. Math. Sci. Univ. Tokyo
{\bf 6} (1999), 397--414.

 
 
\bibitem{MS} S.~Mori, H.~Sumihiro, {\it On Hartshorne's conjecture},   J. Math. Kyoto Univ. {\bf 18} (1978), 523--533.


\bibitem{MM} S.~Mori, S.~Mukai, {\it The uniruledness of the moduli space of curves of genus $11$}, Algebraic Geometry, Proc. Tokyo/Kyoto, 334--353, Lecture Notes in Math. {\bf 1016}, Springer, Berlin, 1983.





\bibitem{muk1} S.~Mukai, {\it Curves and $K3$ surfaces of genus eleven}, Moduli of vector bundles, Lecture Notes in Pure and Applied Math. {\bf 179}, Dekker (1996), 189--197.

\bibitem{muk2}  S.~Mukai, {\it Curves, $K3$ surfaces and Fano
$3$-folds of genus $\leq 10$.} Algebraic
geometry and commutative algebra, Vol. I, 357--377, Kinokuniya, Tokyo
(1988). 

\bibitem{muk3}  S.~Mukai, {\it Fano $3$--folds}, in ``Complex Projective Geometry'', LMS Lecture Notes Series {\bf 179} (1992),  255--261.





 \bibitem{Mum} D.~Mumford, {\it Varieties Defined by Quadratic Equations}. In: Questions on Algebraic Varieties. C.I.M.E. Summer Schools, vol {\bf 51}. Springer, Berlin, Heidelberg.


\bibitem{Pa} R. Pardini, \emph {Some remarks on varieties with
    projectively isomorphic hyperplane sections}, Geometriae Dedicata
  {\bf 52} (1994), 15--32. 


 \bibitem{Pr} Yu. G. Prokhorov, {\it 
 On Fano-Enriques varieties}, Mat. Sb.
 {\bf 198} (2007), 117--134.



\bibitem{Ra} M.~Ramponi, {\it Special divisors on curves on $K3$ surfaces carrying an Enriques involution},  Manuscripta Math.
 {\bf 153} (2017), 315--322.



\bibitem{SD} B.~Saint-Donat, {\it Projective models of K--3 surfaces}, Amer. J. Math. {\bf 96} (1974), 602--639.




 \bibitem{sa} T.~Sano, 
{\it On classification of non-Gorenstein
 $\mathbb{Q}$-Fano $3$-folds of Fano index $1$}, J.
 Math. Soc. Japan {\bf 47} (1995), 369--380.


\bibitem{Ser} E.~Sernesi, {\it Deformations of Algebraic Schemes},
 Grundlehren der mathematischen Wissenschaften {\bf 334}. Springer-Verlag, Berlin, Heidelberg, New York, 2006.




 \bibitem{Ve} A.~Verra, {\it A short proof of the unirationality of $\A_5$},  Indagationes Math. {\bf 46} (1984), 339--355. 


   

\bibitem{Wa-Pn} J.~Wahl, {\it A cohomological characterization of $\PP^n$}, Invent. Math. {\bf 72} (1983), 315--322.

\bibitem{W2} J.~Wahl, {\it The Jacobian algebra of a graded Gorenstein singularity}, Duke Math. J. {\bf 55} (1987), 843--871.



\end{thebibliography}
\end{document}